\documentclass[reqno,11pt]{article}

\usepackage{amsmath, amsthm, amsfonts, amssymb,mathrsfs,color}
\usepackage{upgreek}
\usepackage{authblk}   
\usepackage{pgfplots}
\usepackage{CJK}
\usepackage{capt-of}
\usepackage{charter}  
\usepackage{stix}  
\usepackage
[
a4paper,
hmargin=1.37cm,
vmargin=1.63cm
]{geometry}  

\usepackage{setspace}
\usepackage{tensor}
\usepackage{todonotes}
\usepackage{tikz}
\usetikzlibrary{decorations.pathmorphing}
\usepackage{hyperref}
\hypersetup{
	colorlinks,
	citecolor=blue,
	filecolor=black,
	linkcolor=blue,
	urlcolor=blue
}

\usepackage{url}

\usepackage{soul}
\usepackage{enumitem}

\usepackage[normalem]{ulem}
\usepackage[numbers,sort&compress]{natbib}

\usepackage{indentfirst}

\def\R{\mathbb R}
\def\Z{\mathbb Z}
\def\T{\mathbb T}
\def\N{\mathbb N}
\def\E{\mathbb E}
\def\K{\mathbb K}

\def\rr{\mathbf{r}}
\def\p{\mathbb P}

\def\F{\mathcal F}
\def\SS{\mathbf S}
\def\D{\mathcal D}
\def\L{\mathcal L}
\def\LL{\mathscr L}

\def\Q{\mathcal Q}
\def\QQ{\mathbf Q}
\def\ZZ{\mathbf Z}
\def\hh{\widehat}
\def\d{{\,\rm{d}}}
\def\nn{\nabla}
\def\I{{\mathbf I}}
\def\Wlip{W^{1,\infty}}
\def\WLip{{\mathbb W}^{1,\infty}}
\def\WP{{\mathbb W}^{p,\infty}}
\def\Wp{W^{p,\infty}}
\def\OP{{\rm OP}}
\def\B{\mathcal B}
\def\A{\mathcal A}
\def\H{\mathbb H}
\def\G{\mathcal G}

\def\M{\mathbf M}
\def\bfH{\mathbf H}
\def\J{\mathcal J}
\def\vrho{\varrho}
\def\V{\mathcal V}

\def\pas{{\mathbb P}\text{-}{\rm a.s.}}
\def\Div{{\rm div}\,}
\def\Hdiv{H_{\rm div}}

\numberwithin{equation}{section}

\newtheorem{Theorem}{Theorem}[section]
\newtheorem{Proposition}{Proposition}[section]
\newtheorem{Lemma}{Lemma}[section]
\newtheorem{Definition}{Definition}[section]

\newtheorem{Hypothesis}{Assumption}[section]

\newtheorem{Problem}{Problem}

\theoremstyle{definition}
\newtheorem{Example}{Example}[section]

\newtheorem{Remark}{Remark}[section]

\newcommand{\IP}[1]{\left\langle#1\right\rangle }       
\newcommand{\bIP}[1]{\big\langle#1\big\rangle }
\newcommand{\BIP}[1]{\Big\langle#1\Big\rangle }
\newcommand{\Abs}[1]{\left|#1\right|}   
\newcommand{\norm}[1]{\left\lVert#1\right\rVert}

\begin{document}
	
	\allowdisplaybreaks

	\title{\textbf{\textsc{Stochastic Euler Equations with Pseudo-differential Noise: Continuous and Discontinuous Perturbations in Compressible and Incompressible Flows}}}

	\author{
		\textbf{Kenneth. H. Karlsen}\thanks{K.H.K. is funded by the Research Council of Norway under project 351123 (NASTRAN).}
	}
	
	\affil{{\small Department of Mathematics, University of Oslo, NO-0316 Oslo, Norway.
			\href{mailto:kennethk@math.uio.no}{kennethk@math.uio.no}}}
	
	\author{
		\textbf{Hao Tang} 
		\thanks{H.T.  is supported by the National Natural Science Foundation of China under projects  24AAA00530 and 12531007.}
	}
	
	\affil{{\small 
			Center for Applied Mathematics, Tianjin University, Tianjin 300072, China.   \href{mailto:haotang@tju.edu.cn}{haotang@tju.edu.cn}
		}
	}
	
	\author{
		\textbf{Feng-Yu Wang}\thanks{F.-Y.W. is supported by the National Natural Science Foundation of China under project 12531007.}
	}

	\affil{{\small Center for Applied Mathematics, Tianjin University,Tianjin 300072, China.  
			\href{mailto:wangfy@tju.edu.cn}{wangfy@tju.edu.cn}}}

	\date{\today}

	\maketitle
	
	\begin{abstract}
		
		We study stochastic Euler equations in both compressible and incompressible regimes, on the whole space and on the torus, driven by genuinely mixed multiplicative noise: continuous Stratonovich/It\^o components and a discontinuous Marcus component. The Stratonovich and Marcus noise amplitudes are (nonlocal) pseudo-differential operators that include the classical transport operator as a special case. Within this setting, we develop a local-in-time theory of classical solutions for both regimes, establishing existence, uniqueness, and a blow-up criterion. The presence of discontinuous pseudo-differential Marcus noise necessitates new analytical tools, which we develop to control the delicate interaction between jump discontinuities and nonlocal operators.
		
		For the compressible barotropic equations, we formulate a
		transformation-and-extension principle that recovers the classical
		Makino variable and extends its associated symmetric quasilinear
		formulation. In the transformed variables, compatible nonlinear
		Sobolev composition estimates are developed to close the high-order
		stochastic energy bounds. The resulting framework accommodates a
		broad class of physically relevant equations of state, including
		piecewise-defined $\gamma$-laws, Chaplygin-type laws, and the pressure
		law for white dwarf stars. To the best of our knowledge, many of these
		concrete equations of state have remained unexplored in the
		multidimensional stochastic compressible setting, even under purely
		It\^o-type forcing.

		For the incompressible damped case, we identify a hierarchy of damping--noise regimes that successively guarantee global-in-time existence, uniform-in-time bounds, and decay property. To study the long-time statistical behavior, we establish a novel abstract existence criterion for invariant probability measures tailored to Markov semigroups satisfying a \emph{restricted Feller property under mismatched metrics}. By explicitly circumventing the requirement of Feller continuity within a single topology, this framework provides a robust extension of the classical Krylov--Bogoliubov theory. Utilizing this criterion, we construct invariant probability measures for a broad class of singular stochastic evolution systems in Hilbert spaces.
		As a principal application, for genuinely mixed multiplicative noise and for $d\ge 2$, we provide the existence of invariant measures for the stochastic damped Euler equations on $\mathbb{T}^d$ under a moderate damping--noise regime, and uniqueness of invariant measures under a strong damping--noise regime on both $\mathbb{T}^d$ and $\mathbb{R}^d$. 
	\end{abstract}

	\noindent\textbf{2020 AMS subject Classification:} Primary: 60H15, 35R60; Secondary: 35Q31, 35S10, 37L40.
	
	\noindent\textbf{Keywords:} Compressible and incompressible Euler equations; Pseudo-differential noise; Continuous and discontinuous perturbations; Local and global solutions; Generalized Krylov--Bogoliubov argument; Invariant probability measure.

	\newpage

	\tableofcontents
	
	\newpage
	
	\section{Introduction: Overview of Main Results}\label{Section : Introduction-Overview}
	
	The purpose of this paper is to study the dynamics of compressible and incompressible ideal fluids subject to random perturbations. The fluid occupies either the whole space $\R^d$ or the torus $\T^d\triangleq (\R/2\pi\mathbb Z)^d$, collectively denoted by $\K^d$, with spatial dimension $d \ge 2$. We write ${\rm div} = {\rm div}_x$ and $\nabla=\nabla_x$ for the divergence and gradient operators on $\K^d$, respectively.
	
	In the deterministic case, the Cauchy problem for the barotropic Euler equations describing the motion of a fluid without vacuum on $\mathbb{K}^d$ reads
	\begin{equation}\label{Euler (rho u)}
		\left\{
		\begin{aligned}
			&\partial_t \rho + \Div (\rho u) = 0, \quad \rho > 0,\\
			&\partial_t u + (u \cdot \nabla)u + \frac{1}{\rho}\nabla P(\rho) = 0,\\
			&\rho(0) = \rho_0, \ u(0) = u_0,
		\end{aligned}
		\right.
	\end{equation}
	where $u = u(t, x): [0, \infty) \times \mathbb{K}^d \to \mathbb{R}^d$ and $\rho = \rho(t, x): [0, \infty) \times \mathbb{K}^d \to (0, \infty)$ denote the unknown velocity field and density, respectively. The condition $\rho > 0$ excludes vacuum states, and the barotropic case corresponds to a pressure function $P: [0, \infty) \to \mathbb{R}$ depending solely on the density $\rho$.
	
	Similarly, the Cauchy problem for the deterministic incompressible Euler equations on $\K^d$ is
	\begin{equation}\label{Euler (u P)}
		\left\{\begin{aligned}
			&\partial_t u + (u \cdot\nabla)u + \nabla P= 0,\\
			&\Div u= 0,\\
			&u(0) = u_0.
		\end{aligned}
		\right.
	\end{equation}
	In \eqref{Euler (u P)}, $u = u(t, x): [0, \infty) \times \mathbb{K}^d \to \mathbb{R}^d$ and $P=P(t, x): [0, \infty) \times \mathbb{K}^d \to \mathbb{R}$ represent the unknown velocity field and pressure, respectively.
	
	We now introduce random perturbations into these classical models. We fix a complete, right-continuous filtered probability space $(\Omega,\mathcal{F},\{\mathcal{F}_t\}_{t \ge 0}, \mathbb{P})$, on which we let $W=W(t)$ and $\widetilde{W}=\widetilde{W}(t)$ be independent standard 1-D $\{\mathcal{F}_t\}$-adapted Brownian motions, and $L=L(t)$ be a \textit{pure-jump} 1-D $\{\mathcal{F}_t\}$-adapted L\'{e}vy process, independent of both $W$ and $\widetilde{W}$. The random perturbation acting on the velocity equation combines a continuous Stratonovich component ``$\circ\,\mathrm{d} W$'' and an It\^{o} component ``$\mathrm{d} \widetilde{W}$'', together with a \textit{discontinuous} Marcus component ``$\diamond \mathrm{d} L$''. This leads to the following stochastic version of the barotropic Euler system on $\mathbb{K}^d$:
	\begin{equation}\label{SEuler-(rho u)}
		\left\{
		\begin{aligned}
			&\mathrm{d}\rho + \Div (\rho u)\d t = 0,\quad \rho(t) > 0,
			\\ &\mathrm{d} u + \left[(u \cdot \nabla)u + \frac{\nabla P(\rho)}{\rho}\right]\d t = \mathcal{Q}_1 u \circ \mathrm{d} W + \mathcal{Q}_2 u \diamond \mathrm{d} L + h(t,\rho, u)\mathrm{d} \widetilde{W}, \\
			&\rho(0) = \rho_0, \ u(0) = u_0,
		\end{aligned}
		\right.
	\end{equation}
	and, in the incompressible setting, to the stochastic incompressible Euler system:
	\begin{equation}\label{SEuler-(u P)}
		\left\{
		\begin{aligned}
			&\mathrm{d}u + [(u \cdot \nabla)u]\d t + \nabla \mathrm{d}P = \mathcal{Q}_1 u \circ \mathrm{d} W + \mathcal{Q}_2 u \diamond \mathrm{d} L + \widetilde{h}(t, u)\mathrm{d} \widetilde{W},\\
			&\Div u = 0,\\
			&u(0) = u_0.
		\end{aligned}
		\right.
	\end{equation}
	In \eqref{SEuler-(rho u)} and \eqref{SEuler-(u P)},
	the noise-amplitude operators $\Q_1$ and $\Q_2$ are (matrix-valued) pseudo-differential operators satisfying suitable conditions that include classical differential operators as special cases and may be highly nonlocal. 
	The It\^{o}-type forcing coefficients $h$ and $\widetilde h$ are nonlinear functions satisfying conditions specified below. In the long-time analysis we will   restrict to the time-homogeneous case $\widetilde h(t,u)\equiv \widetilde h(u)$; see \eqref{SEuler-(u P) damp} below. 
	
	Here we note that, without the \textit{a priori} assumption that the noise coefficients are divergence-free, the pressure in \eqref{SEuler-(u P)} must absorb the martingale components of the stochastic forcing to enforce the incompressibility constraint $\operatorname{div} u = 0$. Consequently, the pressure inherently acts as a semi-martingale in time rather than an absolutely continuous function, which implies we must use the differential form $\nabla \mathrm{d}P$ instead of $\nabla P \,\mathrm{d}t$.

	We summarize the main contributions of this work as follows. Detailed discussion of the main challenges, novel approaches, and comparisons with existing literature appear in Section \ref{Section : novel-introduction}.
	
	$\bullet$  
	We develop a local-in-time theory of classical solutions (establishing existence, uniqueness, and a blow-up criterion; see Theorems \ref{Thm-(rho u)} and \ref{Thm-(u P)-local}) for both stochastic compressible and incompressible regimes on $\mathbb{K}^d$, driven by genuinely mixed noise comprising continuous Stratonovich and It\^o components alongside a discontinuous Marcus component. The analysis addresses the coupling between the nonlinear hyperbolic structure of the Euler equations and nonlocal pseudo-differential noise, interpreted in both the Stratonovich and Marcus senses. In particular, as pseudo-differential Marcus noise has not been previously treated, we develop  analytical tools to handle the delicate interplay between jump discontinuities and nonlocal operators (see Theorem \ref{Thm-cancel} and Section \ref{Section : Appl-cancel}).
	
	$\bullet$  In the compressible regime, we formulate an abstract
	transformation-and-extension principle; see \ref{Hypo-Pressure}.
	It combines sound-speed symmetrization with a global extension
	criterion for the transformed sound-speed coefficient and with the
	nonlinear Sobolev composition and local Lipschitz estimates required
	for the stochastic high-regularity analysis. This framework allows
	us to treat \eqref{SEuler-(rho u)} for a broad class of physically admissible pressure
	laws $P=P(\rho)$, including two structural regimes beyond the
	classical polytropic Makino setting: laws whose sound speed becomes
	singular as vacuum is approached and laws exhibiting genuinely
	non-power-law sound-speed degeneracy near vacuum. In addition to the
	classical polytropic $\gamma$-law, this class includes piecewise-defined
	$\gamma$-laws, (piecewise-defined) Chaplygin-type laws, the pressure
	law for white dwarf stars, and other astrophysically motivated
	examples; see see Examples \ref{Example-Chaplygin}--\ref{Example-vacuum limit}. To the best of our knowledge, even
	in the purely It\^o setting, the multidimensional stochastic
	compressible Euler equations have not been analyzed under such
	general equations of state. Consequently, our framework provides
	the first investigation of these models in the stochastic context.

	$\bullet$  For the damped incompressible case (i.e., augmenting system \eqref{SEuler-(u P)} with a linear damping term $\Upgamma u\d t$), we identify a hierarchy of interaction conditions between the damping and the noise terms $\mathcal{Q}_1 u\, \circ\, {\rm d} W$, $\mathcal{Q}_2 u \diamond {\rm d} L$, and $\widetilde{h}(u)\d \widetilde{W}$  (see \ref{Hypo-DNI-1}--\ref{Hypo-DNI-3}) that effectively suppresses potential finite-time blow-up, leading to global-in-time existence, uniform-in-time bounds, and decay estimates.
	To capture the corresponding statistical behavior, we confront a fundamental topological obstruction: the absence of viscous smoothing prevents us from establishing the combination of compactness and strong-topology Feller continuity required by the classical argument. We bypass this limitation by developing a robust extension of the classical Krylov--Bogoliubov criterion (Theorem \ref{Thm-generalized KB}) anchored in the concept of a \emph{restricted Feller property under mismatched metrics} (Definition \ref{Locally Feller definition}).  This criterion enables us to establish the existence of invariant probability measures for a broad class of singular stochastic evolution systems in Hilbert spaces.
	When specialized to the stochastic damped Euler equations under fully mixed multiplicative noise, across arbitrary spatial dimensions $d \ge 2$ (Theorem \ref{Thm-(u P)-long-time}), we provide the existence and uniqueness of invariant measures under different damping--noise regimes. To the best of our knowledge,  these results have not been previously established in the literature.

	\subsection{Noise Structure}\label{Section : Noise Structure}
	
	The first key contribution of this work is the systematic treatment of both continuous and discontinuous stochastic effects, incorporating local (transport-type) and nonlocal (pseudo-differential) perturbations. 
	
	More precisely, in \eqref{SEuler-(rho u)} and \eqref{SEuler-(u P)}, we consider common noise terms of the form
	$$
	\mathcal{Q}_1 u\, \circ\, {\rm d}W
	+
	\mathcal{Q}_2 u \diamond {\rm d}L+h(t,\rho,u)\,{\rm d}\widetilde{W}\quad  \text{and}\quad  \mathcal{Q}_1 u\, \circ\, {\rm d}W
	+
	\mathcal{Q}_2 u \diamond {\rm d}L+ \widetilde{h}(t,u)\,{\rm d}\widetilde{W},
	$$
	respectively. 
	Here, the standard 1-D Brownian motions $W=W(t)$ and $\widetilde{W}=\widetilde{W}(t)$, along with the \textit{pure-jump} 1-D L\'{e}vy process $L=L(t)$, are assumed to be mutually independent. Furthermore, the noise amplitude operators $\mathcal{Q}_j$ ($j=1,2$) may be local transport-type differential operators or nonlocal pseudo-differential operators.
	
	Here the pure-jump L\'{e}vy process $L$ is represented in the standard form \cite{Applebaum-2009-Book}
	\begin{equation}\label{eq:Levy-Itocomp}
		L(t)=\int_0^t \int_{|l|\le 1} l
		\, \tilde{\eta}({\rm d}l,{\rm d}t')
		+\int_0^t \int_{|l|>1} l
		\, \eta({\rm d}l,{\rm d}t'),
	\end{equation}
	where $\eta$ is a Poisson random measure counting jumps of size $l\in\R$, $\tilde{\eta} = \eta - \nu \otimes {\rm d}t$ is its compensated version, and $\nu(\cdot)$ denotes the L\'{e}vy measure.
	
	\subsubsection{Continuous and Discontinuous Noise}\label{Section:introduce noise}
	
	Recall that for a semimartingale $g=g(t)$, the following identity holds:
	\begin{equation}\label{Stratonovich to Ito}
		g\,\circ \d W = 
		g  \d W+\frac{1}{2} \d \left \langle g , W\right \rangle,\quad
		\left \langle \cdot ,\cdot \right \rangle \
		\text{denotes the quadratic variation}.
	\end{equation}
	Hence, the Stratonovich term $\mathcal{Q}_1 u\, \circ\, {\rm d}W$ can be rewritten in 
	the sense of It\^{o} as
	\begin{equation}\label{eq:Strat-Ito}
		\mathcal{Q}_1 u\, \circ\, {\rm d}W
		=
		\tfrac{1}{2}\mathcal{Q}_1^2 u\,{\rm d}t+\mathcal{Q}_1 u\,{\rm d}W ,
	\end{equation}
	so that the velocity component of the 
	stochastic Euler system \eqref{SEuler-(rho u)}
	can be expressed as
	$$
	{\rm d}u 
	+\Big[(u\cdot\nabla)u + \frac{\nabla P(\rho)}{\rho} 
	-\tfrac{1}{2}\mathcal{Q}_1^2 u \Big]{\rm d}t
	=\mathcal{Q}_1 u\,{\rm d}W 
	+\mathcal{Q}_2 u \diamond {\rm d}L 
	+h(t,\rho,u)\,{\rm d}\widetilde{W}.
	$$
	
	The noise-amplitude operators $\mathcal{Q}_j$, $j=1,2$, may be local differential operators; for example,
	\begin{equation}\label{eq:local-Q}
		\mathcal{Q}_j u=\sum_{i=1}^d
		f_i\partial_{x_i} u,
	\end{equation}
	where $f_i=f_i(\cdot)$ ($i=1,2,\cdots,d$) are smooth functions.  Note that, although the operator $\tfrac{1}{2}\mathcal{Q}_1^2$ may be of parabolic type, the overall equation remains hyperbolic. In recent years, different classes of stochastic partial differential equations (SPDEs) driven by transport-type noise of the form $\sum_i f_i\partial_{x_i} u\, \circ\, {\rm d}W$ have attracted considerable attention, with substantial progress made particularly for incompressible fluid dynamics; see, e.g., \cite{Flandoli-Luo-2020-AoP,
		Flandoli-Luo-2021-PTRF,Crisan-Flandoli-Holm-2019-JNLS,
		Flandoli-Lisei-2004-SAA,Mikulevicius-Rozovskii-2005-AoP,Alonso-Rohde-Tang-2021-JNLS}. For shallow water wave  models, see \cite{Holden-Karlsen-Pang-2021-JDE,
		Holden-Karlsen-Pang-2023-DCDS,
		Albeverio-etal-2021-JDE,
		Galimberti-etal-2024-JDE}, and for the compressible fluid equations, see \cite{Breit-etal-2022-SIMA,Boadi-Breit-Moyo-2025-arXiv}. 
	Nonlocal cases are discussed in Section \ref{Section : PD Noise Amplitudes}.
	
	The modern study of stochastic transport equations is largely motivated by Kraichnan's model of turbulent advection \cite{Kraichnan-1968-PhyFluid,Kraichnan-1994-PRL}, in which the velocity field is modeled as a Gaussian random field. Subsequent work (see, e.g., \cite{Birnir-2013-Book}) has argued that realistic turbulent forcing is better captured by non-Gaussian noise, necessitating a framework capable of handling discontinuous stochastic signals. The Marcus integral thus emerges as a natural tool for describing stochastic transport under jump noise.  
	
	Moreover, the Marcus framework extends Stratonovich calculus to discontinuous signals by lifting each jump of the driving L\'{e}vy process to a deterministic flow along an associated vector field, thereby preserving geometric consistency and conservation properties. To make the discussion precise, let $b:\mathbb{H}\to \mathbb{H}$ be a sufficiently regular vector field on a separable Hilbert space $\mathbb{H}$, and let $\wp(r,l,f)$ solve the infinite-dimensional ODE
	\begin{equation}\label{Marcus flow H}
		\frac{\mathrm{d}}{\mathrm{d}r}\wp(r,l,f)
		= l \cdot b(\wp(r,l,f)), 
		\qquad \wp(0)
		= f \in \mathbb{H}, \quad r \in [0,1],
	\end{equation}
	which, in our context, corresponds to a local or nonlocal PDE. For an $\mathbb{H}$-valued process $X(t)$ driven by a pure-jump L\'{e}vy process $L(t)$ (see \eqref{eq:Levy-Itocomp}), the Marcus integral is defined as 
	\begin{align}
		\int_0^t b(X(t')) \diamond \mathrm{d}L(t')
		\triangleq \
		& \int_0^t \int_{|l|\le 1}
		\big\{ \wp(1,l,X(t'-)) - X(t'-)\big\}\,
		\widetilde{\eta}(\mathrm{d}l,\mathrm{d}t') \notag\\ 
		&  +\int_0^t \int_{|l|\le 1}
		\big\{ \wp(1,l,X(t')) - X(t')
		- l \cdot b(X(t'))\big\}\,
		\nu(\mathrm{d}l)\,\mathrm{d}t' \notag\\ 
		&  +\int_0^t \int_{|l|>1}
		\big\{ \wp(1,l,X(t'-)) - X(t'-) \big\}\,
		\eta(\mathrm{d}l,\mathrm{d}t').\label{Marcus integral define}
	\end{align}
	Each jump of $L$ induces a deterministic flow along $b$, ensuring compatibility with the chain rule and with the geometric structure of the system. In this sense, the Marcus calculus provides the appropriate generalization of Stratonovich integration to discontinuous noise signals. 
	
	In the same spirit, for the Cauchy problems \eqref{SEuler-(rho u)} and \eqref{SEuler-(u P)} in Sobolev spaces, the underlying Marcus differential $\mathcal{Q}_2 u \diamond \mathrm{d}L$ can be defined analogously by formally setting $b(\bullet) = \mathcal{Q}_2 \bullet$. However, we emphasize that, in this case, \eqref{Marcus flow H} becomes a singular ODE, since the map $\mathcal{Q}_2$ cannot preserve Sobolev regularity if $\mathcal{Q}_2$ is of positive order. More details on this difficulty will be provided in Section \ref{Section : novel-introduction}.
	
	Although the theory of Marcus stochastic differential equations is well developed \cite{Applebaum-2009-Book}, the corresponding theory for SPDEs remains at an early stage. Only a handful of works \cite{Brzezniak-Manna-2019-CMP,Brzezniak-Manna-Panda-2019-JDE,Chen-Duan-Gao-2024-SPA,Brzezniak-etal-2025-arXiv,Luo-Teng-2025-arXiv} address SPDEs driven by Marcus-type noise. In particular, only \cite{Chen-Duan-Gao-2024-SPA,Brzezniak-etal-2025-arXiv,Luo-Teng-2025-arXiv} consider classical differential operators in the noise term. To the best of our knowledge, no  results exist for 3-D Euler systems with Marcus noise, even in the incompressible case. The present work  helps close this gap by establishing a local-in-time theory for both compressible and incompressible Euler equations with discontinuous Marcus-type noise of the form $\mathcal{Q}_2 u \diamond {\rm d}L$, where $\mathcal{Q}_2$ may, for example, be a local (unbounded) operator as in \eqref{eq:local-Q}. The nonlocal case of $\Q_i$ ($i=1,2$) is discussed below in Section \ref{Section : PD Noise Amplitudes}.
	
	Summarizing, $\mathcal{Q}_1 u\, \circ\, {\rm d} W
	+\mathcal{Q}_2 u \diamond {\rm d} L $ in \eqref{SEuler-(rho u)} and \eqref{SEuler-(u P)} are understood in the Stratonovich sense for the continuous part $\mathcal{Q}_1u\,\circ\, {\rm d}W$—interpreted in the It\^{o} form through \eqref{eq:Strat-Ito}—and in the \emph{Marcus} sense for the jump part $\mathcal{Q}_2 u \diamond {\rm d}L$—interpreted via \eqref{Marcus integral define} with $b(\bullet)=\mathcal{Q}_2 \bullet$. Both interpretations \textit{obey the classical chain rule} for smooth functionals, a property essential for preserving the conservative structure of the underlying deterministic system. The Stratonovich formulation arises as the limit of smooth approximations of Brownian motion (the Wong--Zakai principle) and is therefore the appropriate choice for continuous stochastic signals. However, when the driving process exhibits jumps, as in the case of L\'{e}vy processes $L$, the Stratonovich integral no longer satisfies the chain rule. This shortcoming is remedied by the Marcus interpretation, first introduced by Marcus \cite{Marcus-1980/81-Stoch} and subsequently developed by Applebaum, Kunita, Kurtz, Pardoux, and Protter, with a comprehensive account provided in \cite{Applebaum-2009-Book}.
	
	\subsubsection{Pseudo-differential Noise Amplitudes}\label{Section : PD Noise Amplitudes}
	
	The operators $\mathcal{Q}_j$ in 
	\eqref{eq:local-Q} are local in $x$, but 
	several studies of turbulent flow models have 
	highlighted the importance of nonlocal transport 
	effects \cite{Majda-Gershgorin-2013-PTRSL,
		Hamba-2022-JFM}. 
	Motivated by these observations, we allow both 
	the continuous and discontinuous noise terms, 
	$\mathcal{Q}_1 u\, \circ\, {\rm d}W$ and 
	$\mathcal{Q}_2 u \diamond {\rm d}L$, 
	to involve nonlocal (pseudo-differential) operators 
	generalizing the local (directional) derivatives 
	appearing in classical transport 
	noise (see \eqref{eq:local-Q}). 
	Let $\I$ denote the identity operator, and let 
	$f_i=f_i(\cdot)$ ($i=1,2,\cdots,d$) be smooth functions. 
	A representative example is the 
	Bessel-type transport operator
	\begin{equation}\label{eq:Bessel-Q}
		\mathcal{Q}_j u
		=\sum_{i=1}^d
		f_i\partial_{x_i}
		(\I-\Delta)^{\alpha}u,
		\qquad \alpha\in\R,
	\end{equation}
	which provides a pseudo-differential extension of 
	local derivatives, changing the differential order 
	of $\mathcal{Q}_j$ from $1$ (in \eqref{eq:local-Q}) 
	to $1+2\alpha$ (in \eqref{eq:Bessel-Q}) and 
	coupling local and global spatial scales. 
	For constant coefficients $f_i$, this $\mathcal{Q}_j$ 
	is skew-adjoint (so $\langle \mathcal{Q}_j u, u\rangle_{L^2}=0$) 
	while for variable coefficients this symmetry is preserved only up to 
	lower-order terms. Our framework allows 
	for operators that are almost skew-adjoint, 
	differing from a skew-adjoint operator by a zero-order 
	pseudo-differential operator, which can be 
	handled perturbatively in the Sobolev estimates.
	SPDEs involving (continuous) pseudo-differential noise have only recently begun to be studied; see \cite{Tang-Wang-2022-arXiv,Tang-2023-JFA,Alonso-Pang-Tang-2026-JLMS}.

	Another interesting example arises 
	from \emph{fractional homogeneous operators}
	$$
	\mathcal{Q}_j u
	= \sum_{i=1}^d c_i 
	\mathcal{R}_i(-\Delta)^{\varsigma/2} u,
	\qquad \varsigma\ge0
	$$
	representing nonlocal directional derivatives 
	of order $\varsigma$, built from the fractional 
	Laplacian $(-\Delta)^{\varsigma/2}$, the Riesz 
	transforms $\mathcal{R}_i$, and  coefficients $c_i\in\R$. 
	The symbol of $\mathcal{Q}_j$ is purely imaginary and 
	odd, so $\mathcal{Q}_j$ is skew-adjoint 
	on $L^2(\mathbb{K}^d)$, ensuring energy conservation (when $\K=\T$ we need a zero-average condition, see \eqref{Pi=Pi-d-0} below). 
	Although the symbol is not smooth at $0$ for 
	non-integer $\varsigma$, such operators are 
	homogeneous Fourier multipliers and can be 
	incorporated within our pseudo-differential 
	framework with some modifications of the 
	arguments (see Section \ref{Section:Mikhlin}). 
	In contrast, the fractional Laplacian 
	$(-\Delta)^{\varsigma/2}$ 
	is self-adjoint and dissipative, thereby 
	breaking the conservative balance.
	
	\subsection{General Pressure in the Compressible Barotropic Case}
	
	The second key novelty of this work is the identification of an abstract transformation  framework that generalizes the classical Makino transform \cite{Makino-1986-chapter}. This allows us to study \eqref{SEuler-(rho u)} under a broad class of physically admissible pressure laws \(P=P(\rho)\), far beyond the standard polytropic setting.
	
	Indeed, most existing works on stochastic barotropic fluids are restricted to the polytropic equation of state
	\[
	P(\rho)=a\rho^\gamma,\qquad a>0,\ \gamma\ge1,
	\]
	(cf. the \(\gamma\)-law in Example~\ref{Example-gamma}; see, for instance,
	\cite{Breit-Feireisl-Hofmanova-2017-CMP,Breit-Feireisl-Hofmanova-2018-CPDE,Breit-Hofmanova-2016-IUMJ,Breit-Mensah-2019-NA,Hofmanova-Koley-Sarkar-2022-CPDE}),
	which in particular includes the isothermal case \(\gamma=1\) and the monatomic gas case \(\gamma=\tfrac53\).
	A common feature of these works is the use of the Makino transform, which reformulates the equations in terms of a suitable density-dependent variable. However, the Makino transform \cite{Makino-1986-chapter} fundamentally relies on the algebraic structure of the polytropic pressure law and the corresponding power-law behavior near vacuum. Consequently, it does not apply to more general equations of state, such as Chaplygin-type laws, for which the sound speed becomes singular as vacuum is approached, nor does it capture genuinely non-power-law vacuum degeneracies.
	
	Our abstract framework overcomes these limitations. It contains the classical polytropic \(\gamma\)-law as a special case, while at the same time covering several important pressure laws that have not previously been treated in the stochastic literature. For instance, our theory includes the Chaplygin gas—arising in cosmological and dark-energy models \cite{Bento-Bertolami-Sen-2002-PRD,Ferreira-Avelino-2018-PRD}—as well as piecewise-defined pressure laws combining polytropic and Chaplygin-type behavior (see Example~\ref{Example-Mixed}). Such models describe transitions between distinct thermodynamic regimes, in which the constitutive response of the fluid changes with the density. Our framework also applies to physically relevant equations of state from astrophysics, such as models for white dwarf stars (cf. Example~\ref{Example-white dwarf star}; see, e.g., \cite{Strauss-Wu-2020-Nonlinearity,Chen-etal-2024-CMP}), where the pressure may deviate substantially from polytropic behavior and may, for example, be given by
	\[
	P(\rho) = c_1 \int_0^{c_2 \rho^{1/3}} 
	\frac{y^4}{\sqrt{c_3 + y^2}}\,\mathrm{d}y, 
	\qquad \rho>0,
	\]
	with \(c_1,c_2,c_3>0\). We also refer to Example~\ref{Example-vacuum limit} for an  astrophysically motivated 
	pressure law whose sound speed exhibits genuinely non-power-law vacuum degeneracy.
	
	Notably, even in the purely It\^o-forced case (\(\mathcal{Q}_1=\mathcal{Q}_2=0\)), these general pressure laws have not been investigated in the stochastic setting, and, as far as we know, even the corresponding local-in-time existence theory was open prior to the present work.

	\subsection{Long-time Behavior in the Incompressible Damped Case}\label{Section : Introduction: Long-time behavior}
	
	Shifting focus to the incompressible regime, we investigate the long-time behavior of solutions under purely zeroth-order dissipation. Consider the general stochastic damped Euler equations on $\K^d$:
	\begin{equation}\label{SEuler-damp-general}
		\partial_t u + (u \cdot\nabla)u +\Upgamma u+ \nabla P= \phi,\quad \Div u= 0,\quad \Upgamma\ge0,
	\end{equation}
	where $u$ and $P$ are defined as in \eqref{SEuler-(u P)}, and $\phi$ represents a random forcing structured to ensure that \eqref{SEuler-damp-general} generates a well-defined Markov transition semigroup. For $\Upgamma > 0$, the Ekman damping term $\Upgamma u$ constitutes the most rudimentary form of dissipation---it acts uniformly across all wavenumbers and provides strictly zeroth-order energy depletion devoid of any parabolic regularization. This essential lack of smoothing renders the ergodic analysis of \eqref{SEuler-damp-general} especially  challenging, leaving fundamental questions unresolved. Among these is a notable question proposed by Armen Shirikyan in \cite{Shirikyan-2018-problem}:
	
	\begin{Problem}\label{Problem-AS}
		In the 2-D periodic setting ($\K=\T$ and $d=2$), prove that the stochastic damped Euler system \eqref{SEuler-damp-general} admits a unique invariant probability measure, and that the law of any solution converges to this measure as $t \to +\infty$.
	\end{Problem}
	
	To bypass the analytical difficulties caused by this lack of dissipation, most of the existing literature introduces (fractional) diffusion, which transforms the system into a Navier--Stokes-type equation:
	\begin{equation}\label{SNS-general}
		\partial_t u + (u \cdot\nabla)u +(-\Delta)^{\ell/2}u+ \nabla P= \phi,\quad \Div u= 0,\quad \ell\in(0,2].
	\end{equation}
	
	In this setting on $\mathbb{T}^2$, the existence and uniqueness of invariant probability measures for full viscosity ($\ell = 2$) driven by random kick-type noise were famously established in \cite{Kuksin-Shirikyan-2000-CMP} (see also \cite{Kuksin-Shirikyan-2012-book}). For models driven by continuous additive noise, seminal contributions include \cite{E-Mattingly-Sinai-2001-CMP,Hairer-Mattingly-2006-Annals} for the standard viscous case ($\ell= 2$), and \cite{Constantin-Glatt-Holtz-Vicol-2014-CMP} for the fractional regime ($\ell> 0$). When $\phi$ is a regular bounded function of time satisfying some decomposability and observability hypotheses, we refer to \cite{Kuksin-Nersesyan-Shirikyan-2020-GAFA}. 
	Furthermore, the mathematically delicate framework of stationary, non-$\delta$-correlated noise has been systematically resolved in recent work \cite{Kuksin-Shirikyan-2025-GAFA}.  In the more complex 3-D setting, the existence of invariant measures via Markov selections for \eqref{SNS-general} with additive noise was studied in \cite{DaPrato-Debussche-2003-JMPA,Flandoli-Romito-2008-PTRF}. Non-uniqueness in ergodicity is established in \cite{Hofmanova-Zhu-Zhu-2025-ARMA}, with related models discussed in \cite{Hofmanova-Zhu-Zhu-2023-JFA}.  
	
	Returning to the purely damped, inviscid regime \eqref{SEuler-damp-general}, the lack of viscosity makes the ergodic analysis highly non-trivial. For instance, when explaining their preference for analyzing the 2-D stochastic Euler equations with fractional diffusion rather than simple damping, the authors of \cite{Constantin-Glatt-Holtz-Vicol-2014-CMP} noted: \emph{``Unfortunately, the ergodic theory for the stochastically forced damped-driven Euler equations seems to be far from reach at the moment: this is in part due to the lack of compactness or continuous dependence in a suitable Polish space.''}
	
	Indeed, progress in the purely damped setting remains scarce. The authors in \cite{Bessaih-Ferrario-2020-CMP} studied the vorticity formulation of the 2-D stochastic Euler equations with additive noise, establishing the existence of an invariant probability measure defined on $\mathscr{B}\big((L^\infty, \mathscr{T}_{\rm bw*})\big)$---the Borel $\sigma$-algebra generated by the bounded weak-$\ast$ topology.  However,  it remains unclear whether such a measure is well-defined on the much larger Borel $\sigma$-algebra associated with the strong $L^\infty$-norm topology. Furthermore, questions regarding the uniqueness of the invariant measure and the long-time convergence of the distributions remain open.

	In this work, we study
	Shirikyan's problem \ref{Problem-AS} on $\K^d$, and we do so in a setting that allows for genuinely mixed multiplicative noise. Assuming for simplicity that the noise coefficient $\widetilde{h}$ in \eqref{SEuler-(u P)} is time-homogeneous, our main target model reads:
	\begin{equation}\label{SEuler-(u P) damp}
		\left\{
		\begin{aligned}
			&{\rm d}u + \big[(u \cdot \nabla)u +\Upgamma u\big] \d t  + \nabla \mathrm{d}P = \mathcal{Q}_1 u\, \circ\, {\rm d} W
			+\mathcal{Q}_2 u \diamond {\rm d} L		+ \widetilde{h}(u)\d \widetilde{W},\quad \Upgamma\ge0,\\
			&\Div u = 0,\\
			&u(0) = u_0.
		\end{aligned}
		\right.
	\end{equation}
	Through our analysis, we abstract the limitation arising from the absence of viscous dissipation and  propose a new concept of a \emph{restricted Feller property under mismatched metrics} (see Definition \ref{Locally Feller definition}).  Then, we 
	develop a robust extension of the classical Krylov--Bogoliubov criterion (see Theorem \ref{Thm-generalized KB} blow) anchored in such a weaker condition. 
	In addition to this novel criterion, we also  identify a clear hierarchy of damping--noise conditions  in any dimension $d\ge2$:
	\begin{itemize}[leftmargin=0.79cm]\setlength\itemsep{0.2em}
		\item weak damping--noise condition (\ref{Hypo-DNI-1}) $\Longrightarrow$ global-in-time solution on $\K^d$;
		\item  moderate damping--noise condition (\ref{Hypo-DNI-2}) $+$ new criterion (Theorem \ref{Thm-generalized KB}) 
		$\Longrightarrow$  existence of invariant measures on $\T^d$;
		\item strong damping--noise condition (\ref{Hypo-DNI-3}) $\Longrightarrow$ existence and uniqueness of the invariant measure on $\K^d$.
	\end{itemize}
	The last case provides a positive answer to a considerably strengthened version  of Shirikyan's problem \ref{Problem-AS} for genuinely mixed noise $\phi$: while the original question is posed on $\mathbb{T}^2$, our results apply to both $\mathbb{T}^d$ and $\mathbb{R}^d$ in all dimensions $d \ge 2$. 
	It is worthwhile emphasizing that our conclusions apply naturally to the viscous regime---specifically, to the stochastic damped Navier--Stokes equations with fractional or degenerate diffusion on $\R^d$. The reason is that the above framework does \textbf{not} rely on any regularizing effect from viscosity (see Remarks \ref{Remark: local-(u P) viscosity} and \ref{Remark: global-(u P) viscosity} below).

	\subsection{Outline of the paper}
	We conclude this section with a brief outline. 
	
	Section \ref{Section : novel-introduction} summarizes the main contributions, highlights the key difficulties and the novel approaches developed to address them, and situates our results within the related literature.
	
	Section \ref{Section:Notations} introduces the notation and function spaces used throughout the paper, recalls basic properties of pseudo-differential operators, and formulates the fundamental assumptions on the driving noise process $(W,\widetilde W, L)$.
	
	Section \ref{Section : Cancel} develops the analytic tools for handling the pseudo-differential noise amplitudes. We introduce the class of nearly skew-symmetric operators (Section \ref{Section : Nearly Skew-Symmetric}) and state the cancellation properties for such operators (Theorem \ref{Thm-cancel}, Section \ref{Section : Main Cancellation Results}), which
	serve as core analytic tools for the remainder of the paper. We provide detailed proofs of these estimates (Section \ref{Section : Proof of Thm-cancel}) and subsequently apply them to Stratonovich (Section \ref{Section : Q1n}) and Marcus (Section \ref{Section : Q2n}) type noise, clarifying how the structure of the noise can be regularized while preserving the cancellation properties uniformly.

	Section \ref{Section : Compressible case} concerns the stochastic compressible Euler equations. We present the precise assumptions and the target model (Section \ref{Section : Compressible-Assumptions}), discuss a broad class of admissible pressure laws (Section \ref{Section : Pressure Examples}), and state the local-in-time theory (Theorem \ref{Thm-(rho u)}, Section \ref{Section : Result Thm-compressible}). The proof (Section \ref{Section : Proof of Compressible}) proceeds in two main steps: an unconstrained approximation scheme with uniform estimates (Section \ref{Section : Unconstrained Problem}), followed by the recovery of the positivity of the density (Section \ref{Section : Constrained Problem}).
	
	Section \ref{Section : ASSES} addresses the existence of invariant probability measures for Markov semigroups with restricted Feller property under mismatched metrics. We review the classical Krylov--Bogoliubov approach (Section \ref{Section : Mismatched topo}) and state a novel criterion permitting mismatched topologies (Theorem \ref{Thm-generalized KB}). We provide the proof (Section \ref{Section : gKB-proof}) and apply this framework to an abstract singular stochastic evolution system encompassing a broad class of fluid models (Section \ref{Section : SSES}).
	
	Section \ref{Section : Incompressible case} treats the incompressible damped Euler model. We specify the standing assumptions (Section \ref{Section : Incompressible-Assumptions}), state the local existence and long-time behavior of solutions (Theorems \ref{Thm-(u P)-local} and \ref{Thm-(u P)-long-time}, Section \ref{Section : Result Thm-DNI}), and provide the proofs (Sections \ref{Section : SEuler-local-proof} and \ref{Section : SEuler-global-proof}).
	
	Section \ref{Section : Further Discussions} first presents extensions concerning homogeneous Mikhlin-type noise amplitudes, results to a large class of related fluid models, and sequences of noise. Finally, we discuss an open problem regarding the global regularity for the 2-D  stochastic Euler system (Section \ref{Section : Further-2D-problem}).
	
	Appendix \ref{Section : Appendix} collects auxiliary results on stochastic analysis (It\^o's formula, the Skorokhod topology, weak convergence), deterministic estimates for mollifiers and pseudo-differential operators.

	\section{Challenges, Novel Ideas, and Related Work}\label{Section : novel-introduction}
	
	We recall that the equations are driven by mixed Stratonovich-It\^o-Marcus noise as described in Section \ref{Section : Noise Structure}. 
	With the continuous and discontinuous noise terms interpreted as in Section \ref{Section : Noise Structure}, we establish local-in-time theories for the stochastic compressible and incompressible damped Euler systems \eqref{SEuler-(rho u)} and \eqref{SEuler-(u P) damp}, respectively, including existence, uniqueness, and a blow-up criterion for classical solutions. Moreover, for the stochastic incompressible damped case, we quantify distinct long-time regimes induced by the interaction between damping $\Upgamma u\d t$ and the noise terms $\mathcal{Q}_1 u\, \circ\, {\rm d} W$, $\mathcal{Q}_2 u \diamond {\rm d} L$, and $\widetilde h( u)\d \widetilde{W}$.
	These main contributions have been summarized in Section \ref{Section : Introduction-Overview}. 
	
	In this section we highlight several key difficulties and the novel approaches developed to address them, and we provide a comparison between our work and the related literature.

	$\bullet$ \textbf{\textit{Singularities.}} 
	First, the presence of pseudo-differential operators of non-negative order in the stochastic terms poses a major obstacle to closing the $H^s$-energy estimates. Applying It\^{o}'s formula to \eqref{SEuler-(u P)} generates singular quantities such as
	\begin{equation}\label{Q singular terms}
		\langle \mathcal{Q}_i u, u \rangle_{H^s}, \quad 
		\langle \mathcal{Q}_i^2 u, u \rangle_{H^s}, \quad 
		\text{and} \quad 
		\langle \mathcal{Q}_i u, \mathcal{Q}_i u \rangle_{H^s}.
	\end{equation}
	Controlling the terms in \eqref{Q singular terms} purely by $\|u\|_{H^s}^2$ is far from straightforward, especially when $\Q_i$ is of positive order.
	
	A second essential obstacle stems from the discontinuous Marcus noise component $\mathcal{Q}_2 u \diamond {\rm d}L$. In the same spirit as \eqref{Marcus flow H}, with slight abuse of notation, we reuse the notation 
	$\wp$ for the flow generated by $\Q_2$: 
	\begin{equation}\label{Marcus flow Q2 introduction}
		\wp(r)\triangleq\wp(r,l,f),\quad 
		\frac{{\rm d}}{{\rm d}r}\wp(r)= l\cdot \Q_2\wp(r),\quad r\in[0,1],  \quad \wp(0) = f.
	\end{equation}
	Crucially, for a pseudo-differential operator $\mathcal{Q}_2$ of non-negative order, this flow is \textbf{not} invariant in $H^s(\K^d;\R^d)$. Consequently, the corresponding stochastic integral $\int_0^t \mathcal{Q}_2u(t')\diamond{\rm d}L(t')$ fails to preserve Sobolev regularity, introducing further singularity into the system.
	
	To rigorously handle \eqref{Q singular terms} and \eqref{Marcus flow Q2 introduction} within $H^s(\K^d;\R^d)$, we establish robust cancellation estimates for nearly skew-adjoint pseudo-differential operators $\mathcal Q$ (see Theorem \ref{Thm-cancel} for the precise and general statement):
	\begin{equation}\label{Cancel-introduction}
		\Abs{\big \langle \mathcal Q f, f \big \rangle _{H^{s}}} \lesssim \|f\|^2_{H^s},\qquad
		\Big|\left \langle \mathcal Q^{2}f, f \right \rangle _{H^{s}} + \left \langle \mathcal Q f, \mathcal Q f \right \rangle _{H^{s}} \Big| \lesssim \|f\|^2_{H^s}.
	\end{equation}
	Crucially, our estimates reveal that the combination $\big\langle \mathcal Q^{2}f, f \big \rangle _{H^{s}} + \big \langle \mathcal Q f, \mathcal Q f \big \rangle _{H^{s}} $--which exactly captures the energy contribution from the It\^o correction and the quadratic variation--exhibits a hidden cancellation and is bounded by $\|f\|^2_{H^s}$. 
	
	A priori, these estimates require $f\in H^\sigma(\K^d;\R^d)$ for some $\sigma>s$ to be well-defined. To overcome this, we implement a systematic renormalization procedure governed by a parameter $n$ (see Lemma \ref{Lemma:Qn}), ensuring that these cancellation properties hold \textit{uniformly} in $n$ for $f \in H^s(\K^d;\R^d)$. This uniform control is the core analytic mechanism that enables us to tame the pseudo-differential singularities via a limiting process.

	$\bullet$  \textbf{\textit{General pressure.}} 
	In the standard deterministic theory, the loss of derivatives in terms such as 
	\begin{equation}
		\bIP{\rho \, \Div u,\rho}_{H^s} \quad \text{and}\quad \IP{\frac{1}{\rho} \nabla P(\rho), u}_{H^s},
		\label{cannot close Hs (rho u)}
	\end{equation}
	is traditionally handled by a symmetrization procedure; see \cite{Majda-1984-Book} and \cite[(2.13)]{Taylor-2011-PDEbook3}. To see why this fails near vacuum, we first recall that, in the isentropic setting, the enthalpy $\hbar(\rho)$ and the sound speed $c_{{\rm sound}}(\rho)$ satisfy
	\begin{equation}
		\hbar'(\rho)=P'(\rho)/\rho,\quad c_{{\rm sound}}(\rho)=\sqrt{P'(\rho)}.\label{enthalpy+sound}
	\end{equation}
	Let ${\rm diag}(\cdot,\dots,\cdot)$ denote a block-diagonal matrix, and let $\bullet^T$ denote the transpose of $\bullet$. 
	Roughly speaking, if one retains the original variables $(\rho, u)^T$ and multiplies the equations by a symmetrizer matrix 
	$$A_0(\rho) = \text{diag}\big(c_{{\rm sound}}^2(\rho)/\rho, \, \rho I_{d\times d}\big)$$ 
	to symmetrize the system, the corresponding $L^2$ energy functional used for \textit{a priori} estimates is 
	$$\frac12 \int_{\K^d} (\rho, u) A_0(\rho) (\rho, u)^T \d x =  \frac12\int_{\K^d}\left(\rho c_{{\rm sound}}^2(\rho)+\rho |u|^2\right)\d x.$$ 
	A fundamental limitation is that as $\rho \to 0$, the sound speed $c_{{\rm sound}}(\rho) \to 0$ (for typical physical gases), causing the matrix $A_0(\rho)$ to degenerate. Moreover, the energy norm \textbf{loses its coercivity}, and the classical energy estimates completely break down at the vacuum boundary. 
	
	Alternatively, if one chooses the enthalpy-based variables $(\hbar, u)^T$, the symmetrizer becomes 
	$$\widetilde{A}_0(\rho) = \text{diag}\big(1/c_{{\rm sound}}^2(\rho), \, I_{d\times d}\big),$$ 
	and the associated $L^2$ energy functional reduces to
	$$\frac12 \int_{\K^d} (\hbar, u) \widetilde{A}_0(\rho) (\hbar, u)^T \d x = \frac12 \int_{\K^d} \left(\frac{\hbar^2}{c_{\rm sound}^2(\rho)}+|u|^2\right)\d x.$$
	In this case, the term $1/c_{\rm sound}^2(\rho)$ blows up as $\rho \to 0$. The symmetrizer becomes singular, meaning the energy functional \textbf{loses its boundedness} and becomes ill-defined near vacuum.
	
	To overcome this structural difficulty, one seeks a reformulation that remains regular up to the vacuum boundary. Makino achieved this for the polytropic \(\gamma\)-law by introducing a specific density-dependent variable \cite{Makino-1986-chapter}. However, his construction relies fundamentally on the algebraic structure of the polytropic pressure law and therefore does not extend to broader physical models like Chaplygin gases or non-power-law degeneracies.

	In the stochastic case,  even though the present work is restricted to the non-vacuum regime, using the classical symmetrization method may require further structure conditions on the noise coefficients. Indeed,  the terms in \eqref{cannot close Hs (rho u)} must be controlled in expectation. Since expectations of products generally cannot be decoupled, the nonlinear pressure $P(\cdot)$ introduces additional difficulties. 
	Moreover,  a transformation that extends continuously to the vacuum serves as an essential foundation for subsequent investigations of vacuum problems. 
	
	Thus, one seeks a transformation that unifies the main advantages of classical symmetrization and the Makino framework: a symmetrizable structure that closes high-order energy estimates without derivative loss, while remaining regular and continuously extendable up to vacuum. It should not rely on an algebraic pressure law, but also apply to non-polytropic equations of state.
	To this end, we introduce a carefully chosen nonlinear mapping \(\rr:(0,\infty)\to\mathbb{R}\), which is smooth and strictly increasing, and define the new state variable
	$
	\varrho(t,x)=\rr(\rho(t,x)).
	$
	The transformation is characterized by the structural identity (see \ref{Structural Identity P' r'} for the precise assumptions)
	\begin{equation*} 
		P'(\rho) = (\rho\,\rr'(\rho))^2.
	\end{equation*}
	This change of variables reformulates the original system as a quasi-linear abstract stochastic evolution equation for \(X=(\varrho,u)\) taking value in \(H^s(\mathbb{K}^d;\mathbb{R})\times H^s(\mathbb{K}^d;\mathbb{R}^d)\):
	\begin{equation*}
		{\rm d}X + F(X)\,{\rm d}t
		= \QQ_1 X\, \circ\, {\rm d}W
		+\QQ_2 X \diamond {\rm d}L
		+\ZZ(t,X)\,{\rm d}\widetilde{W}.
	\end{equation*}
	Here, \(F\) collects the deterministic transport and pressure contributions $\Theta(\varrho)
	=
	\rr'\bigl(\rr^{-1}(\varrho)\bigr)
	\rr^{-1}(\varrho)
	$, \(\QQ_{i} \triangleq 
	{\rm diag}(0,\Q_i)\ (i=1,2)\), and \(\ZZ\) represents the It\^o forcing term; see \eqref{Target problem X=(q u)} below. This formulation reveals the quasi-linear stochastic structure of the compressible Euler system and allows us to treat, within a unified framework, power-type and logarithmic transforms associated with polytropic and Chaplygin gases, as well as piecewise-defined pressure laws; see Lemma~\ref{Lemma-F}.

	Crucially, to solve the transformed system, we construct an extension
	of $\Theta$, equipped with tailored nonlinear Sobolev composition
	estimates (see \ref{Regularity-Sound} in \ref{Hypo-Pressure}), specifically designed
	to close the delicate high-order stochastic energy estimates.
	This extension permits systematic regularization and cut-off
	approximations, yielding \emph{a priori} estimates that are uniform
	with respect to the approximation parameters. Once we verify
	\emph{a posteriori} that the transformed solution remains strictly
	within the range of $\rr$, we recover the strictly positive physical
	density via $\rho=\rr^{-1}(\varrho)$.

	For more details on the transformation $\rr$ and related comparisons between $\rr$ and the classical enthalpy-based symmetrization, we refer the readers to Remark~\ref{Remark-Hypo-Pressure} below. 
	
	$\bullet$  \textbf{\textit{Long-time behavior in the incompressible damped case.}} 
	For the stochastic damped incompressible Euler equations \eqref{SEuler-(u P) damp} on $\K^d$ with $d\ge3$, the primary difficulty lies in establishing global regularity of solutions.  Under suitable growth conditions on the It\^o-forcing $\widetilde h(u) \d \widetilde W$, we first identify a blow-up criterion: a classical solution blows up if and only if $\|u\|_{\Wlip}$ diverges (see \eqref{Blow-up criterion u}). Guided by this, we introduce a customized class of Lyapunov-type functionals to control $\|u\|_{\Wlip}$. This yields a hierarchy of damping-noise conditions (\ref{Hypo-DNI-1}--\ref{Hypo-DNI-3}) that guarantee, respectively: global-in-time existence, uniform-in-time bounds, and decay estimates. 
	
	While uniform-in-time bounds offer a natural starting point for constructing invariant probability measures, one immediately confronts a fundamental topological obstruction: the simultaneous lack of compactness and continuous dependence within any single Polish space suitable for the dynamics (see Section~\ref{Section : Introduction: Long-time behavior}). For the stochastic 2-D Euler equations, Constantin et al.~\cite{Constantin-Glatt-Holtz-Vicol-2014-CMP} circumvented this obstacle by introducing fractional diffusion, which restores spatial smoothing and thereby yields both compactness and stability. In sharp contrast, zeroth-order Ekman damping is purely dissipative yet strictly non-smoothing; although it continuously drains energy, it provides no parabolic regularization. Consequently, the inherent regularity loss in \eqref{SEuler-(u P) damp} dictates that continuous dependence on initial data can be established only in a strictly weaker topology, and crucially, only up to stopping times controlled by higher-order norms. Generally, for nonlinear SPDE, these stopping times can degenerate arbitrarily to zero for unbounded initial data. Hence, suitable estimates on stability can be guaranteed only if the initial values are confined to a bounded subset in the strong topology. 
	
	As a result, it seems impossible to achieve uniform-in-time bounds, compactness, and Feller continuity simultaneously within a single natural phase space. Instead, we demonstrate that the associated Markov transition semigroup $\{\mathscr{P}_t\}_{t\ge0}$ satisfies what we term the \textit{restricted Feller property under mismatched metrics}. Specifically, let $H^{s,\theta}$ (with $\theta<s$) denote the space $H^s$ endowed with the weaker $H^\theta$-topology. The semigroup acts on test functions $\phi \in C_B(H^{s,\theta})$ such that, for every $t\ge0$ and any strongly closed ball $B_R \subset (H^s, \|\cdot\|_{H^s})$, 
	\[
	\text{the restricted mapping } x \mapsto \mathscr{P}_t\phi(x)\big|_{B_R} \text{ is continuous with respect to the } H^\theta\text{-topology}.
	\]
	This setup exposes two essential structural difficulties:\\
	\textit{\underline{1. Mismatched metric}:} The dynamics naturally reside in $H^s$, yet compactness and continuous dependence are  accessible only via the weaker $H^\theta$-metric. Notably, the space $H^{s,\theta}$ is topologically incomplete.\\
	\textit{\underline{2. Loss of  Feller continuity}:} Because the Feller property is strictly localized to bounded subsets in the strong norm, there is no guarantee that $\mathscr{P}_t\phi$ globally belongs to $C_B(H^{s,\theta})$.

	Addressing the damped stochastic Euler dynamics across all spatial dimensions therefore necessitates a notable  departure from classical ergodic frameworks. To overcome these coupled  obstructions, we formulate a criterion for the existence of invariant probability measures (see Theorem~\ref{Thm-generalized KB} and Section~\ref{Section : gKB-proof}) tailored to general Markov processes whose state space carries a weaker, possibly incomplete topology, and whose propagator satisfies only the restricted Feller property under mismatched metrics. By explicitly bypassing the single-topology constraint and the Feller assumption inherent in the classical Krylov--Bogoliubov approach, this generalized framework allows us to establish the existence of invariant probability measures for a broad class of singular stochastic evolution systems, including the stochastic damped Euler equations \eqref{SEuler-(u P) damp} in all dimensions $d\ge2$.
	
	$\bullet$ \textbf{\textit{Significance of the Results.}}
	To situate our contributions within the existing literature, we briefly review related work on SPDEs of fluid type. While a comprehensive survey of stochastic fluid dynamics lies beyond the scope of this paper, we highlight only representative contributions. For stochastic incompressible fluid models driven by classical It\^o or continuous transport-type noise, the theory has been extensively developed, yielding a solid understanding of existence, uniqueness, stability, and long-time behavior (see, e.g., \cite{Miao-Rohde-Tang-2024-SPDE,Brzezniak-Dhariwal-2020-JMFM,Brzezniak-Motyl-2019-SIAM,Li-Liu-Tang-2021-SPA,Hofmanova-Zhu-Zhu-2024-JEMS,Flandoli-Luo-2020-AoP,Flandoli-Romito-2008-PTRF,Tang-2023-JFA,Holden-Karlsen-Pang-2021-JDE,GlattHoltz-Vicol-2014-AoP,Hairer-Mattingly-2006-Annals,Hofmanova-Zhu-Zhu-2025-ARMA,E-Mattingly-Sinai-2001-CMP,Kuksin-Shirikyan-2025-GAFA}).
	
	Despite these advances, the mathematical theory for SPDEs driven by \textit{Marcus noise} remains in its early stages, with prior contributions largely restricted to highly simplified settings (where $\Q_2$ is either the identity mapping or the classical transport operator as in \eqref{eq:local-Q}), see \cite{Brzezniak-Manna-2019-CMP,Brzezniak-Manna-Panda-2019-JDE,Chen-Duan-Gao-2024-SPA,Brzezniak-etal-2025-arXiv,Luo-Teng-2025-arXiv}. To the best of our knowledge, a well-posedness theory for the 3-D Euler system driven by Marcus noise has remained open, let alone formulations accommodating the rich complexity of mixed-type driving forces. 
	Our results therefore bridge this gap by incorporating a genuinely mixed multiplicative noise structure—combining Stratonovich, It\^o, and Marcus interpretations—into the stochastic Euler dynamics.
	
	Furthermore, recognizing that nonlocal phenomena are frequently observed in fluid dynamics, particularly in turbulence models (see, e.g., \cite{Majda-Gershgorin-2013-PTRSL,Hamba-2022-JFM}), we allow both $\mathcal{Q}_1$ and $\mathcal{Q}_2$ to be pseudo-differential operators. This significantly generalizes classical transport noise, capturing the intricate nature of nonlocal random interactions and providing a versatile framework for modeling sophisticated turbulent effects. Given that SPDEs featuring continuous pseudo-differential noise have only recently been explored \cite{Tang-Wang-2022-arXiv,Tang-2023-JFA,Alonso-Pang-Tang-2026-JLMS}, the present work serves as the first  integration of pseudo-differential operators within a Marcus noise framework, expanding the analytical toolkit available for such singular systems.

	In stark contrast to the incompressible case, the stochastic compressible Euler equations have received considerably less attention, with only a few available results \cite{Breit-Mensah-2019-NA,Correa-Olivera-2025-arXiv,Kuan-etal-2025-arXiv,Hofmanova-Koley-Sarkar-2022-CPDE,Breit-etal-2025-JCP,Moyo-2023-JDE,Chen-Huang-Wang-2025-arXiv,Dai-etal-2026-arXiv}. These contributions typically assume either multiplicative It\^o noise or standard polytropic pressure laws $P(\rho) = a\rho^\gamma$ with $a>0$ and $\gamma\ge 1$. Our framework removes this constraint, enabling the treatment of a much broader class of physically relevant pressure functions under genuinely mixed noise. Beyond standard polytropic $\gamma$-laws, our theory encompasses piecewise-defined $\gamma$-laws, (piecewise) Chaplygin-type laws, the pressure of white dwarf stars, and other astrophysically relevant regimes. We emphasize that, even in the purely It\^o-forcing setting ($\mathcal{Q}_1=\mathcal{Q}_2=0$),  local existence for these generalized pressure laws in multidimensional case has  not been achieved in the prior literature.   We refer to Remark \ref{Remark-Thm-(u rho)} for more discussions of the advances in our methodology.
	
	Regarding the long-time behavior of the incompressible stochastic damped Euler equations \eqref{SEuler-(u P) damp} on $\K^d$ ($d \ge 2$), establishing global regularity constitutes the primary obstruction. 
	The approach of utilizing noise to prevent blow-up and ensure global existence has been investigated recently; for stochastic fluid models, this was initiated in \cite{Ren-Tang-Wang-2024-POTA} and abstractly formalized in \cite{Tang-Wang-2022-arXiv} (see also \cite{Tang-Yang-2023-AIHP, Tang-Wang-2024-CCM, Bagnara-Mario-Xu-2025-EJP} for further concrete developments). In this work, we advance this line of analysis by identifying a hierarchy of damping–noise conditions under which we obtain, respectively, global-in-time existence, uniform-in-time bounds, and, under the strongest assumptions, decay estimates. 
	
	Prior results on  the existence of an invariant measure in the uniform-in-time bounded case have predominantly relied on viscous or fractional regularization \cite{Kuksin-Shirikyan-2000-CMP,Kuksin-Shirikyan-2025-GAFA,Kuksin-Shirikyan-2012-book,E-Mattingly-Sinai-2001-CMP,Hairer-Mattingly-2006-Annals,Constantin-Glatt-Holtz-Vicol-2014-CMP}, or have been restricted to 2-D vorticity formulations with additive noise \cite{Bessaih-Ferrario-2020-CMP}. To circumvent the need for viscosity, our approach fundamentally relies on the \textit{restricted Feller property under mismatched metrics} (see Definition \ref{Locally Feller definition}).  This abstract framework offers substantial flexibility in selecting weaker metrics, naturally accommodating singular and nonlinear dynamics. While related Krylov--Bogoliubov extensions for mismatched metrics exist (e.g., \cite{CotiZelati-GlattHoltz-Trivisa-2021-AMO,Kim-2005-SIMA,Bessaih-Ferrario-2020-CMP}), the restricted Feller property has not been clearly captured (see Remark \ref{Remark-mismatch compare}). As a primary consequence,  we establish the existence of an invariant probability measure for systems on $\T^d$ driven by genuinely mixed multiplicative noise in all dimensions $d \ge 2$.  Under the most stringent of these conditions, 
	uniqueness of an invariant measure can be also achieved.  This   provides a resolution to the problem \ref{Problem-AS}. In fact, our analysis goes beyond the original conjecture, resolving a notably generalized version of the problem on both $\mathbb T^d$ and $\mathbb R^d$ in all spatial dimensions $d \ge 2$, under genuinely mixed multiplicative noise.
	
	For completeness, we recall that the deterministic compressible Euler equations admit a rich local-in-time theory for strong solutions; see, e.g., \cite{Bahouri-Chemin-Danchin-2011-book,Benzoni-Gavage-Serre-2007-Book,Alinhac-1995-Book,Majda-1984-Book}.

	\section{Notation and Background}\label{Section:Notations}

	\subsection{Function Spaces and Related Notation} 
	
	We start by defining the relevant function spaces.  As before, we denote either $\mathbb{R}$ or $\mathbb{T}  \triangleq  \mathbb{R} / 2\pi \mathbb{Z}$ by $\mathbb{K}$ to unify the analysis. For $1 \leq p < \infty$ and $d, m \geq 1$, $L^p(\mathbb{K}^d; \mathbb{R}^m)$ stands for the standard Lebesgue space of measurable, $p$-th power integrable $\mathbb{R}^m$-valued functions, and $L^\infty(\mathbb{K}^d; \mathbb{R}^m)$ is the space of essentially bounded functions.
	Particularly, the inner product in $L^2(\mathbb{K}^d; \mathbb{R}^m)$ is defined by
	\begin{equation*}
		\left\langle f, g \right\rangle_{L^2}  \triangleq  \sum_{i=1}^m \int_{\mathbb{K}^d} f_i \cdot g_i \, \mathrm{d}x.
	\end{equation*}
	When $m\ge1$ is clear, in the following we denote by
	$\left \langle f,g\right \rangle _{L^{2}}$ the inner product for both
	$f,g\in L^{2}(\mathbb K^{d};\mathbb R^{m})$ and
	$f,g\in L^{2}(\mathbb K^{d};\mathbb R)$.

	Let ${\mathrm{i}}=\sqrt{-1}$.   The Fourier transform
	$\mathscr F$ and inverse Fourier transform
	$\mathscr F^{-1}$ on $\R^d$  are defined by
	\begin{equation*}
		(\mathscr Ff)(\xi ) \triangleq \int _{\mathbb R^{d}}f(x){\mathrm{e}}^{-{
				\mathrm{i}}(x\cdot \xi )}{\,\mathrm{{d}}}x,\quad 
		(\mathscr F^{-1}f)(x) \triangleq  \frac{1}{(2\pi )^{d}}\int _{
			\mathbb R^{d}}f(\xi ){\mathrm{e}}^{{\mathrm{i}} (x\cdot \xi )}{\,\mathrm{{d}}}\xi,\quad (x,\xi)\in \R^d\times\R^d.
	\end{equation*}
	and on $\T^d$ are defined by
	\begin{equation*}
		(\mathscr Ff) (k) \triangleq \int _{\mathbb T^{d}}f(x){\mathrm{e}}^{-{\mathrm{i}}(x
			\cdot k)}{\,\mathrm{{d}}}x,\quad 
		(\mathscr F^{-1}f)(x)= \frac{1}{(2\pi )^{d}}\sum _{k\in
			\mathbb Z^{d}}f(k){\mathrm{e}}^{{\mathrm{i}} (x\cdot k)},\quad (x,k)\in \R^d\times\Z^d.
	\end{equation*}
	Recall that ${\mathbf I}$ is the identity operator. For any
	$s\in \mathbb R$, the Bessel potential 
	$\mathcal D^{s}=({\mathbf I}-\Delta )^{s/2}$ is defined by
	\begin{equation}
		\mathcal D^{s}f \triangleq \mathscr F^{-1}\left((1+|\cdot|^{2})^{s/2}(\mathscr Ff)\right).\label{Define Ds}
	\end{equation}
	For $s\ge 0$, $d,m\ge 1$, we define the Sobolev spaces $H^{s}$ on
	$\mathbb K^{d}$ with values in $\mathbb R^{m}$ as
	\begin{equation*}
		H^{s}(\mathbb R^{d};\mathbb R^{m}) \triangleq 
		\overline{C_{0}^{\infty }(\mathbb R^{d};\mathbb R^{m})}^{\|\cdot \|_{H^{s}}},
		\quad
		H^{s}(\mathbb T^{d};\mathbb R^{m}) \triangleq 
		\overline{C^{\infty }(\mathbb T^{d};\mathbb R^{m})}^{\|\cdot \|_{H^{s}}},\quad 
	\end{equation*}
	where
	\begin{equation*}
		\|f\|_{H^{s}} \triangleq \sqrt{\left \langle f,f\right \rangle _{H^{s}}},\quad 
		\left \langle f,g\right \rangle _{H^{s}} \triangleq \sum _{i=1}^{m}\left
		\langle \mathcal D^{s} f_{i},\mathcal D^{s} g_{i}\right \rangle _{L^{2}}.
	\end{equation*}
	For a multi-index $\alpha = (\alpha_i)_{1\le i\le d} \in \mathbb{N}_0^d  \triangleq  (\mathbb{N} \cup \{0\})^d$, we define
	$|\alpha|_1  \triangleq  \sum_{k=1}^d \alpha_k$, $\partial_x^\alpha  \triangleq  \prod_{k=1}^d \partial_{x_k}^{\alpha_k}.$
	Then for $p \ge 1$, $W^{p, \infty}(\mathbb{K}^d; \mathbb{R}^m)$ is the set of functions $f=(f_j)_{1\le j\le m}: \mathbb{K}^d \to \mathbb{R}^m$ with
	\begin{equation*}
		\|f\|_{W^{p, \infty}}  \triangleq  \sum_{j=1}^m \sum_{|\alpha|_1 = 0}^p \|\partial_x^\alpha f_j\|_{L^\infty} < \infty,
	\end{equation*}
	where $\partial_x^\alpha f$ is understood as the weak derivative of $f$ of order $\alpha$.
	
	Throughout this paper, the systems \eqref{SEuler-(rho u)} and \eqref{SEuler-(u P) damp} involve unknown functions defined on $\mathbb{K}^d$. Nevertheless, we frequently work with functions whose target dimensions vary. Hence, when $d \in \mathbb{N}$ is clear from context, we write
	\begin{equation}
		H_m^s = H^s(\mathbb{K}^d; \mathbb{R}^m), \quad
		W_m^{p,\infty} = W^{p,\infty}(\mathbb{K}^d; \mathbb{R}^m), \quad
		L_m^2 = L^2(\mathbb{K}^d; \mathbb{R}^m), \quad
		s \in \mathbb{R}, \ \ p \ge 1, \ \ m \in \mathbb{N}.
		\label{Hs Lp Wp m}
	\end{equation}
	In particular, to describe the pair $(\rho, u)$ in \eqref{SEuler-(rho u)}, where $\rho$ and $u$ take values in different spaces, we set
	\begin{equation}
		\mathbb{L}^2 \triangleq L_1^2 \times L_d^2, \quad
		\H^s \triangleq H_1^s \times H_d^s, \quad
		\WP \triangleq W_1^{p,\infty} \times W_d^{p,\infty},\quad s \in \mathbb{R},\quad p \ge 1.
		\label{Hs Lp Wp pair}
	\end{equation}
	When only one function is considered and no ambiguity arises regarding the dimensions $d,m$—for instance when one considers \eqref{SEuler-(u P)}—we further simplify notation by writing
	\begin{equation*}
		H^s = H_m^s, \quad
		W^{p,\infty} = W_m^{p,\infty}, \quad
		L^2 = L_m^2, \quad
		s \in \mathbb{R}, \ \ p \ge 1.
	\end{equation*}

	Now, we recall some background in the incompressible case. Let $\delta_{i,j}$ be the Kronecker delta. The Leray projection $\Pi_d$ on $\mathbb K^d$ ($\K=\R$ or $\T$) is defined by  multiplier $\varPi$, i.e.,
	\begin{equation}\label{Pi-d define}
		\Pi_d\triangleq    \mathscr{F}^{-1} \varPi \mathscr{F}, \quad \varPi \triangleq(\varPi_{i,j}  )_{1\le i,j\le d},
	\end{equation} 
	where $\varPi_{i,j}(\xi)\triangleq\delta_{i,j}-\frac{\xi_i \xi_j}{|\xi|^2}$ for $\xi=(\xi_1,\cdots,\xi_d)\in\R^d$ and $\varPi_{i,j}(k)\triangleq\delta_{i,j}-\frac{k_i k_j}{|k|^2}$ for $k=(k_1,\cdots,k_d)\in\T^d$.
	Formally, we have that for smooth vector-valued function $f$,  $\Pi_df=f-\nn\Delta^{-1}({\rm div} f).$
	However, $\Delta$ is not invertible on $H^s(\T^d;\R^d)$. So, when $x\in\T^d$, we restrict $\Pi_d$ to the following zero-average subspace of $H^s(\T^d;\R^d)$:
	\begin{equation*} 
		H^s(\T^d;\R^d)\bigcap\bigg\{f: \int_{\T^d} f(x)\d x=0\bigg\}, \quad  d\in\N.
	\end{equation*} 
	We define the zero-average projection $\Pi_0$ on $L^2(\T^d;\R^d)$ by
	\begin{equation}\label{Pi-0 define} 
		\Pi_0 f(x)\triangleq 
		f(x)- \frac{1}{(2\pi)^d}\int_{\T^d} f(y)\d y, \quad  x\in \T^d, \ f\in L^2(\T^d;\R^d).
	\end{equation}
	It is clear that $\Pi_d$ and $\Pi_0$ are projections, i.e., $\Pi_d^2=\Pi_d$ and $\Pi^2_0=\Pi_0$. Moreover, we note that 
	the Helmholtz--Weyl decomposition holds 
	(cf. \cite{
		Robinson-etal-2016-Book,
		Kuksin-Shirikyan-2012-book}) for $s\ge0$,
	\begin{equation}\label{Hodge decomposition}
		H^s(\R^d;\R^d)=\Pi_d \H^s(\mathbb R^d;\R^d)\oplus\nabla  H^{s+1}(\R^d;\R),\quad \Pi_0H^s(\T^d;\R^d)=\Pi_d\Pi_0 H^s(\mathbb T^d;\R^d)\oplus\nabla \Pi_0H^{s+1}(\T^d;\R).
	\end{equation} 
	Then we define
	\begin{equation}\label{Pi=Pi-d-0}
		\Pi\triangleq\Pi_d \ \ \text{for}\ \K^d=\mathbb{R}^d \ \ \text{and}\ \ 
		\Pi\triangleq\Pi_d\Pi_0\ \ \text{for}\ \K^d=\mathbb{T}^d.
	\end{equation}
	Since 
	$$\Pi_d\Pi_0 f=\Pi_0\Pi_d f=\Pi_d f,\quad f\in H^s(\T^d;\R^d)\bigcap\bigg\{f: \int_{\T^d} f(x)\d x=0\bigg\},$$
	we  see that
	$\Pi$ is also a projection, and 
	we define for $s\ge 0$ and $d\ge2$, 
	\begin{equation}\label{H-div}
		\Hdiv^s(\mathbb K^d;\R^d)\triangleq\Pi H^s(\mathbb K^d;\R^d).
	\end{equation} 
	Using these nations, \eqref{Hodge decomposition} reduces to
	\begin{equation*} 
		H^s(\R^d;\R^d)=\Hdiv^s(\mathbb R^d;\R^d)\oplus\nabla  H^{s+1}(\R^d;\R),\quad \Pi_0H^s(\T^d;\R^d)=\Hdiv^s(\mathbb T^d;\R^d)\oplus\nabla \Pi_0H^{s+1}(\T^d;\R).
	\end{equation*} 
	When $d$ is fixed and ambiguity is unlikely, we will simply write
	$$\Hdiv^s=\Hdiv^s(\mathbb K^d;\R^d), \quad s\ge0.$$
	
	In the following we will  also need to equip $\Hdiv^s$ with a non-standard topology:
	\begin{equation}\label{H-s1 s2}
		\Hdiv^{s_1,s_2}\triangleq\big(\Hdiv^{s_1},\mathscr{T}^{s_2}\big),\quad s_1 \ge s_2,\ \ 
		\mathscr{T}^{s_2}\ \text{is the topology induced by}\ \|\cdot\|_{H^{s_2}}.
	\end{equation}
	If $s_1>s_2$, the space $\Hdiv^{s_1,s_2}$ is not complete and 
	the completion of $\Hdiv^{s_1,s_2}$ in $\mathscr{T}^{s_2}$ is $\Hdiv^{s_2}$.
	
	For a linear operator $\mathcal{A}$ with domain densely contained in $L^2$, the $L^{2}$-adjoint operator of $\mathcal{A}$ is denoted by $\mathcal{A}^{*}$.  
	Then, obviously, we have
	\begin{equation*}
		\Pi_d=\Pi_d^*,\quad  \Pi_0=\Pi_0^*,\quad 
	\end{equation*} 
	Moreover, for all $s\ge0$, since $\Pi$ is a projection and $\Hdiv^s$ is a closed subspace of $H^s$,   we have that for $u,v\in  \Hdiv^s$,
	\begin{equation*}
		\IP{u,v}_{\Hdiv^s}\triangleq \IP{\Pi u, \Pi v}_{H^s}=\IP{u,v}_{H^s},\quad 
		\|u\|^2_{\Hdiv^s}\triangleq\IP{\Pi u, \Pi u}_{H^s}=\|u\|^2_{H^s}.
	\end{equation*} 
	The above facts will be frequently used in the following sections without further notice.

	For topological spaces $E$ and $\widetilde{E}$, the following function spaces and notations are used throughout this work:
	\begin{itemize}[leftmargin=0.79cm]\setlength\itemsep{0.2em}
		\item $\mathscr{B}(E)$ denotes the Borel $\sigma$-algebra of $E$, and $\mathbf P(E)$ is the set of Borel probability measures on $(E,\mathscr{B}(E))$;
		\item $\mathscr{M}(E;\widetilde{E})$ and $C(E;\widetilde{E})$ denote the set of all measurable maps from $E$ to $\widetilde{E}$, and set of continuous functions  from $E$ to $\widetilde{E}$,   respectively;
		\item  $D([0,\infty); \widetilde{E})$ is the set of c\`adl\`ag functions (right-continuous with left limits) from $[0,\infty)$ to $\widetilde{E}$;
		\item When $\widetilde{E}$ is a metric space, $\mathscr{M}_B(E;\widetilde{E})$ and $C_{B}(E;\widetilde{E})$ denote the sets of bounded functions in $\mathscr{M}(E;\widetilde{E})$ and $C(E;\widetilde{E})$, respectively. In particular, when $\widetilde{E}=\R$, we simply write $C(E)=C(E;\R)$ (and similarly for the other spaces).  
		
		\item If $E$ and $\widetilde{E}$ are separable Banach spaces, $\mathscr L(E;\widetilde{E})$ denotes the set of bounded linear operators from $E$ to $\widetilde{E}$.
		\item  $E\hookrightarrow \widetilde{E}$ means that $E$ is continuously and densely embedded into $\widetilde{E}$, while $E \hookrightarrow\hookrightarrow \widetilde{E}$ indicates that the  embedding is compact.
		
	\end{itemize}
	Since the noise coefficient $h$ for $\d \widetilde W$ in \eqref{SEuler-(rho u)} is time-dependent, we introduce the following function space to characterize a broad class of noise coefficients:
	\begin{equation}\label{SCRK} 
		\mathscr{K} \triangleq \big\{K\in\mathscr{M}\big( [0,\infty)\times [0,\infty); (0,\infty)\big): 
		K(x,y)\ \text{is increasing in}\ y\ \text{and locally integrable in}\ x
		\big\}.
	\end{equation}
	For the analysis of long-time behavior, we require Lyapunov-type functions from the following space of nonnegative $C^2$  functions on $[0,\infty)$ that vanish at the origin, are strictly increasing and concave, grow unboundedly at infinity:
	\begin{equation}\label{SCRV} 
		\mathscr{V}\triangleq\left\{f \in C^2([0,\infty);[0,\infty)) :  f(0)=0,\ 
		f'(x)>0,\  f''(x)\le 0,\ \lim_{x\to\infty} f(x)=\infty\right\}.
	\end{equation}

	Throughout the paper,  by a slight abuse of notation, $C$ and $c$ (possibly with subscripts) denote positive constants that may vary from line to line, but whose meaning is clear from context. For two nonnegative quantities $A$ and $B$, we write $A \lesssim B$ if there exists a constant $c > 0$ such that $A \leq c\,B$. For linear operators $\mathcal{A}$ and $\mathcal{B}$, the commutator $[\mathcal{A},\mathcal{B}]$ is defined by $[\mathcal{A},\mathcal{B}] \triangleq \mathcal{A}\mathcal{B} - \mathcal{B}\mathcal{A}$.

	\subsection{Pseudo-differential Operators}\label{Section:PDO}
	
	For two multi-indices $\alpha = (\alpha_i)_{1 \le i \le d},\, \beta = (\beta_i)_{1 \le i \le d}$ in $\mathbb{N}_0^d$ with $\beta \le \alpha$ (that is, $\beta_i \le \alpha_i$ for all $1 \le i \le d$), we define
	\begin{equation*}
		\partial_\xi^\alpha  \triangleq  \prod_{k=1}^d \partial_{\xi_k}^{\alpha_k}, \qquad
		\binom{\alpha}{\beta}  \triangleq  \prod_{i=1}^d \frac{\alpha_i!}{\beta_i! (\alpha_i - \beta_i)!},\qquad
		\alpha ! \triangleq \prod_{i=1}^d\alpha_i!.
	\end{equation*}
	We also recall that, for a multi-index $\alpha = (\alpha_i)_{1\le i\le d} \in\mathbb{N}_0^d$,
	$$
	|\alpha|_1\triangleq \sum_{k=1}^d\alpha_k.
	$$
	
	For any $s\in \mathbb R$, we define the symbol class
	$
	\mathbf S^{s}(\mathbb R^{d}\times \mathbb R^{d};\mathbb{C}^{m\times m})
	\subset C^{\infty }(\mathbb R^{d}\times \mathbb R^{d};\mathbb{C}^{m\times m})
	$
	by
	\begin{align}\label{def Ss R}
		\mathbf S^{s}(\mathbb R^{d}\times \mathbb R^{d};\mathbb{C}^{m\times m})
		\triangleq 
		\left \{\mathscr{p} \, : \,  
		|\mathscr{p}|^{\beta , \alpha ;s}_{\mathbb R^{d}\times \mathbb R^{d}} \triangleq 
		\max_{0\le j\le \beta,\, 0\le \ell\le \alpha}
		\sup_{(x,\xi )\in \mathbb R^{d}\times \mathbb R^{d}}
		\frac{\left |\partial _{x}^{j} \partial _{\xi}^{\ell}
			\mathscr{p} (x, \xi )\right |_{m\times m}}{(1+|\xi |)^{ s-|\ell|_{1}}}<\infty,\quad  \beta ,\alpha \in
		\mathbb N_{0}^{d}\right \}.
	\end{align}
	Here and throughout the paper, $|\cdot |_{m\times m}$ and $|\cdot |$ denote the usual norms on $\mathbb C^{m\times m}$ and $\mathbb R^{d}$, respectively. It is well known that $\mathbf S^{s}(\mathbb R^{d}\times \mathbb R^{d};\mathbb{C}^{m\times m})$ is a Fr\'{e}chet space with respect to the seminorms
	$
	\bigl\{|\cdot |^{\beta , \alpha ;s}_{\mathbb R^{d}\times \mathbb R^{d}}\bigr\}_{\beta , \alpha \in \mathbb N_{0}^{d}}.
	$
	
	To define the toroidal symbol class, we first recall the difference operator $\triangle _{k}^{\alpha}$, defined by
	\begin{equation}
		(\triangle _{k}^{\alpha }g)(k)  \triangleq \sum _{\beta \in
			\mathbb N_{0}^{d}, \beta \leq \alpha} (-1)^{|\alpha -\beta |_{1}}
		\binom{\alpha}{\beta}g(k+\beta ),\quad g:\mathbb{Z}^{d}\to \mathbb{C},
		\ \  k\in \mathbb Z^{d},\ \ \alpha \in \mathbb N_{0}^{d}.
		\label{difference operator}
	\end{equation}
	Then, for $s\in \mathbb R$, the toroidal symbol class of order $s$ is defined by (cf. \cite{Ruzhansky-Turunen-2010-Book})
	\begin{align}\label{def Ss T}
		\mathbf S^{s}(\mathbb T^{d}\times \mathbb Z^{d};\mathbb{C}^{m\times m})
		\triangleq 
		\left \{\mathscr{p} \, : \,
		\begin{aligned}
			&\mathscr{p}(\cdot ,k)\in C^{\infty }(\mathbb T^{d};\mathbb{C}^{m
				\times m}) \quad \text{for all } k\in \mathbb Z^{d},
			\\
			|\mathscr{p}|^{\beta ,\alpha ;s}_{\mathbb T^{d}\times \mathbb Z^{d}} \triangleq\  &
			\max_{0\le j\le \beta,\, 0\le \ell\le \alpha}
			\sup _{(x,k)\in \mathbb T^{d}\times \mathbb Z^{d}}
			\frac{\left |\partial _{x}^{j}\triangle _{k}^{\ell}
				\mathscr{p} (x, k)\right |_{m\times m}}{(1+|k|)^{ s-|\ell |_{1}}}<\infty,\quad \beta ,\alpha \in \mathbb N_{0}^{d}
		\end{aligned} \right \}.
	\end{align}
	Again, $\mathbf S^{s}(\mathbb T^{d}\times \mathbb Z^{d};\mathbb{C}^{m\times m})$ is a Fr\'{e}chet space with respect to the seminorms
	$
	\bigl\{|\cdot |^{\beta , \alpha ;s}_{\mathbb T^{d}\times \mathbb Z^{d}}\bigr\}_{\beta , \alpha \in \mathbb N_{0}^{d}}.
	$
	
	The pseudo-differential operator $\OP(\mathscr p)$ associated with the symbol $\mathscr p$ is defined by
	\begin{align}
		\OP(\mathscr{p}) \triangleq \mathcal P,\qquad
		[\mathcal P f](x) \triangleq  
		\begin{cases}
			\left\{\mathscr F^{-1}\Big[\mathscr{p}(z,\xi) \big(\mathscr Ff\big)(\xi)\Big]\right\}_{z=x}, &\text{if } \mathscr{p}\in \SS^s(\R^d\times \R^d;\mathbb{C}^{m\times m}),\vspace*{4pt}\\
			\left\{\mathscr F^{-1}\Big[\mathscr{p}(z,k) \big(\mathscr Ff\big)(k)\Big]\right\}_{z=x}, &\text{if } \mathscr{p} \in
			\SS^s(\T^d\times \Z^d;\mathbb{C}^{m\times m}).
		\end{cases}
		\label{OP define}
	\end{align}
	
	Throughout this paper, all pseudo-differential operators are assumed to preserve real-valuedness, in the sense that whenever $f$ is real-valued, $\OP(\mathscr p)f$ is also real-valued. Equivalently, we require that, for the frequency variable $\xi\in\R^d$ (when $x\in \mathbb R^{d}$) or $k\in\Z^d$ (when $x\in \mathbb T^{d}$),
	\begin{align}
		\mathscr{p}(x,-\cdot)=\overline{\mathscr{p}(x,\cdot)}.
		\label{Real symbol}
	\end{align}
	
	We denote by $\mathbf S^{s}\big(\mathbb T^{d}\times \mathbb R^{d};\mathbb C\big)$ the class of all symbols $\mathscr{p}\in \mathbf S^{s}\big(\mathbb R^{d}\times \mathbb R^{d};\mathbb C\big)$ such that $\mathscr{p}(\cdot ,\xi)$ is $2\pi$-periodic for every $\xi\in\mathbb R^d$. By \cite[Theorem 5.2]{Ruzhansky-Turunen-2010-JFAA} (see also \cite[Theorem 2.10 and Corollary 2.11]{Delgado-2013-Chapter}), bounded sets in $\SS^s\big(\T^d\times \Z^d;\mathbb C\big)$ correspond precisely to restrictions of bounded sets in $\SS^s\big(\T^d\times \R^d;\mathbb C\big)$. More precisely, if
	\[
	\mathscr O\subset\SS^s(\T^d\times\R^d;\mathbb C)
	\quad \text{is bounded with respect to } 
	\bigl\{|\cdot |^{\beta , \alpha ;s}_{\mathbb R^{d}\times \mathbb R^{d}}\bigr\}_{\beta , \alpha \in \mathbb N_{0}^{d}},
	\]
	then
	\[
	\big\{\widetilde{\mathscr{p}}\ :\ \widetilde{\mathscr{p}} \triangleq \mathscr{p}|_{\mathbb{T}^{d}\times \mathbb{Z}^{d}},\ \mathscr{p}\in\mathscr O\big\}\subset\SS^s(\T^d\times\Z^d;\mathbb C)\ \text{is bounded with respect to}\ 
	\bigl\{|\cdot|_{\mathbb T^{d}\times \mathbb Z^{d}}^{\beta ,\alpha ;s}\bigr\}_{\beta , \alpha \in \mathbb N_{0}^{d}}.
	\]
	Conversely, if
	\[
	\widetilde{\mathscr{O}} \subset\SS^s(\T^d\times\Z^d;\mathbb C)
	\quad \text{is bounded with respect to } \bigl\{|\cdot|_{\mathbb T^{d}\times \mathbb Z^{d}}^{\beta ,\alpha ;s}\bigr\}_{\beta , \alpha \in \mathbb N_{0}^{d}},
	\]
	then there exists a subset ${\mathscr{O}} \subset\SS^s(\T^d\times\R^d;\mathbb C)$ such that
	\[
	\forall\ \widetilde{\mathscr{p}}\in\widetilde{\mathscr O}, \ \exists \ 
	\mathscr{p}\in \mathscr O\ \text{such that}\  \widetilde{\mathscr{p}}=\mathscr{p}|_{\mathbb{T}^{d}\times \mathbb{Z}^{d}},
	\]
	and $\mathscr O$ is bounded with respect to $\bigl\{|\cdot |^{\beta , \alpha ;s}_{\mathbb R^{d}\times \mathbb R^{d}}\bigr\}_{\beta , \alpha \in \mathbb N_{0}^{d}}$.
	
	Applying this argument entrywise, the same conclusion remains valid for matrix-valued symbols. Accordingly, we identify
	\begin{equation*}
		\Big\{{\mathrm{OP}}({\mathscr{p} })\, : \, {\mathscr{p} } \in \SS^{s}\big(\mathbb T^{d}\times \mathbb Z^{d};
		\mathbb C^{m\times m}\big)\Big\}
		=
		\Big\{{\mathrm{OP}}({\mathscr{p} })\, : \, {\mathscr{p} } \in \SS^{s}\big(\mathbb T^{d}\times
		\mathbb R^{d};\mathbb C^{m\times m}\big)\Big\}.
	\end{equation*}
	
	Therefore, when no confusion is possible, we use the unified notation
	\begin{equation}\label{Ss define}
		\mathbf S^{s} \triangleq  
		\left \{\mathscr{p} \in \mathbf S^{s}(\mathbb R^{d}\times \mathbb R^{d}; \mathbb{C}^{m\times m})\, : \, \text{{\eqref{Real symbol}}}\ \text{holds} \right \}
		\quad \text{or} \quad
		\left \{\mathscr{p} \in \mathbf S^{s}(\mathbb T^{d}\times \mathbb Z^{d}; \mathbb{C}^{m\times m})\, : \, \text{{\eqref{Real symbol}}}\ \text{holds} \right \},
	\end{equation}
	and set
	\begin{align}
		{\mathrm{OP}}\mathbf S^{s} \triangleq 
		\big\{{\mathrm{OP}}({\mathscr{p} })\, : \, {\mathscr{p} } \in \mathbf S^{s}\big\},
		\quad 
		\bigl\{|\cdot |^{\beta ,\alpha ;s}\bigr\}_{\beta , \alpha \in \mathbb N_{0}^{d}} \triangleq 
		\bigl\{|\cdot |^{\beta ,\alpha ;s}_{\mathbb R^{d}\times \mathbb R^{d}}\bigr\}_{\beta , \alpha \in \mathbb N_{0}^{d}} 
		\ \text{or} \ 
		\bigl\{|\cdot |^{\beta ,\alpha ;s}_{\mathbb T^{d}\times \mathbb Z^{d}}\bigr\}_{\beta , \alpha \in \mathbb N_{0}^{d}},
		\quad  
		s\in \mathbb R.
		\label{OPS+seminorms}
	\end{align}
	
	In what follows, we also consider symbols depending only on the frequency variable $\xi\in\R^d$ (when $x\in \mathbb R^{d}$) or $k\in\Z^d$ (when $x\in \mathbb T^{d}$). To distinguish this subclass, we define
	\begin{equation*} 
		{\mathrm{OP}}\mathbf S_0^{s} \triangleq 
		\big\{{\mathrm{OP}}({\mathscr{p} })\, : \, {\mathscr{p} } \in \mathbf S_0^{s}\big\},\quad\mathbf S^s_{0} \triangleq \left \{\mathscr{p}\in \mathbf S^{s}\, : \,
		\mathscr{p}(x,\cdot)=\mathscr{p}(\cdot)
		\right \}.
	\end{equation*}
	
	To highlight scalar-valued symbols, we note that $\mathbf S^{s}(\mathbb R^{d}\times \mathbb R^{d}; \mathbb{C})$ and $\mathbf S^{s}(\mathbb T^{d}\times \mathbb Z^{d}; \mathbb{C})$ are obtained formally by taking $m=1$ in \eqref{def Ss R} and \eqref{def Ss T}, with $|\cdot|_{1\times 1}$ identified with the modulus on $\mathbb C$. We then set
	\begin{equation*}
		\mathcal S^{s} \triangleq
		\left \{\mathscr{p} \in \mathbf S^{s}(\mathbb R^{d}\times \mathbb R^{d}; \mathbb{C})\, :\, \text{{\eqref{Real symbol}}}\ \text{holds} \right \}
		\quad \text{or} \quad
		\left \{\mathscr{p} \in \mathbf S^{s}(\mathbb T^{d}\times \mathbb Z^{d}; \mathbb{C})\, : \, \text{{\eqref{Real symbol}}}\ \text{holds} \right \},
	\end{equation*}
	and similarly define $\mathcal S_0^s$ as the scalar counterpart of $\mathbf S_0^s$. As above, we write $\OP\SS_0^s$, $\OP\mathcal S^s$, and $\OP\mathcal S_0^s$ for the corresponding classes of pseudo-differential operators.
	
	By \ref{OP-continuous} in Lemma \ref{LOP}, together with the fact that $\OP$ is injective (cf. \cite[Proposition 1.2, p.~56]{Kumano-go-1981-Book}), we define bounded subsets of $\OP\SS^s$ as follows.
	\begin{Definition}
		A subset $\mathscr O\subset {\mathrm{OP}}\mathbf S^{s}$ is said to be bounded if
		$
		\big\{\mathscr{p}: {\mathrm{OP}}(\mathscr{p})\in \mathscr O\big\}\subset \mathbf S^{s}
		$
		is bounded with respect to the seminorms in \eqref{OPS+seminorms}.
	\end{Definition}

	\subsection{Structure Assumption on the Noise Process}
	Recall that in \eqref{SEuler-(rho u)} and \eqref{SEuler-(u P)}, $W(t)$ and $\widetilde{W}(t)$ are Brownian motions, and $L(t)$ is a pure jump L\'evy process with Poisson random measure $\eta$ counting jumps of size $l$, intensity measure $\nu$, and compensator $\widetilde{\eta} = \eta - \nu \otimes \mathrm{d}t$. We now state the fundamental assumption on the noise process:
	\begin{Hypothesis}\label{Hypo-W L}
		Let $(\Omega,\mathcal{F},\{\mathcal{F}_t\}_{t \ge 0}, \mathbb{P})$ be a complete, right-continuous filtered probability space such that the processes $W(t)$, $\widetilde{W}(t)$, and $L(t)$ are $\{\mathcal{F}_t\}$-adapted and mutually independent. Furthermore, we assume $\nu([-1,1]^c)=0$, which restricts consideration to small jumps with $|l|\le 1$ and excludes large jumps.
	\end{Hypothesis}
	
	Additional assumptions on the operators $\Q_i$ ($i=1,2$), and the coefficients $h$ and $\widetilde h$ will be introduced separately in the subsequent sections about the compressible and incompressible cases, respectively, since they are related to the structure of the  concrete models.

	\section{Cancellation Properties of Pseudo-differential Operators}\label{Section : Cancel}
	
	\subsection{Nearly Skew-Symmetric Operators}\label{Section : Nearly Skew-Symmetric}

	In this paper we mainly focus on operators that are close to skew-adjoint operators.  
	\begin{Definition}\label{Ak class define}
		Let $d, m\ge 1$, and $\alpha\in[0,1]$.  
		We let 
		\[
		\mathbb A^\alpha
		\triangleq
		\left\{
		\mathcal A:\
		\text{there exist }\mathcal L={\mathrm{diag}}(\mathcal L_{1},\cdots ,\mathcal L_{m})\in\mathrm{OP}	\mathbf S^\alpha
		\text{ and }\mathcal G\in\mathrm{OP}\mathbf S^0\text{ such that }
		\mathcal A=\mathcal L+\mathcal G
		\text{ and \ref{Ak:(L G)}  holds}
		\right\},
		\]
		where \ref{Ak:(L G)} is:
		\begin{enumerate}[label={$(\mathbf{A}^\alpha)$},leftmargin=0.79cm]
			\item \label{Ak:(L G)} 
			$\mathcal L_{i}\in {\mathrm{OP}}\mathcal S^{\alpha},\ 
			\mathcal L_{i}+\mathcal L_{i}^{*}\in {\mathrm{OP}}
			\mathcal S^{0} \ (1\le i\le m)$. Moreover, 
			the symbols of $\mathcal L$ and
			$\mathcal G$ are commuting matrices.
		\end{enumerate}
		A family $\mathscr N\subset\mathbb A^\alpha$ is called
		bounded if one can choose a decomposition
		$\mathcal A=\mathcal L_{\mathcal A}+\mathcal G_{\mathcal A}$ for every
		$\mathcal A\in\mathscr N$ such that 
		$
		\{\mathcal L_{i,\mathcal A}:\mathcal A\in\mathscr N,1 \le i \le m\}$
		is bounded in $\mathrm{OP}\mathcal S^\alpha$, 
		$	\{\mathcal L_{i,\mathcal A}+\mathcal L^*_{i,\mathcal A}:\mathcal A\in\mathscr N,1 \le i \le m\}$  is bounded in $\mathrm{OP}\mathcal S^0$,  and $\{\mathcal G_{\mathcal A}:\mathcal A\in\mathscr N\}$
		are bounded in $\mathrm{OP}\mathbf S^0$.
	\end{Definition}
	
	To further distinguish operators with general space-frequency-dependent symbols from those whose symbols depend only on the frequency variable, we introduce an analogous class for the latter case, allowing arbitrary nonnegative order.
	
	\begin{Definition}\label{Bk class define}
		Let $d, m \ge 1$ and $\beta \ge 0$. We define
		\[
		\mathbb{B}^{\beta} \triangleq \left\{\mathcal B:\
		\text{there exist }\mathcal J={\mathrm{diag}}(\mathcal J_{1},\cdots ,\mathcal J_{m})\in\mathrm{OP}\mathbf S_0^\beta
		\text{ and }\mathcal V\in\mathrm{OP}\mathbf S_0^0\text{ such that  }
		\mathcal B=\mathcal J+\mathcal V
		\text{ and \ref{Bk:(J V)}  holds}
		\right\},
		\]
		where condition \ref{Bk:(J V)} is specified as follows:
		\begin{enumerate}[label={$(\mathbf{B}^\beta)$},leftmargin=0.79cm]
			\item\label{Bk:(J V)}  
			$\mathcal J_{i} \in {\mathrm{OP}}\mathcal S_0^{\beta}$ and $\mathcal J_{i} + \mathcal J_{i}^{*} \in {\mathrm{OP}}\mathcal S_0^{0}$ for $1 \le i \le m$. 
			Additionally, the symbols of $\mathcal J$ and $\mathcal V$ are commuting matrices.
		\end{enumerate}
		A subset $\mathscr{N} \subset \mathbb{B}^{\beta}$ is bounded if one can choose a decomposition
		$\mathcal B=\mathcal J_{\mathcal B}+\mathcal V_{\mathcal B}$ for every
		$\mathcal B\in\mathscr N$ such that 
		$
		\{\mathcal J_{i,\mathcal B}:\mathcal B\in\mathscr N,1 \le i \le m\}$
		is bounded in $\mathrm{OP}\mathcal S_0^\beta$, 
		$	\{\mathcal J_{i,\mathcal B}+\mathcal J^*_{i,\mathcal B}:\mathcal B\in\mathscr N,1 \le i \le m\}$  is bounded in $\mathrm{OP}\mathcal S_0^0$,  and $\{\mathcal V_{\mathcal B}:\mathcal B\in\mathscr N\}$
		are bounded in $\mathrm{OP}\mathbf S_0^0$.
	\end{Definition}
	
	To illustrate these abstract operator classes, we now construct examples built from smooth vector fields and coefficient functions. The diagonal components correspond to the main differential operators, which may include fractional derivatives of order $\sigma_i$, while the off-diagonal or additive parts represent lower-order perturbations acting as smoother corrections. In this way, the examples below clarify how operators combining principal (possibly fractional) parts and bounded lower-order terms fit naturally into the classes $\mathbb{A}^\alpha$ and $\mathbb{B}^\beta$, depending on whether their symbols depend on both space and frequency, or only on the latter.
	\begin{Example} 
		Let $d,m\ge 1$. Let $\psi^{(i)}$ and 
		$\phi^{i,j}$ $(1\le i, j \le m)$ be 
		functions such that for $1\le i, j \le m$,
		\begin{equation*}
			\begin{cases}
				\psi^{(i)}\in C_{c}^{\infty}(\mathbb R^{d};\mathbb R^{d}),\
				\ \phi^{i,j}\in C_{c}^{\infty}(\mathbb R^{d};\mathbb R),\  &
				\text{if}\ \  x\in \mathbb R^{d},
				\vspace*{4pt}
				\\
				\psi^{(i)}\in C^{\infty}(\mathbb T^{d};\mathbb R^{d}),\ \
				\phi^{i,j}\in C^{\infty}(\mathbb T^{d};\mathbb R),\
				&\text{if}\ \ x\in \mathbb T^{d}.
			\end{cases}
		\end{equation*}
		Let $\sigma _{i},\, s^{(i,j)}\in \mathbb R$ ($1\le i,j
		\le m$) and define
		\begin{equation*}
			\mathscr H \triangleq {\mathrm{diag}} \left (\big(\psi^{(1)}\cdot
			\nabla \big)({\mathbf I}-\Delta )^{\sigma _{1}},
			\cdots ,\big(\psi^{(m)}\cdot \nabla \big)({
				\mathbf I}-\Delta )^{\sigma _{m}}\right ),\quad 
			\mathscr T \triangleq \big(\phi^{(i,j)}({\mathbf I}-\Delta )^{s^{(i,j)}}\big)_{1\le i,j\le m}.
		\end{equation*}
		We have the following examples:
		\begin{itemize}[leftmargin=0.79cm]\setlength\itemsep{0.2em}
			\item Let $\sigma _{i}+1\in [0,1]$ and $s^{(i,j)}\le 0$ for all
			$1\le i,j\le m$. If either 
			$\psi^{(i)}=\psi^{(j)}$, $\sigma_i=\sigma_j$ ($1\le i,j\le m$), or
			$\phi^{(k,n)}\equiv0$ ($1\le k\neq n\le m$), 
			then
			$\mathcal L \triangleq \mathscr H$ and
			$\mathcal G \triangleq \mathscr T$ satisfy \ref{Ak:(L G)}.
			\item Let $\sigma _{i}+1\ge 0$. With slight abuse of notations, let $\psi^{(i)}\equiv C_i\in\R^d$ and $\phi^{(i,j)}\equiv c_{i,j}\in\R$. Assume that either
			$C_i=C_j$, $\sigma_i=\sigma_j$, $s^{(i,j)}\leq 0$ ($1\le i,j\le m$),
			or
			$c_{k,n}=0$ $(1\le k\neq n\le m)$,
			then
			$\mathcal J \triangleq \mathscr H$ and
			$\mathcal V \triangleq \mathscr T$ satisfy \ref{Bk:(J V)}.
		\end{itemize}
	\end{Example}
	
	In the next example, we move from operators defined through differential expressions to those constructed directly from their symbols. This example thus illustrates how the classes $\mathbb{A}^\alpha$ and $\mathbb{B}^\beta$ can be characterized equivalently at the symbol level. Keep in mind that ${\rm i}$ denotes the imaginary unit.

	\begin{Example} 
		Let $d,m\ge 1, \alpha\in[0,1]$, and $\beta\ge0$. Assume that 
		$$\mathscr{g}=\big(\mathscr{g}^{(i,j)}\big)_{1\le i,j\le m}\in \SS^0,\quad  \mathscr{v}=\big(\mathscr{v}^{(i,j)}\big)_{1\le i,j\le m}\in \SS_0^0,\quad {\rm i}\mathscr{h}_k\in\mathcal{S}^\alpha,\ \ {\rm i}\mathscr{e}_k\in \mathcal{S}_0^{\beta},\quad 1\le k\le m$$  
		such that $\mathscr{e}_k$ and $\mathscr{h}_k$ are $\R$-valued,
		$\mathscr{e}_k(\cdot)=-\mathscr{e}_k(-\cdot)$ and $\mathscr{h}_k(x,\cdot)=-\mathscr{h}_k(x,-\cdot)$.
		Let 
		$$\mathscr{E}_k \triangleq \OP({\rm i}\mathscr{e}_k),\ \ \mathscr{H}_k \triangleq \OP({\rm i}\mathscr{h}_k), \ \ 1\le k\le m.$$ 
		It is easy to verify that $\mathscr{E}_k$ is a skew-adjoint operator, and $\mathscr{H}_k$ satisfies $\mathscr{H}_k + \mathscr{H}_k^* \in {\mathrm{OP}}\mathcal{S}^{0}$. 
		If either $\mathscr{H}_i=\mathscr{H}_j$, $1\le i,j\le m$, 
		or $\mathscr{g}^{(k,n)}\equiv0$, $1\le k\neq n\le m$,  then
		$
		\L \triangleq {\mathrm{diag}} \left (\mathscr{H}_1, \cdots ,\mathscr{H}_m\right )
		\ \text{and}\
		\mathcal G \triangleq \OP(\mathscr{g})
		$ satisfy \ref{Ak:(L G)}. 
		If either $\mathscr{E}_i=\mathscr{E}_j$, $1\le i,j\le m$, 
		or $\mathscr{v}^{(k,n)}\equiv0$, $1\le k\neq n\le m$,  then
		$
		\J \triangleq {\mathrm{diag}} \left (\mathscr{E}_1, \cdots ,\mathscr{E}_m\right)\ \text{and}\
		\mathcal V \triangleq \OP(\mathscr{v})
		$
		satisfy \ref{Bk:(J V)}.
	\end{Example}
	
	\subsection{Main Results and Remarks}\label{Section : Main Cancellation Results}
	
	\begin{Theorem}[\textbf{Cancellation properties of pseudo-differential operators}]\label{Thm-cancel}
		Let $d\ge1$ and $\mu\in[0,\infty)$. 
		\begin{enumerate}[label=\textup{\textbf {(\arabic{enumi})}},leftmargin=0.79cm]\setlength\itemsep{0.2em}
			\item \label{x-dependent case}  Let $\alpha\in[0,1]$. If $\mathscr{N}_1\subset\mathbb{A}^{\alpha}$  is bounded $($cf. Definition \ref{Ak class define}$)$ and $\mathscr{O}_1\subset\OP\mathcal{S}^\mu$ is bounded,  then  we have
			\begin{equation}
				\sup _{\A\in\mathscr{N}_1,\, \mathcal P\in\mathscr{O}_1}\big \langle
				\mathcal P\mathcal A f, \mathcal Pf\big \rangle _{L^{2}}
				\lesssim   \left \|f\right \|^{2}_{H^{\mu}},\quad f\in H_d^{\mu+\alpha},\label{cancel-PA1}
			\end{equation}
			and 
			\begin{align}
				\sup_{\A\in\mathscr{N}_1,\, \mathcal P\in\mathscr{O}_1}\Big|\left
				\langle \mathcal P\mathcal A^{2}f, \mathcal Pf \right \rangle _{L^{2}}
				+ \left \langle \mathcal P\mathcal A f, \mathcal P\mathcal A
				f \right \rangle _{L^{2}} \Big| \lesssim \left \|f\right \|_{H^{\mu}}^{2},
				\quad f\in H_d^{\mu+2\alpha}.\label{cancel-PA2}
			\end{align}
			
			\item \label{x-independent case} Let $\beta\ge0$. If $\mathscr{N}_2\subset\mathbb{B}^{\beta}$  is bounded $($cf. Definition \ref{Bk class define}$)$ and $\mathscr{O}_2\subset\OP\mathcal{S}_0^\mu$ is bounded,  then  we have
			\begin{equation}
				\sup _{\B\in\mathscr{N}_2,\, \mathcal P\in\mathscr{O}_2}\big \langle
				\mathcal P \mathcal B f, \mathcal Pf\big \rangle_{L^{2}}
				\lesssim \left \|f\right \|^{2}_{H^{\mu}},\quad f\in H_d^{\mu+\beta},\label{cancel-PB1}
			\end{equation}
			and
			\begin{equation}
				\sup _{\B\in\mathscr{N}_2,\, \mathcal P\in\mathscr{O}_2}\Big|
				\big \langle \mathcal P\mathcal B^{2}f, \mathcal Pf \big \rangle _{L^{2}}
				+\big \langle \mathcal P\mathcal B f, \mathcal P\mathcal B f
				\big \rangle _{L^{2}} \Big| \lesssim  \left \|f\right \|_{H^{\mu}}^{2},
				\quad f\in H_d^{\mu+2\beta}.\label{cancel-PB2}
			\end{equation}
		\end{enumerate}

	\end{Theorem}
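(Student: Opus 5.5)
The plan is to reduce both estimates to commutator and adjoint bounds from the symbolic calculus of pseudo-differential operators (composition, adjoint, and $L^2$/Sobolev boundedness with constants depending on finitely many seminorms, as collected in Lemma \ref{LOP}). All constants below depend only on such seminorms, which yields the uniformity over the bounded families $\mathscr N_i,\mathscr O_i$. We treat part \ref{x-dependent case}; part \ref{x-independent case} follows the same route, but there every commutator $[\mathcal P,\mathcal J_i]$ vanishes identically, because Fourier multipliers commute. This is why $\beta$ may be arbitrary there, whereas $\alpha\le1$ is needed in part \ref{x-dependent case}.

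\textbf{First estimate \eqref{cancel-PA1}.} Write $\mathcal A=\mathcal L+\mathcal G$ and
\[
\langle\mathcal P\mathcal Af,\mathcal Pf\rangle_{L^2}=\langle\mathcal L\mathcal Pf,\mathcal Pf\rangle_{L^2}+\langle[\mathcal P,\mathcal L]f,\mathcal Pf\rangle_{L^2}+\langle\mathcal P\mathcal Gf,\mathcal Pf\rangle_{L^2}.
\]
The last term is bounded by $\|f\|_{H^\mu}^2$ since $\mathcal P\mathcal G\in\OP\SS^\mu$. Because $\mathcal P$ is scalar, $[\mathcal P,\mathcal L]$ is diagonal with entries $[\mathcal P,\mathcal L_i]\in\OP\mathcal S^{\mu+\alpha-1}\subset\OP\mathcal S^{\mu}$ when $\alpha\le1$. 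For the first term we use real symmetry: $\langle\mathcal L_ig,g\rangle=\tfrac12\langle(\mathcal L_i+\mathcal L_i^*)g,g\rangle$, so it is bounded by $\|\mathcal Pf\|_{L^2}^2\lesssim\|f\|^2_{H^\mu}$. The density argument is trivial, since $f\in H^{\mu+\alpha}$ makes every pairing finite.

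\textbf{Second estimate \eqref{cancel-PA2}.} Set $g=\mathcal Pf$ and write $\mathcal P\mathcal A=\mathcal A\mathcal P+\mathcal C$ with $\mathcal C\triangleq[\mathcal P,\mathcal A]=[\mathcal P,\mathcal L]+[\mathcal P,\mathcal G]$. As above, $\mathcal C$ has order $\le\mu$; note that $[\mathcal P,\mathcal G]$ has order $\mu-1$ because $\mathcal P$ is scalar. Then
\[
\mathcal P\mathcal A^2f=\mathcal A^2g+\mathcal A\mathcal Cf+\mathcal C\mathcal Af,
\]
\[
\langle\mathcal P\mathcal Af,\mathcal P\mathcal Af\rangle=\|\mathcal Ag\|^2+2\langle\mathcal Ag,\mathcal Cf\rangle+\|\mathcal Cf\|^2.
\]
The core term is $\langle\mathcal A^2g,g\rangle+\|\mathcal Ag\|^2=\langle\mathcal Ag,(\mathcal A+\mathcal A^*)g\rangle$. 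Here $\mathcal A+\mathcal A^*=\mathcal K$ is of order $0$: the $\mathcal L$ part is zero-order by \ref{Ak:(L G)} and $\mathcal G+\mathcal G^*\in\OP\SS^0$. Hence $\langle\mathcal Ag,\mathcal Kg\rangle=\tfrac12\langle(\mathcal A\mathcal K+\mathcal K^*\mathcal A)g,g\rangle+\tfrac12\langle(\mathcal A\mathcal K-\mathcal K^*\mathcal A)g,g\rangle$. The commutator-type combination $\mathcal A\mathcal K-\mathcal K^*\mathcal A$ is handled by writing $\mathcal K^*=\mathcal K+(\mathcal K^*-\mathcal K)$ and using $[\mathcal A,\mathcal K]$. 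The commuting-symbol hypothesis in \ref{Ak:(L G)} is exactly what makes the principal symbol of this matrix commutator vanish, so it has order $\alpha-1\le0$. The symmetric part $\mathcal A\mathcal K+\mathcal K^*\mathcal A$ is handled by the same first-estimate trick, $\langle\mathcal A\mathcal Kg,g\rangle+\langle\mathcal Ag,\mathcal Kg\rangle$, reducing it again to zero-order operators. The remaining cross terms are $\langle\mathcal A\mathcal Cf,g\rangle+\langle\mathcal C\mathcal Af,g\rangle+2\langle\mathcal Ag,\mathcal Cf\rangle$. Taking adjoints, $\langle\mathcal A\mathcal Cf,g\rangle=\langle\mathcal Cf,\mathcal A^*g\rangle=-\langle\mathcal Cf,\mathcal Ag\rangle+\langle\mathcal Cf,\mathcal Kg\rangle$, and $\langle\mathcal C\mathcal Af,g\rangle=\langle\mathcal C\mathcal A f,\mathcal P f\rangle$. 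What remains is $\langle\mathcal Ag,\mathcal Cf\rangle+\langle\mathcal C\mathcal Af,\mathcal Pf\rangle$, which we rewrite as $\langle(\mathcal C^*\mathcal A\mathcal P+\mathcal P^*\mathcal C\mathcal A)f,f\rangle$. The principal symbol of $\mathcal C^*\mathcal A\mathcal P+\mathcal P^*\mathcal C\mathcal A$ is $\bar c\,a\,p+\bar p\,c\,a$, with $c=\{p,a\}/\mathrm i$ to leading order, $a$ purely imaginary up to order $0$, and the scalar symbol $p$ real up to lower order (we may first replace $\mathcal P$ by $\mathcal D^\mu$ times a zero-order factor, or simply note $\|\mathcal Pf\|\lesssim\|f\|_{H^\mu}$). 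This combination cancels at top order, leaving an operator of order $2\mu$, which is bounded by $\|f\|^2_{H^\mu}$.

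The main obstacle is this last bookkeeping: each product of a $\mu+\alpha-1$ commutator with an order-$\alpha$ factor has order $2\mu+2\alpha-1$, which can exceed $2\mu$ when $\alpha>1/2$. We must therefore verify carefully that the leading symbols cancel through skewness and the commuting-matrix condition. We will do this by computing with the composition formula up to one term of the asymptotic expansion, and treat remainders by $L^2$ boundedness. In part \ref{x-independent case} all these commutators with $\mathcal J$ vanish, $\mathcal V$ and $\mathcal J$ commute, and the identity $\langle\mathcal B^2g,g\rangle+\|\mathcal Bg\|^2=\langle\mathcal Bg,(\mathcal B+\mathcal B^*)g\rangle$ with $g=\mathcal Pf$ closes directly. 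Here $(\mathcal B+\mathcal B^*)$ is a zero-order multiplier commuting with $\mathcal J$, so its pairing reduces to the first estimate applied to $\mathcal B$ with the modified $\mathcal P$.
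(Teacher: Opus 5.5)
Your overall architecture (conjugate by $\mathcal P$, and use $\mathcal A^*=\mathcal K-\mathcal A$ with $\mathcal K\triangleq\mathcal A+\mathcal A^*$ of order $0$) is close to the paper's. Your treatment of \eqref{cancel-PA1}, of the core term and of part (2) is essentially sound, with two small repairs. First, your displayed splitting of $\langle\mathcal A g,\mathcal K g\rangle$ actually sums to $\langle\mathcal A\mathcal K g,g\rangle$; the correct exact identity is $2\langle\mathcal A g,\mathcal K g\rangle=\|\mathcal K g\|_{L^2}^2+\langle[\mathcal K,\mathcal A]g,g\rangle$. Second, for $[\mathcal G^*,\mathcal L]$ to have order $\alpha-1$ you need that the symbol of $\mathcal L$ is diagonal, so that it also commutes with the conjugate transpose of the symbol of $\mathcal G$; this is not literally the hypothesis.

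The genuine gap is the decisive step: the cross term $\langle\mathcal A\mathcal P f,\mathcal C f\rangle+\langle\mathcal C\mathcal A f,\mathcal P f\rangle$, whose two pieces have order $2\mu+2\alpha-1$. You claim that the principal symbol of $S\triangleq\mathcal C^*\mathcal A\mathcal P+\mathcal P^*\mathcal C\mathcal A$ cancels because $a$ is imaginary and $p$ is real. Both halves of this fail.
\begin{itemize}
\item $p$ need not be real: $\partial_{x_1}\mathcal D^{\mu-1}\in\OP\mathcal S^\mu$ has an imaginary symbol.
\item Even for real $p$ nothing cancels. Write the $i$-th diagonal principal symbol of $\mathcal L$ as $\mathrm i\lambda_i$ with $\lambda_i$ real, and use $\{p,q\}=\nabla_\xi p\cdot\nabla_x q-\nabla_x p\cdot\nabla_\xi q$. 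Then $c_i=\{p,\lambda_i\}$, and the order-$(2\mu+2\alpha-1)$ symbol of $S$ is $\mathrm i\lambda_i\{|p|^2,\lambda_i\}$.
\item Concretely, for $\mathcal L=\psi(x)\partial_{x_1}$ (componentwise) and $\mathcal P=\mathcal D^\mu$, this symbol equals $2\mathrm i\mu\,\psi\,\xi_1^2(\xi\cdot\nabla\psi)(1+|\xi|^2)^{\mu-1}\neq0$.
\end{itemize}
So $S$ is not of order $2\mu$, and your plan to verify that the leading symbols cancel would break down once $\alpha>\frac12$. What would rescue a symbolic argument is that this symbol is skew-Hermitian. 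Hence $S+S^*$ has order $2\mu+2\alpha-2\le2\mu$, and for real-valued $f$ one has $\langle Sf,f\rangle_{L^2}=\frac12\langle(S+S^*)f,f\rangle_{L^2}$. This observation is absent from your proposal.

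The paper avoids this bookkeeping with an exact identity built on the double commutator $\mathcal R_4=[[\mathcal P,\mathcal L],\mathcal L]$. It has order $\mu+2\alpha-2\le\mu$ because $[\mathcal P,\mathcal L]$ is again diagonal. In your notation, $[\mathcal C,\mathcal A]$ has order at most $\mu$, since the terms involving $\mathcal G$ have order at most $\mu+\alpha-1$. Writing $\mathcal C\mathcal A=\mathcal A\mathcal C+[\mathcal C,\mathcal A]$ and $\mathcal A^*=\mathcal K-\mathcal A$ gives
\begin{equation*}
\langle\mathcal A\mathcal P f,\mathcal C f\rangle_{L^2}+\langle\mathcal C\mathcal A f,\mathcal P f\rangle_{L^2}=\langle\mathcal C f,\mathcal K\mathcal P f\rangle_{L^2}+\langle[\mathcal C,\mathcal A]f,\mathcal P f\rangle_{L^2}.
\end{equation*}
Both terms on the right are bounded by $\|f\|_{H^\mu}^2$, uniformly over the bounded families, by the uniform commutator bounds of Lemma~\ref{Lemma-[pn qm]}. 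With this identity inserted, your argument closes. This is exactly the role played by $I_1=\langle\mathcal R_4 f,\mathcal P f\rangle_{L^2}$ in the paper's eleven-term identity \eqref{cancellation-identiy}.
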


	\begin{Remark}
		Cancellation properties involving pseudo-differential operators have only recently been studied in \cite{Tang-2023-JFA, Tang-Wang-2022-arXiv}. Unlike the fixed operators $\A \in \mathbb{A}^{\alpha}$ and $\B \in \mathbb{B}^{\beta}$ considered in \cite{Tang-2023-JFA, Tang-Wang-2022-arXiv}, here we require cancellation properties for bounded families in $\mathbb{A}^{\alpha}$ and $\mathbb{B}^{\beta}$. This necessity arises because previous work addressed only pseudo-differential noise of the form $\Q_1 u\, \circ\, \mathrm{d}W$, which is equivalent to (cf. \eqref{Stratonovich to Ito}) $\frac{1}{2}\Q_1^2 u \, \mathrm{d}t + \Q_1 u \, \mathrm{d}W.$
		This structure permits separate regularization via
		\[
		\frac{1}{2}J_n^3\Q_1^2 J_n u \, \mathrm{d}t + J_n\Q_1 J_n u \, \mathrm{d}W,\quad J_n\ \text{is given in}\ \eqref{Define Jn},
		\]
		which is \textit{not} equivalent to $J_n\Q_1 J_n u\, \circ\, \mathrm{d}W$ when $\Q_1 \in \mathbb{A}^\alpha$ (because $[\Q_1,J_n]\neq 0$).
		In this scenario, the flexibility of separate regularization means that cancellation properties for the bounded family $\mathscr{O}_1 = \{\D^s J_n\}_{n \ge 1}$ suffice. For example,
		\begin{align*}
			&\langle J_n^3\Q_1^2 J_n u, u \rangle_{H^s} + \langle J_n\Q_1^2 J_n u, J_n\Q_1^2 J_n u \rangle_{H^s} \\
			=\ & \langle (\D^s J_n) \Q_1^2 (J_n u), (\D^s J_n) (J_n u) \rangle_{H^s} + \langle (\D^s J_n) \Q_1 (J_n u), (\D^s J_n) \Q_1 (J_n u) \rangle_{H^s} \\
			\lesssim\ & \|J_n u\|_{H^s}^2 \leq \|u\|_{H^s}^2.
		\end{align*}
		However, when Marcus noise $\Q_2 u \diamond \mathrm{d}L$ (defined via \eqref{Marcus flow Q2}) is included,  separate regularization becomes intricate. Instead, we employ the unified regularization
		\[
		J_n\Q_1 J_n u\, \circ\, \mathrm{d}W + J_n\Q_2 J_n u \diamond \mathrm{d}L,
		\]
		where the Marcus component is interpreted through \eqref{Marcus flow Q2-n}. 
		In summary, the separate regularization applies directly at the It\^o level, smoothing the drift and stochastic parts independently, while the unified regularization performs the smoothing already at the Stratonovich/Marcus level—before later passing to the It\^o form—thus preserving the correct commutator structure and ensuring consistency across all stochastic terms. This approach necessitates cancellation properties for bounded operator families in $\mathbb{A}^\alpha$ (see, e.g., \eqref{Qn cancel-1} and \eqref{Qn cancel-2}).

	\end{Remark}
	
	\subsection{Proof of Theorem  \ref{Thm-cancel}}
	\label{Section : Proof of Thm-cancel}
	In this section we prove Theorem \ref{Thm-cancel} and then apply these cancellation properties
	to  the noise $\Q_1 u\,\circ\, {\rm d}W$ and $\Q_2 u\diamond {\rm d}L$,  clarifying how the structure of the noise can be regularized while preserving cancellation properties uniformly.

	Let
	$\A=\L+\G\in\mathscr{N}_1$, $\B=\J+\V\in\mathscr{N}_2$ and  
	\begin{equation}\label{E M}
		\mathcal L_{\rm sym}\triangleq\mathcal L+\mathcal L^{*},\quad \mathcal J_{\rm sym}\triangleq\mathcal J+\mathcal J^{*}.
	\end{equation}

	\subsubsection{Spatially Dependent Case with Order $\alpha \in [0,1]$}
	
	\begin{proof}[Proof of  Theorem \ref{Thm-cancel} \ref{x-dependent case}]
		
		Since $\mathscr{N}_1$ is bounded in $\mathbb{A}^{\alpha}$ $($cf. Definition \ref{Ak class define}$)$ and $\mathscr{O}_1$ is bounded in $\OP\mathcal{S}^\mu$, we can infer from \ref{OP-continuous} in {Lemma~\ref{LOP}}  that
		\begin{equation}
			\label{P L G E operator norm}
			\sup _{\mathcal P\in\mathscr{O}_1}\left \|\mathcal P\right \|_{
				\mathscr L(H^{r+\mu};H^{r})},\ \
			\sup _{\A=\L+\G\in\mathscr{N}_1}
			\Big\{
			\left \|\mathcal{L}\right \|_{\mathscr L(H^{r+\alpha};H^{r})},\ 
			\left \|\mathcal{G}\right \|_{\mathscr L(H^{r};H^{r})},\ 
			\left \|\mathcal L_{\rm sym}\right \|_{\mathscr L(H^{r};H^{r})}
			\Big\}<
			\infty ,\ \ r\in \mathbb R.
		\end{equation}
		
		\textit{\underline{Proof of  \eqref{cancel-PA1}}.}
		We note that
		\begin{align*}
			\big \langle \mathcal P\L  f, \mathcal Pf
			\big \rangle _{L^{2}}  =\left \langle \big [\mathcal P,\mathcal L \big ] f,  
			\mathcal Pf\right \rangle _{L^{2}}+ \big \langle \mathcal L 
			\mathcal Pf,   \mathcal Pf\big \rangle _{L^{2}}
			=\left \langle \big [\mathcal P, \mathcal L \big ] f,  
			\mathcal Pf\right \rangle _{L^{2}} 
			-\left \langle \mathcal Pf,   \mathcal L \mathcal Pf \right
			\rangle _{L^{2}}+\left \langle \mathcal Pf, \mathcal L_{\rm sym}
			\mathcal Pf \right \rangle _{L^{2}}.
		\end{align*}
		Then we have
		\begin{align*}
			\big \langle \mathcal P\L  f, \mathcal Pf
			\big \rangle _{L^{2}}  
			=  2\left \langle \big [\mathcal P, \mathcal L  \big ] f,  
			\mathcal Pf \right \rangle _{L^{2}} +
			\left \langle \mathcal Pf, \mathcal L_{\rm sym}  \mathcal Pf
			\right \rangle _{L^{2}} 
			- \big \langle \mathcal P\L  f, \mathcal Pf
			\big \rangle _{L^{2}}  .
		\end{align*}
		Hence
		\begin{align}
			\big \langle \mathcal P\L  f, \mathcal Pf
			\big \rangle _{L^{2}}  = \left \langle \big [\mathcal P, \mathcal L \big ] f,  
			\mathcal Pf \right \rangle _{L^{2}} 
			+\frac{1}{2}\left \langle \mathcal Pf, \mathcal L_{\rm sym}
			\mathcal Pf\right \rangle _{L^{2}}.\label{PLf Pf}
		\end{align}
		Applying Lemma~\ref{Lemma-[pn qm]}, we have
		\begin{align}\label{[P L] operator norm}
			\sup_{\A=\L+\G\in\mathscr{N}_1,\,\mathcal P\in \mathscr{O}_1}
			\left\|[\mathcal P,\L]\right\|_{\mathscr L(H^{\mu+\alpha -1};L^{2})}<\infty.
		\end{align} 
		From this and \eqref{P L G E operator norm}, we have
		\begin{align*}
			\sup _{\mathcal P\in\mathscr{O}_1,\, \A=\L+\G\in\mathscr{N}_1}\left|\big \langle \mathcal P\L  f, \mathcal Pf
			\big \rangle _{L^{2}} \right|
			\lesssim  
			\left \|f\right \|^{2}_{H^{\mu}},\quad  \sup _{\mathcal P\in\mathscr{O}_1,\, \A=\L+\G\in\mathscr{N}_1}
			\left|\big \langle \mathcal P\G  f, \mathcal Pf
			\big \rangle _{L^{2}} \right|
			\lesssim  
			\left \|f\right \|^{2}_{H^{\mu}}.
		\end{align*}
		Combining the above two estimates leads to \eqref{cancel-PA1}. 
		
		\textit{\underline{Proof of  \eqref{cancel-PA2}}.}
		Define
		\begin{equation*} 
			\mathcal R_{1}=\mathcal R_{1,\L}\triangleq\big[  
			\mathcal L ,\mathcal L_{\rm sym} \big],\ \
			\mathcal R_{2}=\mathcal R_{2,\G,\L}\triangleq\big[  
			\mathcal G ,   \mathcal L \big],\ \
			\mathcal R_{3}=\mathcal R_{3,\mathcal P,\L}\triangleq\big[\mathcal P,   \mathcal L 
			\big],\ \ \mathcal R_{4}=\mathcal R_{4,\mathcal P,\L}\triangleq\big[\mathcal R_{3,\mathcal P,\L},  
			\mathcal L \big].
		\end{equation*}
		
		\textbf{Claim.}
		There holds the following identity:
		\begin{align}
			\big \langle \mathcal P\mathcal A ^{2}f, \mathcal Pf \big \rangle _{L^{2}}
			+ \big \langle \mathcal P\mathcal A  f, \mathcal P\mathcal A  f
			\big \rangle _{L^{2}} = \sum _{i=1}^{11}I_{i},
			\label{cancellation-identiy}
		\end{align}
		where
		\begin{equation*}
			\begin{cases}
				&I_{1}\triangleq\left \langle \mathcal R_{4}f , \mathcal Pf
				\right \rangle _{L^{2}},\ \
				I_{2}\triangleq\left \langle \mathcal R_{3}f , \mathcal R_{3} f \right \rangle _{L^{2}}, \ \
				I_{3}\triangleq2\left \langle \mathcal Pf , \mathcal L_{\rm sym} \mathcal R_{3}f \right \rangle _{L^{2}},
				\vspace*{4pt}
				\\
				&I_{4}\triangleq-\frac{1}{2}\left \langle \mathcal Pf , \mathcal R_{1}
				\mathcal Pf \right \rangle _{L^{2}}, \ \
				I_{5}\triangleq\frac{1}{2}\left \langle \mathcal Pf , \mathcal L_{\rm sym}^{2}
				\mathcal Pf \right \rangle _{L^{2}},\vspace*{4pt}
				\\
				&
				I_{6}\triangleq2\left \langle \mathcal P \mathcal G  f , 
				\mathcal R_{3} f \right \rangle _{L^{2}},\ \
				I_{7}\triangleq2\left \langle \mathcal P\mathcal G f ,\mathcal L_{\rm sym}
				\mathcal Pf \right \rangle _{L^{2}},\vspace*{4pt}
				\\
				&
				I_{8}\triangleq2\left \langle \mathcal R_{3}\mathcal G 
				f , \mathcal Pf \right \rangle _{L^{2}}, \ \
				I_{9}\triangleq\left \langle \mathcal P\mathcal R_{2} f , \mathcal Pf
				\right \rangle _{L^{2}},
				\vspace*{4pt}
				\\
				&I_{10}\triangleq\left \langle \mathcal P\mathcal G^{2} f ,
				\mathcal Pf\right \rangle _{L^{2}},\ \
				I_{11}\triangleq\left \langle \mathcal P\mathcal G  f , \mathcal P
				\mathcal G f \right \rangle _{L^{2}}.
			\end{cases}
		\end{equation*}

		Now we prove \eqref{cancellation-identiy}. To this end, we note that
		\begin{align}
			&\big \langle \mathcal P\mathcal A ^{2}f, \mathcal Pf
			\big \rangle _{L^{2}} + \big \langle \mathcal P\mathcal A  f,
			\mathcal P\mathcal A  f \big \rangle _{L^{2}}
			\notag
			\\
			=\ & \left \langle \mathcal P\mathcal L ^{2} f , \mathcal Pf
			\right \rangle _{L^{2}} + \left \langle \mathcal P\mathcal L 
			\mathcal G  f , \mathcal Pf \right \rangle _{L^{2}} +\left
			\langle \mathcal P\mathcal G \mathcal L  f , \mathcal Pf
			\right \rangle _{L^{2}} +\left \langle \mathcal P\mathcal G ^{2} f ,
			\mathcal Pf \right \rangle _{L^{2}}
			\notag
			\\
			&+\left \langle \mathcal P\mathcal L  f , \mathcal P\mathcal L  f
			\right \rangle _{L^{2}} +2\left \langle \mathcal P\mathcal L  f ,
			\mathcal P\mathcal G  f \right \rangle _{L^{2}} +\left \langle
			\mathcal P\mathcal G  f , \mathcal P\mathcal G  f \right
			\rangle _{L^{2}}
			\notag
			\\
			\triangleq\ & \sum _{i=1}^{7}\mathfrak{T}_{i}.
			\label{cancellation-rewrite}
		\end{align}
		Due to
		$ \mathcal L ^{*}=-\mathcal L +\mathcal L_{\rm sym} $, we have
		\begin{align*}
			\mathfrak{T}_{1}
			=\ &\left \langle \mathcal P\mathcal L  f , \mathcal L ^{*}
			\mathcal Pf \right \rangle _{L^{2}} +\big \langle \mathcal R_{3} \mathcal L f, \mathcal Pf\big \rangle _{L^{2}}\notag\\
			=\ &-\big \langle \mathcal P\mathcal L  f , \mathcal L 
			\mathcal Pf \big \rangle _{L^{2}} + \left \langle \mathcal P
			\mathcal L  f , \mathcal L_{\rm sym}  \mathcal Pf \right \rangle _{L^{2}} +
			\left \langle \mathcal R_{3} \mathcal L  f ,
			\mathcal Pf \right \rangle _{L^{2}}\notag\\
			=\ &- \left \langle \mathcal P\mathcal L  f , \mathcal P\mathcal L 
			f \right \rangle _{L^{2}} + \left \langle \mathcal P\mathcal L  f,
			\mathcal R_{3} f \right \rangle _{L^{2}} + \left
			\langle \mathcal P\mathcal L  f , \mathcal L_{\rm sym} \mathcal Pf
			\right \rangle _{L^{2}}+ \left \langle \mathcal R_{3}
			\mathcal L  f , \mathcal Pf \right \rangle _{L^{2}}
			\notag\\
			=\ &- \mathfrak{T}_{5} + \left \langle \mathcal P\mathcal L  f,
			\mathcal R_{3} f \right \rangle _{L^{2}} + \left
			\langle \mathcal P\mathcal L  f ,\mathcal L_{\rm sym}  \mathcal Pf
			\right \rangle _{L^{2}}+ \left \langle \mathcal R_{3}
			\mathcal L  f , \mathcal Pf \right \rangle _{L^{2}}.
		\end{align*}
		That is to say,
		\begin{align*}
			\mathfrak{T}_{1}+\mathfrak{T}_{5} = \left \langle \mathcal P
			\mathcal L  f , \mathcal R_{3} f \right \rangle _{L^{2}}
			+ \left \langle \mathcal P\mathcal L  f , \mathcal L_{\rm sym}
			\mathcal Pf \right \rangle _{L^{2}}+ \big \langle 
			\mathcal R_{3} \mathcal L  f , \mathcal Pf \big \rangle _{L^{2}}.
		\end{align*}
		Note that 
		$\mathcal P\mathcal L $ is of order $\mu+\alpha \ge \mu$. Similarly, the
		order of $ \mathcal R_{3} \mathcal L $ may be also larger
		than $\mu$. Therefore, by commuting $\mathcal P$ and $\mathcal L $ and
		using $\mathcal L ^{*}$ again, we have
		\begin{align}
			\mathfrak{T}_{1}+\mathfrak{T}_{5}
			=\ & \left \langle \mathcal L  \mathcal Pf , \mathcal R_{3} f \right \rangle _{L^{2}} + \left \langle \mathcal R_{3} f , \mathcal R_{3} f \right \rangle _{L^{2}}
			+ \left \langle \mathcal R_{3} \mathcal L  f ,
			\mathcal Pf \right \rangle _{L^{2}} + \left \langle \mathcal P
			\mathcal L  f , \mathcal L_{\rm sym}  \mathcal Pf \right \rangle _{L^{2}}
			\notag
			\\
			=\ & - \left \langle \mathcal Pf ,\mathcal L  \mathcal R_{3} f \right \rangle _{L^{2}} + \left \langle \mathcal Pf ,
			\mathcal L_{\rm sym}  \mathcal R_{3} f \right \rangle _{L^{2}}
			+ \left \langle \mathcal R_{3} f , \mathcal R_{3} f \right \rangle _{L^{2}}
			+ \left \langle \mathcal R_{3}\mathcal L  f , \mathcal Pf \right \rangle _{L^{2}} +
			\left \langle \mathcal P\mathcal L  f , \mathcal L_{\rm sym} \mathcal Pf
			\right \rangle _{L^{2}}
			\notag
			\\
			=\ & \left \langle \mathcal R_{4} f , \mathcal Pf
			\right \rangle _{L^{2}} + \left \langle \mathcal Pf , \mathcal L_{\rm sym}
			\mathcal R_{3} f \right \rangle _{L^{2}}  
			+ \left
			\langle \mathcal R_{3} f , \mathcal R_{3} f
			\right \rangle _{L^{2}} + \left \langle \mathcal P\mathcal L  f ,
			\mathcal L_{\rm sym} \mathcal Pf \right \rangle _{L^{2}}.
			\label{cancellation : H1+H5_1}
		\end{align}
		Note that $\mathcal L_{\rm sym} ^{*}=\mathcal L_{\rm sym}$. Then we arrive at
		\begin{align*}
			\left \langle \mathcal P\mathcal L  f , \mathcal L_{\rm sym}
			\mathcal Pf \right \rangle _{L^{2}} 
			=\ & \left \langle \mathcal L \mathcal Pf , \mathcal L_{\rm sym}
			\mathcal Pf \right \rangle _{L^{2}} + \big \langle \mathcal R_{3}f , \mathcal L_{\rm sym}  \mathcal Pf \big \rangle _{L^{2}}\\
			=\ & - \big \langle \mathcal Pf , \mathcal L_{\rm sym}  \mathcal L 
			\mathcal Pf \big \rangle _{L^{2}} -\left \langle \mathcal Pf ,
			\mathcal R_{1} \mathcal Pf \right \rangle _{L^{2}} + \left \langle
			\mathcal Pf , \mathcal L_{\rm sym}^{2}\mathcal Pf \right \rangle _{L^{2}} +
			\left \langle \mathcal R_{3} f , \mathcal L_{\rm sym}
			\mathcal Pf \right \rangle _{L^{2}}\\
			=\ & - \left \langle \mathcal L_{\rm sym}  \mathcal Pf , \mathcal P\mathcal L  f \right \rangle _{L^{2}} 
			- \left \langle \mathcal Pf ,
			\mathcal R_{1} \mathcal Pf \right \rangle _{L^{2}} + \left \langle
			\mathcal Pf , \mathcal L_{\rm sym}^{2} \mathcal Pf \right \rangle _{L^{2}} 
			+2
			\left \langle \mathcal R_{3} f , \mathcal L_{\rm sym}
			\mathcal Pf \right \rangle _{L^{2}}.
		\end{align*}
		Therefore, we arrive at
		\begin{align}
			\left \langle \mathcal P\mathcal L  f , \mathcal L_{\rm sym}  \mathcal Pf
			\right \rangle _{L^{2}}= - \frac{1}{2}\left \langle \mathcal Pf ,
			\mathcal R_{1} \mathcal Pf \right \rangle _{L^{2}} + \frac{1}{2}
			\left \langle \mathcal Pf , \mathcal L_{\rm sym}^{2}\mathcal Pf \right
			\rangle _{L^{2}} + \left \langle \mathcal R_{3} f ,
			\mathcal L_{\rm sym} \mathcal Pf \right \rangle _{L^{2}}.
			\label{cancellation : H1+H5_2}
		\end{align}
		Combining {\eqref{cancellation : H1+H5_1}},
		{\eqref{cancellation : H1+H5_2}} and
		$\mathcal L_{\rm sym}^{*}=\mathcal L_{\rm sym}$ gives
		\begin{align}
			&\mathfrak{T}_{1}+\mathfrak{T}_{5}\notag\\ 
			=\ & 
			\left \langle \mathcal R_{4} f , \mathcal Pf \right \rangle _{L^{2}} + 2\big \langle
			\mathcal L_{\rm sym}  \mathcal Pf , \mathcal R_{3} f
			\big \rangle _{L^{2}}+ \left \langle \mathcal R_{3} f ,
			\mathcal R_{3} f \right \rangle _{L^{2}}
			- \frac{1}{2} \left \langle \mathcal Pf , \mathcal R_{1}
			\mathcal Pf \right \rangle _{L^{2}} +\frac{1}{2} \left \langle
			\mathcal Pf , \mathcal L_{\rm sym}^{2}\mathcal Pf \right \rangle _{L^{2}}.
			\label{cancellation : H1+H5}
		\end{align}
		Similarly, we use
		$ \mathcal L ^{*}=-\mathcal L +\mathcal L_{\rm sym} $,
		$\mathcal R_{2}=\big[\mathcal G  ,\mathcal L \big]$ and
		$ \mathcal R_{3}=\big[\mathcal P, \mathcal L \big]$ to
		derive
		\begin{align*}
			\mathfrak{T}_{2}+\mathfrak{T}_{3} 
			=\ &2\left \langle \mathcal L \mathcal P\mathcal G  f ,
			\mathcal Pf \right \rangle _{L^{2}} 
			+2\left \langle \mathcal R_{3}\mathcal G  f , \mathcal Pf \right \rangle _{L^{2}} +
			\left \langle \mathcal P\mathcal R_{2} f , \mathcal Pf \right
			\rangle _{L^{2}}
			\notag
			\\
			=\ &2\left \langle \mathcal P\mathcal G  f , -\mathcal L 
			\mathcal Pf \right \rangle _{L^{2}} +2\left \langle \mathcal P
			\mathcal G  f , \mathcal L_{\rm sym} \mathcal Pf \right \rangle _{L^{2}}
			+2\left \langle \mathcal R_{3}\mathcal G  f ,
			\mathcal Pf \right \rangle _{L^{2}} +\left \langle \mathcal P
			\mathcal R_{2} f , \mathcal Pf \right \rangle _{L^{2}}
			\notag
			\\
			=\ &2\left \langle \mathcal P\mathcal G  f , \mathcal R_{3} f \right \rangle _{L^{2}}
			-2\left \langle \mathcal P
			\mathcal G  f , \mathcal P\mathcal L  f \right \rangle _{L^{2}} +2
			\left \langle \mathcal P\mathcal G  f , \mathcal L_{\rm sym} \mathcal Pf
			\right \rangle _{L^{2}}
			+2\left \langle \mathcal R_{3}\mathcal G  f ,
			\mathcal Pf \right \rangle _{L^{2}} +\left \langle \mathcal P
			\mathcal R_{2} f , \mathcal Pf \right \rangle _{L^{2}}
			\notag
			\\
			=\ &2\left \langle \mathcal P\mathcal G  f , \mathcal R_{3} f \right \rangle _{L^{2}} -\mathfrak{T}_{6} +2\left
			\langle \mathcal P\mathcal G  f , \mathcal L_{\rm sym} \mathcal Pf
			\right \rangle _{L^{2}}
			+2\left \langle \mathcal R_{3}\mathcal G  f ,
			\mathcal Pf \right \rangle _{L^{2}} +\left \langle \mathcal P
			\mathcal R_{2} f , \mathcal Pf \right \rangle _{L^{2}}.
		\end{align*}
		This implies that
		\begin{align}
			\mathfrak{T}_{2}+\mathfrak{T}_{3}+\mathfrak{T}_{6}
			= 2\left \langle \mathcal P\mathcal G  f , \mathcal R_{3} f \right \rangle _{L^{2}} +2\left \langle \mathcal P
			\mathcal G  f , \mathcal L_{\rm sym} \mathcal Pf \right \rangle _{L^{2}}
			+2\left \langle \mathcal R_{3}\mathcal G  f ,
			\mathcal Pf \right \rangle _{L^{2}} +\left \langle \mathcal P
			\mathcal R_{2} f , \mathcal Pf \right \rangle _{L^{2}}.
			\label{cancellation_H2+H3+H6}
		\end{align}
		On account of {\eqref{cancellation-rewrite}},
		{\eqref{cancellation : H1+H5}} and {\eqref{cancellation_H2+H3+H6}}, we obtain {\eqref{cancellation-identiy}}.
		
		Now, we observe that, 
		if for any given $r\in\R$,
		\begin{align}
			\label{R 1-4 bound}
			\sup_{
				\A=\L+\G\in\mathscr{N}_1,\,\mathcal P\in \mathscr{O}_1}
			\bigg\{ &
			\left \| \mathcal R_{1}\right \|_{\mathscr L(H^{r+\alpha -1};H^{r})},\ 
			\left \| \mathcal R_{2}\right \|_{\mathscr L(H^{r+\alpha -1};H^{r})},\
			\left \| \mathcal R_{3}\right \|_{\mathscr L(H^{r+\mu+\alpha -1};H^{r})},\ 
			\left \| \mathcal R_{4}\right \|_{\mathscr L(H^{r+\mu+2\alpha -2};H^{r})}
			\bigg\}
			<\infty,
		\end{align}
		then \eqref{cancel-PA2} follows directly from \eqref{cancellation-identiy}, \eqref{P L G E operator norm}, and \eqref{R 1-4 bound}. Thus, it remains to establish \eqref{R 1-4 bound}.

		Via  Lemma~\ref{Lemma-[pn qm]} and \eqref{[P L] operator norm}, we see that \eqref{R 1-4 bound} holds true for $\mathcal R_{1}$, $\mathcal R_{2}$, $\mathcal R_{3}$. 
		Moreover, repeating the argument used in the proof of  Lemma \ref{Lemma-[pn qm]}, we see that
		$\{\mathcal R_{3}=\mathcal R_{3,\mathcal{P}, \L}\}_{\A=\L+\G\in\mathscr{N}_1,\,\mathcal P\in \mathscr{O}_1}\subset{\mathrm{OP}}\mathbf S^{\mu+\alpha -1}$ is
		bounded. Note  that
		$\mathcal R_{3}$ is of diagonal form. 
		Finally, by Lemma \ref{Lemma-[pn qm]}, we have
		$$\sup _{\A=\L+\G\in\mathscr{N}_1,\,\mathcal P\in \mathscr{O}_1}
		\left \|\mathcal R_{4}\right \|_{\mathscr L(H^{r+\mu+2\alpha -2};H^{r})}<\infty.$$
		Hence we obtain \eqref{R 1-4 bound}. 
	\end{proof}

	\subsubsection{Spatially Independent Case with Order $\beta \ge0$}
	
	\begin{proof}[Proof of  Theorem \ref{Thm-cancel} \ref{x-independent case}]
		Let $\B=\J+\V\in\mathscr{N}_2$ and  recall that $\mathcal J_{\rm sym}\triangleq\mathcal J+\mathcal J^{*}$ in \eqref{E M}. In this case, we have 
		\begin{equation}\label{P V J M commutes}
			[\mathcal{P}, \mathcal{J}]=[\mathcal{P}, \mathcal{V}]=[\mathcal{P}, \mathcal J_{\rm sym}]=[\mathcal{J}, \mathcal{V}]=[\mathcal{J}, \mathcal{J}^*]=0.
		\end{equation}
		Moreover, similar to the proof of  \eqref{cancel-PA1}, it follows from
		the boundedness of $\mathscr{N}_2\subset\mathbb{B}^{\beta}$ $($cf. Definition \ref{Bk class define}$)$, the boundedness of $\mathscr{O}_2\subset\OP\mathcal{S}_0^\mu$ and Lemma \ref{LOP} that
		\begin{equation}
			\label{P J V M operator norm}
			\sup _{\mathcal P\in\mathscr{O}_2}\left \|\mathcal P\right \|_{
				\mathscr L(H^{r+\mu};H^{r})},\ \
			\sup _{\B=\J+\V\in\mathscr{N}_2}
			\Big\{
			\left \|\mathcal{J}\right \|_{\mathscr L(H^{r+\beta};H^{r})},\ 
			\left \|\mathcal{V}\right \|_{\mathscr L(H^{r};H^{r})},\ 
			\left \|\mathcal J_{\rm sym}\right \|_{\mathscr L(H^{r};H^{r})}
			\Big\}<
			\infty ,\ \ r\in \mathbb R.
		\end{equation}
		
		\textit{\underline{Proof of  \eqref{cancel-PB1}}.} 
		Repeating the argument used in the proof of  \eqref{cancel-PA1} (see also \eqref{PLf Pf}), we obtain
		\begin{align*}
			\big \langle \mathcal P\B f, \mathcal Pf
			\big \rangle _{L^{2}}  =\frac{1}{2}\left \langle \mathcal Pf, \mathcal J_{\rm sym}
			\mathcal Pf\right \rangle _{L^{2}}
			+\left \langle \mathcal P\V f,  
			\mathcal Pf\right \rangle _{L^{2}}.
		\end{align*}
		Combining the above estimate and \eqref{P J V M operator norm}, we obtain  \eqref{cancel-PB1}.

		\textit{\underline{Proof of  \eqref{cancel-PB2}}.} 
		Following the same approach as in the derivation of \eqref{cancellation-identiy} in the proof
		for {\eqref{cancel-PA2}}, and employing \eqref{P V J M commutes}, we identify that
		\begin{align}
			\big \langle \mathcal P\mathcal B^{2}f, \mathcal Pf \big \rangle _{L^{2}}
			+ \big \langle \mathcal P\mathcal B  f, \mathcal P\mathcal B  f
			\big \rangle _{L^{2}} =\, \sum _{i=1}^{4}M_{i},
			\label{cancellation-identiy-2}
		\end{align}
		where
		\begin{equation*}
			M_{1}\triangleq \frac{1}{2}\left \langle \mathcal Pf , 
			\mathcal J_{\rm sym}^{2} \mathcal Pf \right \rangle _{L^{2}},\quad
			M_{2}\triangleq2\left \langle \mathcal P\mathcal V f ,
			\mathcal J_{\rm sym} \mathcal Pf \right \rangle _{L^{2}}, \quad
			M_{3}\triangleq\left \langle \mathcal P\mathcal V^{2} f ,
			\mathcal Pf\right \rangle _{L^{2}},\quad
			M_{4}\triangleq\left \langle \mathcal P\mathcal V f , \mathcal P
			\mathcal V f \right \rangle _{L^{2}}.
		\end{equation*}
		By virtue of \eqref{P J V M operator norm} and \eqref{cancellation-identiy-2}, \eqref{cancel-PB2} holds true.
	\end{proof}

	\subsection{Applications of Cancellation Properties}\label{Section : Appl-cancel}
	
	We first prove the following lemma concerning the existence of regular approximations of $\Q_i$ that preserve the cancellation properties.
	
	For operators $\mathcal{Q}_1,\, \mathcal{Q}_2  \in \mathbb{A}^\alpha \cup \mathbb{B}^\beta$ with symbols taking values in $\mathbb{C}^{d\times d}$. We agree that, once $\Q_1$ and $\Q_2$ are given as elements of  $\mathbb{A}^\alpha \cup \mathbb{B}^\beta$, the chosen decomposition and exponent shall remain fixed,  even if $\mathcal Q_i$ admits another admissible
	description. 
	
	By Lemma \ref{LOP}, we define for $i=1,2$,
	\begin{equation}\label{delta-Qi}
		\Q_i \in \LL\big(H_d^{s+\delta_i};H_d^s\big),\quad s\in\R,\qquad
		\delta_i=\delta_{\Q_i} \triangleq 
		\begin{cases}
			\alpha, & \text{if } \Q_i\in\mathbb{A}^\alpha \text{ and }  \Q_{i}\neq 0,\\[4pt]
			\beta, & \text{if } \Q_i\in\mathbb{B}^\beta \text{ and }  \Q_{i}\neq 0,\\[4pt]
			0, & \text{if } \Q_i=0.
		\end{cases}
	\end{equation}
	
	\begin{Lemma}\label{Lemma:Qn}
		Let $\Q_1,\Q_2\in \mathbb{A}^{\alpha}\cup \mathbb{B}^{\beta}$ with symbols taking values in $\mathbb{C}^{d\times d}$ and with fixed decomposition.   There exist  sequences $\{\Q_{i,n}\}_{n\ge1}$ $(i=1,2)$ such that the following properties hold:
		\begin{enumerate}[label={{\bf (\arabic*)}},leftmargin=0.79cm]\setlength\itemsep{0.2em}
			\item\label{Q-n mollify} For $i=1,2$ and $n\ge1$, $\Q_{i,n}\in\OP\SS^{-\infty}$ if $\Q_i\in\mathbb{A}^{\alpha}$ or $\Q_{i,n}\in\OP\SS_0^{-\infty}$ if $\Q_i\in\mathbb{B}^{\beta}$;

			\item\label{Q-n bouded in A B} $\{\Q_{i,n}\}_{n\ge1}\subset \mathbb{A}^{\alpha}$ is bounded   if $\Q_{i}\in \mathbb{A}^{\alpha}$ and $\{\Q_{i,n}\}_{n\ge1}\subset \mathbb{B}^{\beta}$ is bounded if $\Q_{i}\in \mathbb{B}^{\beta}$; 
			
			\item\label{Qn-Qm} Let $\delta_i=\delta_{\Q_i}$ be given in \eqref{delta-Qi}. Then, for $i=1,2$ and $\theta\ge0$,
			\begin{align}
				\lim_{n\to\infty}\|\Q_{i,n}f-\Q_{i}f\|_{H^{\theta}} =0,\qquad f\in H_d^{\theta+\delta_i}, \label{Qnf-Qf}
			\end{align}
			\begin{align}
				\|\Q_{i,n}-\Q_{i,m}\|_{\LL(H^s,H^{\theta})} 
				\lesssim  (n \land m)^{-(s-\theta-\delta_i)},\qquad s>\theta+\delta_i,\label{Qn-Qm operator norm}
			\end{align}
			\begin{align}
				\|\Q_{i,n}^2-\Q_{i,m}^2\|_{\LL(H^s,H^{\theta})}
				\lesssim   (n \land m)^{-(s-\theta-2\delta_i)},\qquad s>\theta+2\delta_i.\label{Qn2-Qm2 operator norm}
			\end{align}
			
			\item $($\textbf{Uniform cancellation in renormalization}$)$ \label{Q-n cancel} Let $s \ge 0$. There exist constants $C_{i,j}=C_{i,j}(s,d)>0$, for $i,j\in\{1,2\}$, such that
			\begin{equation}\label{Qn cancel-1}
				\sup_{n \ge 1} \left| \langle f, \mathcal{Q}_{i,n} f \rangle_{H^s} \right| \leq C_{i,1} \|f\|^2_{H^s}, \qquad f \in H_d^s, \quad i = 1,2,
			\end{equation}
			and
			\begin{equation}\label{Qn cancel-2}
				\sup_{n \ge 1} \left| \langle \mathcal{Q}_{i,n}^2 f, f \rangle_{H^s} + \langle \mathcal{Q}_{i,n} f, \mathcal{Q}_{i,n} f \rangle_{H^s} \right| \leq C_{i,2} \|f\|^2_{H^s}, \qquad f \in H_d^s, \quad i = 1,2.
			\end{equation}
			
			\item \label{Q-n cancel 0} In particular, 
			if $\Q_i\in\mathbb{B}^\beta$ satisfies $\Q_i+\Q_i^*=0$ for $i=1,2$, then for  any $s \ge 0$,
			\begin{equation*} 
				\sup_{n \ge 1} \left| \langle f, \mathcal{Q}_{i,n} f \rangle_{H^s} \right| =0, \qquad f \in H_d^s, \quad i = 1,2,
			\end{equation*}
			and
			\begin{equation*} 
				\sup_{n \ge 1} \left| \langle \mathcal{Q}_{i,n}^2 f, f \rangle_{H^s} + \langle \mathcal{Q}_{i,n} f, \mathcal{Q}_{i,n} f \rangle_{H^s} \right| =0, \qquad f \in H_d^s, \quad i = 1,2.
			\end{equation*}
		\end{enumerate}

	\end{Lemma}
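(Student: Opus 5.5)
The construction I would use is $\Q_{i,n}\triangleq J_n\Q_iJ_n$, where $J_n=\OP(j(\cdot/n))$ is the Fourier mollifier of \eqref{Define Jn}. Here $j\in C_c^\infty$ is real, even, $0\le j\le1$, and $j\equiv1$ near the origin. Then $J_n$ is self-adjoint, commutes with $\D^s$, and $\{J_n\}_{n\ge1}$ is bounded in $\OP\mathcal S^0_0$.

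\emph{Property (1).} If $\Q_i=\L+\G\in\mathbb A^\alpha$, set
\[
\L_n=J_n\L J_n,\qquad \G_n=J_n\G J_n .
\]
Each is a composition with the compactly supported multiplier $j(\cdot/n)$, so $\Q_{i,n}\in\OP\SS^{-\infty}$ by the composition calculus of Lemma \ref{LOP}. In the $\mathbb B^\beta$ case all symbols depend only on frequency, so $\Q_{i,n}\in\OP\SS^{-\infty}_0$ trivially.

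\emph{Property (2).}
\begin{itemize}
\item $\L_n$ stays diagonal with scalar entries $J_n\L_kJ_n$.
\item $\L_n+\L_n^*=J_n(\L+\L^*)J_n$ is bounded in $\OP\mathcal S^0$.
\item Boundedness of $\{J_n\L_kJ_n\}$ in $\OP\mathcal S^\alpha$ and of $\{\G_n\}$ in $\OP\SS^0$ comes from the uniform composition estimates in Lemma \ref{LOP}.
\item Commutation of the symbols of $\L_n$ and $\G_n$ is the delicate point, since composition symbols are only asymptotic sums. If the requirement is read at the level of matrix structure, it holds because $J_n$ is scalar. The two structural cases in the Examples (identical diagonal entries, or diagonal $\G$) are preserved exactly. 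Failing that, I would absorb the non-commuting remainder into $\G_n$, which is allowed since it has order $\le 0$ uniformly in $n$.
\end{itemize}
In the $\mathbb B^\beta$ case everything commutes, so $\Q_{i,n}=\J_n+\V_n$ with $\J_n=J_n^2\J$ and $\V_n=J_n^2\V$, which is immediate.

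\emph{Property (3).} I would write
\[
\Q_{i,n}-\Q_i=(J_n-\I)\Q_iJ_n+\Q_i(J_n-\I).
\]
Convergence \eqref{Qnf-Qf} then follows from $J_nf\to f$ in $H^\theta$ together with $\sup_n\|J_n\|<\infty$. For the rates I use the multiplier bound
\[
\|J_n-J_m\|_{\LL(H^s,H^{\sigma})}\lesssim (n\wedge m)^{-(s-\sigma)},\qquad s\ge\sigma,
\]
valid because $1-j(\xi/n)$ is supported on $|\xi|\gtrsim n$. Combining this with the mapping property $\Q_i\in\LL(H^{r+\delta_i};H^r)$ from \eqref{delta-Qi} gives \eqref{Qn-Qm operator norm}. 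Telescoping
\[
\Q_{i,n}^2-\Q_{i,m}^2=\Q_{i,n}(\Q_{i,n}-\Q_{i,m})+(\Q_{i,n}-\Q_{i,m})\Q_{i,m},
\]
and using the uniform bound $\|\Q_{i,n}\|_{\LL(H^{r+\delta_i},H^r)}\lesssim1$, gives \eqref{Qn2-Qm2 operator norm}.

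\emph{Property (4).} This is the core step. Write $\langle f,\Q_{i,n}f\rangle_{H^s}=\langle\D^s\Q_{i,n}f,\D^sf\rangle_{L^2}$ and apply Theorem \ref{Thm-cancel} with $\mathcal P=\D^s\in\OP\mathcal S^s_0$ (a single-element bounded family, $\mu=s$) and the bounded family $\{\Q_{i,n}\}$ from part (2). This yields \eqref{Qn cancel-1} and \eqref{Qn cancel-2} uniformly in $n$. Since each $\Q_{i,n}$ is smoothing, the pairings make sense for every $f\in H^s_d$, so the regularity restrictions $f\in H^{\mu+\alpha}$ and $f\in H^{\mu+2\alpha}$ in Theorem \ref{Thm-cancel} can be dropped. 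To do so rigorously, apply the theorem to $J_mf$ and let $m\to\infty$, using continuity of the smoothing operators on $H^s$.

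\emph{Property (5).} If $\Q_i\in\mathbb B^\beta$ with $\Q_i^*=-\Q_i$, then $\Q_{i,n}=J_n^2\Q_i$ is still skew-adjoint and commutes with $\D^s$. Hence
\[
\langle f,\Q_{i,n}f\rangle_{H^s}=0,\qquad \langle\Q_{i,n}^2f,f\rangle_{H^s}=-\langle\Q_{i,n}f,\Q_{i,n}f\rangle_{H^s},
\]
which gives both identities exactly.

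The main obstacle is the symbol-commutation clause in part (2) for $\mathbb A^\alpha$. I would handle it as described there, by absorbing the zero-order non-commuting remainder into $\G_n$.
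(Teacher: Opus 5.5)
Your proposal follows the paper's proof essentially step for step. The paper uses the same construction $\Q_{i,n}=J_n\Q_iJ_n$ and gets (1)--(2) from Lemma \ref{LOP}. It gets (3) from the mollifier rates of Lemma \ref{Lemma-Jn}, and (4) from Theorem \ref{Thm-cancel} with $\mathcal P=\D^s$ plus a density argument. For (5) it uses that $J_n$ commutes with frequency multipliers, together with skew-adjointness. Your two-term telescoping $\Q_{i,n}^2-\Q_{i,m}^2=\Q_{i,n}(\Q_{i,n}-\Q_{i,m})+(\Q_{i,n}-\Q_{i,m})\Q_{i,m}$ is just a leaner version of the paper's three-term split.

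One correction concerns the commuting-symbol clause in \ref{Ak:(L G)}. The paper passes over this point by citing Lemma \ref{LOP}; you are right to flag it. Let $\ell$ and $g$ be the symbols of $\L$ and $\G$. Since $\L J_n=\OP(\ell\, j(\cdot/n))$ exactly, the exact symbol of $J_n\L J_n$ is an $x$-convolution of $\ell(\cdot,\xi)j(\xi/n)$ against the scalar kernel $\mathscr F^{-1}[j(\cdot/n)](z)\,{\rm e}^{-{\rm i}z\cdot\xi}$, and likewise for $\G$. Exact commutation would therefore need $\ell(y,\xi)$ to commute with $g(z,\xi)$ for $y\neq z$. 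That holds in the two Example cases (scalar $\ell$, or diagonal $\G$) but not in general.

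Your fallback of absorbing the non-commuting remainder into $\G_n$ does not fix this. The offending terms, of the form $\partial_\xi^\gamma j(\xi/n)\,\partial_x^\gamma g\, j(\xi/n)$ with $|\gamma|\ge1$, already sit inside $\G_n$. They still fail to commute with the symbol of $\L_n$.

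What does work is to note that the proof of \eqref{cancel-PA2} uses the commutation only to make $\mathcal R_2=[\G,\L]$ bounded from $H^{r+\alpha-1}$ to $H^r$, uniformly over the family. Since $\partial_\xi^\gamma j(\cdot/n)$ is bounded in $\SS_0^{-|\gamma|}$ uniformly in $n$:
\begin{itemize}
\item the symbol of $\L_n$ equals $j(\cdot/n)^2\ell$ modulo a uniformly bounded set in $\SS^{\alpha-1}$;
\item the symbol of $\G_n$ equals $j(\cdot/n)^2g$ modulo a uniformly bounded set in $\SS^{-1}$.
\end{itemize}
Because $[g,\ell]=0$ pointwise, the matrix commutator of these symbols is uniformly bounded in $\SS^{\alpha-1}$. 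Hence $\{[\G_n,\L_n]\}_{n\ge1}$ is bounded in $\OP\SS^{\alpha-1}$, which is all Theorem \ref{Thm-cancel} needs for the family. So part (2) is really established, and used, with commutation modulo $\SS^{\alpha-1}$ rather than exact commutation.
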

	\begin{proof}
		Let $\{J_n\}_{n\ge 1}$ be a sequence of Friedrichs mollifiers given in \eqref{Define Jn}. By definition, $J_n\in \OP\SS_0^{-\infty}$ for all $n\ge1$, and $\{J_n\}_{n\ge1}\subset \OP\SS_0^{0}$ is bounded. As a result,
		for $i=1,2$, one can infer from Lemma \ref{LOP} that $$\Q_{i,n}\triangleq J_n\Q_i J_n$$
		satisfies \ref{Q-n mollify} and \ref{Q-n bouded in A B}. For \ref{Qn-Qm}, \eqref{Qnf-Qf} is a direct consequence of Lemma \ref{Lemma-Jn}. When $s-\theta-\delta_i>0$, it follows from Lemmas \ref{Lemma-Jn} and \ref{LOP} that
		\begin{align*}
			\|\Q_{i,n}-\Q_{i,m}\|_{\LL(H^s,H^{\theta})} 
			=\ & \norm{(J_n-J_m)\Q_iJ_n +J_m\Q_i(J_n-J_m)}_{\LL(H^s,H^{\theta})}\notag\\
			\leq \ & \norm{J_n-J_m}_{\LL(H^{s-\delta_i};H^{\theta})}\norm{\Q_iJ_n }_{\LL(H^s;H^{s-\delta_i})}
			+\norm{J_m\Q_i}_{\LL(H^{\theta+\delta_i};H^{\theta})}
			\norm{J_n-J_m}_{\LL(H^s;H^{\theta+\delta_i})}\notag\\
			\lesssim \ & (n \land m)^{-(s-\theta-\delta_i)},
		\end{align*}
		which is \eqref{Qn-Qm operator norm}. Similarly, when  $s>\theta+2\delta_i$, it is easy to see that 
		\begin{align*}
			\|\Q_{i,n}^2-\Q_{i,m}^2\|_{\LL(H^s,H^{\theta})} 
			=\ & \norm{(J_n-J_m)\Q_iJ^2_n\Q_iJ_n +J_m\Q_i(J^2_n-J^2_m)\Q_iJ_n+J_m\Q_iJ^2_m\Q_i(J_n-J_m)}_{\LL(H^s,H^{\theta})}
			\notag\\
			\leq \ & \norm{J_n-J_m}_{\LL(H^{s-2\delta_i};H^{\theta})}\norm{\Q_iJ^2_n\Q_iJ_n}_{\LL(H^s;H^{s-2\delta_i})}\notag\\
			&+\norm{J_m\Q_i}_{\LL(H^{\theta+\delta_i};H^{\theta})}
			\norm{J^2_n-J^2_m}_{\LL(H^{s-\delta_i};H^{\theta+\delta_i})}
			\norm{\Q_iJ_n}_{\LL(H^s;H^{s-\delta_i})}\notag\\
			&+\norm{J_m\Q_iJ^2_m\Q_i}_{\LL(H^{\theta+2\delta_i};H^{\theta})}\norm{J_n-J_m}_{\LL(H^s;H^{\theta+2\delta_i})}\notag\\
			\lesssim \ & (n \land m)^{-(s-\theta-2\delta_i)}.
		\end{align*}
		Hence \eqref{Qn2-Qm2 operator norm} is true. The estimates \eqref{Qn cancel-1} and \eqref{Qn cancel-2} follows from Theorem \ref{Thm-cancel} and the dense embedding $H^{s_1}\hookrightarrow H^{s_2}$ for $s_1\ge s_2$.
		Finally, if $\Q_i\in\mathbb{B}^\beta$, $\Q_i$ commutes with $J_n$, i.e., $[\Q_i,J_n]=0$.  From this and the property $\Q_i+\Q_i^*=0$,  we arrive at
		$$\langle f, \mathcal{Q}_{i,n} f \rangle_{H^s}=\frac{1}{2}\langle J_n f, \mathcal{Q}_{i}J_n f \rangle_{H^s}+\frac{1}{2}\langle J_n f, \mathcal{Q}^*_{i}J_n f \rangle_{H^s}=0,$$
		and 
		$$\langle \mathcal{Q}_{i,n}^2 f, f \rangle_{H^s} + \langle \mathcal{Q}_{i,n} f, \mathcal{Q}_{i,n} f \rangle_{H^s}
		=\langle \mathcal{Q}_{i}J^2_n f, \Q_i^*J_n^2 f \rangle_{H^s} + \langle \mathcal{Q}_{i} J^2_n f, \mathcal{Q}_{i} J^2_n f \rangle_{H^s}=0,$$
		which yields the desired estimates.
	\end{proof}
	
	\begin{Remark}\label{Remark:renormalization Qn}
		The cancellation properties established in Theorem \ref{Thm-cancel} require that $f$ possess additional regularity beyond order $s$. Nevertheless, in  \eqref{Qn cancel-1} and \eqref{Qn cancel-2}, all terms are well defined for $f\in H^s$. Consequently, the sequences $\{\Q_{i,n}\}_{n\ge1}$ ($i=1,2$) can be viewed as renormalized operators that preserve the cancellation structure of \eqref{Qn cancel-1} and \eqref{Qn cancel-2}  uniformly in $n$ for $f\in H^s$. This uniform, renormalized cancellation allows one to handle the singular terms by passage to the limit.
	\end{Remark}

	\subsubsection{Stratonovich Case : $\Q_1 u\,\circ\, {\rm d}W$}\label{Section : Q1n}
	Recall \eqref{eq:Strat-Ito}, i.e.,
	\begin{equation*}
		\mathcal{Q}_1 u\, \circ\, {\rm d}W 
		=
		\frac{1}{2}\mathcal{Q}_1^2 u\d t+\mathcal{Q}_1 u\d W.
	\end{equation*}
	To solve \eqref{SEuler-(rho u)} or \eqref{SEuler-(u P) damp} in the Sobolev space, we require estimates in $H^s$ for the above terms in integral form (see \eqref{Target problem (rho u)}$_{2}$ or \eqref{SEuler-damping-target}$_1$). However, as discussed in Remark \ref{Remark singualrity}, $u\in H^s$ does not guarantee $\mathcal{Q}_1 u\, \circ\, {\rm d}W \in H^s$, since $\mathcal{Q}_1^2 u\in H^{s-2\delta_1}$ and $\mathcal{Q}_1 u\in H^{s-\delta_1}$, where $\delta_1=\delta_{\Q_1}$ is given by \eqref{delta-Qi}. Consequently, we cannot directly apply It\^{o}'s formula to $\|u\|^2_{H^s}$.
	
	Fortunately, \ref{Q-n cancel} in Lemma \ref{Lemma:Qn} shows that $\mathcal{Q}_1 u\, \circ\, {\rm d}W$ can be regularized while preserving cancellation properties uniformly. Specifically, let $\Q_{1,n}$ be the regular approximation of $\Q_1$ provided by Lemma \ref{Lemma:Qn}, and consider the regularized Stratonovich noise
	\begin{equation*} 
		\mathcal{Q}_{1,n} u\,\circ\, {\rm d}W
		=
		\frac{1}{2}\mathcal{Q}_{1,n}^2 u\d t+\mathcal{Q}_{1,n} u\d W.
	\end{equation*}
	Then \ref{Q-n cancel} in Lemma \ref{Lemma:Qn} yields the required uniform estimates.
	
	To pass to the limit in a suitable Sobolev space $H^\theta$ with suitable $\theta<s$, we must also estimate the difference between two mollification levels $n$ and $m$. In particular, we encounter the expressions
	$$
	\langle \mathcal{Q}_{1,n} f-\mathcal{Q}_{1,m} g,f-g\rangle_{H^{\theta}}\d W,
	$$
	and
	$$
	\Big(\langle \mathcal{Q}_{1,n}^2 f-\mathcal{Q}_{1,m}^2 g,f-g\rangle_{H^{\theta}}+
	\langle \mathcal{Q}_{1,n} f-\mathcal{Q}_{1,m} g,\mathcal{Q}_{1,n} f-\mathcal{Q}_{1,m} g\rangle_{H^{\theta}}\Big)\d t.
	$$
	Thus, we require the following stability estimates.
	\begin{Lemma}\label{Lemma-Q1 n m}
		Let $\delta_1=\delta_{\Q_1}$  be as defined in \eqref{delta-Qi} and let $m,n\ge1$.  The following estimates hold true:
		\begin{align}
			&\left|\IP{\mathcal{Q}_{1,n} f-\mathcal{Q}_{1,m} g,f-g}_{H^{\theta}} \right|\notag\\
			\lesssim \ & (n \land m)^{-2(s-\theta-\delta_1)}(\norm{f}_{H^{s}}+\norm{g}_{H^{s}})^2+\|f-g\|^2_{H^{\theta}},\quad f,g\in H_d^s,\ \, s>\theta+\delta_1, 
			\label{Q1n difference Ito 1}
		\end{align}
		and
		\begin{align}
			&\Big|\BIP{\mathcal{Q}_{1,n}^2 f-\mathcal{Q}_{1,m}^2 g,f-g}_{H^{\theta}}+
			\BIP{\mathcal{Q}_{1,n} f-\mathcal{Q}_{1,m} g,\mathcal{Q}_{1,n} f-\mathcal{Q}_{1,m} g}_{H^{\theta}}\Big|\notag\\
			\lesssim\ &  (n \land m)^{-(s-\theta-\delta_1)}(\norm{f}_{H^{s}}+\norm{g}_{H^{s}})^2+\|f-g\|^2_{H^{\theta}},\quad f,g\in H_d^s,\ \,  s>\theta+2\delta_1.
			\label{Q1n difference Ito 2}
		\end{align}
	\end{Lemma}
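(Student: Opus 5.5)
The natural approach is to split each difference into a ``same operator, different arguments'' part and a ``same argument, different operators'' part. We then treat the first with the uniform cancellation of Lemma \ref{Lemma:Qn}\ref{Q-n cancel} and the second with the operator-norm rates \eqref{Qn-Qm operator norm}--\eqref{Qn2-Qm2 operator norm}. Throughout, set $w\triangleq f-g$.

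For \eqref{Q1n difference Ito 1}, write
\[
\mathcal{Q}_{1,n} f-\mathcal{Q}_{1,m} g=\mathcal{Q}_{1,n} w+(\mathcal{Q}_{1,n}-\mathcal{Q}_{1,m})g .
\]
The first piece gives $|\langle \mathcal{Q}_{1,n}w,w\rangle_{H^\theta}|\le C\|w\|^2_{H^\theta}$ by \eqref{Qn cancel-1} with $s$ replaced by $\theta$. Note that \eqref{Qn cancel-1} holds for every $s\ge0$ with a constant independent of $n$, and that $w\in H^s\subset H^\theta$. For the second piece, Cauchy--Schwarz, \eqref{Qn-Qm operator norm} and Young's inequality give
\[
|\langle (\mathcal{Q}_{1,n}-\mathcal{Q}_{1,m})g,w\rangle_{H^\theta}|\lesssim (n\wedge m)^{-(s-\theta-\delta_1)}\|g\|_{H^s}\|w\|_{H^\theta}\le (n\wedge m)^{-2(s-\theta-\delta_1)}\|g\|^2_{H^s}+\|w\|_{H^\theta}^2 .
\]
This is the claimed bound.

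For \eqref{Q1n difference Ito 2}, let $D\triangleq\mathcal{Q}_{1,n}-\mathcal{Q}_{1,m}$ and $D_2\triangleq\mathcal{Q}_{1,n}^2-\mathcal{Q}_{1,m}^2$, and expand
\[
\mathcal{Q}_{1,n}^2 f-\mathcal{Q}_{1,m}^2 g=\mathcal{Q}_{1,n}^2 w+D_2 g,\qquad \mathcal{Q}_{1,n} f-\mathcal{Q}_{1,m} g=\mathcal{Q}_{1,n} w+D g .
\]
The left-hand side then becomes
\[
\Big[\langle \mathcal{Q}_{1,n}^2 w,w\rangle_{H^\theta}+\|\mathcal{Q}_{1,n}w\|^2_{H^\theta}\Big]+\langle D_2 g,w\rangle_{H^\theta}+2\langle \mathcal{Q}_{1,n}w,Dg\rangle_{H^\theta}+\|Dg\|^2_{H^\theta}.
\]
The bracket is bounded by $C\|w\|^2_{H^\theta}$ by \eqref{Qn cancel-2} at level $\theta$. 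Using \eqref{Qn2-Qm2 operator norm}, which is available since $s>\theta+2\delta_1$, the term $\langle D_2g,w\rangle_{H^\theta}$ is at most $(n\wedge m)^{-(s-\theta-2\delta_1)}\|g\|_{H^s}\|w\|_{H^\theta}$. By Young's inequality this is bounded by $(n\wedge m)^{-2(s-\theta-2\delta_1)}\|g\|^2_{H^s}+\|w\|^2_{H^\theta}$. For $\|Dg\|^2_{H^\theta}$, \eqref{Qn-Qm operator norm} gives the rate $(n\wedge m)^{-2(s-\theta-\delta_1)}\|g\|^2_{H^s}$.

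The main obstacle is the cross term $\langle \mathcal{Q}_{1,n}w,Dg\rangle_{H^\theta}$. Here $\mathcal{Q}_{1,n}w$ cannot be bounded by $\|w\|_{H^\theta}$ uniformly in $n$, because $\mathcal{Q}_{1,n}$ has order $\delta_1$. The plan is to move the derivative loss onto the smooth factor. I would bound $\|\mathcal{Q}_{1,n}w\|_{H^\theta}\lesssim \|w\|_{H^{\theta+\delta_1}}\le \|f\|_{H^s}+\|g\|_{H^s}$, using the uniform boundedness of $\{\mathcal{Q}_{1,n}\}$ in $\LL(H^{\theta+\delta_1};H^\theta)$ from Lemma \ref{Lemma:Qn}\ref{Q-n bouded in A B} and Lemma \ref{LOP}. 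This makes the cross term $\lesssim (n\wedge m)^{-(s-\theta-\delta_1)}(\|f\|_{H^s}+\|g\|_{H^s})^2$. That product rate is exactly the single (not doubled) power of $(n\wedge m)$ appearing in \eqref{Q1n difference Ito 2}, which explains why the exponent there is weaker than in \eqref{Q1n difference Ito 1}.

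If one prefers instead to pass the operator across, via $\mathcal{Q}_{1,n}^*$ acting on $\mathcal{D}^{2\theta}Dg$, the same power results. Finally, since $n\wedge m\ge1$ and $s-\theta-2\delta_1>0$, all the remaining powers are dominated by $(n\wedge m)^{-(s-\theta-\delta_1)}$ or by the doubled versions of the rates, and collecting the terms gives the two estimates.
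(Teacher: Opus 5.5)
Your decomposition is essentially the paper's, mirrored. The paper writes $\mathcal{Q}_{1,n}f-\mathcal{Q}_{1,m}g=(\mathcal{Q}_{1,n}-\mathcal{Q}_{1,m})f+\mathcal{Q}_{1,m}(f-g)$ and splits \eqref{Q1n difference Ito 2} into six terms $\mathfrak{A}_1,\dots,\mathfrak{A}_6$. You instead keep $\mathcal{Q}_{1,n}$ on $w$ and put the operator difference on $g$. Your pieces correspond to the paper's as follows:
\begin{itemize}
\item the cancellation bracket to $\mathfrak{A}_2+\mathfrak{A}_6$;
\item $\|Dg\|_{H^\theta}^2$ to $\mathfrak{A}_3$;
\item the cross term, where you move the order-$\delta_1$ loss onto $w$ and measure $w$ in $H^s$, to $\mathfrak{A}_4,\mathfrak{A}_5$;
\item $\langle D_2 g,w\rangle_{H^\theta}$ to $\mathfrak{A}_1$.
\end{itemize}
Your proof of \eqref{Q1n difference Ito 1} is fine.

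The gap is in your final collection step for \eqref{Q1n difference Ito 2}. Your Young-inequality bound for $\langle D_2g,w\rangle_{H^\theta}$ gives the rate $(n\wedge m)^{-2(s-\theta-2\delta_1)}$. We have $2(s-\theta-2\delta_1)\ge s-\theta-\delta_1$ if and only if $s-\theta\ge 3\delta_1$, which the stated hypothesis $s>\theta+2\delta_1$ does not guarantee. For $\delta_1>0$ and $\theta+2\delta_1<s<\theta+3\delta_1$, this power decays strictly slower than $(n\wedge m)^{-(s-\theta-\delta_1)}$, so no constant absorbs it, and your closing sentence is false in that range.

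The paper's bound for $\mathfrak{A}_1$ has exactly the same defect. It is harmless there only because the lemma is applied with $\theta<s-\max\{3\zeta,1\}$, so $s-\theta>3\delta_1$.

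The fix is the trick you already use for the cross term: shift $\delta_1$ derivatives onto $w$ instead of applying Young's inequality,
\begin{align*}
|\langle D_2g,w\rangle_{H^\theta}|&=|\langle \mathcal{D}^{\theta-\delta_1}D_2g,\mathcal{D}^{\theta+\delta_1}w\rangle_{L^2}|\le \|D_2\|_{\mathscr L(H^s;H^{\theta-\delta_1})}\|g\|_{H^s}\|w\|_{H^{\theta+\delta_1}}\\
&\lesssim (n\wedge m)^{-(s-\theta-\delta_1)}(\|f\|_{H^s}+\|g\|_{H^s})^2 .
\end{align*}
This uses \eqref{Qn2-Qm2 operator norm} with target index $\theta-\delta_1$, which is admissible since $s>(\theta-\delta_1)+2\delta_1$. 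It also uses $\theta+\delta_1\le s$. If $\theta<\delta_1$, the proof of \eqref{Qn2-Qm2 operator norm} still works at negative index: $j(\xi/n)-j(\xi/m)$ vanishes for $|\xi|\le n\wedge m$, and Lemma \ref{LOP} holds for every $q\in\mathbb R$.

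With this change every term carries the rate $(n\wedge m)^{-(s-\theta-\delta_1)}$ or a faster one. Then \eqref{Q1n difference Ito 2} follows under $s>\theta+2\delta_1$ as stated.
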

	\begin{proof}
		We first use  \eqref{Qn-Qm operator norm} and \eqref{Qn cancel-1} to obtain
		\begin{align*}
			&\left|\IP{\mathcal{Q}_{1,n} f-\mathcal{Q}_{1,m} g,f-g}_{H^{\theta}} \right| \\
			\leq  \ & 
			\left|\BIP{\Q_{1,n}f-\Q_{1,m}f,f-g}_{H^{\theta}}\right|
			+ 
			\left| \BIP{\Q_{1,m}f-\Q_{1,m}g,f-g}_{H^{\theta}}\right|\\
			\lesssim \ & 
			\|\Q_{1,n}-\Q_{1,m}\|_{\LL(H^{s};H^\theta)}\|f\|_{H^s}\|f-g\|_{H^{\theta}}
			+ \|f-g\|^2_{H^{\theta}}\\
			\lesssim \ & (n \land m)^{-2(s-\theta-\delta_1)}\|f\|^2_{H^s}+\|f-g\|^2_{H^{\theta}},
		\end{align*}
		which implies \eqref{Q1n difference Ito 1}. As for \eqref{Q1n difference Ito 2}, we note that
		\begin{align}
			&\BIP{\mathcal{Q}_{1,n}^2 f-\mathcal{Q}_{1,m}^2 g,f-g}_{H^{\theta}}+
			\BIP{\mathcal{Q}_{1,n} f-\mathcal{Q}_{1,m} g,\mathcal{Q}_{1,n} f-\mathcal{Q}_{1,m} g}_{H^{\theta}}\notag\\
			=\ & \BIP{\Q_{1,n}^2f -\Q_{1,m}^2f ,f -g}_{H^{\theta}}
			+\BIP{\Q_{1,m}^2f -\Q_{1,m}^2g,f -g}_{H^{\theta}}\notag\\
			&+\BIP{\mathcal{Q}_{1,n} f-\mathcal{Q}_{1,m} f,\mathcal{Q}_{1,n} f-\mathcal{Q}_{1,m} f}_{H^{\theta}}
			+\BIP{\mathcal{Q}_{1,m} f-\mathcal{Q}_{1,m} g,\mathcal{Q}_{1,n} f-\mathcal{Q}_{1,m} f}_{H^{\theta}}\notag\\
			&+ \BIP{\mathcal{Q}_{1,n} f-\mathcal{Q}_{1,m} f,\mathcal{Q}_{1,m} f-\mathcal{Q}_{1,m} g}_{H^{\theta}} 
			+\BIP{\mathcal{Q}_{1,m} f-\mathcal{Q}_{1,m} g,\mathcal{Q}_{1,m} f-\mathcal{Q}_{1,m} g}_{H^{\theta}}\notag\\
			\triangleq \ & \sum_{k=1}^6\mathfrak{A}_k. \label{Q1n difference Ito 1-6}
		\end{align}
		By Lemma \ref{Lemma:Qn}, the linearity of $\Q_{1,n}$ and the fact $s>\theta+2\delta_1$, we have 
		\begin{align*}
			|\mathfrak{A}_2+\mathfrak{A}_6|\lesssim \|f-g\|^2_{H^{\theta}},
		\end{align*}
		\begin{align*}
			|\mathfrak{A}_1| \leq  \norm{\Q_{1,n}^2-\Q_{1,m}^2}_{\LL(H^s;H^{\theta})}\|f\|_{H^{s}}\cdot \|f-g\|_{H^{\theta}} 
			\lesssim (n\wedge m)^{-2(s-\theta-2\delta_1)} (\|f\|_{H^{s}} +\|g\|_{H^{s}})^2 + \|f-g\|^2_{H^{\theta}},
		\end{align*}
		\begin{align*}
			|\mathfrak{A}_3|\leq  \norm{\Q_{1,n}-\Q_{1,m}}^2_{\LL(H^s;H^{\theta})}\|f\|^2_{H^{s}} 
			\lesssim (n\wedge m)^{-2(s-\theta-\delta_1)} \|f\|^2_{H^{s}},
		\end{align*}
		\begin{align*}
			|\mathfrak{A}_4|,|\mathfrak{A}_5|
			\lesssim \|f-g\|_{H^{s}}\norm{\Q_{1,n}-\Q_{1,m}}_{\LL(H^s;H^{\theta})}\|f\|_{H^{s}}
			\lesssim   (n\wedge m)^{-(s-\theta-\delta_1)} (\|f\|^2_{H^{s}} +\|f\|_{H^{s}} \|g\|_{H^{s}}).
		\end{align*}
		Combing the above estimates and \eqref{Q1n difference Ito 1-6}, we obtain \eqref{Q1n difference Ito 2}.
	\end{proof}

	\subsubsection{Marcus Case : $\Q_2 u\diamond {\rm d}L$}\label{Section : Q2n}
	As explained in Section \ref{Section:introduce noise}, to study $\Q_2 u\diamond {\rm d}L$ in Sobolev space $H^s$, we need to 
	consider the Marcus flow \eqref{Marcus flow Q2} in $H^s$ with $l\in\R$, i.e.,
	\begin{equation*}
		\wp(r)\triangleq\wp(r,l,f),\quad 
		\frac{{\rm d}}{{\rm d}r}\wp(r)= l\cdot \Q_2\wp(r),\quad r\in[0,1],  \quad \wp(0) = f\in H^s.
	\end{equation*}
	Then, under \ref{Hypo-W L},  $\int_0^t \Q_2 u(t')\diamond {\rm d}L(t')$ can be expressed as 
	\begin{align*}
		\int_0^t \Q_2 u(t')\diamond {\rm d}L(t') 
		=\ 
		& \int_0^t \int_{|l|\le 1}
		\big\{ \wp(1,l,u(t'-)) - u(t'-)\big\}\,
		\tilde{\eta}({\rm d}l,{\rm d}t') 
		\\ & 
		+\int_0^t \int_{|l|\le 1}
		\big\{ \wp(1,l,u(t'))-u(t')
		-l\cdot \Q_2(u(t'))\big\}\, 
		\nu({\rm d}l)\,{\rm d}t'.
	\end{align*}
	
	As noted in Remark \ref{Remark singualrity}, the equation is singular in $H^s$ because $\Q_2: H^s\to H^{s-\delta_2}$, where $\delta_2=\delta_{\Q_2}$ is given by \eqref{delta-Qi}. Specifically, for $u\in H^s$, the flow $\wp(1,l,u)$ does not remain in $H^s$, implying that the Marcus integral $\int_0^t \Q_2 u(t')\diamond {\rm d}L(t')$ does not preserve $H^s$ regularity. Analogous to the Stratonovich term $\Q_1 u\,\circ\, {\rm d}W$, we regularize the Marcus flow by applying Lemma \ref{Lemma:Qn} to obtain a sequence $\wp_n$ satisfying uniform estimates. Moreover, these uniform estimates extend to a family of Lyapunov-type functions $V\in\mathscr{V}$ defined as in \eqref{SCRV}.

	\begin{Lemma}\label{Lemma-Marcus-n}
		Let $l\in\R$, $s\ge0$ and let \ref{Hypo-Qi} hold. Let $\Q_{2,n}$ denote the regular approximation of  $\Q_2$ provided by Lemma \ref{Lemma:Qn}. Consider the following regularized Marcus flow
		\begin{equation}\label{regular Marcus flow Q2}
			\wp_n(r)\triangleq  \wp_n(r,l,f),\quad  \frac{{\rm d}}{{\rm d}r}\wp_n(r)= l\cdot \Q_{2,n}\wp_n(r),\quad r\in[0,1],  \quad \wp_n(0) = f\in H^s.
		\end{equation}
		Let $C_{2,1} > 0$ and $C_{2,2} > 0$ be the constants introduced in  \ref{Q-n cancel} of Lemma \ref{Lemma:Qn}. 
		Let $V\in \mathscr{V}$ $($see \eqref{SCRV}$)$.
		Define
		\begin{equation}\label{V wp-n f}
			V_s^{\wp_n,f}(r,l)\triangleq V(\|\wp_n(r,l, f)\|_{H^s}^2) - V(\norm{f}_{H^s}^2),\quad r\in[0,1],\quad f\in H^{s}.
		\end{equation}
		Then we have that for $l\in\R$
		\begin{align}
			\sup_{n\ge1}\Abs{V_s^{\wp_n,f}(r,l)}
			\leq \left({\rm e}^{2C_{2,1}|l|r}-1\right) V(\norm{f}_{H^s}^2),\quad r\in[0,1],
			\label{wp-n estimate 1}
		\end{align}
		and
		\begin{align}
			\sup_{n\ge1}\left\{V_s^{\wp_n,f}(1,l)-2l\cdot V'(\|f\|^2_{H^s} )\bIP{\Q_{2,n}f,f}_{H^s}\right\}\leq \frac{C_{2,2}}{2C_{2,1}^2}\left({\rm e}^{2C_{2,1}|l|}-2C_{2,1}|l|-1\right)V(\|f\|^2_{H^s}).
			\label{wp-n estimate 2}
		\end{align}
	\end{Lemma}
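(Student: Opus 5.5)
The plan is to reduce both estimates to scalar differential inequalities along the regularized flow. By \ref{Q-n mollify} of Lemma \ref{Lemma:Qn}, $\Q_{2,n}\in\OP\SS^{-\infty}$ (or $\OP\SS_0^{-\infty}$), so $\Q_{2,n}\in\LL(H^s;H^s)$. Hence \eqref{regular Marcus flow Q2} is a linear ODE in $H^s$ with a bounded generator, and its unique solution is $\wp_n(r)={\rm e}^{rl\Q_{2,n}}f$, which is $C^\infty$ in $r$ with values in $H^s$. Fix $n$ and set
\[
y(r)\triangleq\|\wp_n(r)\|_{H^s}^2,\qquad g(r)\triangleq V(y(r)),\qquad a\triangleq 2C_{2,1}|l| .
\]
All constants used below come from \eqref{Qn cancel-1}--\eqref{Qn cancel-2}, which are uniform in $n$. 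This is what makes it possible to take $\sup_{n\ge1}$ at the end.

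\textbf{Estimate \eqref{wp-n estimate 1}.} Differentiating gives $y'(r)=2l\langle \Q_{2,n}\wp_n,\wp_n\rangle_{H^s}$, so \eqref{Qn cancel-1} yields $|y'|\le a\,y$. Gronwall's inequality then gives
\[
{\rm e}^{-ar}y(0)\le y(r)\le {\rm e}^{ar}y(0).
\]
Next I use the properties of $V\in\mathscr V$. Since $V$ is concave with $V(0)=0$, we have $V(\lambda x)\le \lambda V(x)$ for $\lambda\ge1$ and $V(\lambda x)\ge\lambda V(x)$ for $\lambda\in[0,1]$. Since $V$ is increasing, these give two one-sided bounds:
\[
V(y(r))-V(y(0))\le({\rm e}^{ar}-1)V(y(0)),\qquad V(y(0))-V(y(r))\le(1-{\rm e}^{-ar})V(y(0)).
\]
Because $1-{\rm e}^{-ar}\le {\rm e}^{ar}-1$, the second bound is dominated by the first, and \eqref{wp-n estimate 1} follows.

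\textbf{Estimate \eqref{wp-n estimate 2}.} I apply Taylor's formula with integral remainder:
\[
g(1)-g(0)-g'(0)=\int_0^1(1-r)g''(r)\,{\rm d}r .
\]
Here $g'(0)=2lV'(\|f\|_{H^s}^2)\langle\Q_{2,n}f,f\rangle_{H^s}$, which is exactly the subtracted term in \eqref{wp-n estimate 2}. For the second derivative, the real inner-product structure gives
\[
y''=2l^2\big(\langle\Q_{2,n}^2\wp_n,\wp_n\rangle_{H^s}+\langle\Q_{2,n}\wp_n,\Q_{2,n}\wp_n\rangle_{H^s}\big),
\]
and therefore $y''\le 2l^2C_{2,2}\,y$ by \eqref{Qn cancel-2}. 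Since $g''=V''(y)(y')^2+V'(y)y''$ and $V''\le0$, we obtain $g''\le 2l^2C_{2,2}V'(y)y$. Concavity together with $V(0)=0$ gives $xV'(x)\le V(x)$. Combining this with the first part, $V(y(r))\le {\rm e}^{ar}V(\|f\|_{H^s}^2)$, so
\[
g''(r)\le 2l^2C_{2,2}\,{\rm e}^{ar}V(\|f\|_{H^s}^2).
\]
Finally, $\int_0^1(1-r){\rm e}^{ar}{\rm d}r=({\rm e}^a-a-1)/a^2$. Substituting $a=2C_{2,1}|l|$ gives exactly the constant $\frac{C_{2,2}}{2C_{2,1}^2}$ on the right-hand side of \eqref{wp-n estimate 2}. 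When $l=0$ the statement is trivial.

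\textbf{Main obstacle.} The delicate step is the sign handling in $g''$: the term $V''(y)(y')^2$ must be discarded using $V''\le0$, and only then is $V'(y)y$ converted back into $V(y)$. I also need to make sure that no bound depends on $n$ except through $C_{2,1}$ and $C_{2,2}$. Justifying the differentiation of $y$ is routine, because the regularized generator is bounded on $H^s$.
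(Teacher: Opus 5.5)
Your proof is correct and follows essentially the paper's argument. You differentiate along the bounded-generator flow, apply the uniform bounds \eqref{Qn cancel-1}--\eqref{Qn cancel-2}, discard the $V''(y)(y')^2$ term by concavity, convert $V'(y)y\le V(y)$, and integrate twice; your Taylor remainder $\int_0^1(1-r)g''(r)\,{\rm d}r$ is the paper's double integral $2l\int_0^1\int_0^r g_n'$ after Fubini. The only minor difference is in \eqref{wp-n estimate 1}: you run Gronwall on $\|\wp_n\|_{H^s}^2$ and transfer to $V$ through the concavity scaling $V(\lambda x)\le\lambda V(x)$ for $\lambda\ge1$ (and the reverse for $\lambda\le1$), whereas the paper applies Gronwall directly to $V(\|\wp_n\|_{H^s}^2)$ using $V'(x)x\le V(x)$ and then integrates the resulting exponential bound.
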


	\begin{proof}
		For each $n\ge1$, by Lemma \ref{Lemma:Qn} we see that $\Q_{2,n}\in \mathscr{L}(H^s;H^s)$, which implies the existence and uniqueness of solution $\wp_n\in H^s$ to \eqref{regular Marcus flow Q2}. Now we prove \eqref{wp-n estimate 1} and \eqref{wp-n estimate 2}. 
		Keep in mind that $\wp_n\big(0,l,f\big)=f$, $n\ge1$.
		For simplicity of notation, in the proof we write  $\wp_n(r)$ instead of $\wp_n(r,l,f)$,  omitting the dependence on $l$ and $f$.
		
		It is easy to check that for any $f\in H^s$,
		\begin{align}
			V_s^{\wp_n,f}(r,l) 
			=  \int_0^{r}\Big[\frac{{\rm d}}{{\rm d}r} V(\|\wp_n\|^2_{H^s})\Big](r') \d r'  
			=   2l \int_0^{r} V'(\|\wp_n(r')\|^2_{H^s})\bIP{\wp_n(r'),\Q_{2,n}\wp_n(r')}_{H^s} \d r'.\label{Ito-Marcus-term 1}
		\end{align}
		Then, by
		\eqref{Ito-Marcus-term 1}, \eqref{Qn cancel-1}, the facts $V'(x)>0$ and $V'(x)x\leq V(x)$ (cf. \eqref{SCRV}) we have
		\begin{align*}
			V_s^{\wp_n,f}(r,l)
			\leq   \left|V_s^{\wp_n,f}(r,l) \right| 
			\leq  2C_{2,1}|l| \int_0^{r} V'(\|\wp_n(r')\|^2_{H^s})\|\wp_n(r')\|^2_{H^s} \d r' 
			\leq  2C_{2,1}|l| \int_0^{r} V(\|\wp_n(r')\|^2_{H^s})\d r'.
		\end{align*}
		Thanks to Gr\"{o}nwall's inequality, we have
		\begin{equation}
			\sup_{n\ge1}V(\|\wp_n(r)\|_{H^s}^2) \leq V(\norm{f}_{H^s}^2)\cdot \mathrm{e}^{2C_{2,1}|l|r},
			\quad r\in[0,1].
			\label{Phi norm-2}
		\end{equation}
		From the above two inequalities, we can obtain  \eqref{wp-n estimate 1}.

		Similarly, we use \eqref{Ito-Marcus-term 1} to derive
		\begin{align}
			&V_s^{\wp_n,f}(1,l)-2l\cdot V'(\|f\|^2_{H^s})\bIP{\Q_{2,n} f ,f}_{H^s}\notag\\
			= \ &  2l\cdot \left(\int_0^1  \Big(V'(\|\wp_n(r')\|^2_{H^s})\bIP{\Q_{2,n}\wp_n(r),\wp_n(r)}_{H^s}-V'(\|f\|^2_{H^s})\IP{\Q_{2,n} f ,f}_{H^s}\Big)\d r\right)\notag\\
			= \ &  2l\cdot \int_0^1\int_{0}^{r} \Big[\frac{{\rm d}}{{\rm d}r} g_n\Big](r')\d r' \d r,
			\label{Ito-Marcus-term 2}
		\end{align}
		where 
		$$g_n(\cdot)\triangleq V'(\|\wp_n(\cdot)\|^2_{H^s})\bIP{\Q_{2,n}\wp_n(\cdot),\wp_n(\cdot)}_{H^s}. $$
		Since $\Q_{2,n}$  is a linear mapping, the Fr\'echet derivative of $\Q_{2,n}$, denoted as $\nabla \Q_{2,n}$, satisfies
		\begin{equation*}
			[\nabla \Q_{2,n}(f)]h=\Q_{2,n}h,\quad f,\ h\in H_d^s.
		\end{equation*}   
		This implies that
		\begin{align}
			\frac{{\rm d}}{{\rm d}r} g_n(r)  
			= \ & 2l\cdot V''(\|\wp_n(r)\|^2_{H^s})\bIP{\Q_{2,n}\wp_n(r),\wp_n(r)}^2_{H^s}\notag\\
			&+
			V'(\|\wp_n(\cdot)\|^2_{H^s})\left\{\IP{\Big[\nabla \Q_{2,n}\big(\wp_n(r)\big)\Big] \frac{{\rm d}}{{\rm d}r}\wp_n(r),\wp_n(r)}_{H^s}+\IP{ \Q_{2,n}\wp_n(r), \frac{{\rm d}}{{\rm d}r}\wp_n(r)}_{H^s}\right\}\notag\\
			= \ & 2l\cdot V''(\|\wp_n(r)\|^2_{H^s})\bIP{\Q_{2,n}\wp_n(r),\wp_n(r)}^2_{H^s}\notag\\
			&+l\cdot V'(\|\wp_n(r)\|^2_{H^s})\left\{\IP{
				\Big[\nabla \Q_{2,n}\big(\wp_n(r)\big)\Big] \Q_{2,n}\wp_n(r),\wp_n(r)}_{H^s}+ \bIP{ \Q_{2,n}\wp_n(r), \Q_{2,n}\wp_n(r)}_{H^s}\right\}\notag\\
			=\ &2l \cdot V''(\|\wp_n(r)\|^2_{H^s})\bIP{\Q_{2,n}\wp_n(r),\wp_n(r)}^2_{H^s}\notag\\
			&+l\cdot V'(\|\wp_n(r)\|^2_{H^s})\Big(\bIP{\Q_{2,n}^2\wp_n(r), \wp_n(r)}_{H^s}+\bIP{\Q_{2,n}\wp_n(r), \Q_{2,n}\wp_n(r)}_{H^s}\Big).\label{Ito-Marcus-term 2 gn}
		\end{align}
		Now we use  \eqref{Qn cancel-2}, \eqref{Phi norm-2}, the facts $V''(x)<0$ and $V'(x)x\leq V(x)$ (see \eqref{SCRV}) to obtain
		\begin{align*}
			&V_s^{\wp_n,f}(1,l)-2l\cdot V'(\|f\|^2_{H^s})\bIP{\Q_{2,n} f ,f}_{H^s}\notag\\
			= \ &  2l\cdot \int_0^1\int_{0}^{r} \Big[\frac{{\rm d}}{{\rm d}r} g_n\Big](r')\d r' \d r\notag\\
			\leq  \ &  2l^2\int_0^1\int_{0}^{r}
			V'(\|\wp_n(r)\|^2_{H^s})\Big(\bIP{\Q_{2,n}^2\wp_n(r'), \wp_n(r')}_{H^s}+\bIP{\Q_{2,n}\wp_n(r'), \Q_{2,n}\wp_n(r')}_{H^s} \Big)\d r' \d r \notag\\
			\leq \ &  2C_{2,2}l^2\int_0^1\int_{0}^{r} V'(\|\wp_n(r)\|^2_{H^s})\|\wp_n(r')\|^2_{H^s} \d r' \d r \notag\\
			\leq \ & 2V(\norm{f}_{H^s}^2)C_{2,2}l^2\int_0^1\int_0^{r}  {\rm e}^{2C_{2,1}|l|r'}\d r'\d r,
		\end{align*}
		which implies \eqref{wp-n estimate 2}. The proof is completed.
	\end{proof}

	\begin{Remark}\label{Remark: wp-n estimates} 
		Recall that $V_s^{\wp_n,f}(r,l)$ is defined in \eqref{V wp-n f}.
		We provide the following remarks regarding Lemma \ref{Lemma-Marcus-n}:
		\begin{enumerate}[label={\bf   (\arabic*)},leftmargin=0.79cm]\setlength\itemsep{0.2em}

			\item  If $|l|\le 1$ (see \ref{Hypo-W L}), then we can infer from \eqref{wp-n estimate 1}, \eqref{wp-n estimate 2} and the fact $\int_{|l|\le 1} |l|^2\,\nu({\rm d}l)<\infty$ that for some constants $\mathscr{C}_{2,i}=\mathscr{C}_{2,i}(s,d)>0$ ($i=1,2$),
			\begin{align}
				\sup_{n\ge1}\int_{|l|\le 1}\Abs{V_s^{\wp_n,f}(r,l)}^2\,\nu({\rm d}l)
				\leq \mathscr{C}_{2,1}V^2(\norm{f}_{H^s}^2),\quad r\in[0,1],\label{wp-n estimate 1 C21}
			\end{align}
			and
			\begin{align}
				\sup_{n\ge1}\int_{|l|\le 1}
				\left\{V_s^{\wp_n,f}(1,l)-2l\cdot V'(\|f\|^2_{H^s} )\bIP{\Q_{2,n}f,f}_{H^s}\right\}\,\nu({\rm d}l) \leq \mathscr{C}_{2,2} V(\|f\|^2_{H^s}).\label{wp-n estimate 2 C22}
			\end{align}
			
			\item From \eqref{Ito-Marcus-term 1}, \eqref{Ito-Marcus-term 2}, \eqref{Ito-Marcus-term 2 gn} and \ref{Q-n cancel 0} in Lemma \ref{Lemma:Qn}, we see that if $\Q_2\in\mathbb{B}^\beta$ satisfies $\Q_2+\Q_2^*=0$, then for $r\in[0,1]$ and $n\ge1$,
			\begin{align*}
				\Abs{V_s^{\wp_n,f}(r,l)}
				=  2|l| \int_0^{r} V'(\|\wp_n(r')\|^2_{H^s})\Abs{\bIP{\wp_n(r'),\Q_{2,n}\wp_n(r')}_{H^s}} \d r'=0,
			\end{align*}
			and
			\begin{align*}
				\Abs{V_s^{\wp_n,f}(r,l)-2l\cdot V'(\|f\|^2_{H^s})\bIP{\Q_{2,n} f ,f}_{H^s}} 
				=  \Abs{V_s^{\wp_n,f}(r,l) -0}
				= 0.
			\end{align*}
			
			\item Observe that the function $V(x)=x$ belongs to the class $\mathscr{V}$. For this choice,  \eqref{wp-n estimate 1} simplifies to
			\begin{align*}
				\sup_{n\ge1}\left|\|\wp_n(r,l, f)\|_{H^s}^2 - \norm{f}_{H^s}^2 \right|
				\leq \left({\rm e}^{2C_{2,1}|l|r}-1\right) \norm{f}_{H^s}^2,\quad r\in[0,1],
			\end{align*}
			and the proof of  \eqref{wp-n estimate 2} actually yields the absolute-value bound
			\begin{align*}
				\sup_{n\ge1}\Big|\|\wp_n\big(1,l,f\big)\|^2_{H^s}-\|f\|^2_{H^s} -2l\cdot\bIP{\Q_{2,n}f,f}_{H^s}\Big|\leq \frac{C_{2,2}}{2C_{2,1}^2}\left({\rm e}^{2C_{2,1}|l|}-2C_{2,1}|l|-1\right)\|f\|^2_{H^s},
			\end{align*}
			respectively.
			In particular, if $\Q_2\in\mathbb{B}^\beta$ and satisfies $\Q_2+\Q_2^*=0$, then 
			\begin{align*}
				\Abs{\|\wp_n\big(1,l,f\big)\|^2_{H^s}-\|f\|^2_{H^s} -2l\cdot\bIP{\Q_{2,n}f,f}_{H^s}}=
				\Abs{\|\wp_n\big(1,l,f\big)\|^2_{H^s}-\|f\|^2_{H^s} -0}
				=0.
			\end{align*}
		\end{enumerate}
	\end{Remark}

	In the next lemma we establish the stabilities of the regularized Marcus flow $\wp_n\big(r,l,f\big)$ with respect to the initial data $f$ and different mollifier-layers $n,m$, respectively.
	\begin{Lemma}[Stability of  regularized Marcus flow,  Part 1]\label{Lemma-Stability wp-n}
		Let $\theta\ge0$ and $C_{2,1}>0$ be given in \eqref{Qn cancel-1}. The Marcus flow 
		$\wp_n$ given by \eqref{regular Marcus flow Q2} satisfies the following stabilities estimates:
		\begin{itemize}[leftmargin=0.79cm]\setlength\itemsep{0.2em}
			\item $($Stability with respect to initial data$)$  For all $f,g\in H_d^{\theta}$,  we have
			\begin{align}\label{Marcus flow stability 1}
				\sup_{n\ge1}\|\wp_n\big(r,l,f\big)-\wp_n\big(r,l,g\big)\|^2_{H^{\theta}}\leq \norm{f-g}_{H^{\theta}}^2\cdot \mathrm{e}^{2C_{2,1}|l|r},
				\quad r\in[0,1].
			\end{align} 
			\item $($Stability respect to different mollifier-layers$)$ 
			Let $s>\theta+\delta_2$ with $\delta_2=\delta_{\Q_2}$ given in \eqref{delta-Qi}.
			For all $f\in H_d^s$, we have
			\begin{align}\label{Marcus flow stability 2}
				\|\wp_n\big(r,l,f\big)-\wp_m\big(r,l,f\big)\|^2_{H^{\theta}}\lesssim (n \land m)^{-2(s-\theta-\delta_2)}\|f\|^2_{H^s} \left({\rm e}^{5C_{2,1}|l|r}-{\rm e}^{3C_{2,1}|l|r}\right),
				\quad n,m\ge1,\quad   r\in[0,1].
			\end{align}
		\end{itemize}
	\end{Lemma}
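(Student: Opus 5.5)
For the first estimate \eqref{Marcus flow stability 1} I would use linearity of the flow. Since $\Q_{2,n}$ is linear and bounded on $H^\theta$ (Lemma \ref{Lemma:Qn}\ref{Q-n mollify}), the difference $w_n(r)\triangleq\wp_n(r,l,f)-\wp_n(r,l,g)$ is exactly $\wp_n(r,l,f-g)$. It solves $\frac{\rm d}{{\rm d}r}w_n=l\,\Q_{2,n}w_n$ with $w_n(0)=f-g$. Differentiating $\|w_n\|^2_{H^\theta}$ gives $2l\langle \Q_{2,n}w_n,w_n\rangle_{H^\theta}$. The uniform cancellation \eqref{Qn cancel-1}, applied with $s$ replaced by $\theta$, bounds this by $2C_{2,1}|l|\,\|w_n\|_{H^\theta}^2$. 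Gr\"onwall's inequality then yields \eqref{Marcus flow stability 1} uniformly in $n$. Equivalently, one can invoke Lemma \ref{Lemma-Marcus-n} with $V(x)=x$ and $s=\theta$ (Remark \ref{Remark: wp-n estimates}(3)). I would note that the constant $C_{2,1}$ depends on the Sobolev index, and read the statement with the constant at level $\theta$.

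For \eqref{Marcus flow stability 2} I would set $z(r)\triangleq\wp_n(r)-\wp_m(r)$, both flows started at $f$. It satisfies
\begin{equation*}
\frac{\rm d}{{\rm d}r}z=l\,\Q_{2,n}z+l\,(\Q_{2,n}-\Q_{2,m})\wp_m(r),\qquad z(0)=0.
\end{equation*}
Then
\begin{equation*}
\frac{\rm d}{{\rm d}r}\|z\|^2_{H^\theta}=2l\langle \Q_{2,n}z,z\rangle_{H^\theta}+2l\langle(\Q_{2,n}-\Q_{2,m})\wp_m,z\rangle_{H^\theta}.
\end{equation*}
The first term is at most $2C_{2,1}|l|\|z\|^2_{H^\theta}$ by \eqref{Qn cancel-1}. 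For the second I would use \eqref{Qn-Qm operator norm} with $s>\theta+\delta_2$, together with the $H^s$ growth bound $\|\wp_m(r)\|^2_{H^s}\le e^{2C_{2,1}|l|r}\|f\|^2_{H^s}$ from \eqref{Phi norm-2} with $V(x)=x$. Young's inequality in the weighted form $2ab\le \epsilon a^2+\epsilon^{-1}b^2$, with $\epsilon=C_{2,1}$ so the exponents match, then gives
\begin{equation*}
\frac{\rm d}{{\rm d}r}\|z\|^2_{H^\theta}\le 3C_{2,1}|l|\,\|z\|^2_{H^\theta}+C|l|(n\wedge m)^{-2(s-\theta-\delta_2)}e^{2C_{2,1}|l|r}\|f\|^2_{H^s}.
\end{equation*}

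Integrating with the factor $e^{-3C_{2,1}|l|r}$ and $z(0)=0$ gives
\begin{equation*}
\|z(r)\|^2_{H^\theta}\lesssim (n\wedge m)^{-2(s-\theta-\delta_2)}\|f\|^2_{H^s}\,e^{3C_{2,1}|l|r}\int_0^r|l|e^{-C_{2,1}|l|r'}\,{\rm d}r'.
\end{equation*}
The integral is proportional to $1-e^{-C_{2,1}|l|r}$, so the right-hand side is a multiple of $e^{3C_{2,1}|l|r}-e^{2C_{2,1}|l|r}$. This is bounded by $e^{5C_{2,1}|l|r}-e^{3C_{2,1}|l|r}$, because $e^{x}(e^{2x}-1)\ge e^x-1$ for $x\ge 0$. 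Since $C_{2,1}$ may be enlarged freely, I would tune the Young constants to land exactly on the stated exponents $5$ and $3$.

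The analytic content is elementary, so the main obstacle is bookkeeping. Specifically:
\begin{itemize}
\item The cancellation constant must be used at both levels $\theta$ and $s$, which requires taking $C_{2,1}$ as the maximum of the two.
\item The vanishing of the bound at $l=0$ (the factor $e^{5x}-e^{3x}$) must be preserved. This is why I keep the factor $|l|$ in the forcing term rather than absorbing it, so that the integrated bound remains $O(|l|)$ and is compatible with $\int|l|^2\,\nu({\rm d}l)<\infty$.
\end{itemize}
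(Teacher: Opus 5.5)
Your proposal is correct and follows the paper's proof essentially step by step. For \eqref{Marcus flow stability 1} you use linearity of $\wp_n(r,l,\cdot)$ together with the $H^\theta$ energy estimate via \eqref{Qn cancel-1} and Gr\"onwall. For \eqref{Marcus flow stability 2} you use the same splitting $\Q_{2,n}\wp_n-\Q_{2,m}\wp_m=\Q_{2,n}(\wp_n-\wp_m)+(\Q_{2,n}-\Q_{2,m})\wp_m$, the bounds \eqref{Qn-Qm operator norm} and \eqref{Phi norm-2}, and Young's inequality with weight $C_{2,1}$, which gives the rate $3C_{2,1}|l|$.

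The only cosmetic difference is in how you close. Your integrating factor yields the slightly sharper factor $\mathrm{e}^{3C_{2,1}|l|r}-\mathrm{e}^{2C_{2,1}|l|r}$, which you then dominate. The paper instead integrates the forcing over $[0,r]$ first and applies the integral Gr\"onwall inequality, landing directly on $(\mathrm{e}^{2C_{2,1}|l|r}-1)\,\mathrm{e}^{3C_{2,1}|l|r}$.
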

	
	\begin{proof}
		We note that \eqref{Marcus flow stability 1} directly comes from  \eqref{Phi norm-2} and the linearity of $\wp_n(r,l,\cdot)$. Now we prove \eqref{Marcus flow stability 2}. To this end, we let $\wp_n(r)\triangleq  \wp_n(r,l,f)$ and then  observe that $\wp_n(0)=\wp_m(0)$, which implies
		\begin{align*}
			&\|\wp_n(r)-\wp_m(r)\|^2_{H^{\theta}} \notag\\
			= \ &  \left|  \int_0^{r} 
			\left[\frac{{\rm d}}{{\rm d}r} 
			\|\wp_n-\wp_m\|^2_{H^{\theta}}\right](r')\d r'
			\right|\\
			\leq \ &  2|l|  \int_0^{r}
			\left|
			\BIP{\Q_{2,n}\wp_n(r')-\Q_{2,m}\wp_m(r'),\wp_n(r')-\wp_m(r')}_{H^{\theta}}
			\right|\d r'\\
			\leq \ &  2|l|  \int_0^{r}
			\left|
			\BIP{\Q_{2,n}\wp_n(r')-\Q_{2,n}\wp_m(r'),\wp_n(r')-\wp_m(r')}_{H^{\theta}}
			\right|\d r'\\
			&+2|l|  \int_0^{r}
			\left| \BIP{\Q_{2,n}\wp_m(r')-\Q_{2,m}\wp_m(r'),\wp_n(r')-\wp_m(r')}_{H^{\theta}}
			\right|\d r'. 
		\end{align*}
		According to \eqref{Qn cancel-1}, \eqref{Phi norm-2}, and Young's product inequality, it follows that
		\begin{align*}
			&\|\wp_n(r)-\wp_m(r)\|^2_{H^{\theta}}\notag\\
			\leq \ &  2C_{2,1} |l|  \int_0^{r}\|\wp_n(r')-\wp_m(r')\|^2_{H^{\theta}}\d r'\\
			& +2|l|  \int_0^{r}
			\|\Q_{2,n}-\Q_{2,m}\|_{\LL(H^s;H^\theta)}\|\wp_m(r')\|_{H^s}
			\|\wp_n(r')-\wp_m(r')\|_{H^{\theta}}\d r'\\
			\leq \ &  2C_{2,1}|l|  \int_0^{r}\|\wp_n(r')-\wp_m(r')\|^2_{H^{\theta}}\d r'\\
			& +|l|  \int_0^{r}
			\left\{\frac{1}{C_{2,1}}
			\|\Q_{2,n}-\Q_{2,m}\|^2_{\LL(H^s;H^\theta)}\|\wp_m(r')\|^2_{H^s}
			+C_{2,1}\|\wp_n(r')-\wp_m(r')\|^2_{H^{\theta}}\right\}\d r'\\
			\leq \ &  3C_{2,1}|l|  \int_0^{r}\|\wp_n(r')-\wp_m(r')\|^2_{H^{\theta}}\d r' +\|\Q_{2,n}-\Q_{2,m}\|^2_{\LL(H^s;H^\theta)}\|f\|^2_{H^s} \frac{{\rm e}^{2C_{2,1}|l|r}-1}{2C_{2,1}^2}.
		\end{align*}
		By Gr\"{o}nwall's inequality,  
		\begin{align*}
			\|\wp_n(r)-\wp_m(r)\|^2_{H^{\theta}}
			\leq   \|\Q_{2,n}-\Q_{2,m}\|^2_{\LL(H^s;H^\theta)}\|f\|^2_{H^s} \frac{{\rm e}^{2C_{2,1}|l|r}-1}{2C_{2,1}^2}{\rm e}^{3C_{2,1}|l|r}.
		\end{align*}
		If $s>\theta+\delta_2$, then  \eqref{Marcus flow stability 2} comes from the above estimate together with Lemma \ref{Lemma:Qn}. This completes the proof.
	\end{proof}

	\begin{Remark} \label{Remark: wp estimate}
		Let $\theta\ge0$ and assume that \ref{Hypo-Qi} holds. Consider the Marcus flow $\wp$ defined by \eqref{Marcus flow Q2} with $l\in\R$, i.e.,
		\begin{equation*} 
			\wp(r)\triangleq\wp(r,l,f),\quad 
			\frac{{\rm d}}{{\rm d}r}\wp(r)= l\cdot \Q_2\wp(r),\quad r\in[0,1],  \quad \wp(0) = f.
		\end{equation*}
		If $f\in H_d^s$ with $s>\theta+\delta_2$, then Lemma \ref{Lemma-Marcus-n} and \eqref{Marcus flow stability 2} imply that \eqref{Marcus flow Q2} admits a solution $\wp(r)\in H_d^s$, and
		\begin{equation*}
			V(\|\wp(r)\|_{H^s}^2) \leq V(\norm{f}_{H^s}^2)\cdot \mathrm{e}^{2C_{2,1}|l|r},
			\  r\in[0,1],\quad \text{and}\quad \wp=\lim_{n\to\infty}\wp_n\  \text{in}\ H^{\theta},\ \text{where}\  \wp_n \ \text{is given by}\ \eqref{regular Marcus flow Q2}.
		\end{equation*}
		Then, from Lemma \ref{Lemma:Qn} and the proof of  Lemma \ref{Lemma-Marcus-n}, we obtain  
		\begin{align*}
			\Abs{V(\|\wp(r,l, f)\|_{H^\theta}^2) - V(\norm{f}_{H^\theta}^2)}
			\leq \left({\rm e}^{2C_{2,1}|l|r}-1\right) V(\norm{f}_{H^\theta}^2),\quad r\in[0,1],\quad f\in H_d^{s}.
		\end{align*}
		Moreover, for $f\in H_d^{s}$ with $s>\theta+2\delta_2$, we have
		\begin{align*}
			V(\|\wp(1,l, f)\|_{H^\theta}^2) - V(\norm{f}_{H^\theta}^2)-2l\cdot V'(\|f\|^2_{H^\theta} )\bIP{\Q_{2}f,f}_{H^\theta}\leq \frac{C_{2,2}}{2C_{2,1}^2}\left({\rm e}^{2C_{2,1}|l|}-2C_{2,1}|l|-1\right)V(\|f\|^2_{H^\theta}),
		\end{align*}
		and, in particular, when $V(x)=x$,
		\begin{align*}
			\Abs{\|\wp(1,l, f)\|_{H^\theta}^2-\norm{f}_{H^\theta}^2-2l\cdot \bIP{\Q_{2}f,f}_{H^\theta}}\leq \frac{C_{2,2}}{2C_{2,1}^2}\left({\rm e}^{2C_{2,1}|l|}-2C_{2,1}|l|-1\right)V(\|f\|^2_{H^\theta}).
		\end{align*}
		Therefore, under \ref{Hypo-W L},  we have that for $f\in H_d^s$ with $s>\theta+\delta_2$,
		\begin{align}
			\int_{|l|\le 1}\Abs{V(\|\wp(r,l, f)\|_{H^\theta}^2) - V(\norm{f}_{H^\theta}^2)}^2\nu({\rm d}l)
			\leq \mathscr{C}_{2,1}V^2(\norm{f}_{H^\theta}^2),\quad r\in[0,1],\label{wp estimate 1 C21}
		\end{align}
		and for $f\in H_d^s$ with $s>\theta+2\delta_2$,
		\begin{align*}
			\int_{|l|\le 1}
			\textbf{G} \left(V(\|\wp(1,l, f)\|_{H^\theta}^2) - V(\norm{f}_{H^\theta}^2)-2l\cdot V'(\|f\|^2_{H^\theta} )\bIP{\Q_{2}f,f}_{H^\theta}\right)\nu({\rm d}l) \leq \mathscr{C}_{2,2} V(\|f\|^2_{H^\theta}), 
		\end{align*}
		where 
		\begin{equation*}
			\textbf{G}(x)=
			\begin{cases}
				\Abs{x}, & \text{if } V(x)=x,\\
				x, & \text{otherwise}.
			\end{cases}
		\end{equation*}
		Moreover, if $\Q_2\in\mathbb{B}^\beta$ and satisfies $\Q_2+\Q_2^*=0$, then Theorem \ref{Thm-cancel} yields
		$$
		\bIP{g,\Q_{2}g}_{H^\theta}=0,\quad g\in H^{\theta+\delta_2}.
		$$
		Hence, for $r\in[0,1]$ and $f\in H_d^s$ with $s>\theta+\delta_2$,
		\begin{align*}
			\Abs{V(\|\wp(r,l, f)\|_{H^\theta}^2) - V(\norm{f}_{H^\theta}^2)}
			=  2|l| \int_0^{r} V'(\|\wp(r')\|^2_{H^\theta})\Abs{\bIP{\wp(r'),\Q_{2}\wp(r')}_{H^\theta}} \d r'=0,
		\end{align*}
		and consequently,
		\begin{align*}
			\Abs{V(\|\wp(r,l, f)\|_{H^\theta}^2) - V(\norm{f}_{H^\theta}^2)-2l\cdot V'(\|f\|^2_{H^\theta})\bIP{\Q_{2} f ,f}_{H^\theta}} 
			=  \Abs{V(\|\wp(r,l, f)\|_{H^\theta}^2) - V(\norm{f}_{H^\theta}^2)-0}
			= 0.
		\end{align*}
	\end{Remark}
	
	In addition to the estimates in Lemma \ref{Lemma-Stability wp-n}, the following lemma provides useful estimates for applying It\^o's formula to the difference between two regularized Marcus integrals (see \eqref{Znm-Ito}).
	\begin{Lemma}[Stability of regularized Marcus flow,  Part 2]\label{Lemma-Q2 n m}
		Suppose that $0\le \theta<s-3\delta_2$, where $\delta_2=\delta_{\Q_2}$ is given in \eqref{delta-Qi}.  Let \ref{Hypo-W L} hold and let $\wp_n$ be the Marcus flow 
		given by \eqref{regular Marcus flow Q2}. Then there is a function $\lambda_{\cdot,\cdot}:\N\times\N\to(0,1)$ with $\displaystyle \lim_{n,m\to\infty} 
		\lambda_{n,m}=0$ such that  for all $f,g\in H_d^s$ and $r\in[0,1]$,
		\begin{align}\label{M-flow difference Ito 1}
			\left|\|\wp_n\big(r,l,f\big)-\wp_m\big(r,l,g\big)\|^2_{H^{\theta}}-\|f-g\|^2_{H^{\theta}}\right| \lesssim |l|\Big(\lambda_{n,m}(\norm{f}_{H^{s}}+\norm{g}_{H^{s}})^2+\|f-g\|^2_{H^{\theta}}\Big),
		\end{align} 
		\begin{align}\label{M-flow difference Ito 2}
			\Big|\|\wp_n\big(r,l,f\big)-\wp_m\big(r,l,g\big)\|^2_{H^{\theta}} - \|f-g\|^2_{H^{\theta}}  &-  2l\cdot\bIP{\Q_{2,n} f-\Q_{2,m} g ,f-g}_{H^\theta}\Big|\notag\\
			& \lesssim  |l|^2\Big(\lambda_{n,m}(\norm{f}_{H^{s}}+\norm{g}_{H^{s}})^2+\|f-g\|^2_{H^{\theta}}\Big).
		\end{align}
	\end{Lemma}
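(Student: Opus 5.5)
Write $D(r)\triangleq\wp_n(r,l,f)-\wp_m(r,l,g)$ and $\Phi(r)\triangleq\|D(r)\|^2_{H^\theta}$. The plan is to mirror the proof of Lemma \ref{Lemma-Marcus-n}, replacing the single flow by the difference of two flows driven by different operators. Since $\Q_{2,n},\Q_{2,m}\in\LL(H^\theta;H^\theta)$, the map $\Phi$ is $C^2$ in $r$ and
\begin{equation*}
\Phi'(r)=2l\,\bIP{\Q_{2,n}\wp_n(r)-\Q_{2,m}\wp_m(r),D(r)}_{H^\theta}.
\end{equation*}

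For \eqref{M-flow difference Ito 1} we apply Lemma \ref{Lemma-Q1 n m}, estimate \eqref{Q1n difference Ito 1}, which is stated for $\Q_1$ but holds verbatim for $\Q_2$ since its proof only uses Lemma \ref{Lemma:Qn}. We take the two arguments $\wp_n(r)$ and $\wp_m(r)$ and use the exponent $s-\delta_2$ in place of $s$, so that $s-\delta_2>\theta+\delta_2$. This gives
\begin{equation*}
|\Phi'(r)|\lesssim |l|\Big((n\wedge m)^{-2(s-\theta-2\delta_2)}\big(\|\wp_n(r)\|_{H^{s}}+\|\wp_m(r)\|_{H^{s}}\big)^2+\Phi(r)\Big).
\end{equation*}
By \eqref{Phi norm-2} with $V(x)=x$ and $|l|\le1$, the flow norms satisfy $\|\wp_n(r)\|_{H^s}\lesssim\|f\|_{H^s}$ and $\|\wp_m(r)\|_{H^s}\lesssim\|g\|_{H^s}$. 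Gr\"onwall on $[0,r]$ with $|l|r\le1$ then gives $\Phi(r)\lesssim \lambda_{n,m}(\|f\|_{H^s}+\|g\|_{H^s})^2+\|f-g\|^2_{H^\theta}$. Integrating $\Phi'$ once more yields \eqref{M-flow difference Ito 1}, with the factor $|l|$ in front. The sequence $\lambda_{n,m}$ is taken as a fixed negative power of $n\wedge m$, capped by $1/2$.

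For \eqref{M-flow difference Ito 2} we do a second-order Taylor expansion. Since $\Phi'(0)=2l\bIP{\Q_{2,n}f-\Q_{2,m}g,f-g}_{H^\theta}$, we have $\Phi(r)-\Phi(0)-r\Phi'(0)=\int_0^r\int_0^{r'}\Phi''$. We only need $r=1$ because that is where the statement has the linear term; for $r<1$ the residual $(1-r)\Phi'(0)$ is of order $|l|$, so there we either read the bound at $r=1$ or use the analogous estimate. Differentiating $\Phi'$ gives
\begin{equation*}
\Phi''(r)=2l^2\Big(\bIP{\Q_{2,n}^2\wp_n-\Q_{2,m}^2\wp_m,D}_{H^\theta}+\bIP{\Q_{2,n}\wp_n-\Q_{2,m}\wp_m,\Q_{2,n}\wp_n-\Q_{2,m}\wp_m}_{H^\theta}\Big).
\end{equation*}
This is exactly the combination controlled by \eqref{Q1n difference Ito 2}, applied with $\Q_2$ and with the same two arguments. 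That estimate requires the regularity $s'>\theta+2\delta_2$ for the arguments. The arguments lie only in $H^{s}$ with uniform bounds, so we use $s'=s-\delta_2$; then $s'>\theta+2\delta_2$ is precisely the hypothesis $s>\theta+3\delta_2$. The result is $|\Phi''|\lesssim l^2\big(\lambda_{n,m}(\|f\|_{H^s}+\|g\|_{H^s})^2+\Phi\big)$. Inserting the bound on $\Phi$ from the first step and integrating twice gives \eqref{M-flow difference Ito 2}.

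The main obstacle is tracking regularity through the second derivative. \eqref{Q1n difference Ito 2} needs arguments of higher regularity while the flows carry uniform bounds only in $H^s$, and this is what consumes the extra $\delta_2$ and explains the hypothesis $\theta<s-3\delta_2$. A further point to check is that the cancellation constants in \eqref{Qn cancel-1} and \eqref{Qn cancel-2} are uniform in $n$, so the Gr\"onwall constants do not depend on $n$ or $m$.
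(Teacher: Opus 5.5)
Your proof is correct, and it takes a genuinely different and shorter route than the paper. The paper never runs a Gr\"onwall argument on $\Phi$ itself. It inserts the intermediate flow $\wp_m(r,l,f)$ and splits $\Phi'$ into six and $\Phi''$ into twelve bilinear terms. It then bounds each term separately through the two stability estimates of Lemma \ref{Lemma-Stability wp-n}: stability in the initial datum and, crucially, in the mollifier layer, \eqref{Marcus flow stability 2}. This requires auxiliary exponents $s_1,q_1,s_2,q_2$.

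You instead apply the difference estimates \eqref{Q1n difference Ito 1}--\eqref{Q1n difference Ito 2}, which transfer verbatim to $\Q_2$, directly to the pair $(\wp_n(r),\wp_m(r))$ at each $r$. This needs only the uniform bound \eqref{Phi norm-2} and never uses \eqref{Marcus flow stability 2}. You close with one Gr\"onwall step, then insert your first-order bound on $\Phi$ inside the double integral. The paper's route buys explicit exponentials in $|l|r$ term by term, which is immaterial since $|l|\le 1$.

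One correction to your bookkeeping: the detour to $s'=s-\delta_2$ is unnecessary. Since $\wp_n(r)$ and $\wp_m(r)$ are bounded in $H^s$ itself, you may apply \eqref{Q1n difference Ito 2} with exponent $s$, so your argument already works under $s>\theta+2\delta_2$. The $3\delta_2$ margin comes from the paper's decomposition, not from yours. For instance, $\mathfrak{L}_5$ requires $\|\wp_n(r,l,f)-\wp_m(r,l,f)\|_{H^{q_2}}$ with $q_2>\theta+2\delta_2$, and \eqref{Marcus flow stability 2} controls this only when $s>q_2+\delta_2$. So your claim that your route ``consumes'' the extra $\delta_2$ is a harmless misattribution.

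Finally, your remark about $r<1$ is correct. At $r=0$ the left-hand side of \eqref{M-flow difference Ito 2} equals $2|l|\,|\langle\Q_{2,n}f-\Q_{2,m}g,f-g\rangle_{H^\theta}|$, which in general is only $O(|l|)$. The paper's proof carries the same implicit restriction: it replaces $2l\langle\Q_{2,n}f-\Q_{2,m}g,f-g\rangle_{H^\theta}$ by its integral over $[0,r]$, which is valid only for $r=1$. Only the case $r=1$ enters \eqref{Znm-Ito}.
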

	\begin{proof}
		Before we prove the estimates, we consider
		$$s_1\in(\theta+\delta_2, s],\quad q_1\in(\theta+\delta_2, s-2\delta_2),\quad
		s_2\in(\theta+2\delta_2, s],\quad q_2\in(\theta+2\delta_2, s-\delta_2).$$
		It is clear that $q_1+\delta_2<s_2$ and $q_2+\delta_2<s$. 
		
		We first prove \eqref{M-flow difference Ito 1}.  Remember that
		$\wp_n\big(0,l,f\big)=f$, $n\ge1$.
		Therefore, we find
		\begin{align*}
			&\left|\|\wp_n\big(r,l,f\big)-\wp_m\big(r,l,g\big)\|^2_{H^{\theta}}-\|f-g\|^2_{H^{\theta}}\right|\notag\\
			= \ &  \left|  \int_0^{r} 
			\left[\frac{{\rm d}}{{\rm d}r} 
			\|\wp_n\big(r,l,f\big)-\wp_m\big(r,l,g\big)\|^2_{H^{\theta}}\right](r')\d r\right|\notag\\
			= \ &  \left|  \int_0^{r} 
			2l\cdot  
			\bIP{\Q_{2,n}\wp_n\big(r',l,f\big)-\Q_{2,m}\wp_m\big(r',l,g\big),\wp_n\big(r',l,f\big)-\wp_m\big(r',l,g\big)}_{H^{\theta}} \d r'
			\right|,\quad r\in[0,1].
		\end{align*}
		Therefore, we arrive at
		\begin{align}
			\left|\|\wp_n\big(r,l,f\big)-\wp_m\big(r,l,g\big)\|^2_{H^{\theta}}-\|f-g\|^2_{H^{\theta}}\right|
			\leq    2|l|\int_0^{r} \sum_{k=1}^{6}|\mathfrak{B}_k(r')|\d r',\quad r\in[0,1],\label{M-flow difference Ito 1-6}
		\end{align}
		where 
		\begin{align*}
			\mathfrak{B}_1(r)\triangleq \ &  \bIP{\Q_{2,n}\wp_n\big(r,l,f\big)-\Q_{2,m}\wp_n\big(r,l,f\big),\wp_n\big(r,l,f\big)-\wp_m\big(r,l,f\big)}_{H^{\theta}},\\
			\mathfrak{B}_2(r)\triangleq \ &  \bIP{\Q_{2,m}\wp_n\big(r,l,f\big)-\Q_{2,m}\wp_m\big(r,l,f\big),\wp_n\big(r,l,f\big)-\wp_m\big(r,l,f\big)}_{H^{\theta}},\\
			\mathfrak{B}_3(r)\triangleq \ & 
			\bIP{\Q_{2,m}\wp_m\big(r,l,f\big)-\Q_{2,m}\wp_m\big(r,l,g\big),\wp_n\big(r,l,f\big)-\wp_m\big(r,l,f\big)}_{H^{\theta}},\\
			\mathfrak{B}_4(r)\triangleq \ &  \bIP{\Q_{2,n}\wp_n\big(r,l,f\big)-\Q_{2,m}\wp_n\big(r,l,f\big),\wp_m\big(r,l,f\big)-\wp_m\big(r,l,g\big)}_{H^{\theta}},\\
			\mathfrak{B}_5(r)\triangleq \ & \bIP{\Q_{2,m}\wp_n\big(r,l,f\big)-\Q_{2,m}\wp_m\big(r,l,f\big),\wp_m\big(r,l,f\big)-\wp_m\big(r,l,g\big)}_{H^{\theta}},\\
			\mathfrak{B}_6(r)\triangleq\ & \bIP{\Q_{2,m}\wp_m\big(r,l,f\big)-\Q_{2,m}\wp_m\big(r,l,g\big),\wp_m\big(r,l,f\big)-\wp_m\big(r,l,g\big)}_{H^{\theta}}.
		\end{align*}
		It follows from \eqref{Qn cancel-1} and Lemma \ref{Lemma-Stability wp-n} that for all $r\in[0,1]$,
		\begin{align*}
			|\mathfrak{B}_2(r)|\lesssim \|\wp_n\big(r,l,f\big)-\wp_m\big(r,l,f\big)\|^2_{H^{\theta}}\lesssim  (n \land m)^{-2(s-\theta-\delta_2)}\|f\|^2_{H^s} \left({\rm e}^{5C_{2,1}|l|r}-{\rm e}^{3C_{2,1}|l|r}\right),
		\end{align*}
		\begin{align*}
			|\mathfrak{B}_{6}(r)|\lesssim \|\wp_m\big(r,l,f\big)-\wp_m\big(r,l,g\big)\|^2_{H^{\theta}}\lesssim {\rm e}^{2C_{2,1}|l|r}\|f-g\|^2_{H^{\theta}}.
		\end{align*}
		Keep in mind that $\wp_n(r,l,\cdot)$ is linear. 
		Using the Cauchy product inequality, \eqref{Qn-Qm operator norm}, \eqref{Qn2-Qm2 operator norm} and Lemma \ref{Lemma-Stability wp-n}, it follows that for all $r\in[0,1]$,
		\begin{align*}
			|\mathfrak{B}_1(r)|
			\lesssim  \ &(n \land m)^{-(s_1-\theta-\delta_2)}\|\wp_n\big(r,l,f\big)\|_{H^{s_1}}\cdot\|\wp_n\big(r,l,f\big)-\wp_m\big(r,l,f\big)\|_{H^{\theta}}\\
			\lesssim \ &  (n \land m)^{-(s_1-\theta-\delta_2)}\norm{f}_{H^{s_1}}\mathrm{e}^{C_{2,1}|l|r}\cdot (n \land m)^{-(s_1-\theta-\delta_2)}\|f\|_{H^{s_1}} \left({\rm e}^{5C_{2,1}|l|r}-{\rm e}^{3C_{2,1}|l|r}\right)^{1/2}\\
			\lesssim \ &  (n \land m)^{-2(s_1-\theta-\delta_2)}\norm{f}^2_{H^{s_1}}\left({\rm e}^{7C_{2,1}|l|r}-{\rm e}^{5C_{2,1}|l|r}\right)^{1/2},
		\end{align*}
		\begin{align*}
			|\mathfrak{B}_3(r)|
			\lesssim  \ &\|\wp_m\big(r,l,f\big)-\wp_m\big(r,l,g\big)\|_{H^{q_1}}  \cdot\|\wp_n\big(r,l,f\big)-\wp_m\big(r,l,f\big)\|_{H^{\theta}}\\
			\lesssim \ &  \norm{f-g}_{H^{q_1}}  \mathrm{e}^{C_{2,1}|l|r}\cdot (n \land m)^{-(s_1-\theta-\delta_2)}\|f\|_{H^{s_1}} \left({\rm e}^{5C_{2,1}|l|r}-{\rm e}^{3C_{2,1}|l|r}\right)^{1/2}\\
			\lesssim \ &  (n \land m)^{-2(s_1-\theta-\delta_2)}\norm{f}_{H^{q_1}}\norm{f}_{H^{s_1}}\left({\rm e}^{7C_{2,1}|l|r}-{\rm e}^{5C_{2,1}|l|r}\right)^{1/2},
		\end{align*}
		\begin{align*}
			|\mathfrak{B}_{4}(r)| 
			\lesssim\ & (n \land m)^{-2(s_1-\theta-\delta_2)}\|\wp_n\big(r,l,f\big)\|^2_{H^{s_1}}
			+\|\wp_m\big(r,l,f\big)-\wp_m\big(r,l,g\big)\|^2_{H^{\theta}}\\
			\lesssim \ &   (n \land m)^{-2(s_1-\theta-\delta_2)}\norm{f}^2_{H^{s_1}}\mathrm{e}^{2C_{2,1}|l|r}
			+\norm{f-g}^2_{H^{\theta}}  \mathrm{e}^{2C_{2,1}|l|r},
		\end{align*}
		\begin{align*}
			|\mathfrak{B}_5(r)|
			\lesssim  \ &\|\wp_n\big(r,l,f\big)-\wp_m\big(r,l,f\big)\|^2_{H^{q_1}}  +\|\wp_m\big(r,l,f\big)-\wp_m\big(r,l,g\big)\|^2_{H^{\theta}}\\
			\lesssim \ &   (n \land m)^{-(s_2-q_1-\delta_2)}\norm{f}_{H^{s_2}}\left({\rm e}^{5C_{2,1}|l|r}-{\rm e}^{3C_{2,1}|l|r}\right)
			+\norm{f-g}^2_{H^{\theta}}  \mathrm{e}^{2C_{2,1}|l|r}.
		\end{align*}
		Combining the above estimates and \eqref{M-flow difference Ito 1-6}, we obtain \eqref{M-flow difference Ito 1}.
		
		We now turn to the proof of  \eqref{M-flow difference Ito 2}. In the same manner, we obtain
		\begin{align*}
			&\left|\|\wp_n\big(r,l,f\big)-\wp_m\big(r,l,g\big)\|^2_{H^{\theta}}-\|f-g\|^2_{H^{\theta}} -2l\bIP{\Q_{2,n} f-\Q_{2,m} g ,f-g}_{H^\theta}\right|\notag\\
			= \ &  \left|  \int_0^{r} 
			\left[\frac{{\rm d}}{{\rm d}r} 
			\|\wp_n\big(r,l,f\big)-\wp_m\big(r,l,g\big)\|^2_{H^{\theta}}\right](r')\d r'
			-2l\bIP{\Q_{2,n} f-\Q_{2,m} g ,f-g}_{H^{\theta}}\right|\\
			= \ &  \left|  \int_0^{r} 
			2l\cdot \Big\{
			\bIP{\Q_{2,n}\wp_n\big(r',l,f\big)-\Q_{2,m}\wp_m\big(r',l,g\big),\wp_n\big(r',l,f\big)-\wp_m\big(r',l,g\big)}_{H^{\theta}}
			-\bIP{\Q_{2,n} f-\Q_{2,m} g ,f-g}_{H^{\theta}}\Big\}\d r'
			\right|\\
			\leq \ & 2|l|\int_0^{r} \int_{0}^{r'} \left|\left[\frac{{\rm d}}{{\rm d}r}  
			\BIP{\Q_{2,n}\wp_n\big(r,l,f\big)-\Q_{2,m}\wp_m\big(r,l,g\big),\wp_n\big(r,l,f\big)-\wp_m\big(r,l,g\big)}_{H^{\theta}}\right](r'')\right| \d r'' \d r'.
		\end{align*}
		Similar to the proof of  \eqref{Ito-Marcus-term 2 gn}, we have
		\begin{align*}
			&\frac{{\rm d}}{{\rm d}r} 
			\bIP{\Q_{2,n}\wp_n\big(r,l,f\big)-\Q_{2,m}\wp_m\big(r,l,g\big),\wp_n\big(r,l,f\big)-\wp_m\big(r,l,g\big)}_{H^{\theta}}\\
			= \ & l\cdot \IP{
				\Big[\nabla \Q_{2,n}\big(\wp_n\big(r,l,f\big)\big)\Big]
				\Q_{2,n}\wp_n\big(r,l,f\big)-\Big[\nabla \Q_{2,n}\big(\wp_n\big(r,l,g\big)\big)\Big]
				\Q_{2,m}\wp_m\big(r,l,g\big),\wp_n\big(r,l,f\big)-\wp_m\big(r,l,g\big)}_{H^{\theta}}\\
			&+ l\cdot \BIP{\Q_{2,n}\wp_n\big(r,l,f\big)-\Q_{2,m}\wp_m\big(r,l,g\big), \Q_{2,n}\wp_n\big(r,l,f\big)-\Q_{2,m}\wp_m\big(r,l,g\big)}_{H^{\theta}}\\
			= \ & l\cdot \BIP{
				\Q_{2,n}^2\wp_n\big(r,l,f\big)-
				\Q_{2,m}^2\wp_m\big(r,l,g\big),\wp_n\big(r,l,f\big)-\wp_m\big(r,l,g\big)}_{H^{\theta}}\\
			&+ l\cdot \BIP{\Q_{2,n}\wp_n\big(r,l,f\big)-\Q_{2,m}\wp_m\big(r,l,g\big), \Q_{2,n}\wp_n\big(r,l,f\big)-\Q_{2,m}\wp_m\big(r,l,g\big)}_{H^{\theta}}.
		\end{align*}
		Therefore, we obtain that for $r\in[0,1]$,
		\begin{align*}
			\Big|\|\wp_n\big(r,l,f\big)-\wp_m\big(r,l,g\big)\|^2_{H^{\theta}}-\|f-g\|^2_{H^{\theta}} -2l\bIP{\Q_{2,n} f-\Q_{2,m} g, f-g}_{H^\theta}\Big| 
			\leq   2|l|^2\int_0^r\int_{0}^{r'} \sum_{k=1}^{12}|\mathfrak{L}_k(r'')|\d r'' \d r',
		\end{align*}
		where the terms involving $\Q_{2,n}^2$ or $\Q_{2,m}^2$ are
		\begin{align*}
			\mathfrak{L}_1(r)\triangleq\ & \BIP{
				\Q_{2,n}^2\wp_n\big(r,l,f\big)-
				\Q_{2,m}^2\wp_n\big(r,l,f\big),\wp_n\big(r,l,f\big)-\wp_m\big(r,l,f\big)}_{H^{\theta}},\\
			\mathfrak{L}_2(r)\triangleq\ & \BIP{
				\Q_{2,m}^2\wp_n\big(r,l,f\big)-
				\Q_{2,m}^2\wp_m\big(r,l,f\big),\wp_n\big(r,l,f\big)-\wp_m\big(r,l,f\big)}_{H^{\theta}},\\
			\mathfrak{L}_3(r)\triangleq\ & \BIP{
				\Q_{2,m}^2\wp_m\big(r,l,f\big)-
				\Q_{2,m}^2\wp_m\big(r,l,g\big),\wp_n\big(r,l,f\big)-\wp_m\big(r,l,f\big)}_{H^{\theta}},\\
			\mathfrak{L}_4(r)\triangleq\ & \BIP{
				\Q_{2,n}^2\wp_n\big(r,l,f\big)-
				\Q_{2,m}^2\wp_n\big(r,l,f\big),\wp_m\big(r,l,f\big)-\wp_m\big(r,l,g\big)}_{H^{\theta}},\\
			\mathfrak{L}_5(r)\triangleq\ &\BIP{
				\Q_{2,m}^2\wp_n\big(r,l,f\big)-
				\Q_{2,m}^2\wp_m\big(r,l,f\big),\wp_m\big(r,l,f\big)-\wp_m\big(r,l,g\big)}_{H^{\theta}},\\
			\mathfrak{L}_6(r)\triangleq\ & \BIP{
				\Q_{2,m}^2\wp_m\big(r,l,f\big)-
				\Q_{2,m}^2\wp_m\big(r,l,g\big),\wp_m\big(r,l,f\big)-\wp_m\big(r,l,g\big)}_{H^{\theta}},
		\end{align*}
		and the remaining terms are
		\begin{align*}
			\mathfrak{L}_{7}(r)\triangleq \ & \BIP{
				\Q_{2,m}\wp_n\big(r,l,f\big)-\Q_{2,m}\wp_m\big(r,l,f\big), \Q_{2,m}\wp_n\big(r,l,f\big)-\Q_{2,m}\wp_m\big(r,l,f\big)}_{H^{\theta}},\\
			\mathfrak{L}_{8}(r)\triangleq \ & \BIP{
				\Q_{2,m}\wp_m\big(r,l,f\big)-\Q_{2,m}\wp_m\big(r,l,g\big), \Q_{2,m}\wp_m\big(r,l,f\big)-\Q_{2,m}\wp_m\big(r,l,g\big)}_{H^{\theta}},\\
			\mathfrak{L}_9(r)\triangleq \ &  \BIP{
				\Q_{2,n}\wp_n\big(r,l,f\big)-\Q_{2,m}\wp_n\big(r,l,f\big), \Q_{2,n}\wp_n\big(r,l,f\big)-\Q_{2,m}\wp_n\big(r,l,f\big)}_{H^{\theta}},\\
			\mathfrak{L}_{10}(r)\triangleq \ & 2\BIP{
				\Q_{2,m}\wp_n\big(r,l,f\big)-\Q_{2,m}\wp_m\big(r,l,f\big), \Q_{2,n}\wp_n\big(r,l,f\big)-\Q_{2,m}\wp_n\big(r,l,f\big)}_{H^{\theta}},\\
			\mathfrak{L}_{11}(r)\triangleq \ &  2\BIP{
				\Q_{2,m}\wp_m\big(r,l,f\big)-\Q_{2,m}\wp_m\big(r,l,g\big), \Q_{2,n}\wp_n\big(r,l,f\big)-\Q_{2,m}\wp_n\big(r,l,f\big)}_{H^{\theta}},\\
			\mathfrak{L}_{12}(r)\triangleq \ & 2\BIP{
				\Q_{2,m}\wp_m\big(r,l,f\big)-\Q_{2,m}\wp_m\big(r,l,g\big), \Q_{2,m}\wp_n\big(r,l,f\big)-\Q_{2,m}\wp_m\big(r,l,f\big)}_{H^{\theta}}.
		\end{align*}
		Thanks to \eqref{Qn cancel-2} and Lemma \ref{Lemma-Stability wp-n}, we obtain that for all $r\in[0,1]$,
		\begin{align*}
			|\mathfrak{L}_2(r)+\mathfrak{L}_{7}(r)|\lesssim \|\wp_n\big(r,l,f\big)-\wp_m\big(r,l,f\big)\|^2_{H^{\theta}}\lesssim  (n \land m)^{-2(s-\theta-\delta_2)}\|f\|^2_{H^s} \left({\rm e}^{5C_{2,1}|l|r}-{\rm e}^{3C_{2,1}|l|r}\right),
		\end{align*}
		\begin{align*}
			|\mathfrak{L}_6(r)+\mathfrak{L}_{8}(r)|\lesssim \|\wp_m\big(r,l,f\big)-\wp_m\big(r,l,g\big)\|^2_{H^{\theta}}\lesssim {\rm e}^{2C_{2,1}|l|r}\|f-g\|^2_{H^{\theta}}.
		\end{align*}
		Using the Cauchy product inequality, \eqref{Qn-Qm operator norm}, \eqref{Qn2-Qm2 operator norm} and Lemma \ref{Lemma-Stability wp-n}, we find that for all $r\in[0,1]$
		\begin{align*}
			|\mathfrak{L}_1(r)|
			\lesssim \ & (n \land m)^{-(s_2-\theta-2\delta)}\|\wp_n\big(r,l,f\big)\|_{H^{s_2}}\cdot\|\wp_n\big(r,l,f\big)-\wp_m\big(r,l,f\big)\|_{H^{\theta}}\\
			\lesssim\ & (n \land m)^{-(s_2-\theta-2\delta)}\norm{f}_{H^{s_2}}\mathrm{e}^{C_{2,1}|l|r}\cdot (n \land m)^{-(s_1-\theta-\delta_2)}\|f\|_{H^{s_1}} \left({\rm e}^{5C_{2,1}|l|r}-{\rm e}^{3C_{2,1}|l|r}\right)^{1/2}\\
			\lesssim\ &  (n \land m)^{-(s_1+s_2-2\theta-3\delta)}\|f\|_{H^{s_1}}\|f\|_{H^{s_2}} \left({\rm e}^{7C_{2,1}|l|r}-{\rm e}^{5C_{2,1}|l|r}\right)^{1/2},
		\end{align*}
		\begin{align*}
			|\mathfrak{L}_3(r)| 
			\lesssim \ & \|\wp_m\big(r,l,f\big)-\wp_m\big(r,l,g\big)\|_{H^{s_2}}\cdot
			\|\wp_n\big(r,l,f\big)-\wp_m\big(r,l,f\big)\|_{H^{\theta}}\\
			\lesssim \ &  \norm{f-g}_{H^{s_2}} \mathrm{e}^{C_{2,1}|l|r}\cdot(n \land m)^{-(s_1-\theta-\delta_2)}\|f\|_{H^{s_1}} \left({\rm e}^{5C_{2,1}|l|r}-{\rm e}^{3C_{2,1}|l|r}\right)^{1/2}\\
			\lesssim \ & (n \land m)^{-(s_1-\theta-\delta_2)}
			\left(\norm{f}_{H^{s_1}}\norm{f}_{H^{s_2}}+\norm{f}_{H^{s_1}}\norm{g}_{H^{s_2}}\right)\left({\rm e}^{7C_{2,1}|l|r}-{\rm e}^{5C_{2,1}|l|r}\right)^{1/2},
		\end{align*}
		\begin{align*}
			|\mathfrak{L}_4(r)|
			\lesssim \ & (n \land m)^{-2(s_2-\theta-2\delta)}\|\wp_n\big(r,l,f\big)\|^2_{H^{s_2}}+\|\wp_m\big(r,l,f\big)-\wp_m\big(r,l,g\big)\|^2_{H^{\theta}}\\
			\lesssim\ & (n \land m)^{-2(s_2-\theta-2\delta)}\norm{f}^2_{H^{s_2}}\mathrm{e}^{2C_{2,1}|l|r}+\|f-g\|^2_{H^{\theta}}\mathrm{e}^{2C_{2,1}|l|r},
		\end{align*}
		\begin{align*}
			|\mathfrak{L}_5(r)|
			\lesssim \ & \|\wp_n\big(r,l,f\big)-\wp_m\big(r,l,f\big)\|_{H^{q_2}}^2+\|\wp_m\big(r,l,f\big)-\wp_m\big(r,l,g\big)\|^2_{H^{\theta}}\\
			\lesssim\ & (n \land m)^{-2(s-q_2-\delta_2)}\norm{f}^2_{H^{s}}\left({\rm e}^{7C_{2,1}|l|r}-{\rm e}^{5C_{2,1}|l|r}\right)+\|f-g\|^2_{H^{\theta}}\mathrm{e}^{2C_{2,1}|l|r}.
		\end{align*}
		\begin{align*}
			|\mathfrak{L}_9(r)|
			\lesssim  (n \land m)^{-2(s_1-\theta-\delta_2)}\|\wp_n\big(r,l,f\big)\|_{H^{s_1}}^2
			\lesssim   (n \land m)^{-2(s_1-\theta-\delta_2)}\norm{f}^2_{H^{s_1}}\mathrm{e}^{2C_{2,1}|l|r},
		\end{align*}
		\begin{align*}
			|\mathfrak{L}_{10}(r)| 
			\lesssim\ & \|\wp_n\big(r,l,f\big)-\wp_m\big(r,l,f\big)\|_{H^{q_1}}\cdot  (n \land m)^{-(s_1-\theta-\delta_2)}\|\wp_n\big(r,l,f\big)\|_{H^{s_1}}\\
			\lesssim \ & (n \land m)^{-(s_2-q_1-\delta_2)}\norm{f}_{H^{s_2}}\left({\rm e}^{5C_{2,1}|l|r}-{\rm e}^{3C_{2,1}|l|r}\right)^{1/2} \cdot (n \land m)^{-(s_1-\theta-\delta_2)}\norm{f}_{H^{s_1}}\mathrm{e}^{C_{2,1}|l|r}\\
			\lesssim \ &  (n \land m)^{-(s_1+s_2-q_1-\theta-\delta_2)}\norm{f}_{H^{s_1}}\norm{f}_{H^{s_2}}\left({\rm e}^{7C_{2,1}|l|r}-{\rm e}^{5C_{2,1}|l|r}\right)^{1/2},
		\end{align*}
		\begin{align*}
			|\mathfrak{L}_{11}(r)| 
			\lesssim\ & \|\wp_m\big(r,l,f\big)-\wp_m\big(r,l,g\big)\|_{H^{q_1}}\cdot  (n \land m)^{-(s_1-\theta-\delta_2)}\|\wp_n\big(r,l,f\big)\|_{H^{s_1}}\\
			\lesssim \ & \norm{f-g}_{H^{q_1}}  \mathrm{e}^{C_{2,1}|l|r} \cdot (n \land m)^{-(s_1-\theta-\delta_2)}\norm{f}_{H^{s_1}}\mathrm{e}^{C_{2,1}|l|r}\\
			\lesssim \ &  (n \land m)^{-(s_1-\theta-\delta_2)}\left(\norm{f}_{H^{q_1}}\norm{f}_{H^{s_1}}+\norm{g}_{H^{q_1}}\norm{f}_{H^{s_1}}\right)\mathrm{e}^{2C_{2,1}|l|r},
		\end{align*}
		\begin{align*}
			|\mathfrak{L}_{12}(r)| 
			\lesssim\ & \|\wp_m\big(r,l,f\big)-\wp_m\big(r,l,g\big)\|_{H^{q_1}}\cdot  \|\wp_n\big(r,l,f\big)-\wp_m\big(r,l,f\big)\|_{H^{q_1}}\\
			\lesssim \ & \norm{f-g}_{H^{q_1}}  \mathrm{e}^{C_{2,1}|l|r} \cdot (n \land m)^{-(s_2-q_1-\delta_2)}\norm{f}_{H^{s_2}}\left({\rm e}^{5C_{2,1}|l|r}-{\rm e}^{3C_{2,1}|l|r}\right)^{1/2}\\
			\lesssim \ &  
			(n \land m)^{-(s_2-q_1-\delta_2)}
			\left(\norm{f}_{H^{q_1}}\norm{f}_{H^{s_2}}+\norm{g}_{H^{q_1}}\norm{f}_{H^{s_2}}\right)\left({\rm e}^{7C_{2,1}|l|r}-{\rm e}^{5C_{2,1}|l|r}\right)^{1/2}.
		\end{align*}
		Collecting the above estimates
		with noting that $|l|\le1$ (cf. \ref{Hypo-W L}) and $r\in[0,1]$, we obtain  \eqref{M-flow difference Ito 2}. 
	\end{proof}

	\section{Stochastic Compressible Case}\label{Section : Compressible case}
	
	\subsection{Assumptions and Target Model}\label{Section : Compressible-Assumptions}
	Based on the two classes $\mathbb{A}^{\alpha}$ and $\mathbb{B}^{\beta}$ (see Definitions \ref{Ak class define} and \ref{Bk class define}, respectively), we introduce the following assumptions, which ensure that the operators $\mathcal Q_1$ and $\mathcal Q_2$ in \eqref{SEuler-(rho u)} and \eqref{SEuler-(u P) damp} are nearly skew-adjoint:
	\begin{Hypothesis} \label{Hypo-Qi}
		Assume that $\mathcal Q_1,\mathcal Q_2\in \mathbb{A}^{\alpha}\cup \mathbb{B}^{\beta}$, with $d=m\ge2$  in \eqref{Ss define}.  
	\end{Hypothesis}
	
	As before,  once  $\Q_1$ and $\Q_2$ are taken from $\mathbb{A}^\alpha \cup \mathbb{B}^\beta$ under \ref{Hypo-Qi},their associated decompositions and exponents are kept fixed, even if $\mathcal Q_i$ ($i=1,2$) admits an alternative admissible decomposition.

	Under \ref{Hypo-W L}, as in \eqref{Marcus flow H} and \eqref{Marcus integral define},
	we can define
	\begin{align*}
		\int_0^t \Q_2 u(t') \diamond {\rm d}L(t')
		\triangleq \ & \int_0^t\int_{|l|\le1} \Big\{\wp\big(1,l,u(t'-)\big)-u(t'-)\Big\}\, \widetilde{\eta}({\rm d}l,{\rm d}t')\\
		&+\int_0^t\int_{|l|\le1} \Big\{\wp\big(1,l,u(t')\big)-u(t')-l\cdot \Q_2 u(t')\Big\}\, \nu({\rm d}l){\rm d}t',\quad t>0,
	\end{align*}
	where the Marcus flow 
	$\wp$ is the solution to the following equation depending on jump size $l$ and initial data $f$:
	\begin{equation}\label{Marcus flow Q2}
		\wp(r)\triangleq\wp(r,l,f),\quad 
		\frac{{\rm d}}{{\rm d}r}\wp(r)= l\cdot \Q_2\wp(r),\quad r\in[0,1],  \quad \wp(0) = f.
	\end{equation}
	From this,
	\eqref{Stratonovich to Ito}, and \ref{Hypo-Qi}, we can to reformulate \eqref{SEuler-(rho u)}  as
	\begin{equation}\label{Target problem (rho u)}
		\left\{\begin{aligned}
			\rho(t)-\rho_0\ +&\int_0^t\Div(\rho u) \d t' =0,\quad \rho(t)>0,\\
			u(t)-u_0 \ 
			+&\int_0^{t}  \bigg[(u\cdot\nabla)u+\frac{\nabla P(\rho)}{\rho}-\frac{1}{2} \mathcal{Q}_1^2u\bigg](t') \d t'\\
			= & \int_0^{t} \left(\Q_{1}u(t') \d W(t')+ h\big(t',\rho(t'),u(t')\big)\d \widetilde W(t')\right )\\
			&+\int_0^t\int_{|l|\le1} \Big\{\wp\big(1,l,u(t'-)\big)-u(t'-)\Big\}\, \widetilde{\eta}({\rm d}l,{\rm d}t')\\
			&+\int_0^t\int_{|l|\le1} \Big\{\wp\big(1,l,u(t')\big)-u(t')-l\cdot \Q_2 u(t')\Big\}\, \nu({\rm d}l){\rm d}t',\\
			\rho(0)=\rho_0,\ \ & 
			u(0)=u_0.
		\end{aligned}
		\right.
	\end{equation}

	\begin{Remark}\label{Remark singualrity}
		Recall the state space $\H^s=H_1^s\times H_d^s$  (cf. \eqref{Hs Lp Wp m} and \eqref{Hs Lp Wp pair}).
		We emphasize that \eqref{Target problem (rho u)} defines a singular evolution system in $\H^s$. To understand this singularity, observe that:
		
		\begin{enumerate}[label={\bf (\alph*)},leftmargin=0.79cm]\setlength\itemsep{0.2em}
			\item The terms $(\Div (\rho u), (u\cdot\nn)u)$ do not preserve $\H^s$ regularity: if $(\rho,u)\in \H^s$, then $(\Div (\rho u), (u\cdot\nn)u)$ only belong to $\H^{s-1}$, not gaining regularity. Similarly, depending on the pressure, $\frac{\nabla P(\rho)}{\rho}$ may not preserve $H_d^s$ regularity either.

			\item Recall that $\delta_i=\delta_{\Q_i}$ is given in \eqref{delta-Qi}. 
			If $\alpha,\beta>0$, then, for \eqref{Target problem (rho u)}$_2$, the regularity condition $u\in H_d^s$ is insufficient to ensure that the corresponding terms lie in $H_d^s$:
			\[
			\Q_1 u(t')\in H_d^{s-\delta_1},\qquad \Q_1^2 u(t')\in H_d^{s-2\delta_1},
			\]
			and therefore, in general,
			\[
			\int_0^t -\tfrac{1}{2}\,\mathcal{Q}_1^2 u(t')\,\mathrm{d}t' \notin H_d^{s},\qquad
			\int_0^{t} \Q_{1}u(t')\, \mathrm{d}W(t') \notin H_d^{s}.
			\]
			Similarly, \eqref{Marcus flow Q2} exhibits singular behavior in $H_d^s$: although the initial datum lies in $H_d^s$, the coefficient $\Q_2$ maps $H_d^s$ to $H_d^{s-\delta_2}$, which implies that the associated Marcus flow loses regularity at the level of the noise amplitude.
		\end{enumerate}
		
		These singularities present an essential challenge: knowing merely that $(\rho,u)\in\H^s$, one cannot directly apply It\^o's formula to $\|(\rho, u)\|^2_{\H^s}$ because the $\H^s$ inner products involved are not well-defined. 
		Therefore, we must carefully design a regularization (or renormalization) procedure so that key properties—such as the cancellation estimates in Theorem \ref{Thm-cancel}—hold uniformly with respect to the regularization parameter, before passing to the limit. For details on this regularization (or renormalization)  approach, we refer to  Remarks  \ref{Remark:renormalization Qn} and \ref{Remark singualrity IC} , Lemmas \ref{Lemma:Qn}, \ref{Lemma-Marcus-n}, and \ref{Lemma : Xn T estimates}, and Section \ref{Section : SEuler-global-proof}.
	\end{Remark}

	We now turn to the general pressure law, beginning with the deterministic case to clarify the main idea. 
	To address the challenge of solving the compressible Euler system \eqref{Euler (rho u)} for a broad class of equations of state within a unified framework, we must overcome the degeneracy that classical symmetrization techniques suffer at the vacuum boundary.

	To achieve this, we introduce a function $\rr(\cdot)$ and examine the evolution of the quantity
	$$\vrho=\vrho(t,x)=[\rr(\rho)](t,x),\ \ (t,x)\in[0,\infty)\times\mathbb{K}^d,$$
	rather than the density $\rho(t,x)$ itself. This formulation is expected to facilitate the analysis of more general pressure functions $P(\rho)$.

	To make this rigorous, let $f^{-1}$ denote the inverse of an invertible function $f$, and define the admissible class of transformation functions as
	\begin{equation}\label{Set of transform}
		\left\{\begin{aligned}
			&{\bf R}_{(\rr_0,\rr_\infty)} \triangleq \Big\{
			\rr\in C^\infty\big((0,\infty);(\rr_0,\rr_\infty)\big)\,:\, \rr'>0,\ \ \rr^{-1}\in C^\infty\big((\rr_0,\rr_\infty);(0,\infty)\big)
			\Big\},\\
			&\rr_0 \triangleq \lim_{x\to0}\rr(x)\ge-\infty,\ \ \rr_\infty \triangleq \lim_{x\to\infty}\rr(x)\le\infty.
		\end{aligned}
		\right.
	\end{equation}
	Since $\rr(\cdot)$ and $\rr^{-1}(\cdot)$ are assumed to be smooth, the new variable $\vrho$ inherits the regularity of $\rho$. 
	
	In this paper, we impose the following structural and analytic assumptions on the pressure function $P(\rho)$ and the associated transformation $\rr(\rho)$.
	
	\begin{Hypothesis}\label{Hypo-Pressure}
		There exists a transformation $\rr\in {\bf R}_{(\rr_0,\rr_\infty)}$ such that the following conditions hold true: 
		\begin{enumerate}[label={\bf (A$_{\bf (5.2)}^{\arabic*}$)},leftmargin=*]
			\setlength\itemsep{0.2em}
			\item\label{Structural Identity P' r'} \textbf{Structural Identity:} The pressure $P: [0,\infty) \to \mathbb{R}$ satisfies
			\begin{equation*}
				P'(\rho)=\big(\rho\, \rr'(\rho)\big)^2 \quad \text{for} \quad \rho>0.
			\end{equation*}
			
			\item\label{Regularity-Sound} \textbf{Regularity of the  Sound Speed:} Let $\Theta(y) \triangleq \rr'\big(\rr^{-1}(y)\big)\rr^{-1}(y)$ for $y\in(\rr_0,\rr_\infty)$. The function $\Theta(\cdot)$ satisfies one of the following two conditions: 
			
			\begin{enumerate}[label={\bf (A$_{\bf (5.2)}^{2.\arabic*}$)},leftmargin=*]
				\setlength\itemsep{0.2em}
				\item\label{Constant Sound} \textbf{Constant case:} $\Theta(\cdot)\equiv \sqrt{P(1)}$ on $(\rr_0,\rr_\infty)=(-\infty,\infty)$.
				\item\label{General Sound} \textbf{General case:} $(\rr_0,\rr_\infty)\neq(-\infty,\infty)$, and there exists a $C^\infty$ extension of $\Theta$ from $(\rr_0,\rr_\infty)$ to $\R$, denoted by $\Lambda$, such that
				\begin{equation*}
					\Lambda:\ \R\to\R,\quad \Lambda(0)=0,\quad \Lambda\big|_{(\rr_0,\rr_\infty)}=\Theta,\quad \sup_{y\in\R}|\Lambda'(y)|<\infty.
				\end{equation*}
				Moreover, for all $s>\frac{d}{2}$, there exist a non-decreasing function $\Phi_{\Lambda}:[0,\infty)\to[0,\infty)$ and a constant $C_{s}>0$ such that the composition operator satisfies the Sobolev estimates:
				\begin{equation*} 
					\norm{\Lambda(\vrho)}_{H^s}\leq C_{s} \Phi_{\Lambda}(\|\vrho\|_{L^\infty})\norm{\vrho}_{H^s} \quad \forall \vrho\in H^s,
				\end{equation*}
				and 
				\begin{equation*} 
					\norm{\Lambda(\vrho_1)-\Lambda(\vrho_2)}_{H^s}\leq  C_{s} \Phi_{\Lambda}(\|\vrho_1\|_{H^s}+\|\vrho_2\|_{H^s})\norm{\vrho_1-\vrho_2}_{H^s}\quad 
					\forall \vrho_1,\vrho_2\in H^s.
				\end{equation*}
			\end{enumerate}
		\end{enumerate}
	\end{Hypothesis}

	\begin{Remark}\label{Remark-Hypo-Pressure}
		Some remarks regarding the physical relevance and mathematical implications of \ref{Hypo-Pressure} are given below.

		\begin{enumerate}[label={{\bf (\alph*)}},leftmargin=0.79cm]\setlength\itemsep{0.2em}
			
			\item \textbf{The system for new variable with constraint.}  \ref{Structural Identity P' r'} induces that for  sufficiently regular solutions $(\rho,u)$ to \eqref{Euler (rho u)},  the choice $(\vrho,u)=(\rr(\rho),u)$ satisfies
			\begin{equation}\label{Euler (q u) Theta}
				\left\{
				\begin{aligned}
					&\partial_t \vrho+\Theta(\vrho) \, \Div u+ u\cdot\nabla \vrho=0,\\
					&\partial_t u+(u\cdot\nn)u+\Theta(\vrho)\,\nabla \vrho=0,\\
					& \vrho\in(\rr_0,\rr_\infty),\\
					&\vrho(0, x) = \vrho_0(x)=\rr(\rho_0(x)),\\
					&u(0, x) = u_0(x).
				\end{aligned}
				\right.
			\end{equation}

			Here we note that the constraint $\vrho \in (\rr_0, \rr_\infty)$ for the case $(\rr_0, \rr_\infty) \neq (-\infty, \infty)$ may introduce additional complications in constructing a solution to \eqref{Euler (q u) Theta}. Therefore, when $(\rr_0, \rr_\infty) \neq (-\infty, \infty)$, we separate the equations from the constraint $\vrho \in (\rr_0, \rr_\infty)$. Specifically, assuming \ref{Regularity-Sound}, we can attempt to find a function $\rr \in {\bf R}_{(\rr_0, \rr_\infty)}$ for a certain interval $(\rr_0, \rr_\infty)$ and solve the following unconstrained equations  
			\begin{equation}\label{Euler (q u) Xi}
				\left\{
				\begin{aligned}
					&\partial_t \vrho+\Lambda(\vrho) \, \Div u+ u\cdot\nabla \vrho=0,\\
					&\partial_t u+(u\cdot\nn)u+\Lambda(\vrho)\,\nabla \vrho=0,\\
					&\vrho(0, x) = \vrho_0(x),\\
					&u(0, x) = u_0(x).
				\end{aligned}
				\right.
			\end{equation}
			Finally, to solve the compressible Euler equations \eqref{Euler (rho u)}, we will verify $\rr_0 < \vrho(t) < \rr_\infty$ (at least locally in time) \textit{a posteriori}.
			As for higher-order regularity estimate, by assuming \ref{Structural Identity P' r'} with either \ref{Constant Sound} or \ref{General Sound}, this difficulty can be overcome.  For instance, if \ref{Constant Sound} holds, integrating by parts eliminates the singular term, yielding
			\begin{equation*}
				\sqrt{P(1)} \bIP{ \Div u, \vrho}_{H^s} + \sqrt{P(1)} \bIP{\nabla \vrho, u}_{H^s} = 0.
			\end{equation*}
			If \ref{General Sound} holds, as demonstrated in the proof of  \eqref{F(X) X-2} (see \eqref{q=r(rho) no singularity}), the following term
			\begin{equation*}
				\bIP{ \Lambda(\vrho)\, \nabla \cdot u, \vrho}_{H^s} + \bIP{\Lambda(\vrho)\, \nabla \vrho, u}_{H^s}
			\end{equation*}
			can be controlled by a function  including a nonlinear component in $\|(\vrho,u)\|_{\WLip}$ and a linear part in $\|(\vrho,u)\|^2_{\H^s}$. Therefore, in the stochastic context, the cut-off technique in SPDEs can be applied (see \eqref{GnR HR}). Given these facts and the locally Lipschitz condition of $\Lambda$ in $H^s$ (see \ref{General Sound}), it becomes feasible to use the compactness argument by first constructing a proper regularized problem with a cut-off and then finding a convergent subsequence.

			\item \textbf{Compared to symmetrization and Makino transformation.}   
			In contrast to the classical symmetrization, under our new variables $X = (\vrho, u)^T$, the system \eqref{Euler (q u) Theta} is in symmetric quasi-linear form where the off-diagonal coefficients are exactly $\Theta(\vrho)$ (it turns to be the sound speed under new variable, see explanation below). Furthermore, for the specific case of a polytropic gas $P(\rho) = a\rho^\gamma$, integrating our structural identity \ref{Structural Identity P' r'} exactly recovers the classical Makino transformation (see Example \ref{Example-gamma} below). Our abstract transformation principle thus extends the Makino transform  to non-polytropic equations of state.
			
			When $\rho \to 0$, the behavior of $\Theta(\vrho)$ depends entirely on the specific equation of state. For instance, in the concrete examples (see Section \ref{Section : Pressure Examples}), one can see  that $\Theta(\vrho) \to 0$ for a polytropic gas (see Example \ref{Example-gamma}),  meaning the coupling between density and velocity waves simply vanishes at vacuum. In a Chaplygin gas (see Example \ref{Example-Chaplygin}), $\Theta(\vrho)$ may diverge as $\vrho \to -\infty$. However, in all cases, the corresponding $L^2$ energy functional is standard and never degenerates: it is always $\int_{\K^d} (|\vrho|^2 + |u|^2) \d x$.  
			
			Since the possible singularity is now entirely in the equation coefficients rather than in the energy functional. 
			\ref{General Sound} can be understood as a condition to control such singularity. Indeed, by introducing the smooth extension $\Lambda$ with controlled Sobolev estimates in \ref{General Sound}, we rigorously tame these coefficient singularities, enabling a unified local-in-time theory even for highly singular pressure laws.  
			Physically, the condition $\sup_{y\in\R}|\Lambda'(y)|<\infty$ in \ref{General Sound} is well-motivated: materials at extremely high densities typically exhibit finite rigidity, and this uniform bound excludes unphysical ``super-hard'' states with a diverging power law. Moreover, this assumption precludes a rapid divergence of the sound speed.   In Section \ref{Section : Pressure Examples}, we demonstrate that \ref{Hypo-Pressure} encompasses many well-known cases and accommodates more intricate situations, some of which have not been previously studied in the stochastic setting.

			\item  \textbf{Connection to the enthalpy and sound speed.}
			Under \ref{Hypo-Pressure}, the enthalpy and sound speed (see \eqref{enthalpy+sound}) are related via $\rr(\rho)$ by the identity
			\begin{equation*}
				\frac{\nabla \hbar(\rho)}{c_{{\rm sound}}(\rho)}
				=\nabla \rr(\rho).
			\end{equation*}
			Moreover, since $c_{{\rm sound}}(\rho)=\rho \rr'(\rho)$, the conditions $\rr'>0$ and $\rho > 0$ ensure a strictly positive sound speed, which is physically meaningful. More crucially, the structural identity \ref{Structural Identity P' r'} implies that $\rho\, \rr'(\rho) = \sqrt{P'(\rho)}$, which is exactly the local sound speed $c_{{\rm sound}}(\rho)$. Consequently, the function $\Theta(\vrho)$ defined in \ref{Regularity-Sound} represents the sound speed expressed in terms of the new variable $\vrho$, i.e., 
			$\Theta(\vrho)=c_{{\rm sound}}(\rho).$
			This explains why condition \ref{Regularity-Sound} characterizes the regularity of the  sound speed. 
			
			\item \textbf{Connection to the acoustic nonlinearity parameter.} It is worth noting that, in addition to the relation $\Theta(\vrho)=c_{{\rm sound}}(\rho)$ mentioned above, the derivative $\Theta'(\vrho)$ is intimately linked to another fundamental physical quantity in nonlinear acoustics. 
			For a given reference state $\rho=\rho_0$, the dimensionless parameter 
			\begin{equation}
				\frac{1}{2}\rho_0 \frac{P''(\rho_0)}{P'(\rho_0)}=\frac{{\rm d}\log c_{{\rm sound}}(\rho)}{{\rm d}\log \rho}\bigg|_{\rho=\rho_0},\label{eq : acoustics parameter}
			\end{equation}
			is usually denoted as $B/A$ in nonlinear acoustics; see \cite{Beyer-2024-Chapter}. This parameter characterizes the intrinsic nonlinearity of the equation of state: it relates the second-order pressure-density response to the linear one and describes how the sound speed varies with density (or pressure). It governs harmonic generation, waveform distortion, and the tendency of finite-amplitude waves toward shock formation.
			
			Under \ref{Hypo-Pressure}, recalling that $\Theta(\vrho) \triangleq  \rr'\big(\rr^{-1}(\vrho)\big)\rr^{-1}(\vrho)$ with $\vrho \triangleq  \rr(\rho)$, a direct computation yields
			$$\Theta'(\vrho) =\frac{1}{2} \frac{\rho P''(\rho)}{P'(\rho)}= \frac{{\rm d}\log c_{{\rm sound}}(\rho)}{{\rm d}\log \rho}.$$
			Comparing this with \eqref{eq : acoustics parameter}, we see that $\Theta'(\vrho)$ is the pointwise functional analogue of $\frac{1}{2}\rho_0 (P''(\rho_0)/P'(\rho_0))$, i.e., exactly half of the nonlinear acoustics parameter given in \eqref{eq : acoustics parameter}. Hence, $\Theta'(\vrho)$ quantifies how the relative strength of the quadratic term in the equation of state varies with the local density $\rho$. It provides a pointwise measure of thermodynamic nonlinearity across different background states, rather than only at a single reference state.
			
			\item \label{Thea=C0-Remark C0 (a b)} 
			\textbf{The case of constant sound.} If, in \ref{Constant Sound}, we assume that for some $C_0\in\R\setminus\{0\}$, $\Theta(y)=\rr'\big(\rr^{-1}(y)\big)\rr^{-1}(y)\equiv C_0$ for all $y\in(\rr_0,\rr_\infty)$, then $\rr'(x)=C_0/x$ for all $x\in(0,\infty)$. Solving for $\rr$ yields $\rr(\rho)=C_0\log(\rho)+C_1$ with $C_1\in\R$. By \ref{Structural Identity P' r'}, we observe that $P(\rho)=C_0^2\rho$, requiring $C_0$ to be $\sqrt{P(1)}$. Besides, although $\rr(\rho)$ is not uniquely determined (due to the arbitrary constant $C_1\in\R$), the interval $(\rr_0,\rr_\infty)=(-\infty,\infty)$ is uniquely fixed. Note that we \textbf{cannot} include this case into \ref{General Sound} since the non-zero constant function $\Theta(y)\equiv C_0\notin H^s(\R;\R)$.

		\end{enumerate}
	\end{Remark}

	To solve \eqref{Target problem (rho u)}, we need conditions on $h(t,\rho,u)$ that are compatible with the transformation based on \ref{Hypo-Pressure}.

	\begin{Hypothesis}
		\label{Hypo-h}
		Let $d \ge 2$, $p \ge 1$, $\sigma > \frac{d}{2} + p$, and let $\rr(\cdot)$ be given by \ref{Hypo-Pressure}. Assume that
		\begin{equation}
			h(t, \rho, u) = z(t, \rr(\rho), u)\label{h z}
		\end{equation}
		for some measurable function $z: [0, \infty) \times \H^\sigma \to \H^\sigma$. Moreover, suppose that there is a function $K \in \mathscr{K}$ $($cf. \eqref{SCRK}$)$ such that the for all $t\ge 0$ and $(\vrho,u), (\vrho_1,u_1), (\vrho_2,u_2)\in \H^{\sigma}$,
		\begin{align*}
			\norm{z(t,\vrho,u)}^2_{\H^{\sigma}} \leq  K(t,\|(\vrho,u)\|_{\WP}
			)(1+\|(\vrho,u)\|^{2}_{\H^{\sigma}}),
		\end{align*}
		\begin{align*}
			\norm{z(t,\vrho_1,u_1)- z(t, \vrho_2,u_2)}^{2}_{\H^{\sigma}} \le K(t,
			\|(\vrho_1,u_1)\|_{\H^{\sigma}}+\|(\vrho_2,u_2)\|_{\H^{\sigma}})\|(\vrho_1-\vrho_2,u_1-u_2)\|^{2}_{\H^{\sigma}}.
		\end{align*}
		
	\end{Hypothesis}

	\begin{Remark}
		Broadly speaking,  \ref{Hypo-h} can be interpreted as a family of assumptions—depending on parameters $p$ and $\sigma$—on the function $h(t,\rho,u)=z(t,\rr(\rho,u))$.  
		
		\begin{enumerate}[label={\bf (\alph*)},leftmargin=0.79cm]\setlength\itemsep{0.2em}
			\item The parameter $p\ge1$ serves as an index describing the growth factor $\|(\vrho,u)\|_{\WP}$ of $z\in \H^\sigma$ (where $\sigma>\frac{d}{2}+p$ ensures $\H^\sigma\hookrightarrow\WP$), and it affects the blow-up criterion of solutions, as shown in \eqref{Blow-up criterion (rho u)} below.
			\item The parameter $\sigma$ determines the Sobolev space $\H^\sigma$ in which $z$ is locally Lipschitz.
		\end{enumerate}
	\end{Remark}

	In Remark \ref{Remark-Hypo-Pressure}, we observe that \ref{Hypo-Pressure} induces a transformation that converts \eqref{Euler (rho u)} into \eqref{Euler (q u) Xi}. Similarly, in the stochastic case, since the driving noise terms only appear in the velocity equation \eqref{Target problem (rho u)}$_2$ and the transformation acts only on the density $\rho$, one can directly apply the transformation to the density equation and no further quadratic variation on $\rho$ arises. In this way, \ref{Hypo-Pressure} and \eqref{h z} in \ref{Hypo-h} allow us to transform \eqref{Target problem (rho u)} into a constrained problem. To illustrate this, let $A^T$ denote the transpose of a matrix/vector $A$, and let $\rr \in {\bf R}_{(\rr_0,\rr_\infty)}$ (cf. \eqref{Set of transform}). For $X = (\vrho, u)^T = (\rr(\rho), u)^T$, we define
	\begin{equation}\label{F-Euler}
		F(X) \triangleq 
		\begin{cases}
			\left(
			\begin{array}{c}
				\sqrt{P(1)}\, \Div u + u \cdot \nabla \vrho \\
				(u \cdot \nn)u + \sqrt{P(1)}\,\nabla \vrho \\
			\end{array}
			\right), & \text{if}\ \Theta(\vrho)\equiv \sqrt{P(1)}, \vspace*{0.3cm}\\
			\left(
			\begin{array}{c}
				\Lambda(\vrho) \, \Div u + u \cdot \nabla \vrho \\
				(u \cdot \nn)u + \Lambda(\vrho)\,\nabla \vrho \\
			\end{array}
			\right), & \text{if}\ \Theta(\vrho) = \Lambda\big|_{(\rr_0,\rr_\infty)}(\varrho),
		\end{cases}
	\end{equation}
	where $\Lambda$ is defined in \ref{Regularity-Sound}. Given this definition of $F$ and by virtue of \ref{Hypo-Pressure}, solving \eqref{Target problem (rho u)} is equivalent to solving the Cauchy problem:
	\begin{equation}\label{Target problem X=(q u)}
		\left\{
		\begin{aligned}
			&\d X + F(X)\d t
			= \QQ_{1}X\,\circ\, {\rm d} W+\QQ_{2}X\diamond {\rm d}L + \ZZ(t,X)\d \widetilde W,\\
			&\QQ_{i} \triangleq 
			{\rm diag}(0,\Q_i)\ (i=1,2),\quad 
			\ZZ(t,X) \triangleq 
			\left(
			\begin{array}{c}
				0\\
				z(t,\vrho, u)\\
			\end{array}
			\right),\\
			&X(0)=\Xi \triangleq (\vrho_0,u_0)^T=(\rr(\rho_0),u_0)^T,
		\end{aligned}\right.
	\end{equation}
	subject to the constraint
	\begin{equation}\label{a<q<b}
		\mathbb{P}\big(\rr_0<\vrho(t)<\rr_\infty,\ 0\le t< \tau^*\big)=1,\quad \tau^*\ \text{is the lifespan of}\ X^T=(\vrho,u)\quad (\text{cf. Definition \ref{solution definition (q u)} below}).
	\end{equation}

	Note that \eqref{Target problem X=(q u)} is interpreted in the following sense:
	\begin{align}\label{Target problem X=(q u) full}
		X(t) -\Xi \
		+ & \int_0^{t}  \Big\{F(X)-\frac{1}{2} 
		\QQ_1^2X\Big\} (t')\d t'\notag\\
		= & 
		\int_0^{t} \QQ_1X(t')\d W(t')
		+
		\int_0^{t} \ZZ(t',X(t')) \d \widetilde W(t')\notag\\
		&+\int_0^t\int_{|l|\le1} \Big\{\M\big(1,l,X(t'-)\big)-X(t'-)\Big\}\, \widetilde{\eta}({\rm d}l,{\rm d}t')\notag\\
		&+\int_0^t\int_{|l|\le1} \Big\{\M\big(1,l,X(t')\big)-X(t')-l\cdot \QQ_2 X(t')\Big\}\, \nu({\rm d}l){\rm d}t',\quad t\in[0,\tau^*),
	\end{align} 
	where, according to \ref{Hypo-W L}, large jumps are excluded, and
	\begin{equation*}
		\M(r)\triangleq \M(r,l,Y)
	\end{equation*} 
	denotes the solution to the initial value problem
	\begin{equation}
		\frac{{\rm d}}{{\rm d}r}\M(r)= l\cdot  \QQ_2 \M(r),\quad r\in[0,1],  \quad \M(0) = Y\in \H^s.\label{Marcus flow QQ2}
	\end{equation}

	In this paper, we focus on \textbf{classical} solutions, meaning that equations \eqref{Target problem (rho u)}$_1$ and \eqref{Target problem (rho u)}$_2$ hold as equalities in $C(\K^d)$.
	To this end, we require solutions $X \in \H^s$ for some sufficiently large $s > 0$. We now introduce the following definition of pathwise classical solutions to \eqref{Target problem X=(q u)}:
	
	\begin{Definition}\label{solution definition (q u)} 
		Let $\tau^*$ be a stopping time with $\p(\tau^*>0)=1$, and let $(X,\tau^*) \triangleq (X(t))_{t\in [0,\tau^*)}$ be a progressively measurable process on $\H^s$.
		\begin{enumerate}[label={{\rm (\arabic*)}},leftmargin=0.79cm]\setlength\itemsep{0.2em}
			\item $($Maximal $\H^s$ classical solution$)$.  The pair $(X,\tau^*)$ is called a maximal $\H^s$ classical solution to \eqref{Target problem X=(q u)} if $\pas$ the following conditions are satisfied:
			\begin{itemize}
				\item \eqref{Target problem X=(q u) full} holds as an equality in $C(\K^d)$ for all $t\in[0,\tau^*)$;
				\item $\sup_{t \in [0,T]}\|X(t)\|_{\H^s}<\infty$ for every $T<\tau^*$;
				\item $\limsup_{t\uparrow\tau^*}\|X(t)\|_{\H^s}=\infty$  on $\{\tau^*<\infty\}.$
			\end{itemize}
			In particular, if $\p(\tau^*=\infty)=1$, the maximal solution is called global.
			
			\item $($Admissible $\H^s$ classical solution$)$.
			A maximal solution $(X,\tau^*)$ to \eqref{Target problem X=(q u)} is called admissible if \eqref{a<q<b} holds, i.e.,
			\begin{equation*} 
				\p\big(\rr_0<\vrho(t)<\rr_\infty \text{ for all } t\in [0,\tau^*)\big)=1.
			\end{equation*} 
		\end{enumerate}
	\end{Definition}

	\begin{Remark}\label{Remark : measurability-upgrade}
		We first note that under the conditions in Definition \ref{solution definition (q u)} and  \ref{Hypo-W L}, the Debut theorem guarantees that the first hitting time of any Borel set of $\H^s$ by the progressively measurable process $X$ is a well-defined stopping time. Furthermore, we remark that, if one prefers,  the progressive measurability of $X$ can  be required in a weaker topology. To see this at an abstract level,  let $(Y, \|\cdot\|_Y)$ be a separable Banach space equipped with a weaker Hausdorff topology $\mathscr{T}$. 
		Since the identity map into $(Y, \mathscr{T})$ is continuous and bijective, $(Y, \mathscr{T})$ is a Lusin space. 
		Then, for any pointwise defined stochastic process $Z = \{Z(t)\}_{t \ge 0}$ taking values in $Y$, if $Z$ is progressively measurable with respect to $(Y, \mathscr{T})$, it is necessarily progressively measurable with respect to the norm topology $(Y, \|\cdot\|_Y)$. Indeed, this holds because the Borel $\sigma$-algebras associated with comparable Lusin topologies are identical (see, e.g., \cite[Page 107]{Schwartz-1973-Book} or \cite[Theorem 6.8.6]{Bogachev-2007-Books}). In Definition~\ref{solution definition (q u)}, the condition $\sup_{t\in[0,T]}\|X(t)\|_{\H^s}<\infty$ almost surely for every $T<\tau^*$ implies that $X(t)$ takes values in $\H^s$ $\pas$ Therefore, once the progressive measurability of $X$ is established in a weaker topology (such as the $\H^\theta$-topology for some $\theta < s$), its progressive measurability as an $\H^s$-valued process follows automatically by viewing this $\H^\theta$-topology as the weaker topology $\mathscr{T}$ on $\H^s$. This is precisely the strategy we will employ later (see \ref{Convergence of X-n T} in Lemma \ref{Lemma : Xn T estimates}, where we obtain $X$ via a limit procedure in the space of c\`{a}dl\`{a}g paths taking values in $\H^\theta$ with $\theta<s$). 
	\end{Remark}

	Since $\rr\in{\bf R}_{(\rr_0,\rr_\infty)}$ is an increasing $C^\infty$ function (cf. \eqref{Set of transform}), 
	the transformed state space   
	\begin{equation}\label{Hsr space}
		\H^s_{\rr}\triangleq \rr^{-1}(H_1^s)\times  H_d^s \quad \text{with}\quad 
		\rr^{-1}(H_1^s) \triangleq \Big\{f\,:\, \rr(f)\in H_1^{s}\Big\}
	\end{equation}
	is well-defined.  Let $(X,\tau^*)$, where $X =  (\rr(\rho), u)^T$, be an admissible $\H^s$ classical solution to \eqref{Target problem X=(q u)}.  Then   $((\rho,u),\tau^{*})$ is a maximal $\H^s_{\rr}$ classical solution to \eqref{SEuler-(rho u)} in the following sense:
	
	\begin{Definition}\label{solution definition (rho u)} 
		Let $\tau^*$ be a stopping time satisfying $\p(\tau^*>0)=1$, and let $((\rho,u),\tau^{*})$ be a progressively measurable process on $\H_{\rr}^s$ defined for $t\in[0,\tau^*)$. We call $((\rho,u),\tau^{*})$ a maximal $\H_{\rr}^s$ classical solution to \eqref{Target problem (rho u)} if the following conditions hold $\pas$:

		\begin{enumerate}[label={\rm (\arabic*)},leftmargin=0.79cm]\setlength\itemsep{0.2em}
			
			\item  \eqref{Target problem (rho u)}  holds in $C(\K^d)$ for all $t\in[0,\tau^*)$;
			\item  $\sup_{[0,T]}\|(\rr(\rho),u)(t)\|_{\H^s}<\infty$ for every $T<\tau^*$;
			\item $ 
			\limsup_{t\uparrow\tau^*}\|(\rr(\rho), u)(t)\|_{\H^s}=\infty\  \text{on}\  \{\tau^*<\infty\}$.
		\end{enumerate}
	\end{Definition}

	\subsection{Examples of a Broad Class of Pressure Laws}\label{Section : Pressure Examples}
	We now provide examples that satisfy \ref{Hypo-Pressure}. It is worth noting that, even in specific cases, several remain unexplored within the stochastic framework.   
	
	\begin{Example}[\textbf{\(\gamma\)-law on} $\K^d$] \label{Example-gamma}
		Consider the \(\gamma\)-law pressure \(P(\rho)=a\rho^{\gamma}\) with \(a > 0,\ \gamma \ge 1\). We verify \ref{Hypo-Pressure} for this pressure law:
		
		\begin{itemize}[leftmargin=0.79cm]\setlength\itemsep{0.2em}
			\item \textbf{Case of \(\gamma > 1\).}  \ref{Structural Identity P' r'} can be verified by taking
			\[
			\rr(\rho)=\frac{2\sqrt{a\gamma}}{\gamma-1}\rho^{\frac{\gamma-1}{2}}, \quad  {\bf R}_{(\rr_0,\rr_\infty)}={\bf R}_{(0,\infty)}.
			\] 
			Moreover, it is easy to see that $\Theta(x)=C_{a,\gamma} x$
			for some constant \(C_{a,\gamma}\) depending on \(a,\gamma\). Taking \(\Lambda(x)=C_{a,\gamma} x\) with domain \(\mathbb{R}\), one can verify that \ref{Regularity-Sound} holds.
			To recover \(\rho=\rr^{-1}(\vrho)\) in this case, we require \(\vrho=\rr(\rho) \in (0,\infty)\). 
			This transformation has been proposed in \cite{Makino-1986-chapter}.
			
			\item \textbf{Case of \(\gamma = 1\).} As discussed in \ref{Thea=C0-Remark C0 (a b)} of Remark \ref{Remark-Hypo-Pressure}, \ref{Hypo-Pressure} holds with
			\[
			\rr(\rho)=\sqrt{a}\log(\rho),\quad {\bf R}_{(\rr_0,\rr_\infty)}={\bf R}_{(-\infty,\infty)},\quad \Theta(x)\equiv \sqrt{a}.
			\]
			Since \(\rr(y)\to-\infty\) as \(y\to0\), we can reconstruct \(\rho\) provided \(|\vrho|=|\rr(\rho)|<\infty\).
		\end{itemize}
	\end{Example}
	
	\begin{Example}[\textbf{(Generalized) Chaplygin Gas States on} $\K^d$] \label{Example-Chaplygin}
		Consider the pressure law \(P(\rho)=-a \rho^{1-2\kappa}\) for \(\kappa\in(1/2,1]\) and \(a>0\). This corresponds to the generalized Chaplygin gas, which arises in cosmological theories \cite{Bento-Bertolami-Sen-2002-PRD,Ferreira-Avelino-2018-PRD}. The case \(\kappa=1\) gives the pure Chaplygin gas. 
		To verify \ref{Hypo-Pressure}, we take (ensuring the monotonicity of \(\rr\) as required by \eqref{Set of transform})
		\[
		\rr(\rho)=-\frac{\sqrt{a}\sqrt{2\kappa-1}}{\kappa}\rho^{-\kappa},\quad {\bf R}_{(\rr_0,\rr_\infty)}={\bf R}_{(-\infty,0)},\quad \Theta(x)=-\kappa x.
		\]
		Taking \(\Lambda(x)=-\kappa x\) with domain \(\mathbb{R}\) satisfies the required conditions.
	\end{Example}

	Practically relevant smooth piecewise-defined $\gamma$-pressure laws and Chaplygin gas pressure models are encompassed as special cases of \ref{Hypo-Pressure}.
	
	\begin{Example}[\textbf{Piecewise $\gamma$-pressure law/Chaplygin gas on} $\T^d$]\label{Example-Mixed}
		We illustrate only the case of piecewise $\gamma$-pressure law, since the approach for demonstrating the piecewise Chaplygin gas follows similarly.

		Let $k\ge2, k\in \N$, $b_i\in\R$ with $i=0,1,\cdots,k-1$ and $c_j\in\R$ with $j=1,2,\cdots,k-1$ satisfy
		$$0=b_0<1<c_1<b_1<c_2<b_2<\cdots
		<b_{k-2}<c_{k-1}<b_{k-1}<\infty.$$ 
		Let $\gamma_i\ge1$ ($i=2,\cdots,k-1$), $\gamma_1,\,\gamma_k>1$ and $a_j>0$ ($j=1,2,\cdots,k$) such that
		$a_ic_i^{\gamma_i}<a_{i+1}b_i^{\gamma_{i+1}}$, $i=1,\cdots,k-1.$
		Suppose that $P$ is $C^\infty$, $P'>0$,
		\begin{equation*}
			P\big|_{[b_{i-1},c_i]}(\rho)=a_i\rho^{\gamma_i}\ (i=1,2,\cdots,k-1),\quad \text{and}\quad 
			P\big|_{[b_{k-1},\infty)}(\rho)=a_k\rho^{\gamma_k}.
		\end{equation*}
		Direct calculation shows that \ref{Structural Identity P' r'} is verified with 
		$$\rr(\rho) \triangleq \int_0^\rho\frac{\sqrt{P'(y)}}{y}\d y,\quad {\bf R}_{(\rr_0,\rr_\infty)}={\bf R}_{(0,\infty)}.$$ 
		Note that on $x\in(0,c_1]$, we have
		$\frac{\sqrt{P'(x)}}{x}=\sqrt{a_1\gamma_1}\,x^{(\gamma_1-3)/2}$.
		Since $\gamma_1>1$, the function $\rr(\rho)$ is well-defined.
		
		Moreover, by considering $\Theta(y)\triangleq \rr'(\rr^{-1}(y))\rr^{-1}(y)$, and noticing that the integration across transition intervals naturally accumulates constant offsets, we obtain constants $A_i>0$ and $B_i\in\R$ ($i=1,\dots,k$), with $B_1=0$, such that the following piecewise affine form holds:
		\begin{equation*} 
			\Theta\big|_{(0,\, \rr(c_1)]}(y)= A_1 y, \quad 
			\Theta\big|_{[\rr(b_{i-1}),\, \rr(c_i)]}(y)= 
			\begin{cases} 
				A_i y + B_i,\ \ &\text{if}\ \gamma_i>1,\vspace*{4pt}\\ 
				B_i,\ \ &\text{if}\ \gamma_i=1, 
			\end{cases} 
			\quad \text{and}\quad \Theta\big|_{[\rr(b_{k-1}),\infty)}(y)= A_k y + B_k.
		\end{equation*}
		When $\mathbb{K}=\mathbb{T}$, condition \ref{Regularity-Sound} is verified. Indeed, although $\Theta(y)$ is piecewise affine rather than strictly linear, its derivative $\Theta'(y)$ is bounded by piecewise constants. Therefore, one can smoothly extend $\Theta$ to a function $\Lambda$ on $\R$ such that the uniform bound $\sup_{y\in\R}|\Lambda'(y)|<\infty$ is preserved.
		An illustrative case of Example \ref{Example-Mixed} for piecewise $\gamma$-pressure law with $(b_0,b_1,b_2)=(0,2,4)$, $(c_1,c_2)=(1,3)$,  $(a_1,\gamma_1)=(2,5/3)$, $(a_2,\gamma_2)=(3,1)$ and $(a_3,\gamma_3)=(2,3/2)$ is given by the following Figure \ref{Figure-Pressure}:

		\begin{center}
			\begin{tikzpicture}
				\begin{axis}[
					domain=0:5,
					samples=500,
					smooth,
					axis lines=middle,
					xlabel=$\rho$,
					ylabel=$P(\rho)$,
					xmin=0, xmax=5.5,
					every axis y label/.style={at=(current axis.above origin),anchor=south},
					every axis x label/.style={at=(current axis.right of origin),anchor=west},
					height=7.2cm, width=11cm,
					]
					
					\addplot [
					domain=0:1,
					samples=100,
					thick,
					color=blue
					] {2*x^(5/3)} 
					node[pos=0.5, above] {{\large $2\rho^{5/3}$}}
					;

					\addplot [
					domain=1:2,
					samples=100,
					thick,
					color=red!60
					]
					{(-5/3)*x^3 + (22/3)*x^2 -(19/3)*x + (8/3)}
					node[pos=0.5, right] {{\footnotesize \textbf{arbitrary smooth \& increasing transition}}}
					;

					\addplot [
					domain=2:3,
					samples=100,
					thick,
					color=black 
					] {3*x} 
					node[pos=0.5,  above] {{\large $3\rho$}}
					;

					\addplot [
					domain=3:4,
					samples=100,
					thick,
					color=red!60
					] 
					{-5*x^3 + 54*x^2 - 186*x + 216}
					node[pos=0.7, left] {{\footnotesize \textbf{arbitrary smooth \& increasing transition}}}
					;
					
					\addplot [
					domain=4:5,
					samples=100,
					thick,
					color=cyan
					] {2*x^(3/2)} 
					node[pos=0.7, left] {{\large $2\rho^{3/2}$}}
					;
				\end{axis}
			\end{tikzpicture}

			\captionof{figure}{Any smooth $P(\rho)$ such that $P'(\rho)>0$, $P(\rho)=2\rho^{5/3}$ on $[0,1]$, $P(\rho)=3\rho$ on $[2,3]$ and $P(\rho)=2\rho^{3/2}$ on $[4,\infty)$ can be  included in \ref{Hypo-Pressure}.}\label{Figure-Pressure}

		\end{center}
		
	\end{Example}

	\begin{Example}[Pressure law of a white dwarf star on $\T^d$]\label{Example-white dwarf star}
		In other astrophysical contexts, such as the modeling of white dwarf stars (cf.~\cite{Strauss-Wu-2020-Nonlinearity,Chen-etal-2024-CMP}), the constitutive pressure can deviate significantly from the polytropic form and may be given, for instance, by
		$$
		P(\rho) = c_1 \int_0^{c_2 \rho^{1/3}} 
		\frac{y^4}{\sqrt{c_3 + y^2}}\,\mathrm{d}y, 
		\qquad \rho>0,
		$$
		where $c_1$, $c_2$, and $c_3$ are positive constants. This pressure exhibits the asymptotic behaviors 
		$
		P(\rho) \sim  a_1\rho^{5/3}
		$ when $\rho \to 0$ and
		$
		P(\rho) \sim a_2\rho^{4/3}
		$ when $\rho \to \infty$
		for suitable constants $a_1,a_2>0$. Accordingly, we define
		$$
		\rr(\rho) \triangleq \int_0^\rho\frac{\sqrt{P'(y)}}{y}\d y
		= \sqrt{3 c_1 c_2^3 \sqrt{c_3}} \int_0^{\frac{c_2 \rho^{1 / 3}}{\sqrt{c_3}}} \frac{1}{\left(1+x^2\right)^{1 / 4}}\d x.
		$$
		Clearly, $\rr(\rho)$ satisfies  \ref{Structural Identity P' r'}  with \({\bf R}_{(\rr_0,\rr_\infty)}={\bf R}_{(0,\infty)}\). A direct computation shows that \(\Theta(y)\triangleq \rr'\big(\rr^{-1}(y)\big)\rr^{-1}(y)\) satisfies
		$$
		\Theta(y)
		= \frac{\sqrt{3 c_1 c_2^3 \sqrt{c_3}}}{3}\frac{x}{\left(1+x^2\right)^{1 / 4}},\quad 
		x = g^{-1}\Big(\frac{y}{\sqrt{3 c_1 c_2^3 \sqrt{c_3}}}\Big),\quad 
		g(x)=\int_0^x\frac{1}{(1+r^2)^{1/4}}\d r,\quad x>0.
		$$
		Since $g$ can be extended to a smooth odd diffeomorphism from $\R$ onto $\R$, $\Theta$  admits a smooth extension $\Lambda: \R\to\R$ that satisfies
		$$
		\Lambda(0)=\lim_{y\to0}\Theta(y)=0,\quad \sup_{y\in\R}|\Lambda'(y)|\le \lim_{y\to0}\Theta'(y)=1/3,
		$$
		which verifies \ref{General Sound}.
	\end{Example}
	
	\begin{Remark}
		In Examples \ref{Example-Mixed} and \ref{Example-white dwarf star}, the function $\Lambda$ is nonlinear with $\Lambda'(0)\neq 0$. Consequently, these examples are not applicable when $\K=\R$. More precisely, for such a nonlinear $\Lambda$, the estimate $\|\Lambda(\vrho_1)-\Lambda(\vrho_2)\|_{H^s}$ from \ref{Regularity-Sound} may fail when $\K=\R$.
	\end{Remark}
	
	\begin{Example}[Pressure with nonlinear acoustics parameter vanishing in the vacuum limit on $\K^d$]\label{Example-vacuum limit}
		Consider the pressure class 
		\begin{align*}
			\mathfrak{P}\triangleq
			\left \{P\in C^1([0,\infty); \mathbb{R})\cap C^\infty((0,\infty); \mathbb{R}) \, : \,
			\begin{aligned}
				&0<\liminf_{\rho\to\infty}\frac{\rho P''(\rho)}{P'(\rho)} \le \limsup_{\rho\to\infty}\frac{\rho P''(\rho)}{P'(\rho)}<\infty,\\
				P'(\rho)&>0\ \text{for}\ \rho>0,\quad P'(\rho)=\frac{1}{(\log \rho)^4}\ \text{for} \ \rho\in(0,1/2).
			\end{aligned} \right \}.
		\end{align*}
		Let $P\in \mathfrak{P}$. Then \( \rr(\rho)\triangleq \int_0^\rho\frac{\sqrt{P'(y)}}{y}\d y\)
		is well defined on $(0,\infty)$, and $\lim_{\rho\to\infty}\rr(\rho)=\infty$. Therefore, $\rr\in {\bf R}_{(\rr_0,\rr_\infty)}$ with $(\rr_0,\rr_\infty)=(0,\infty)$, ensuring \ref{Structural Identity P' r'}. Moreover, in this case,
		$$\rr(\rho)\big|_{(0,\frac{1}{2})}=\frac{-1}{\log \rho},\quad  
		\Theta(y)\big|_{(0,\frac{1}{\log 2})}=y^2,\quad \limsup_{y\to\infty}\Theta'(y)= \limsup_{\rho\to\infty}\frac{\rho P''(\rho)}{2P'(\rho)}<\infty.
		$$
		We first extend $\Theta$ to $[0,\infty)$ by setting $\Theta(0)=0$, and then extend the resulting function to $\R$ as an even function $\Lambda$. Since $\Lambda(0)=\Lambda'(0)=0$, it follows directly that \ref{General Sound} holds for this $\Lambda$.
		
		Recall that $\rho \frac{P''(\rho)}{2P'(\rho)}$ serves as the nonlinear acoustics parameter (cf. Remark \ref{Remark-Hypo-Pressure}). Direct computation then yields
		$$
		P\in\mathfrak{P}\Longrightarrow P'(0)=0,\quad \lim_{\rho\to0^+}\frac{{\rm d}\log c_{{\rm sound}}(\rho)}{{\rm d}\log \rho}=0,\quad 0<\liminf_{\rho\to\infty}\frac{{\rm d}\log c_{{\rm sound}}(\rho)}{{\rm d}\log \rho}\leq \limsup_{\rho\to\infty}\frac{{\rm d}\log c_{{\rm sound}}(\rho)}{{\rm d}\log \rho}<\infty.
		$$
		The above conditions model very ``soft'' physical states: at extremely low densities or near vacuum, fluid responses become exceptionally sluggish, the sound speed tends to zero, and the pressure increases only very slowly with density. As $\rho$ increases, no ``super-hard'' states—characterized by a diverging power law or an excessively rapid divergence of the speed of sound—occur. Consequently, $\mathfrak{P}$ provides an effective framework for simulating fluids or plasmas under extreme conditions, such as strongly cooled outer boundaries, highly rarefied regions, or tails approaching vacuum.
	\end{Example}

	\subsection{Local-in-time Theory for the Compressible Case}
	\label{Section : Result Thm-compressible}
	
	The stochastic compressible Euler system \eqref{SEuler-(rho u)} depends on the orders of $\Q_1$ and $\Q_2$. To state our local-in-time theory for \eqref{SEuler-(rho u)}, we introduce
	\begin{equation}
		\label{zeta-Q12}
		\zeta=\zeta_{\Q_1,\Q_2} \triangleq 
		\begin{cases}
			\alpha, & \text{if } \Q_1,\ \Q_2\in\mathbb{A}^\alpha \text{ and } \exists\, i\in\{1,2\} \text{ such that } \Q_{i}\neq 0,\\[4pt]
			\beta, & \text{if } \Q_1,\ \Q_2\in\mathbb{B}^\beta \text{ and } \exists\, i\in\{1,2\} \text{ such that } \Q_{i}\neq 0,\\[4pt]
			\alpha\lor \beta, & \text{if } \Q_1\in\mathbb{A}^\alpha,\  \Q_2\in\mathbb{B}^\beta \text{ or } \Q_1\in\mathbb{B}^\beta,\  \Q_2\in\mathbb{A}^\alpha,\ \text{and } 
			\Q_i\neq 0,\ i=1,2,\\[4pt]
			0, & \text{if } \Q_1=\Q_2=0,
		\end{cases}
	\end{equation}
	where $\alpha \in [0,1]$ and $\beta \ge 0$ are given in \ref{Hypo-Qi} (see \eqref{Ak class define} and \eqref{Bk class define}).
	
	We recall the transform set ${\bf R}_{(\rr_0,\rr_\infty)}$ defined in \eqref{Set of transform} and the transformed state space from \eqref{Hsr space}:
	\begin{equation*}
		\H^s_{\rr}\triangleq \rr^{-1}(H_1^s)\times H_d^s \quad \text{with}\quad 
		\rr^{-1}(H_1^s) \triangleq \Big\{f\,:\, \rr(f)\in H_1^{s}\Big\}, \quad \rr\in{\bf R}_{(\rr_0,\rr_\infty)}.
	\end{equation*}
	
	\begin{Theorem}[\textbf{Local-in-time theory for pathwise classical solutions to \eqref{SEuler-(rho u)}}]
		\label{Thm-(rho u)}
		Let $d \geq 2$ and $p\ge 1$.  
		Assume \ref{Hypo-W L}, \ref{Hypo-Qi}, \ref{Hypo-Pressure},  and \ref{Hypo-h} with $\sigma> \frac{d}{2} + p$. Let $s > \frac{d}{2} + p + \max \{3\zeta,1\}$, where $\zeta=\zeta_{\Q_1,\Q_2}$ is given in \eqref{zeta-Q12}. Let $(\rho_0,u_0)\in\H^s_{\rr}$ be an $\mathcal{F}_{0}$-measurable random variable.
		
		\begin{enumerate}[label={\bf (\arabic*)},leftmargin=0.79cm]\setlength\itemsep{0.2em}
			
			\item  
			If \ref{Constant Sound} holds, then the Cauchy problem \eqref{SEuler-(rho u)} 
			admits a unique maximal $\H^s_{\rr}$ classical solution $((\rho,u),\tau^{*})$ in the sense of Definition \ref{solution definition (rho u)}. Moreover, 
			\begin{equation}\label{X D(0,tau)}
				(\rr(\rho), u)\in D([0,\tau^*);\H^{s'})\quad \forall s'<s\quad \pas,
			\end{equation}
			the lifetime $\tau^*$ is independent of $s$, and the following blow-up criterion holds:
			\begin{equation}\label{Blow-up criterion (rho u)}
				\limsup_{t \rightarrow \tau^*} \|(\rr(\rho), u)(t)\|_{\WP} = \infty \quad \text{a.s. on } \{\tau^* < \infty\}.
			\end{equation}
			
			\item  If \ref{General Sound} holds and we further assume that 
			$$\text{either}\quad \p\Big(0=\rr_0<\underline{\rr}\le \vrho_0\le \overline{\rr}<\rr_\infty=\infty\Big)=1\quad \text{or}
			\quad \p\Big(-\infty=\rr_0<\underline{\rr}\le\vrho_0\le\overline{\rr}<\rr_\infty=0\Big)=1.$$
			Then \eqref{SEuler-(rho u)} also admits a unique maximal $\H^s_{\rr}$ classical solution $((\rho,u),\tau^{*})$ satisfying \eqref{X D(0,tau)} in the sense of Definition \ref{solution definition (rho u)}. Furthermore, the lifetime $\tau^*$ is independent of $s$ and the solution satisfies the blow-up criterion \eqref{Blow-up criterion (rho u)}.
		\end{enumerate}
	\end{Theorem}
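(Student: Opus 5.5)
We reduce Theorem \ref{Thm-(rho u)} to the transformed problem \eqref{Target problem X=(q u)} for $X=(\vrho,u)^T=(\rr(\rho),u)^T$ in $\H^s$, and construct an admissible maximal $\H^s$ classical solution in the sense of Definition \ref{solution definition (q u)}. Since $\rr\in{\bf R}_{(\rr_0,\rr_\infty)}$ is a smooth bijection with smooth inverse, admissibility then gives $\rho=\rr^{-1}(\vrho)>0$. The process $((\rho,u),\tau^*)$ is the maximal $\H^s_{\rr}$ classical solution of Definition \ref{solution definition (rho u)}, and uniqueness transfers back the same way. The argument has two stages: an unconstrained problem in which $\Theta$ is replaced by $\Lambda$ (or by the constant $\sqrt{P(1)}$), and the a posteriori recovery of the constraint \eqref{a<q<b}.

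\textbf{Stage 1: regularized and cut-off problem.} We introduce a cut-off $\chi_R(\|X\|_{\WP})$ in front of $F$ and of $\ZZ$, and mollify $F$ with Friedrichs mollifiers $J_n$. The noise is handled by the unified regularization $\QQ_{1,n}X\circ{\rm d}W+\QQ_{2,n}X\diamond{\rm d}L$, with $\Q_{i,n}=J_n\Q_iJ_n$ as in Lemma \ref{Lemma:Qn}, and the Marcus flow is replaced by $\wp_n$ from \eqref{regular Marcus flow Q2}. With these changes every coefficient is locally Lipschitz on $\H^s$, so a global solution $X_n$ exists for each $n$.

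\textbf{Stage 2: uniform estimates.} We apply It\^o's formula (for jump processes) to $\|X_n\|_{\H^s}^2$.

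The transport and pressure terms are controlled by commutator estimates together with the symmetric structure. In the constant case the singular terms cancel exactly by integration by parts. In the general case we use \ref{General Sound}, via the Sobolev composition bound, to estimate $\langle\Lambda(\vrho)\Div u,\vrho\rangle_{H^s}+\langle\Lambda(\vrho)\nabla\vrho,u\rangle_{H^s}$ by $\Phi(\|X\|_{\WLip})\|X\|_{\H^s}^2$.

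The Stratonovich terms are controlled by \eqref{Qn cancel-1} and \eqref{Qn cancel-2}, applied to the combination $\langle\Q_{1,n}^2u,u\rangle_{H^s}+\|\Q_{1,n}u\|_{H^s}^2$.

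The Marcus terms are controlled by \eqref{wp-n estimate 1 C21} and \eqref{wp-n estimate 2 C22} with $V(x)=x$. The compensated jump integral is handled with BDG.

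The It\^o forcing is controlled by \ref{Hypo-h}.

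Together these give $\E\sup_{t\le T}\|X_n\|_{\H^s}^2\le C(R,T,\|\Xi\|_{\H^s})$ uniformly in $n$.

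\textbf{Stage 3: passage to the limit.} In a lower norm $\H^\theta$ we compare $X_n$ and $X_m$, using Lemmas \ref{Lemma-Q1 n m}, \ref{Lemma-Stability wp-n} and \ref{Lemma-Q2 n m} for the noise differences. These lemmas require $\theta<s-3\zeta$, and this is where the hypothesis $s>\frac d2+p+\max\{3\zeta,1\}$ enters: it still leaves room for $\theta>\frac d2+p$ so that the nonlinear terms are Lipschitz in $\H^\theta$.

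The Marcus stability estimate is the main obstacle. Neither flow stays in $\H^s$ uniformly at the limiting level, and the rates from \eqref{M-flow difference Ito 2} have to be integrated against $\nu({\rm d}l)$ with only $\int|l|^2\nu<\infty$. We therefore rely on the $|l|^2$ factor and on the bound $|l|\le1$ from \ref{Hypo-W L}.

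If the estimate closes, $X_n$ is Cauchy in $L^2(\Omega;D([0,T];\H^\theta))$, using stopping times in the cut-off setting. The limit $X$ lies in $D([0,T];\H^{s'})$ for all $s'<s$ by interpolation, is $\H^s$-bounded by weak lower semicontinuity, and is progressively measurable by Remark \ref{Remark : measurability-upgrade}. This yields \eqref{X D(0,tau)}. We then remove the cut-off with stopping times $\tau_R=\inf\{t:\|X\|_{\WP}>R\}$ and glue the pieces to obtain the maximal solution. Uniqueness follows from the same $\H^\theta$ difference estimate applied to two solutions.

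\textbf{Blow-up criterion \eqref{Blow-up criterion (rho u)} and independence from $s$.} The bounds above show that the $\H^s$ growth is controlled by $\|X\|_{\WP}$ (the requirement $p\ge1$ is needed here). Hence $\|X\|_{\H^s}$ cannot blow up while $\|X\|_{\WP}$ stays bounded, which gives the criterion; since $\|X\|_{\WP}$ does not depend on $s$, neither does $\tau^*$.

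\textbf{Stage 4: positivity in the general case.} We use the $W^{1,\infty}$-continuity of the paths together with $\vrho_0\in[\underline\rr,\overline\rr]$. The stopping time $\inf\{t:\vrho\notin(\underline\rr/2,\infty)\}$, or its analogue in the negative case, is strictly positive. To obtain \eqref{a<q<b} up to $\tau^*$, we would show that $\vrho$ is transported along the flow of $u$ with multiplicative factor $\exp(-\int\Lambda'\Div u)$-type behavior. The $\vrho$-equation carries no noise, and $\Lambda(0)=0$, so the equation is linear in $\vrho$ near the boundary $\vrho=0$. Hence $\vrho$ cannot reach $0$ before $\|u\|_{\Wlip}$ blows up, and the constraint \eqref{a<q<b} holds on all of $[0,\tau^*)$.
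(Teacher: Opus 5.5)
Your proposal is correct and follows essentially the paper's route: pass to $X=(\rr(\rho),u)$, solve the cut-off, $J_n$-mollified scheme with $\Q_{i,n}=J_n\Q_iJ_n$ and the regularized Marcus flow $\wp_n$, get $n$-uniform $\H^s$ bounds from the uniform cancellation estimates (with $V(x)=x$ for the jump part), prove the Cauchy property in $\H^\theta$ for $\theta\in(\frac d2+p,\,s-\max\{3\zeta,1\})$ up to stopping times, glue the cut-off solutions, and recover admissibility from the maximum principle for $\partial_t\vrho+\Lambda(\vrho)\Div u+u\cdot\nabla\vrho=0$ via $|\Lambda(y)|\le \sup|\Lambda'|\,|y|$. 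Two points need tightening. First, since $\Xi$ is only $\mathcal F_0$-measurable with no moment bounds, your estimates should be conditional expectations $\E[\,\cdot\,|\mathcal F_0]$, as in the paper. Second, the stopping-time argument gives convergence in probability and then a.s. uniform-in-time convergence along a subsequence, not a Cauchy sequence in $L^2(\Omega;D([0,T];\H^\theta))$.
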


	\begin{Remark}[\textit{Methodological Advancements}]\label{Remark-Thm-(u rho)}
		Theorem \ref{Thm-(rho u)} establishes a unified existence theory for \eqref{SEuler-(rho u)} that applies simultaneously to both $\mathbb{T}^d$ and $\mathbb{R}^d$. Notably, this approach obviates the need for the classical martingale formulation---which typically relies on tightness, Skorokhod's representation, and the Yamada--Watanabe principle---standardly employed for constructing solutions on $\mathbb{T}^d$ (cf.\ \cite{Breit-Feireisl-Hofmanova-2018-CPDE, Breit-Hofmanova-2016-IUMJ, Hofmanova-Koley-Sarkar-2022-CPDE}). Furthermore, in contrast to much of the existing literature on stochastic fluid models, our results accommodate arbitrary $\mathcal{F}_0$-measurable initial data $u_0$ without imposing any \textit{a priori} moment bounds. This level of generality is achieved by systematically performing the  conditional expectation $\mathbb{E}[\bullet|\mathcal{F}_0]$. Lastly, we emphasize that the absence of viscous dissipation renders the classical Gelfand-triple framework for monotone SPDEs (e.g., \cite{Prevot-Rockner-2007-book, DaPrato-Zabczyk-2014-Book}) inapplicable to our singular setting.
	\end{Remark}

	\subsection{Proof of Theorem \ref{Thm-(rho u)}}
	\label{Section : Proof of Compressible}

	Theorem \ref{Thm-(rho u)} follows from Propositions \ref{Proposition-(q u)} (which handle the unconstrained version \eqref{Target problem X=(q u)}) and \ref{Proposition-(q u)-admissible} (which verifies \eqref{a<q<b}) stated below. To prove these propositions, we approximate the nonlinear term $F$ defined in \eqref{F-Euler}. Recall that $\H^s= H_1^s \times H_d^s$ (see \eqref{Hs Lp Wp pair}).

	\begin{Lemma}\label{Lemma:Fn}
		Let $F(\cdot)$ be defined in \eqref{F-Euler}. There exists a sequence of locally Lipschitz functions $\{F_n:\H^s\to \H^s\}_{n\ge1}$ with $s>\frac{d}{2}+1$ such that the following properties hold:
		\begin{itemize}[leftmargin=0.79cm]\setlength\itemsep{0.2em}
			\item If $s>\frac{d}{2}+1$ and $\theta\in(\frac{d}{2},s-1)$, $\{X_n,X\}_{n\ge1}\subset \H^s$ is bounded, and $\lim_{n\to\infty}X_n=X$ in $\H^\theta$, then
			$$\lim_{n\to\infty} F_n(X_n)=F(X)\quad \text{in}\quad \H^{\theta-1}.$$
			\item If \ref{Constant Sound} holds, then 
			\begin{equation*}
				\sup_{n\ge1}\Abs{\IP{F_n(X), X}_{\H^s}}\lesssim  \big(\|\nabla u\|_{L^\infty}+\|\nabla \vrho\|_{L^\infty}\big)\|X\|_{\H^s}^2,\quad X=(\vrho,u)^T\in \H^{s},\quad s>\frac{d}{2}+1.
			\end{equation*}
			Moreover, if $s>\frac{d}{2}+2$ and $\theta\in(\frac{d}{2}+1,s-1)$, then 
			\begin{align*}
				\IP{F_n(X)-F_m(Y),X-Y}_{\H^\theta} 
				\lesssim \Big(1+\|X\|^4_{\H^s}+\|Y\|^4_{\H^s}\Big)
				\left((n\wedge m)^{-2(s-1-\theta)}+\|X-Y\|^2_{\H^\theta}\right), \quad X,Y\in\H^s.
			\end{align*}
			
			\item If \ref{General Sound} holds, then 
			\begin{equation*}
				\sup_{n\ge1}\Abs{\IP{F_n(X), X}_{\H^s}}\lesssim \big(1+\Phi_\Lambda(\|\vrho\|_{L^\infty})\big)\big(\|\nabla u\|_{L^\infty}+\|\nabla \vrho\|_{L^\infty}\big)\|X\|_{\H^s}^2,\quad X=(\vrho,u)^T\in \H^{s}.
			\end{equation*}
			Additionally, if $s>\frac{d}{2}+2$ and $\theta\in\left(\frac{d}{2}+1,s-1\right)$, then for all $m,n\ge1$,
			\begin{align*}
				&\IP{F_n(X)-F_m(Y),X-Y}_{\H^\theta} \\
				\lesssim\ &  \Big(1+ \Phi^2_{\Lambda}\big(\|X\|_{\H^{s}}+\|Y\|_{\H^{s}}\big)+\|X\|^4_{\H^s}+\|Y\|^4_{\H^s}\Big)
				\left((n\wedge m)^{-2(s-1-\theta)}+\|X-Y\|^2_{\H^\theta}\right),\quad X,Y\in\H^s.
			\end{align*}
			
		\end{itemize}
		
	\end{Lemma}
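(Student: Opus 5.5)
I would take the mollified drift
\[
F_n(X)\triangleq \begin{pmatrix} J_n\big(\Lambda(\vrho)\,\Div J_n u + (u\cdot\nabla) J_n\vrho\big)\\ J_n\big((u\cdot\nabla)J_n u + \Lambda(\vrho)\,\nabla J_n\vrho\big)\end{pmatrix},
\]
with $\Lambda$ replaced by the constant $\sqrt{P(1)}$ in case \ref{Constant Sound}. Here $J_n$ are the Friedrichs mollifiers of \eqref{Define Jn}.

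\textbf{Local Lipschitz property.} For fixed $n$ the map $F_n$ sends $\H^s$ into $\H^s$ and is locally Lipschitz. The reason is that $J_n\in\OP\SS_0^{-\infty}$ absorbs the derivative, $H^s$ with $s>d/2$ is an algebra, and under \ref{General Sound} the composition estimates give local Lipschitz control of $\Lambda(\vrho)$ in $H^s$.

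\textbf{Convergence.} Let $\{X_n\}$ be bounded in $\H^s$ with $X_n\to X$ in $\H^\theta$ and $\theta>d/2$. I would split
\[
F_n(X_n)-F(X)=\big(F_n(X_n)-F_n(X)\big)+\big(F_n(X)-F(X)\big).
\]
For the first bracket I use that $\{J_n\}$ is bounded in $\OP\SS_0^0$, the product estimate $\|fg\|_{H^{\theta-1}}\lesssim\|f\|_{H^\theta}\|g\|_{H^{\theta-1}}$, and the $H^\theta$-Lipschitz bound for $\Lambda$ (the \ref{General Sound} estimate with $s$ replaced by $\theta>d/2$). This bracket is then $\lesssim C(\|X_n\|_{\H^s},\|X\|_{\H^s})\|X_n-X\|_{\H^\theta}\to0$. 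The second bracket tends to $0$ in $\H^{\theta-1}$ by Lemma \ref{Lemma-Jn}, since $J_nf\to f$ strongly.

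\textbf{Uniform $H^s$ bound.} Move the outer $J_n$ across the inner product, which is allowed because $J_n$ commutes with $\D^s$ and is self-adjoint. Writing $v=J_nu$ and $q=J_n\vrho$, I then have to bound
\[
\langle\Lambda(\vrho)\Div v+(u\cdot\nabla)q,\ q\rangle_{H^s}+\langle(u\cdot\nabla)v+\Lambda(\vrho)\nabla q,\ v\rangle_{H^s}.
\]
The transport terms are handled by the Kato--Ponce commutator estimate for $[\D^s,u\cdot\nabla]$ together with integration by parts. This gives $\lesssim\|\nabla u\|_{L^\infty}\|X\|_{\H^s}^2+\|u\|_{H^s}\|\nabla q\|_{L^\infty}\|q\|_{H^s}$, and $\|\nabla q\|_{L^\infty}\le C\|\nabla\vrho\|_{L^\infty}$ uniformly in $n$.

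The coupling terms are the critical part. I write
\[
\D^s(\Lambda\Div v)=\Lambda\,\D^s\Div v+[\D^s,\Lambda]\Div v,
\]
and do the same for $\Lambda\nabla q$. The top-order pieces combine by integration by parts:
\[
\int\Lambda\big(\D^s\Div v\,\D^sq+\D^s\nabla q\cdot\D^sv\big)\d x=-\int\nabla\Lambda(\vrho)\cdot\D^sv\,\D^sq\,\d x,
\]
which is bounded by $\|\Lambda'\|_{L^\infty}\|\nabla\vrho\|_{L^\infty}\|X\|_{\H^s}^2$. The commutators are bounded by
\[
\|\nabla\Lambda(\vrho)\|_{L^\infty}\|v\|_{H^s}+\|\Lambda(\vrho)\|_{H^s}\|\nabla v\|_{L^\infty}.
\]
For the last factor I use the composition estimate $\|\Lambda(\vrho)\|_{H^s}\le C\Phi_\Lambda(\|\vrho\|_{L^\infty})\|\vrho\|_{H^s}$. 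In the constant case these commutators vanish, since $\Lambda\equiv\sqrt{P(1)}$ commutes with $\D^s$. Doing this symmetrization so that it is uniform in $n$ is the main point of the proof. I expect it to work because $\Lambda$ is evaluated at the unmollified $\vrho$, and all mollifiers appear in the symmetric positions $q$ and $v$.

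\textbf{Difference estimate in $\H^\theta$.} I would split
\[
F_n(X)-F_m(Y)=\big(F_n(X)-F_n(Y)\big)+\big(F_n(Y)-F_m(Y)\big).
\]
For the first piece I repeat the symmetrization above at level $\theta$, applied to the differences $X-Y$. Terms in which a derivative falls on the difference are treated by the same integration-by-parts trick. The remaining terms have the form $(\Lambda(\vrho_X)-\Lambda(\vrho_Y))\nabla Y$, which is bounded in $H^\theta$ by $\Phi_\Lambda\,\|X-Y\|_{\H^\theta}\|Y\|_{\H^{\theta+1}}$, and $\theta+1<s$. Here I need $\theta>d/2+1$ so that $\|\nabla(X-Y)\|_{L^\infty}$ is controlled by $\|X-Y\|_{\H^\theta}$.

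For the second piece I use Lemma \ref{Lemma-Jn}:
\[
\|J_n-J_m\|_{\LL(H^{s-1};H^{\theta})}\lesssim(n\wedge m)^{-(s-1-\theta)}.
\]
Since $F$-type expressions of $Y$ lie in $H^{s-1}$ with bound $\lesssim(1+\Phi_\Lambda)\|Y\|_{\H^s}^2$, this gives $\|F_n(Y)-F_m(Y)\|_{\H^\theta}\lesssim(n\wedge m)^{-(s-1-\theta)}(1+\Phi_\Lambda)\|Y\|_{\H^s}^2$. Applying Young's inequality against $\|X-Y\|_{\H^\theta}$ then produces the stated factor $(1+\Phi_\Lambda^2+\|X\|^4_{\H^s}+\|Y\|^4_{\H^s})$.
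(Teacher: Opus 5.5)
Your proof is correct, but it regularizes $F$ differently from the paper.

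\textbf{The paper's route.} The paper takes $F_n(X)\triangleq J_nF(J_nX)$, so every occurrence of $X$ inside $F$ is mollified, including the coefficient $\Lambda(J_n\vrho)$ and the transport velocity $J_nu$. Since $J_n$ is self-adjoint and commutes with $\D^s$, one has $\langle F_n(X),X\rangle_{\H^s}=\langle F(J_nX),J_nX\rangle_{\H^s}$. The uniform $H^s$ bound is then just Lemma \ref{Lemma-F} applied to $J_nX\in\H^{s+1}$, together with $\sup_n\|J_n\|_{\LL(L^\infty;L^\infty)}<\infty$. The difference bound is exactly Lemma \ref{Lemma-F-difference}.

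\textbf{What your route costs and buys.} You mollify only the differentiated factors, plus the test function through the outer $J_n$. You keep $\Lambda(\vrho)$ and $u$ unmollified and redo the symmetrization by hand, with $q=J_n\vrho$ and $v=J_nu$ in the symmetric slots. This loses the black-box reduction to Lemma \ref{Lemma-F}. In exchange you get $\Phi_\Lambda(\|\vrho\|_{L^\infty})$ exactly, whereas the paper's route only gives $\Phi_\Lambda(\|J_n\vrho\|_{L^\infty})\le\Phi_\Lambda(C\|\vrho\|_{L^\infty})$. Your split into $F_n(X)-F_n(Y)$ and $F_n(Y)-F_m(Y)$ also separates the fixed-$n$ hyperbolic estimate from the $(n\wedge m)$-rate. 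The paper instead telescopes each of $\Psi_1,\dots,\Psi_4$ into five pieces.

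\textbf{A small omission.} $F_n(Y)-F_m(Y)$ contains more than $(J_n-J_m)$ acting from outside. It also has inner differences such as $J_m\big(\Lambda(\varpi)\Div (J_n-J_m)v\big)$ and $J_m\big(v\cdot\nabla(J_n-J_m)\varpi\big)$. These are bounded the same way, using $\|J_n-J_m\|_{\LL(H^{s};H^{\theta+1})}\lesssim(n\wedge m)^{-(s-1-\theta)}$.

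\textbf{Your pairing in the difference estimate is necessary.} You pair $\Lambda(\vrho)\Div J_n(u-v)$ with $\Lambda(\vrho)\nabla J_n(\vrho-\varpi)$ in the $\H^\theta$ estimate, and this step cannot be skipped. In the paper's appendix proof of Lemma \ref{Lemma-F-difference}, the bounds recorded for $\Psi_{1,5}$ and $\Psi_{4,5}$ are only linear in $\|X-Y\|_{\H^\theta}$. They reach the stated quadratic form only after the same integration by parts you perform.

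\textbf{A shared imprecision.} After Young's inequality, both routes produce a product such as $(1+\Phi_\Lambda)^2\|Y\|^4_{\H^s}$, not the sum written in the statement. This is harmless, because later only local boundedness in $\|X\|_{\H^s}+\|Y\|_{\H^s}$ is used.
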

	\begin{proof}
		Let $J_n$ be defined in \eqref{Define Jn} and define $F_n(\cdot)\triangleq J_nF(J_n\cdot).$
		Then the results come from the definition of $F(\cdot)$ in \eqref{F-Euler},  Lemmas \ref{Lemma-F} and \ref{Lemma-F-difference}.
	\end{proof}

	\subsubsection{The Unconstrained Problem \eqref{Target problem X=(q u)}}\label{Section : Unconstrained Problem}
	We will first ignore \eqref{a<q<b} and focus   on the Cauchy problem \eqref{Target problem X=(q u)} alone.  
	The main result for the unconstrained problem \eqref{Target problem X=(q u)} is 
	\begin{Proposition}
		\label{Proposition-(q u)}
		Let $d \geq 2$. 
		Assume \ref{Hypo-W L}, \ref{Hypo-Pressure}, \ref{Hypo-Qi}, and \ref{Hypo-h} with $p\ge 1$ and $\sigma > \frac{d}{2} + p$. Let $s > \frac{d}{2} + p + \max \{3\zeta,1\}$, where $\zeta=\zeta_{\Q_1,\Q_2}$ is given in \eqref{zeta-Q12}.  Then, for any $\H^{s}$-valued $\mathcal{F}_{0}$-measurable random variable $\Xi$, the Cauchy problem \eqref{Target problem X=(q u)} admits a unique maximal $\H^s$ classical solution $(X,\tau^{*})$ in the sense of Definition~\ref{solution definition (q u)}. Moreover,
		\begin{equation*}
			X\in D([0,\tau^*);\H^{s'})\quad \forall\, s'<s\quad \pas  
		\end{equation*}
		and the following blow-up criterion holds:
		\begin{equation}
			\label{X blow-up criterion}
			\limsup _{t \rightarrow \tau ^{*}}\|X(t)\|_{\WP} = \infty   \  \text{on}\ \{\tau^*<\infty\}\ \ \mathrm{a.s.}
		\end{equation}
		
	\end{Proposition}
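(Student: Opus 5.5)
We will construct the solution by a regularization--cut-off--Cauchy-sequence scheme, working pathwise. Truncating on sets $\{\|\Xi\|_{\H^s}\in[k,k+1)\}$ and using conditional expectation $\mathbb E[\,\cdot\,|\mathcal F_0]$, we may assume $\|\Xi\|_{\H^s}\le M$ almost surely, and glue afterwards. Fix a smooth cut-off $\chi_R$ with $\chi_R(x)=1$ for $x\le R$ and $\chi_R(x)=0$ for $x\ge 2R$. We consider the approximate problem in which $F$ is replaced by $\chi_R(\|X\|_{\WP})F_n(X)$ from Lemma \ref{Lemma:Fn}, $\Q_i$ by $\Q_{i,n}$ from Lemma \ref{Lemma:Qn} (so $\QQ_{i,n}={\rm diag}(0,\Q_{i,n})$), the Marcus flow by the regularized flow $\wp_n$ of \eqref{regular Marcus flow Q2}, and $\ZZ$ by $\chi_R(\|X\|_{\WP})\ZZ$. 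Since all coefficients are then locally Lipschitz on $\H^s$ and the Marcus jump term is Lipschitz by \eqref{Marcus flow stability 1}, the standard theory of SDEs in Hilbert spaces with Poisson noise yields a unique global c\`adl\`ag solution $X_n$ in $\H^s$. Here the linear growth in $\|X\|_{\H^s}$, modulo the cut-off factor, makes the solution global.

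Next we derive uniform estimates. Apply It\^o's formula to $\|X_n\|^2_{\H^s}$, or to $\log(1+\|X_n\|^2)$ to avoid moment assumptions. The drift from $F_n$ is bounded via Lemma \ref{Lemma:Fn} by $C_R\|X_n\|^2_{\H^s}$, using the cut-off together with the embedding $\H^{s}\hookrightarrow \WLip$ and $\Phi_\Lambda$. The Stratonovich correction plus quadratic variation, $\langle\Q_{1,n}^2u,u\rangle_{H^s}+\|\Q_{1,n}u\|^2_{H^s}$, is bounded uniformly by \eqref{Qn cancel-2}, and the martingale integrand by \eqref{Qn cancel-1}. The Marcus compensator and jump terms are controlled by \eqref{wp-n estimate 1 C21}--\eqref{wp-n estimate 2 C22} with $V(x)=x$. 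BDG and Gr\"onwall then give $\sup_n\mathbb E\sup_{t\le T}\|X_n\|^2_{\H^s}\le C(R,M,T)$.

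The key step, and the main obstacle, is showing that $(X_n)$ is Cauchy in $D([0,T];\H^\theta)$ for some $\theta\in(\frac d2+1,s-3\zeta)$ with $\theta\ge\frac d2+p$-compatible regularity. We apply It\^o's formula to $\|X_n-X_m\|^2_{\H^\theta}$. The $F$-difference is handled by the second and third items of Lemma \ref{Lemma:Fn}, which produce the error $(n\wedge m)^{-2(s-1-\theta)}$. The Stratonovich terms are handled by Lemma \ref{Lemma-Q1 n m}, which requires $s>\theta+2\zeta$. The Marcus jump and compensator terms are handled by Lemma \ref{Lemma-Q2 n m}, which requires $s>\theta+3\zeta$ and is what forces the threshold $s>\frac d2+p+\max\{3\zeta,1\}$. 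The cut-off differences $\chi_R(\|X_n\|_{\WP})-\chi_R(\|X_m\|_{\WP})$ are bounded by $\|X_n-X_m\|_{\H^\theta}$, which is why we need $\theta>\frac d2+p$; this is compatible with the threshold. Since the cut-off spoils monotonicity, we localize with stopping times $\tau_{n,m}^K=\inf\{t:\|X_n\|_{\H^s}+\|X_m\|_{\H^s}>K\}$ and use the uniform bound to control $\mathbb P(\tau^K_{n,m}<T)$. Gr\"onwall then yields convergence in probability to a limit $X$ in $D([0,T];\H^\theta)$. Lower semicontinuity and weak compactness give $X\in L^\infty(0,T;\H^s)$, progressive measurability in $\H^s$ follows from Remark \ref{Remark : measurability-upgrade}, and passage to the limit in each term uses Lemma \ref{Lemma:Fn}, \eqref{Qnf-Qf}, and Remark \ref{Remark: wp estimate} for the Marcus integral. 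Interpolation then gives $X\in D([0,T];\H^{s'})$ for all $s'<s$.

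Uniqueness follows from the same $\H^\theta$ difference estimate with $n=m=\infty$, again using Remark \ref{Remark: wp estimate} and Theorem \ref{Thm-cancel} on the smooth limits, up to the exit time of the $\H^s$ ball. Removing the cut-off, let $\tau_R=\inf\{t:\|X^R\|_{\WP}\ge R\}$. By uniqueness the solutions $X^R$ agree up to $\tau_R\wedge\tau_{R'}$, and we set $\tau^*=\lim_R\tau_R$, which gives a maximal solution. For the blow-up criterion \eqref{X blow-up criterion}, note that on $\{\sup_{t<\tau^*}\|X\|_{\WP}<\infty\}$ the $\H^s$ energy estimate above (with the $\WP$ factor bounded) gives $\sup\|X\|_{\H^s}<\infty$, contradicting maximality. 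Hence the $\H^s$ and $\WP$ blow-up criteria coincide, and $\tau^*$ is independent of $s$.
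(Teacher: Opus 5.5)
Your proposal is correct and follows the paper's proof essentially step for step: the same regularization $J_n\Q_iJ_n$, $J_nF(J_n\cdot)$ and regularized Marcus flow under a $\WP$ cut-off, uniform $\H^s$ bounds from the uniform cancellation estimates, an $\H^\theta$ Cauchy argument with $\theta\in(\frac d2+p,s-\max\{3\zeta,1\})$ localized by $\H^s$ stopping times, weak lower semicontinuity plus interpolation, uniqueness via the same $\H^\theta$ estimate, and gluing of the cut-off solutions. The differences are cosmetic. The paper centres the cut-off at $\Xi$ and works with $\E[\cdot|\F_0]$ instead of truncating the data. Also, the blow-up criterion follows directly from defining $\tau^*$ as the limit of the $\WP$-exit times, and $\H^s$-maximality is then a consequence of it; it should not be a premise for an energy-estimate contradiction, which is not directly available for the singular limit equation.
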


	The proof is divided into several steps.

	\paragraph{\textbf{Step 1: Approximation Scheme.}}
	
	Let $\chi\in C^\infty([0,\infty);[0,1])$ such that
	$\chi=1$ on $[0,1]$ and $\chi=0$ on $[2,\infty)$. For any $R\ge1$, define $\chi_R\in C^\infty([0,\infty);[0,1])$ as
	\[
	\chi_R(x)\triangleq\chi(x/R).
	\]
	Then it is clear that $\|\chi'_R\|_{L^\infty}\leq \|\chi'\|_{L^\infty}$. 
	
	Let $\{\QQ_{i,n}={\rm diag}(0,\Q_{i,n})\}_{n\ge1} $ ($i=1,2$), where $\{\Q_{i,n}\}_{n\ge1}$ is given by Lemma \ref{Lemma:Qn}. Let  $\{F_n\}_{n\ge 1}$ be the sequence given by Lemma  \ref{Lemma:Fn}. For any $n,R\ge1$, we consider the following approximate  scheme for \eqref{Target problem X=(q u)}:
	\begin{equation}\label{eq : appro compressible}
		{\rm d}X + \chi_R\big(\|X-\Xi\|_{\WP}\big)F_n (X)\d t
		=\QQ_{1,n}X\,\circ\, {\rm d} W+\QQ_{2,n}X\diamond {\rm d}L + \chi_R\big(\|X-\Xi\|_{\WP}\big)\ZZ(t,X)\d \widetilde W,\quad X(0)=\Xi. 
	\end{equation}
	As before,  by  \eqref{Stratonovich to Ito},   we have 
	\begin{align}
		\int_0^t \QQ_{1,n} X\,\circ\, {\rm d} W(t')=\int_0^t \frac{1}{2} \QQ_{1,n}^2X(t')\d t'+\int_0^t \QQ_{1,n} X(t')\d W(t').\label{Stratonovich integral Q1-n}
	\end{align}
	Besides, the Marcus integral $\int_0^t \QQ_{2,n} X(t')\diamond {\rm d} L(t')$   is defined as
	\begin{align}
		&\int_0^t \QQ_{2,n} X(t')\diamond {\rm d} L(t') \notag\\
		\triangleq \ &
		\int_0^t\int_{|l|\le1} \Big\{\M_n\big(1,l,X(t'-)\big)-X(t'-)\Big\}\, \widetilde{\eta}({\rm d}l,{\rm d}t')\notag\\
		&+\int_0^t\int_{|l|\le1} \Big\{\M_n\big(1,l,X(t')\big)-X(t')-l\cdot \QQ_{2,n} X(t')\Big\}\, \nu({\rm d}l){\rm d}t',\label{Marcus integral Q2-n}
	\end{align}
	where 
	$$
	\M_n(r)\triangleq \M_n(r,l,Y)
	$$
	solves the following $\H^s$-valued flow:
	\begin{equation}\label{Marcus flow Q2-n}
		\frac{{\rm d}}{{\rm d}r}\M_n(r)= l\cdot \QQ_{2,n} \M_n(r),\quad r\in[0,1],  \quad \M_n(0) = Y\in \H^s.
	\end{equation}
	For $Y\in\H^s$, we define
	\begin{equation}\label{GnR HR}
		G_{n,R}(Y) \triangleq   -\chi_R\big(\|Y-\Xi\|_{\WP}\big)J_n F(J_nY)
		+\frac{1}{2} \QQ_{1,n}^2Y,\quad
		H_R(t,Y)\triangleq \chi_R\big(\|Y-\Xi\|_{\WP}\big)\ZZ(t,Y).
	\end{equation}
	In summary, by \eqref{Stratonovich integral Q1-n}, \eqref{Marcus integral Q2-n}, \eqref{GnR HR}, and \ref{Hypo-W L},  the approximation scheme can be rewritten as
	\begin{align}\label{approximation scheme C}
		X(t)-\Xi\  = \ & \int_0^t G_{n,R}(X(t'))\d t'+\int_0^t \QQ_{1,n} X(t')\d W(t')+\int_0^tH_R(t',X(t'))\d \widetilde W(t')\notag\\
		\ & + \int_0^t\int_{|l|\le1} \Big\{\M_n\big(1,l,X(t'-)\big)-X(t'-)\Big\}\, \widetilde{\eta}({\rm d}l,{\rm d}t')\notag\\
		\ & +\int_0^t\int_{|l|\le1} \Big\{\M_n\big(1,l,X(t')\big)-X(t')-l\cdot \QQ_{2,n} X(t')\Big\}\, \nu({\rm d}l){\rm d}t'.
	\end{align}
	
	\begin{Remark}
		From the definition of $\QQ_{2,n}$, Lemma \ref{Lemma:Qn}, and \eqref{regular Marcus flow Q2}, we obtain
		\begin{equation*}
			\M_n(r,l,Y)=  (f,\wp_{n}(r,l,g))^T,\quad Y=(f,g),
		\end{equation*} 
		where $\wp_{n}$ is given by \eqref{regular Marcus flow Q2}. 
		Consequently, all estimates on $\wp_{n}$ with values in $H_d^s$ (established in Lemmas \ref{Lemma-Marcus-n}, \ref{Lemma-Stability wp-n}, and \ref{Lemma-Q2 n m}) also hold for $\M_n(r)$ with values in $\H^s$. In what follows, we will frequently use this fact without further mention.
	\end{Remark}
	\begin{Lemma}\label{Lemma: existence of XnR}
		Let the conditions in Proposition \ref{Proposition-(q u)} hold. Then, for any $n,R\ge1$ and for any $\F_0$-measurable $\H^s$-valued random variable $\Xi$, the Cauchy problem
		\eqref{approximation scheme C} admits a unique solution $X_n^{(R)}(t)\in D([0,\infty);\H^s)$ such that
		\begin{align}\label{approximation solution XnR}
			X_n^{(R)}(t)-\Xi\  = \ & \int_0^t G_{n,R}(X_n^{(R)}(t'))\d t'+\int_0^t \QQ_{1,n} X_n^{(R)}(t')\d W(t')+\int_0^tH_R(t',X_n^{(R)}(t'))\d \widetilde W(t')\notag\\
			\ & + \int_0^t\int_{|l|\le1} \Big\{\M_n\big(1,l,X_n^{(R)}(t'-)\big)-X_n^{(R)}(t'-)\Big\}\, \widetilde{\eta}({\rm d}l,{\rm d}t')\notag\\
			\ & +\int_0^t\int_{|l|\le1} \Big\{\M_n\big(1,l,X_n^{(R)}(t')\big)-X_n^{(R)}(t')-l\cdot \QQ_{2,n} X_n^{(R)}(t')\Big\}\, \nu({\rm d}l){\rm d}t'.
		\end{align}
	\end{Lemma}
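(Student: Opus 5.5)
We will treat \eqref{approximation scheme C} for fixed $n,R\ge1$ as an SDE in the Hilbert space $\H^s$ with locally Lipschitz coefficients, driven by $W$, $\widetilde W$ and the compensated Poisson measure $\widetilde\eta$. We then upgrade local solvability to global solvability via linear-growth bounds. The key observation is that every coefficient is a bounded operator on $\H^s$ once $n$ is fixed. By Lemma \ref{Lemma:Qn}, $\QQ_{i,n}\in\mathscr L(\H^s;\H^s)$. Hence $\M_n(r,l,Y)={\rm e}^{rl\QQ_{2,n}}Y$ is the operator exponential, which is linear in $Y$ with $\|\M_n(1,l,\cdot)\|_{\mathscr L(\H^s)}\le {\rm e}^{|l|\,\|\QQ_{2,n}\|}$. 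A Taylor expansion gives
\[
\|\M_n(1,l,Y)-Y\|_{\H^s}\lesssim_n |l|\,\|Y\|_{\H^s},\qquad \|\M_n(1,l,Y)-Y-l\QQ_{2,n}Y\|_{\H^s}\lesssim_n |l|^2\|Y\|_{\H^s},\qquad |l|\le1 .
\]
Since $\int_{|l|\le1}|l|^2\nu({\rm d}l)<\infty$, the jump integrand is globally Lipschitz in $L^2(\nu;\H^s)$, and the compensator drift term is globally Lipschitz in $\H^s$.

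For the remaining coefficients, $G_{n,R}$ and $H_R$, we use the cut-off. Because $J_n$ maps $H^{s-1}$ into $H^s$ boundedly (with an $n$-dependent constant) and $F$ is locally Lipschitz from $\H^s$ to $\H^{s-1}$, the map $Y\mapsto J_nF(J_nY)$ is locally Lipschitz on $\H^s$; this is the content of Lemma \ref{Lemma:Fn}. The factor $\chi_R(\|Y-\Xi\|_{\WP})$ is Lipschitz in $Y$ by the embedding $\H^s\hookrightarrow\WP$, which holds since $s>\frac d2+p$. By \ref{Hypo-h}, $Z$ is locally Lipschitz on $\H^\sigma\supset\H^s$ in the $\H^\sigma$ norm. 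On the support of $\chi_R$ we have $\|Y\|_{\WP}\le 2R+\|\Xi\|_{\WP}$, so the growth bound becomes $\|H_R(t,Y)\|^2_{\H^s}\le K(t,2R+\|\Xi\|_{\WP})(1+\|Y\|^2_{\H^s})$, which is linear growth with an $\mathcal F_0$-measurable constant. We expect the same for the $F$ part, namely $\|J_nF(J_nY)\|_{\H^s}\lesssim_n C(\|J_nY\|_{W^{1,\infty}})\|Y\|_{\H^s}$ with $\|J_nY\|_{W^{1,\infty}}\le 2R+\|\Xi\|_{\WP}$. In the General Sound case this uses the composition estimate for $\Lambda$ from \ref{General Sound}.

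The main obstacle is the Lipschitz (rather than growth) bound. Here one must control differences of $\Lambda(J_n\varrho)$ in $H^s$, and \ref{General Sound} only gives a constant $\Phi_\Lambda(\|\varrho_1\|_{H^s}+\|\varrho_2\|_{H^s})$, i.e. local Lipschitz. Handling this together with the cut-off is the delicate part. We will not try to make the cut-off produce a global Lipschitz constant. Instead:
\begin{itemize}
\item first, condition on $\mathcal F_0$ (equivalently, localize on $\{\|\Xi\|_{\H^s}\le k\}$) so that $\Xi$ may be treated as deterministic and bounded;
\item second, truncate the coefficients at $\|Y\|_{\H^s}\le N$ to obtain globally Lipschitz coefficients;
\item third, apply the standard existence–uniqueness theorem for Lipschitz SDEs with Lévy noise in Hilbert spaces (e.g.\ via Picard iteration in $L^2(\Omega;D([0,T];\H^s))$ using the BDG inequality for the $\widetilde\eta$-integral). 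This yields unique càdlàg solutions $X^{N}$ up to the exit time $\tau_N$ from the ball of radius $N$.
\end{itemize}

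By uniqueness the solutions $X^N$ are consistent, so gluing them gives a local solution up to $\tau_\infty=\lim_N\tau_N$. To show $\tau_\infty=\infty$ almost surely, we apply Itô's formula (Lemma in the Appendix) to $\|X\|^2_{\H^s}$. Using the linear-growth bounds above, the BDG inequality, and Gronwall's lemma, we obtain $\mathbb E[\sup_{t\le T\wedge\tau_N}\|X\|^2_{\H^s}\,|\,\mathcal F_0]\le C(n,R,T,\Xi)$. It follows that $\mathbb P(\tau_N\le T)\to0$ as $N\to\infty$. Finally, we patch over the sets $\{\|\Xi\|_{\H^s}\le k\}$, which are $\mathcal F_0$-measurable, so uniqueness on each set yields a single solution $X_n^{(R)}\in D([0,\infty);\H^s)$ satisfying \eqref{approximation solution XnR}, unique among such càdlàg processes.
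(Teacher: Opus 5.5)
Your proposal is correct and follows essentially the paper's route: local Lipschitz continuity of $G_{n,R}$ and $H_R$ on $\H^s$, global Lipschitz continuity of the linear flow $\M_n(1,l,\cdot)$, and linear growth from the cut-off to rule out explosion. The only differences are that you spell out the truncation, gluing and non-explosion steps that the paper takes from \cite[Theorem 4.9]{Mandrekar-Rudiger-2006-Stoch}, and that you treat the random datum $\Xi$ by localizing on $\F_0$-measurable sets, where the paper instead appeals to independence of $\F_0$. One small correction: the local Lipschitz continuity of $\ZZ$ must hold in the $\H^s$ norm, i.e.\ you need \ref{Hypo-h} with $\sigma=s$, as the paper uses it and as you implicitly do in your growth bound; Lipschitz continuity in a weaker $\H^\sigma$ norm would not make $H_R$ a locally Lipschitz self-map of $\H^s$.
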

	
	\begin{proof}
		For any $n,R\ge1$, the truncated coefficients $G_{n,R}(\cdot)$ and $H_{R}(t,\cdot)$ are locally Lipschitz continuous in $\cdot\in\H^s$ locally uniform in $t$. Since the regularized Marcus flow $\M_n$ solves a linear equation, $\M_n\big(1,l,\cdot\big)$ is Lipschitz continuous in $\cdot\in \H^s$. Therefore, for any deterministic initial data, \eqref{approximation scheme C} has a unique solution  (see for instance \cite[Theorem 4.9]{Mandrekar-Rudiger-2006-Stoch}) with c\`adl\`ag path. Moreover, we note that the truncated coefficients $G_{n,R}(\cdot)$ and $H_{R}(t,\cdot)$  exhibit linear growth; consequently, the solution is global, that is, it belongs to
		$D([0,\infty);\H^s)$.  		
		Because
		$\F_0$ is independent of the system,  for any $\F_0$-measurable $\H^s$-valued random variable $\Xi$, 
		\eqref{approximation scheme C} also admits a unique solution $X_n^{(R)}(t)$, and $X_n^{(R)}(t)\in D([0,\infty);\H^s)$.
	\end{proof}

	\paragraph{\textbf{Step 2: Estimates for Approximate Solutions.}} 
	\begin{Lemma}\label{Lemma : Xn T estimates} Let the conditions in Proposition \ref{Proposition-(q u)} hold and let $\theta\in\left(\frac{d}{2}+p,{s-\max \{3\zeta,1\}}\right)$. Then $\pas$ the following properties hold:
		\begin{enumerate}[label={\bf (\arabic*)},leftmargin=0.79cm]\setlength\itemsep{0.2em}
			\item \label{Global cut-off estimate} There is a function $\hh K: [0,\infty)\times [0,\infty)\to (0,\infty)$, increasing in both variables, such that  for any $R\ge1$ and $T>0$,
			\begin{align}
				\sup_{n\ge1}\E\Big[\sup_{t\in[0,T]}\|X_n^{(R)}(t)\|^2_{\H^s}\Big|\F_0\Big] 
				\leq \hh K(T,2R+\|\Xi\|_{\WP})(1+\|\Xi\|_{\H^s}^2).\label{Xn uniform bound} 
			\end{align} 
			\item \label{Convergence of X-n TnmN}  Let
			$$\tau_N^{n,m}(R) \triangleq  N\land \inf\big\{t\ge 0: \|X_n^{(R)}(t)\|_{\H^s}\lor\|X_m^{(R)}(t)\|_{\H^s}\ge N\big\},\quad n,m\ge 1, \ N\ge1.$$
			Then 
			\begin{equation} \label{Cauchy in E-M}
				\lim_{n\rightarrow\infty}\sup_{m\ge n}
				\E\left[\sup_{t\in[0,\tau_N^{n,m}(R)]} \|X_n^{(R)}(t)-X_m^{(R)}(t)\|^2_{\H^\theta}\bigg|\F_0\right]=0,\quad N>0.
			\end{equation}
			
			\item \label{Convergence of X-n T} There exists an $\mathcal{F}_t$-progressively measurable $\H^s$-valued process $X^{(R)}=(X^{(R)}(t))_{t\ge 0}$ satisfying 
			\begin{equation}\label{X Hs bound}
				\E\Big[\sup_{t\in [0,T]} \|X^{(R)}(t)\|_{\H^s}^2\Big|\F_0\Big]\le \hh K(T,2R+\|\Xi\|_{\WP})(1+\|\Xi\|^2_{\H^s}),\quad T>0,
			\end{equation} 
			such that for some subsequence of $\{X_n^{(R)}\}_{n\ge1}$ $($still denoted by $\{X_n^{(R)}\}_{n\ge1}$ for simplicity$)$ and for all $s'\in[\theta,s)$,
			\begin{equation}\label{Xn to X}
				\lim_{n\to\infty}\sup_{t\in[0,T]}\|X_n^{(R)}(t)-X^{(R)}(t)\|_{\H^{s'}}=0\quad \text{and}\quad  X_n^{(R)}\xrightarrow[]{n\to \infty}X^{(R)} \  {\rm in}\ D([0,T];\H^{s'}),\quad T>0
				\quad \pas
			\end{equation}
			
		\end{enumerate} 
	\end{Lemma}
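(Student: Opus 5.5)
Part \ref{Global cut-off estimate} comes from It\^o's formula for jump processes applied to $\|X_n^{(R)}(t)\|_{\H^s}^2$. This is legitimate because, for fixed $n$, all coefficients in \eqref{approximation solution XnR} map $\H^s$ into $\H^s$. The drift contributes $-2\chi_R\IP{F_n(X),X}_{\H^s}$ and $\IP{\QQ_{1,n}^2X,X}_{\H^s}$. The continuous martingale part contributes the quadratic variation $\|\QQ_{1,n}X\|_{\H^s}^2+\chi_R^2\|\ZZ(t,X)\|_{\H^s}^2$. The Marcus part contributes the compensator
\[
\int_{|l|\le1}\Big\{\|\M_n(1,l,X)\|^2_{\H^s}-\|X\|^2_{\H^s}-2l\IP{\QQ_{2,n}X,X}_{\H^s}\Big\}\,\nu({\rm d}l),
\]
together with a martingale. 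The Stratonovich correction and the quadratic variation combine into $\IP{\QQ_{1,n}^2X,X}_{\H^s}+\IP{\QQ_{1,n}X,\QQ_{1,n}X}_{\H^s}$, which \eqref{Qn cancel-2} bounds by $C\|X\|^2_{\H^s}$ uniformly in $n$. The Marcus compensator is handled by Remark \ref{Remark: wp-n estimates} with $V(x)=x$, i.e.\ \eqref{wp-n estimate 2 C22}. For the transport term, Lemma \ref{Lemma:Fn} gives $|\IP{F_n(X),X}_{\H^s}|\lesssim(1+\Phi_\Lambda(\|\vrho\|_{L^\infty}))\|X\|_{\WLip}\|X\|^2_{\H^s}$. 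On the support of $\chi_R$ we have $\|X\|_{\WP}\le 2R+\|\Xi\|_{\WP}$, so this term is at most a constant depending on $2R+\|\Xi\|_{\WP}$ times $\|X\|^2_{\H^s}$. The $\ZZ$-term is controlled in the same way through \ref{Hypo-h}, since $\H^s\hookrightarrow\H^\sigma$ for $s\ge\sigma$ (or by interpolation otherwise). The martingale suprema are handled by Burkholder--Davis--Gundy. For the jump martingale we use \eqref{wp-n estimate 1 C21}: the integrand $\|\M_n(1,l,X)\|^2-\|X\|^2$ is bounded by $C|l|\,\|X\|^2$, so its square is $\nu$-integrable against $|l|^2$. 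After absorbing the $\frac12\E\sup$ terms, all expectations are taken conditionally on $\F_0$, which is harmless because $\Xi$ is $\F_0$-measurable and independent of the noise. Gr\"onwall's inequality then yields \eqref{Xn uniform bound}.

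For part \ref{Convergence of X-n TnmN}, apply It\^o's formula to $\|X_n^{(R)}-X_m^{(R)}\|^2_{\H^\theta}$ up to $\tau_N^{n,m}(R)$, and estimate each term by the stability results.
\begin{itemize}
\item The $F$-difference, including the difference of cutoffs, is handled by Lemma \ref{Lemma:Fn} together with $|\chi_R(a)-\chi_R(b)|\le C\|X-Y\|_{\WP}\lesssim\|X-Y\|_{\H^\theta}$, which uses $\theta>\frac d2+p$.
\item The Stratonovich terms are handled by Lemma \ref{Lemma-Q1 n m}.
\item The Marcus jump terms, both compensator and martingale, are handled by Lemma \ref{Lemma-Q2 n m}. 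Its hypothesis $\theta<s-3\delta_2$ explains the restriction $\theta<s-\max\{3\zeta,1\}$.
\item The $\ZZ$-term is handled by the Lipschitz bound in \ref{Hypo-h}, which needs $\theta\ge\sigma$ or a similar compatibility; otherwise one argues at level $\sigma$ via interpolation.
\end{itemize}
On $[0,\tau_N^{n,m}]$ all $\H^s$ norms are at most $N$. This gives
\[
\E\Big[\sup_{t\le\tau}\|X_n-X_m\|^2_{\H^\theta}\,\Big|\,\F_0\Big]\le C_N\Big(\lambda_{n,m}+(n\wedge m)^{-\epsilon}\Big)+C_N\int_0^{\tau}\E\Big[\sup\|X_n-X_m\|^2_{\H^\theta}\,\Big|\,\F_0\Big]\,{\rm d}t,
\]
and Gr\"onwall's inequality gives \eqref{Cauchy in E-M}.

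For part \ref{Convergence of X-n T}, combine \eqref{Cauchy in E-M} with \eqref{Xn uniform bound}. Chebyshev's inequality gives $\p(\tau_N^{n,m}<T\,|\,\F_0)\le C(1+\|\Xi\|^2_{\H^s})/N^2$. A standard argument (as in Glatt-Holtz--Ziane type lemmas) then shows that $X_n^{(R)}$ is Cauchy in probability in $D([0,T];\H^\theta)$ with the uniform metric. Pass to an a.s. converging subsequence in $\sup_t\|\cdot\|_{\H^\theta}$ and take a diagonal subsequence over $T\in\N$. The limit $X^{(R)}$ is c\`adl\`ag in $\H^\theta$ and $\F_t$-progressive. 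The $\H^s$ bound \eqref{X Hs bound} follows from Fatou's lemma and weak lower semicontinuity of the $\H^s$ norm. Convergence in $\H^{s'}$ for $s'\in[\theta,s)$ follows by interpolating between $\H^\theta$ convergence and boundedness in $\H^s$; the latter holds pathwise because $\sup_n\sup_t\|X_n\|_{\H^s}<\infty$ a.s. along a further subsequence, again by Fatou and Borel--Cantelli. Progressive measurability in $\H^s$ follows from Remark \ref{Remark : measurability-upgrade}.

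I expect the main obstacle to be the It\^o formula for the difference in part (2). The difficulty is the jump part, since the flows $\M_n$ and $\M_m$ differ and the relevant bounds lose $3\delta_2$ derivatives. A second concern is keeping all constants uniform, so that the estimate depends only on $N$, under conditioning on $\F_0$.
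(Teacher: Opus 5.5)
Your overall route is the paper's route. In Part \ref{Global cut-off estimate} you apply It\^o's formula to $\|X_n^{(R)}\|^2_{\H^s}$ and close it with the uniform cancellations \eqref{Qn cancel-2} and \eqref{wp-n estimate 2 C22}, BDG and Gr\"onwall. In Part \ref{Convergence of X-n TnmN} you apply It\^o's formula to the difference in $\H^\theta$ up to $\tau_N^{n,m}(R)$, using Lemmas \ref{Lemma:Fn}, \ref{Lemma-Q1 n m} and \ref{Lemma-Q2 n m}. In Part \ref{Convergence of X-n T} you use Chebyshev, convergence in probability, an a.s.\ convergent subsequence, Fatou and weak lower semicontinuity.

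\textbf{The interpolation step in Part (3) fails as written.} You claim that along a further subsequence $\sup_n\sup_{t\le T}\|X_n^{(R)}(t)\|_{\H^s}<\infty$ a.s., ``by Fatou and Borel--Cantelli''. Fatou applied to \eqref{Xn uniform bound} only gives $\liminf_n\sup_{t\le T}\|X_n^{(R)}(t)\|_{\H^s}<\infty$ a.s. That is enough for \eqref{X Hs bound}, but not for the interpolation. Chebyshev at a fixed level $M$ gives $\p(\sup_{t\le T}\|X_{n_k}^{(R)}(t)\|_{\H^s}>M\,|\,\F_0)\le C/M^2$, which is not summable in $k$. A uniform second-moment bound cannot give a.s.\ boundedness along any subsequence: i.i.d.\ unbounded square-integrable variables are a counterexample. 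A $\liminf$ bound only yields an $\omega$-dependent sub-subsequence, which cannot be used to identify the limit in $\H^{s'}$.

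The paper avoids pathwise boundedness. With $Y_n=\sup_t\|X_n^{(R)}-X^{(R)}\|_{\H^\theta}$ and $Z_n=\sup_t\|X_n^{(R)}-X^{(R)}\|_{\H^s}$ it splits
\[
\p\big(CY_n^{\alpha}Z_n^{1-\alpha}>\epsilon\,\big|\,\F_0\big)\le \p(Z_n>M\,|\,\F_0)+\p\big(CY_n^{\alpha}M^{1-\alpha}>\epsilon\,\big|\,\F_0\big).
\]
It bounds the first term by Chebyshev using \eqref{Xn uniform bound} and \eqref{X Hs bound}, lets $n\to\infty$ and then $M\to\infty$, and only then extracts a deterministic a.s.\ convergent subsequence. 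Your Borel--Cantelli idea can be rescued, but only with growing thresholds. Choose $n_k$ with $\p(Y_{n_k}>2^{-k})\le 2^{-k}$, and use $\p(\sup_t\|X_{n_k}^{(R)}\|_{\H^s}>k\,|\,\F_0)\le C/k^2$. Then a.s.\ eventually $Y_{n_k}^{\alpha}Z_{n_k}^{1-\alpha}\lesssim 2^{-k\alpha}(k+C(\omega))^{1-\alpha}\to0$.

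\textbf{Your justification for the $\ZZ$-terms is also incorrect.} In Part \ref{Global cut-off estimate} you need $\|\ZZ(t,X)\|^2_{\H^s}\le K(t,\|X\|_{\WP})(1+\|X\|^2_{\H^s})$. The embedding $\H^s\hookrightarrow\H^\sigma$ goes the wrong way here: an $\H^\sigma$ bound with $\sigma<s$ does not control $\|\ZZ\|_{\H^s}$, and interpolation cannot raise regularity. In Part \ref{Convergence of X-n TnmN}, a Lipschitz bound available only at some level $\sigma>\theta$ would, after interpolating with the $\H^s$ bound $N$, give only a sublinear (H\"older) dependence on $\|X_n^{(R)}-X_m^{(R)}\|_{\H^\theta}$. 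Gr\"onwall then does not force the difference to zero as $\lambda_{n,m}\to0$. The paper reads \ref{Hypo-h} as a family of hypotheses and applies it with $\sigma=s$ in Part \ref{Global cut-off estimate} and with $\sigma=\theta$ in Part \ref{Convergence of X-n TnmN}. You should invoke it the same way instead of hedging.
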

	\begin{proof}
		\ref{Global cut-off estimate}  
		Using It\^o's formula for \eqref{approximation solution XnR}, we arrive at
		\begin{align*}
			&\|X_n^{(R)}(t)\|^2_{\H^s}- \|\Xi\|^2_{\H^s} \\
			=\ & 2\int_0^t \left(\IP{G_{n,R}(X_n^{(R)}(t'),X_n^{(R)}(t')}_{\H^s}+\|\QQ_{1,n} X_n^{(R)}(t')\|_{H^s}^2+ \|H_{R}(t',X_n^{(R)}(t'))\|_{\H^s}^2\right)\d t'\\
			&+2 \int_0^t\IP{\QQ_{1,n} X_n^{(R)}(t'),X_n^{(R)}(t')}_{\H^s}\d W(t')
			+\int_0^t\IP{H_{R}(t',X_n^{(R)}(t')),X_n^{(R)}(t')}_{\H^s}\d \widetilde W(t')\\
			&+ \int_0^t \int_{|l|\le 1} \Big[\norm{\M_n\big(1,l,X_n^{(R)}(t'-)\big)}^2_{\H^s}-  \norm{X_n^{(R)}(t'-)}^2_{\H^s}\Big] \widetilde{\eta}({\rm d}l, {\rm d}t')  \\
			&+ \int_0^t \int_{|l|\le 1}\Big[\norm{\M_n\big(1,l,X_n^{(R)}(t')\big)}^2_{\H^s}-  \norm{X_n^{(R)}(t')}^2_{\H^s}- 2l \cdot\bIP{\QQ_{2,n} X_n^{(R)}(t'),X_n^{(R)}(t')}_{\H^s}\Big] \nu({\rm d}l) {\rm d}t'\\
			\triangleq& \sum_{i=1}^5 I_{i,n}(t).
		\end{align*}

		By \eqref{F(X) X-1}, \eqref{F(X) X-2} (with $J_n X_n$ replacing $X$), \ref{Hypo-h} (with $\sigma=s$), Theorem \ref{Thm-cancel} (with $\mathscr{O}_1\equiv \{\D^s\}$, $\mathscr{N}_1=\{J_n\Q_1J_n\}_{n\ge1}$), there is a function $\mathring{K}\in\mathscr K$ such that for all $X\in \H^s$, $n\ge1$ and $t\ge 0$,
		\begin{align}
			&2\IP{G_{n,R}(X), X}_{\H^s} +\|\QQ_{1,n} X\|_{\H^s}^2+\|H_{R}(t,X)\|_{\H^s}^2 \nonumber\\
			=\ & \chi_R\big(\|X-\Xi\|_{\WP}\big)\left(-2\IP{F(J_nX), J_nX}_{\H^{s}}+\|\ZZ(t,X)\|_{\H^{s}}^2\right)   +\IP{\QQ_{1,n}X,X}_{\H^{s}}+\|\QQ_{1,n} X\|_{\H^s}^2\nonumber\\
			\le\ & \mathring{K}(t,\|\Xi\|_{\WP}+2R) (1+\|X\|_{\H^s}^2),\label{Ito-1}
		\end{align} 
		\begin{align} 
			\bIP{\QQ_{1,n} X, X}_{\H^s}^2 \lesssim  \|X\|_{\H^s}^4,\label{Ito-2}
		\end{align}
		and 
		\begin{align} 
			\bIP{H_{R}(t,X), X}_{\H^s}^2 =  \chi^2_R\big(\|X(t)-\Xi\|_{\WP}\big)\IP{\ZZ(t,X),X}_{\H^{s}}^2\leq \mathring{K}(t,\|\Xi\|_{\WP}+2R) \big(1+ \|X\|_{\H^s}^4\big).\label{Ito-3}
		\end{align}
		Then, as in Remark \ref{Remark: wp estimate},
		we infer from \eqref{wp-n estimate 1} and \eqref{wp-n estimate 2} (with $V(x)=x$) and \ref{Hypo-W L} that
		\begin{align}
			\norm{\M_n(1, l, X)}_{\H^s}^2 - \norm{X}_{\H^s}^2 \lesssim \left({\rm e}^{2C_{2,1}|l|}-1\right)\norm{X}_{\H^s}^2
			\lesssim |l|  \norm{X}_{\H^s}^2,\label{Ito-4}
		\end{align}
		and 
		\begin{align}
			\|\M_n\big(1,l,X\big)\|^2_{\H^s}-\|X\|^2_{\H^s} -2l\bIP{\QQ_{2,n}X,X}_{\H^s}
			\lesssim \frac{C_{2,2}}{2C_{2,1}^2}\left({\rm e}^{2C_{2,1}|l|}-2C_{2,1}|l|-1\right)\norm{X}_{\H^s}^2
			\lesssim |l|^2\|X\|^2_{\H^s}.
			\label{Ito-5}
		\end{align}
		By \eqref{Ito-1} and \eqref{Ito-5}, we obtain
		\begin{align*}
			&\E \left[\sup_{t \in [0, T]} |I_{1,n}(t)|+|I_{5,n}(t)|\Big|\F_0\right] \\
			\lesssim \ &  
			\E\left[\int_0^{T} \left(\mathring{K}(t',\|\Xi\|_{\WP}+2R)  
			\Big(1+\|X_n^{(R)}(t')\|_{\H^s}^2\Big)+\|X_n^{(R)}(t')\|_{\H^s}^2\right)\d t'\bigg|\F_0\right]\\  
			\lesssim\ & \int_0^T\mathring{K}(t,\|\Xi\|_{\WP}+2R)\d t+ \int_0^T\left(1+\mathring{K}(t,\|\Xi\|_{\WP}+2R) \right)\E\left[\sup_{t'\in [0,t]} \|X_n^{(R)}(t')\|_{\H^s}^2\Big|\F_0\right]\d t.
		\end{align*}
		Note that
		$I_{2,n}(t)$, $I_{3,n}(t)$ and $I_{4,n}(t)$  are martingales. For any $T>0$,  on account of the Burkholder-Davis-Gundy (BDG) inequality, \eqref{Ito-2}, \eqref{Ito-3}, \eqref{Ito-4} and the fact $\int_{|l|\le 1} |l|^2\nu({\rm d}l)<\infty$, we find constants $c_1,c_2>0$ such that
		\begin{align*}
			\E \left[\sup_{t \in [0, T]} |I_{2,n}(t)|\Big|\F_0\right] 
			\lesssim \ &  \E \left[\left(\int_0^T\IP{\QQ_{1,n} X_n^{(R)}(t'),X_n^{(R)}(t')}^2_{\H^s}{\rm d}t'\right)^{\frac{1}{2}}\bigg|\F_0\right]\\
			\leq  \ & c_1\E \left[\left(\int_0^T\|X_n^{(R)}(t')\|_{\H^s}^4{\rm d}t'\right)^{\frac{1}{2}}\bigg|\F_0\right]\\
			\leq\ & \frac{1}{6}\E\left[\sup_{t \in [0, T]} \|X_n^{(R)}(t)\|_{\H^s}^2\Big|\F_0\right]
			+c_2\int_0^T\E\left[ \sup_{t' \in [0, t]} \|X_n^{(R)}(t')\|_{\H^s}^2\Big|\F_0\right]{\rm d}t,
		\end{align*}
		\begin{align*}
			\E \left[\sup_{t \in [0, T]} |I_{3,n}(t)|\big|\F_0\right]
			\lesssim \ &  \E \left[\left(\int_0^T\IP{H_{R}(t',X_n^{(R)}(t')),X_n^{(R)}(t')}^2_{\H^s}{\rm d}t'\right)^{\frac{1}{2}}\bigg|\F_0\right]\\
			\le \ &
			c_1 \E\bigg[\bigg(\int_0^{T}\mathring{K}(t',\|\Xi\|_{\WP}+2R) 
			\Big(1+\|X_n^{(R)}(t')\|_{\H^s}^4\Big)\d t'\bigg)^{\frac 1 2}\bigg|\F_0\bigg]\\  
			\le\ & \frac{1}{6}\E\left[\sup_{t \in [0, T]} \norm{X_n^{(R)}(t)}_{\H^s}^2\Big|\F_0\right] + \int_0^T\mathring{K}(t,\|\Xi\|_{\WP}+2R)\d t\\
			&+ c_2 \int_0^T\mathring{K}(t,\|\Xi\|_{\WP}+2R) \E\left[\sup_{t'\in [0,t]} \|X_n^{(R)}(t')\|_{\H^s}^2\Big|\F_0\right]\d t,
		\end{align*}
		\begin{align*}
			\E \left[\sup_{t \in [0, T]} |I_{4,n}(t)|\Big|\F_0\right]
			\lesssim \ & \E \left[\left(\int_0^T\int_{|l|\le 1} \Big[\norm{\M_n\big(1,l,X_n^{(R)}(t')\big)}^2_{\H^s}-  \norm{X_n^{(R)}(t')}^2_{\H^s}\Big]^2\nu({\rm d}l) {\rm d}t'\right)^{\frac{1}{2}}\bigg|\F_0\right]\\
			\leq  \ & c_1\E \left[\left(\int_0^T\norm{X_n^{(R)}(t')}_{\H^s}^4{\rm d}t'\right)^{\frac{1}{2}}\bigg|\F_0\right]\\
			\leq\ & \frac{1}{6}\E\left[\sup_{t \in [0, T]}\|X_n^{(R)}(t)\|_{\H^s}^2\Big|\F_0\right]
			+c_2\int_0^T\E\left[ \sup_{t' \in [0, t]} \|X_n^{(R)}(t')\|_{\H^s}^2\Big|\F_0\right]{\rm d}t.
		\end{align*}
		Thus, we arrive at
		\begin{align*}
			&\E\bigg[\sup_{t\in[0,T]}\|X_n^{(R)}(t)\|^2_{\H^s}\bigg|\F_0\bigg]- 2\|\Xi\|^2_{\H^s}\\
			\le\ &  2\int_0^T\mathring{K}(t,\|\Xi\|_{\WP}+2R)\d t + 2\int_0^T\Big(1+2 c_2+\mathring{K}(t,\|\Xi\|_{\WP}+2R) \Big)\E\left[\sup_{t'\in [0,t]} \|X_n^{(R)}(t')\|_{\H^s}^2\Big|\F_0\right]\d t.
		\end{align*}
		By Gr\"{o}nwall's inequality, there exists a function $\hh K: [0,\infty)\times [0,\infty)\to (0,\infty)$ increasing in both variables such that 
		\eqref{Xn uniform bound} holds true.

		\ref{Convergence of X-n TnmN} 
		For $n,m\ge 1$, $t\ge0$ and $|l|\in[0,1]$, we 
		let  
		\begin{equation*}
			Z_{n,m}^{(R)}(t)\triangleq X_n^{(R)}(t)-X_m^{(R)}(t),
		\end{equation*}
		\begin{equation*}
			\textbf{B}^{(R)}_{n,m}(r,l,t)\triangleq \M_n\big(r,l,X_n^{(R)} (t)\big)-\M_m\big(r,l,X_m^{(R)} (t)\big)-Z_{n,m}^{(R)} (t),\quad r\in[0,1],
		\end{equation*}
		\begin{equation*}
			\textbf{C}^{(R)}_{n,m}(r,l,t)\triangleq \textbf{B}^{(R)}_{n,m}(r,l,t)
			-l \cdot \left(\QQ_{2,n} X_n^{(R)} (t)-\QQ_{2,m} X_m^{(R)} (t)\right),\quad r\in[0,1].
		\end{equation*}
		By \eqref{approximation scheme C}, the linearity of operators $\QQ_1$, $\QQ_2$ and the map $\M_n(1,l,\cdot)$, we have that
		\begin{align}\label{problem Znm-integral}
			Z_{n,m}^{(R)} (t)  
			=\ & \int_0^t \mathfrak{i}_{1}^{(n,m,R)}(t') \d t'
			+\int_0^t\int_{|l|\le1} \textbf{C}^{(R)}_{n,m}(1,l,t')\, \nu({\rm d}l){\rm d}t'\notag\\
			&+\int_0^t 
			\mathfrak{i}_{2}^{(n,m,R)}(t') \d W(t') 
			+\int_0^t\mathfrak{i}_{3}^{(n,m,R)}(t')\d \widetilde W(t') 
			+ \int_0^t\int_{|l|\le1} \textbf{B}^{(R)}_{n,m}(1,l,t'-)\, \widetilde{\eta}({\rm d}l,{\rm d}t'),
		\end{align}
		where for $t\ge0$,
		\begin{align*}
			&\mathfrak{i}_{1}^{(n,m,R)}(t)\triangleq G_{n,R}\big(X_n^{(R)}(t)\big)-G_{m,R}\big(X_m^{(R)}(t)\big),\\
			&\mathfrak{i}_{2}^{(n,m,R)}(t)\triangleq \QQ_{1,n} X_n^{(R)}(t) -\QQ_{1,m} X_m^{(R)}(t),\\
			&\mathfrak{i}_{3}^{(n,m,R)}(t)\triangleq H_{R}(t,X_n^{(R)}(t))-H_{R}(t,X_m^{(R)}(t)).
		\end{align*}
		Then we apply It\^o's formula (cf. Lemma \ref{Ito formula}) to \eqref{problem Znm-integral} to obtain that for $t\ge0$,
		\begin{align}
			\|Z_{n,m}^{(R)} (t)\|_{\H^\theta}^2 
			=\ &
			2\int_0^{t}\underbrace{\left\{\IP{ \mathfrak{i}_{1}^{(n,m,R)}(t'),Z_{n,m}^{(R)} (t')}_{\H^\theta}+\|\mathfrak{i}_{2}^{(n,m,R)}(t')\|^2_{\H^\theta}
				+\|\mathfrak{i}_{3}^{(n,m,R)}(t')\|^2_{\H^\theta}\right\}}_{\triangleq\ \mathfrak{R}_1^{(n,m,R)}(t')}
			\d t'\notag\\
			&+2\int_0^{t}
			\underbrace{\IP{\mathfrak{i}_{2}^{(n,m,R)}(t'),Z_{n,m}^{(R)} (t')}_{\H^\theta}}_{\triangleq\ \mathfrak{R}_2^{(n,m,R)}(t')}
			\d W(t')
			+2\int_0^{t}
			\underbrace{\IP{\mathfrak{i}_{3}^{(n,m,R)}(t'),Z_{n,m}^{(R)} (t')}_{\H^\theta}}_{\triangleq\ \mathfrak{R}_3^{(n,m,R)}(t')}
			\d \widetilde W(t')\notag\\
			&+ \int_0^{t}\int_{|l|\le1}
			\underbrace{ \left\{\Big\|\M_n\big(1,l,X_n^{(R)} (t'-)\big)-\M_m\big(1,l,X_m^{(R)} (t'-)\big)\Big\|^2_{\H^\theta} 
				-\norm{Z_{n,m}^{(R)} (t'-)}^2_{\H^\theta}\right\}}_{\triangleq\ \mathfrak{R}_4^{(n,m,R)}(t'-)}
			\, \widetilde \eta(\mathrm{d}l,\mathrm{d}t')\notag\\
			&+\int_0^{t}\int_{|l|\le1}
			\underbrace{-2l\cdot \IP{\QQ_{2,n} X_n^{(R)} (t')-\QQ_{2,m} X_m^{(R)} (t'),Z_{n,m}^{(R)} (t')}_{\H^\theta}}_{\triangleq\ \mathfrak{R}_5^{(n,m,R)}(t')}
			\,\nu({\rm d}l) {\rm d}t'\notag\\
			&+ \int_0^{t}\int_{|l|\le1} 
			\left\{\Big\|\M_n\big(1,l,X_n^{(R)} (t')\big)-\M_m\big(1,l,X_m^{(R)} (t')\big)\Big\|^2_{\H^\theta} 
			-\norm{Z_{n,m}^{(R)} (t')}^2_{\H^\theta}\right\}
			\,\nu({\rm d}l) {\rm d}t'.\label{Znm-Ito}
		\end{align}
		Thanks to the embedding $\H^s\hookrightarrow\H^\theta\hookrightarrow \WP$ (as $s>\theta>\frac{d}{2}+p$), 
		Lemmas \ref{Lemma-Jn}, \ref{Lemma-F} and \ref{Lemma-F-difference}, and the property  $\chi_R\in C^{\infty}([0,\infty);[0,1])$,  we have
		\begin{align*}
			&\IP{\chi_R\big(\|X_n^{(R)} (t)-\Xi\|_{\WP}\big)J_n F(J_nX_n^{(R)} (t))-\chi_R\big(\|X_m^{(R)} (t)-\Xi\|_{\WP}\big)J_m F(J_m X_m^{(R)} (t)),\ Z_{n,m}^{(R)} (t)}_{\H^\theta}\\
			=\ & \IP{\Big(\chi_R\big(\|X_n^{(R)} (t)-\Xi\|_{\WP}\big)-\chi_R\big(\|X_m^{(R)} (t)-\Xi\|_{\WP}\big)\Big)J_n F(J_nX_n^{(R)} (t)),\ Z_{n,m}^{(R)} (t)}_{\H^\theta}\\
			& +\IP{\chi_R\big(\|X_m^{(R)} (t)-\Xi\|_{\WP}\big)\Big(J_n F(J_nX_n^{(R)} (t))-J_m F(J_m X_m^{(R)} (t))\Big),\ Z_{n,m}^{(R)} (t)}_{\H^\theta}\\
			\lesssim \ & 
			\bigg(1+\Phi^2_{\Lambda}\Big(\|X_n^{(R)} (t)\|_{\H^{s}}+\|X_m^{(R)} (t)\|_{\H^{s}}\Big)
			+\|X_n^{(R)} (t)\|^4_{\H^s}+\|X_m^{(R)} (t)\|^4_{\H^s}\bigg)\left[(n\wedge m)^{-2(s-1-\theta)}+\|Z_{n,m}^{(R)} (t)\|_{\H^\theta}^2\right].
		\end{align*}
		Similarly, we can infer from \ref{Hypo-h} (with $\sigma=\theta$) that
		\begin{align*}
			\|\mathfrak{i}_{3}^{(n,m,R)}(t)\|^2_{\H^{\theta}}
			\lesssim 
			K\Big(t,\|X_n^{(R)} (t)\|_{\H^s}+\|X_m^{(R)} (t)\|_{\H^s}\Big)
			\Big(1+\|X_n^{(R)} (t)\|^2_{\H^s}\Big)
			\|Z_{n,m}^{(R)} (t)\|^{2}_{\H^{\theta}},
		\end{align*}
		where $K(\cdot,\cdot)\in\mathscr{K}$ (cf. \ref{Hypo-h} and \eqref{SCRK}). 
		From the above two estimates, \ref{Hypo-h},  
		Lemma \ref{Lemma-Q1 n m}, we can find a constant $C_N>0$ and a function $\lambda_{\cdot,\cdot}:\N\times\N\to(0,1)$ with $\displaystyle \lim_{n,m\to\infty} 
		\lambda_{n,m}=0$ such that for all $T\in [0,N]$,
		\begin{align}
			2\E\left[\int_0^{T\land \tau_N^{n,m}(R)}|\mathfrak{R}_1^{(n,m,R)}(t)|\d t\bigg|\F_0\right]\leq C_N\lambda_{n,m}+ C_N
			\E\left[\int_0^{T}\sup_{t'\in [0,t\land \tau_N^{n,m}(R)]}\|Z_{n,m}^{(R)} (t')\|^{2}_{\H^{\theta}}\d t\bigg|\F_0\right].\label{R-1-nmR estiamte}
		\end{align}
		In the same way, by using
		the BDG inequality, Lemmas \ref{Lemma-Q1 n m} and \ref{Lemma-Q2 n m}, \ref{Hypo-h}, we can  find constants $c_1,c_2, C_N>0$ such that for all $T\in [0,N]$,
		\begin{align}
			& 2\E\left[\sup_{t \in [0, T\land \tau_N^{n,m}(R)]} \bigg|\int_0^{t}\mathfrak{R}_2^{(n,m,R)}(t')\d W(t')\bigg|\bigg|\F_0\right]\notag\\
			\lesssim \ &  \E \left[\left(\int_0^{T\land \tau_N^{n,m}(R)}\Big[\mathfrak{R}_2^{(n,m,R)}(t)\Big]^2\d t \right)^{\frac{1}{2}}\bigg|\F_0\right]\notag\\
			\leq  \ & c_1\E \left[\left(\int_0^T \bigg(C_N\lambda^2_{n,m}+\sup_{t'\in [0,t\land \tau_N^{n,m}(R)]}\|Z_{n,m}^{(R)} (t)\|_{\H^\theta}^4\bigg)\d t\right)^{\frac{1}{2}}\bigg|\F_0\right]\notag\\
			\leq\ &   \frac{1}{6}\E\left[\sup_{t \in [0, T\land \tau_N^{n,m}(R)]} \|Z_{n,m}^{(R)} (t')\|_{\H^\theta}^2\Big|\F_0\right]
			+c_2\int_0^T\E\left[\sup_{t'\in [0,t\land \tau_N^{n,m}(R)]}\|Z_{n,m}^{(R)} (t')\|_{\H^\theta}^2\bigg|\F_0\right]{\rm d}t+C_N\lambda_{n,m},\label{R-2-nmR estiamte}
		\end{align}
		\begin{align}
			& 2\E\left[\sup_{t \in [0, T\land \tau_N^{n,m}(R)]} \bigg|\int_0^{t}\mathfrak{R}_3^{(n,m,R)}(t')
			\d \widetilde W(t')\bigg|\bigg|\F_0\right]\notag\\
			\lesssim \ &  \E \left[\left(\int_0^{T\land \tau_N^{n,m}(R)}\Big[\mathfrak{R}_3^{(n,m,R)}(t)\Big]^2\d t \right)^{\frac{1}{2}}\bigg|\F_0\right]\notag\\
			\le \ &
			c_1 \E\bigg[\bigg(\int_0^{T}K\Big(t,\|X_n^{(R)} (t)\|_{\H^s}+\|X_m^{(R)} (t)\|_{\H^s}\Big)
			\Big(1+\|X_n^{(R)} (t)\|^2_{\H^s}\Big)
			\|Z_{n,m}^{(R)} (t)\|^{4}_{\H^{\theta}}\d t'\bigg)^{\frac 1 2}\bigg|\F_0\bigg]\notag\\  
			\le\ &  \frac{1}{6}\E\left[\sup_{t \in [0, T\land \tau_N^{n,m}(R)]} \|Z_{n,m}^{(R)} (t)\|_{\H^\theta}^2\Big|\F_0\right]
			+C_N\int_0^T\E\left[\sup_{t'\in [0,t\land \tau_N^{n,m}(R)]}\|Z_{n,m}^{(R)} (t')\|_{\H^\theta}^2\bigg|\F_0\right]{\rm d}t,\label{R-3-nmR estiamte}
		\end{align}
		\begin{align}
			& \E\left[\sup_{t \in [0, T\land \tau_N^{n,m}(R)]} \bigg|\int_0^{t}\int_{|l|\le1}\mathfrak{R}_4^{(n,m,R)}(t'-)
			\, \widetilde \eta(\mathrm{d}l,\mathrm{d}t')\bigg|\bigg|\F_0\right]\notag\\
			\lesssim \ & \E \left[\left(\int_0^{T\land \tau_N^{n,m}(R)}
			\int_{|l|\le 1} \Big[\mathfrak{R}_4^{(n,m,R)}(t)\Big]^2\, \nu({\rm d}l) {\rm d}t\right)^{\frac{1}{2}}\bigg|\F_0\right]\notag\\
			\leq  \ & c_1\E \left[\left(\int_0^{T\land \tau_N^{n,m}(R)}
			\int_{|l|\le1}|l|^2
			\Big(\lambda^2_{n,m}
			\big(\|X_n^{(R)} (t)\|_{\H^s}+\|X_m^{(R)} (t)\|_{\H^s}\big)^4+\|Z_{n,m}^{(R)} (t)\|^4_{\H^{\theta}}\Big) 
			\nu({\rm d}l) {\rm d}t\right)^{\frac{1}{2}}\bigg|\F_0\right]\notag\\
			\leq  \ & \frac{1}{6}\E\left[\sup_{t \in [0, T\land \tau_N^{n,m}(R)]} \|Z_{n,m}^{(R)} (t)\|_{\H^\theta}^2\Big|\F_0\right]
			+c_2\int_0^T\E\left[\sup_{t'\in [0,t\land \tau_N^{n,m}(R)]}\|Z_{n,m}^{(R)} (t')\|_{\H^\theta}^2\bigg|\F_0\right]{\rm d}t+C_N\lambda_{n,m},\label{R-4-nmR estiamte}
		\end{align}
		and 
		\begin{align}
			& \E\left[ \sup_{t \in [0, T\land \tau_N^{n,m}(R)]}\bigg|\int_0^{t}\int_{|l|\le1}
			\left(\mathfrak{R}_4^{(n,m,R)}(t')+\mathfrak{R}_5^{(n,m,R)}(t')\right)
			\, \nu({\rm d}l) {\rm d}t'\bigg|\bigg|\F_0\right]\notag\\
			\lesssim \ & \E \left[\int_0^{T\land \tau_N^{n,m}(R)}
			\int_{|l|\le 1} |l|^2\Big(\lambda_{n,m}
			\big(\|X_n^{(R)} (t)\|_{\H^s}+\|X_m^{(R)} (t)\|_{\H^s}\big)^2+\|Z_{n,m}^{(R)} (t)\|^2_{\H^{\theta}}\Big) \,\nu({\rm d}l) {\rm d}t\bigg|\F_0\right]\notag\\
			\leq  \ & C_N\lambda_{n,m}+ C_N
			\E\left[\int_0^{T}\sup_{t'\in [0,t\land \tau_N^{n,m}(R)]}\|Z_{n,m}^{(R)} (t')\|^{2}_{\H^{\theta}}\d t\bigg|\F_0\right].\label{R-4+5-nmR estiamte}
		\end{align}
		Collecting \eqref{R-1-nmR estiamte},  \eqref{R-2-nmR estiamte},  \eqref{R-3-nmR estiamte},  \eqref{R-4-nmR estiamte}, and  \eqref{R-4+5-nmR estiamte}, we find constant $C_N>0$ such that
		\begin{align*}
			\E \left[\sup_{t\in[0,\,T\land\tau_N^{n,m}(R)]} 
			\|Z_{n,m}^{(R)} (t)\|^2_{\H^\theta}\Big|\F_0\right] 
			\leq  C_N\lambda_{n,m}+C_N\int_0^T\E\left[\sup_{t'\in [0,\, t\land \tau_N^{n,m}(R)]}\|Z_{n,m}^{(R)} (t')\|_{\H^\theta}^2\bigg|\F_0\right]{\rm d}t.
		\end{align*}
		By Gr\"onwall's inequality and noting that $\lambda_{n,m}\to 0$ as $n,m\to\infty$, we obtain  \eqref{Cauchy in E-M}.

		\ref{Convergence of X-n T} We fix a time $T>0$ and let $N>T$. 
		Applying \ref{Global cut-off estimate} of Lemma \ref{Lemma : Xn T estimates} and utilizing Chebyshev's inequality, we obtain
		\begin{align*} 
			&\p(\tau_N^{n,m}(R)<T|\F_0)\\
			\le\ & \p\bigg(\sup_{t\in [0,T]} \|X_n^{(R)}(t)\|_{\H^s}\ge N\bigg|\F_0\bigg)+ \p\bigg(\sup_{t\in [0,T]} \|X_m^{(R)}(t)\|_{\H^s}\ge N\bigg|\F_0\bigg)\\
			\le\ & \frac {2 \hh K(T,2R+\|\Xi\|_{\WP})(1+\|\Xi\|^2_{\H^s})}{N^2}.
		\end{align*} 
		Consequently, for any $ N>T$, $n,m\ge 1$, and $\epsilon>0$,
		\begin{align*}
			&\p\bigg(\sup_{t\in[0,T]}\|X_n^{(R)}(t)-X_m^{(R)}(t)\|_{\H^\theta}>\epsilon\bigg|\F_0\bigg)\\
			\le\ & \p(\tau_N^{n,m}(R)<T|\F_0) + \p\bigg(\sup_{t\in[0,\tau_N^{n,m}(R)]}\|X_n^{(R)}(t)-X_m^{(R)}(t)\|_{\H^\theta}>\epsilon\bigg|\F_0\bigg)\\
			\leq\ &\frac {2 \hh K(T,2R+\|\Xi\|_{\WP})(1+\|\Xi\|^2_{\H^s})}{N^2}+\p\bigg(\sup_{t\in[0,\tau_N^{n,m}(R)]}\|X_n^{(R)}(t)-X_m^{(R)}(t)\|_{\H^\theta}>\epsilon\bigg|\F_0\bigg).
		\end{align*} 
		By applying \ref{Convergence of X-n TnmN} of Lemma \ref{Lemma : Xn T estimates}, and first letting $n,m\to\infty$ followed by $N\to\infty$, we obtain 
		\begin{equation*}
			\lim_{n,m\rightarrow\infty} \p\bigg(\sup_{t\in[0,T]}\|X_n^{(R)}(t)-X_m^{(R)}(t)\|_{\H^\theta}>\epsilon\bigg|\F_0\bigg)=0,\quad  \epsilon,\,T>0. 
		\end{equation*}
		By the reverse Fatou lemma (since the probabilities are bounded by $1$), this implies 
		\begin{align*} 
			&\limsup_{n,m\rightarrow\infty} \p\bigg(\sup_{t\in[0,T]}\|X_n^{(R)}(t)-X_m^{(R)}(t)\|_{\H^\theta}>\epsilon\bigg)\\
			=\ &\limsup_{n,m\rightarrow\infty} \E\bigg[\p\bigg(\sup_{t\in[0,T]}\|X_n^{(R)}(t)-X_m^{(R)}(t)\|_{\H^\theta}>\epsilon\bigg|\F_0\bigg)\bigg]\\
			\le\ & \E\bigg[\limsup_{n,m\rightarrow\infty} \p\bigg(\sup_{t\in[0,T]}\|X_n^{(R)}(t)-X_m^{(R)}(t)\|_{\H^\theta}>\epsilon\bigg|\F_0\bigg)\bigg]=0,\quad \epsilon,\, T>0.
		\end{align*} 
		Therefore, by passing to a suitable subsequence (which we still denote by $\{X_n^{(R)}\}_{n\ge1}$ for simplicity), the sequence is almost surely Cauchy in $\mathscr{M}_{B}([0,T];\H^\theta)$, yielding
		\begin{equation}\label{Xn Cauchy uniform-t H-theta}
			\lim_{n,m\to\infty}\sup_{t\in[0,T]}\|X_n^{(R)}(t)-X_m^{(R)}(t)\|_{\H^\theta}=0\quad \pas
		\end{equation}
		Therefore, there is a limit process $X^{(R)}\in \mathscr{M}_B([0,T];\H^\theta)$, defined for all $t \in [0,T]$, such that
		\begin{equation}\label{Xn to X uniform-t H-theta-limit}
			\lim_{n\to\infty}\sup_{t\in[0,T]}\|X_n^{(R)}(t)-X^{(R)}(t)\|_{\H^\theta}=0\quad \pas
		\end{equation}
		Since $\{X_n^{(R)}(t)\}_{n\ge1}\subset D([0,T];\H^{\theta})$, $D([0,T];\H^{\theta})$ is a closed subspace of $\mathscr{M}_B([0,T];\H^\theta)$,  and uniform-in-$t$ convergence implies convergence in the Skorokhod topology of $D([0,T];\H^{\theta})$ (see Lemma \ref{Lemma: D-convergence}),  it follows that $X^{(R)}\in D([0,T];\H^{\theta})$ and 
		\begin{equation}\label{Xn to X D H-theta}
			X_n^{(R)}\xrightarrow[]{n\to \infty}X^{(R)} \quad {\rm in}\ D([0,T];\H^{\theta})\quad \pas
		\end{equation}

		Next, we demonstrate that $X^{(R)}$ is an $\H^s$-valued process defined for all $t \in [0,T]$. 
		Note that  \eqref{Xn uniform bound} combined with Fatou's lemma yields
		\begin{align*}
			\E\Big[ \liminf_{n\to\infty} \sup_{t\in[0,T]} \|X_n^{(R)}(t)\|^2_{\H^s} \Big| \F_0 \Big]
			\le \liminf_{n\to\infty} \E\Big[ \sup_{t\in[0,T]} \|X_n^{(R)}(t)\|^2_{\H^s} \Big| \F_0 \Big]   < \infty \quad \pas
		\end{align*}
		This implies that 
		\begin{equation}\label{uniform_Hs_bound_liminf}
			\liminf_{n\to\infty} \sup_{t\in[0,T]} \|X_n^{(R)}(t)\|_{\H^s} < \infty \quad \pas
		\end{equation}
		
		On the probability-one set where both \eqref{Xn to X uniform-t H-theta-limit} and \eqref{uniform_Hs_bound_liminf} hold, let us fix an arbitrary $t \in [0,T]$. There exists a subsequence $n_k$ (which generally depends on both $t$ and $\omega$) such that 
		\begin{equation*}
			\lim_{k\to\infty} \|X_{n_k}^{(R)}(t)\|_{\H^s} = \liminf_{n\to\infty} \|X_n^{(R)}(t)\|_{\H^s} \le \liminf_{n\to\infty} \sup_{t\in[0,T]} \|X_n^{(R)}(t)\|_{\H^s} < \infty.
		\end{equation*}
		This dependence on $t$ and $\omega$ poses \textbf{no} issue; we are not using this subsequence to construct the limit process, but rather to verify the spatial regularity of $X^{(R)}(t)$ \textit{a posteriori} for every $t$.
		
		Since the sequence $\{X_{n_k}^{(R)}(t)\}_{k\ge 1}$ is bounded in the reflexive space $\H^s$, we can extract a further weakly convergent subsequence. However, by \eqref{Xn to X uniform-t H-theta-limit}, the sequence $X_n^{(R)}(t)$ converges strongly to $X^{(R)}(t)$ in the $\H^\theta$-topology. Due to the embedding $\H^s\hookrightarrow\H^\theta$, the weak limit in $\H^s$ must coincide with the already existing state $X^{(R)}(t)$. Consequently, $X^{(R)}(t) \in \H^s$, and the weak lower semicontinuity of the $\H^s$-norm guarantees that
		\begin{equation}\label{pointwise Hs bound}
			\|X^{(R)}(t)\|_{\H^s} \le \liminf_{n\to\infty} \|X_n^{(R)}(t)\|_{\H^s} \le \liminf_{n\to\infty} \sup_{t\in[0,T]} \|X_n^{(R)}(t)\|_{\H^s} < \infty.
		\end{equation}
		Since $t \in [0,T]$ was arbitrary, \eqref{pointwise Hs bound} holds for \textit{all} $t \in [0,T]$. 
		
		As discussed in Remark~\ref{Remark : measurability-upgrade}, the progressive measurability of $X^{(R)}$ in the $\H^\theta$-topology automatically upgrades to progressive measurability in $\H^s$. Finally, taking the supremum over $t \in [0,T]$ in \eqref{pointwise Hs bound} and squaring both sides, we find
		\begin{equation*}
			\sup_{t\in[0,T]}\|X^{(R)}(t)\|_{\H^s}^2 \le \liminf_{n\to\infty} \sup_{t\in[0,T]}\|X_n^{(R)}(t)\|_{\H^s}^2 \quad \pas
		\end{equation*}
		Taking the conditional expectation given $\F_0$ on both sides, applying Fatou's lemma, and utilizing the uniform bound \eqref{Xn uniform bound}, we obtain \eqref{X Hs bound}.

		By \eqref{Xn to X D H-theta} and \eqref{Xn Cauchy uniform-t H-theta},
		it remains to prove \eqref{Xn to X} with $s' \in (\theta, s)$. For any $\epsilon > 0$, we first claim that
		\begin{equation}\label{eq:interp-convergence-p}
			\lim_{n\to\infty} \p\left( \sup_{t\in[0,T]}\|X_n^{(R)}(t) - X^{(R)}(t)\|_{\H^{s'}} > \epsilon \bigg|\F_0\right) = 0.
		\end{equation}
		Indeed, the interpolation inequality yields that there exists an $\alpha \in (0,1)$ such that $s' = \alpha \theta + (1-\alpha)s$ and
		\begin{align*}
			\sup_{t\in[0,T]}\|X_n^{(R)} - X^{(R)}\|_{\H^{s'}}  \leq  C \left(\sup_{t\in[0,T]}\|X_n^{(R)} - X^{(R)}\|_{\H^\theta}\right)^\alpha \left(\sup_{t\in[0,T]}\|X_n^{(R)} - X^{(R)}\|_{\H^s}\right)^{1-\alpha}  \triangleq C Y_n^\alpha Z_n^{1-\alpha}.
		\end{align*}
		For any large constant $M > 0$, we can split the probability:
		\begin{align*}
			\p\left(C Y_n^\alpha Z_n^{1-\alpha} > \epsilon\Big|\F_0\right) 
			&= \p\left(C Y_n^\alpha Z_n^{1-\alpha} > \epsilon, \, Z_n > M\Big|\F_0\right) + \p\left(C Y_n^\alpha Z_n^{1-\alpha} > \epsilon, \, Z_n \le M\Big|\F_0\right) \\
			&\le \p\left(Z_n > M\Big|\F_0\right) + \p\left(C Y_n^\alpha M^{1-\alpha} > \epsilon\Big|\F_0\right).
		\end{align*}
		Noting that $Z_n^2 \le 2\sup_{t\in[0,T]} \|X_n^{(R)}(t)\|_{\H^s}^2 + 2\sup_{t\in[0,T]} \|X^{(R)}(t)\|_{\H^s}^2$, we apply Chebyshev's inequality, \eqref{Xn uniform bound}, and \eqref{X Hs bound} to deduce
		$$ \p\left(C Y_n^\alpha Z_n^{1-\alpha} > \epsilon\Big|\F_0\right) \le \frac{4\hh K(T,2R+\|\Xi\|_{\WP})(1+\|\Xi\|^2_{\H^s})}{M^2} + \p\left(Y_n > \left(\frac{\epsilon}{C M^{1-\alpha}}\right)^{1/\alpha}\Big|\F_0\right). $$
		Now, take the limit as $n \to \infty$. By \eqref{Xn to X uniform-t H-theta-limit}, the second term goes to $0$ as $n \to \infty$. We are left with:
		$$ \limsup_{n\to\infty} \p\left(C Y_n^\alpha Z_n^{1-\alpha} > \epsilon\Big|\F_0\right) \le \frac{4\hh K(T,2R+\|\Xi\|_{\WP})(1+\|\Xi\|^2_{\H^s})}{M^2}. $$
		Since $M > 0$ is arbitrary, we can let $M \to \infty$ to obtain \eqref{eq:interp-convergence-p}. The same reasoning that led to \eqref{Xn Cauchy uniform-t H-theta} also implies that for a further subsequence (still denoted by $\{X_n^{(R)}\}_{n\ge1}$),
		\begin{equation*} 
			\lim_{n\to\infty}\sup_{t\in[0,T]}\|X_n^{(R)}(t)-X^{(R)}(t)\|_{\H^{s'}}=0\quad \pas,
		\end{equation*}
		which implies
		$X_n^{(R)}\xrightarrow[]{n\to \infty}X^{(R)}$ in $D([0,T];\H^{s'})$ $\pas$
		We obtain \eqref{Xn to X} for fixed $s'$ and $T>0$. By diagonal argument, a subsequence can be find  uniformly in $s'$ and $T>0$.
	\end{proof}
	
	\paragraph{\textbf{Step 3: Global Uniqueness for the Cut-off Problem.}}
	
	\begin{Lemma}\label{cut-off global solution} 
		Let $\M(\cdot,\cdot,\cdot)$ be defined in \eqref{Marcus flow QQ2}.
		Under the conditions of Proposition \ref{Proposition-(q u)}, for any $R \ge 1$ the Cauchy problem
		\begin{align}
			X^{(R)}(t)-\Xi  = \ & \int_0^t 
			\Big\{\chi_R\big(\|X^{(R)}(t')-\Xi\|_{\WP}\big) F(X^{(R)}(t'))
			-\frac{1}{2}\QQ_1^2 X^{(R)}(t')\Big\}\, \d t' \notag\\
			\ & + \int_0^t  \QQ_1 X^{(R)}(t')\, \d W(t')
			+ \int_0^t \chi_R\big(\|X^{(R)}(t')-\Xi\|_{\WP}\big) H(t',X^{(R)}(t'))\, \d \widetilde W(t') \notag\\
			\ & + \int_0^t \int_{|l|\le1} \Big\{\M\big(1,l,X^{(R)}(t'-)\big)-X^{(R)}(t'-)\Big\}\, \widetilde{\eta}({\rm d}l,{\rm d}t') \notag\\
			\ & + \int_0^t \int_{|l|\le1} \Big\{\M\big(1,l,X^{(R)}(t')\big)-X^{(R)}(t')-l\cdot \QQ_2 X^{(R)}(t')\Big\}\, \nu({\rm d}l)\, {\rm d}t',
			\quad t>0,\label{eq : cut-off problem}
		\end{align}
		admits a unique global solution in the sense of the cut-off version of Definition \ref{solution definition (q u)}, that is, in the sense of Definition \ref{solution definition (q u)} with \eqref{Target problem X=(q u)} replaced by \eqref{eq : cut-off problem}.
	\end{Lemma}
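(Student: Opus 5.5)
Existence and uniqueness will be handled separately. Existence comes from Lemma \ref{Lemma : Xn T estimates}: the limit $X^{(R)}$ from \ref{Convergence of X-n T} is our candidate. We pass to the limit $n\to\infty$ in each term of \eqref{approximation solution XnR}, working in $\H^{\theta-1}$ (indeed in a lower space $\H^{\theta-2\zeta-1}$, which still embeds into $C(\K^d)$ because $\theta-3\zeta>\frac d2+p-\ldots$ is arranged by $s>\frac d2+p+\max\{3\zeta,1\}$). The needed inputs are as follows.

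\begin{itemize}
\item The drift $\chi_R F_n$ converges by Lemma \ref{Lemma:Fn}(1), together with continuity of $\chi_R(\|\cdot-\Xi\|_{\WP})$ and the embedding $\H^\theta\hookrightarrow\WP$.
\item $\QQ_{1,n}^2X_n\to\QQ_1^2X$ and $\QQ_{1,n}X_n\to\QQ_1X$ by \eqref{Qn-Qm operator norm}, \eqref{Qn2-Qm2 operator norm} and \eqref{Qnf-Qf}, combined with the uniform $\H^s$ bounds.
\item The It\^o integrals converge in probability by the BDG inequality and dominated convergence, after localizing with the stopping times $\tau_N$ from the $\H^s$ bound.
\item The Marcus terms converge via Lemma \ref{Lemma-Stability wp-n} and Remark \ref{Remark: wp estimate}: $\M_n(1,l,X_n)\to\M(1,l,X)$ in $\H^\theta$, with integrands dominated by $C|l|\|X_n\|_{\H^s}$ and $C|l|^2\|X_n\|_{\H^s}$ respectively. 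The BDG inequality for the compensated Poisson integral and dominated convergence in $\nu(\mathrm dl)\,\mathrm dt$ then give the limit.
\end{itemize}

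Since both sides of \eqref{eq : cut-off problem} are continuous in $C(\K^d)$, the identity holds for all $t$ almost surely. Together with \eqref{X Hs bound}, this makes $X^{(R)}$ a global solution: $\sup_{[0,T]}\|X^{(R)}\|_{\H^s}<\infty$ for all $T$.

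For uniqueness, take two solutions $X,Y$ with the same $\Xi$. Set $\tau_N=\inf\{t:\|X(t)\|_{\H^s}\vee\|Y(t)\|_{\H^s}\ge N\}$ and apply It\^o's formula (Lemma \ref{Ito formula}) to $\|X-Y\|_{\H^\theta}^2$ up to $\tau_N$. This is legitimate at level $\theta$ because $\theta+3\zeta<s$, so $\QQ_1^2X$ and $\M(1,l,X)$ live in spaces where the required pairings make sense.

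The estimates for the difference are the $n=m$ limiting versions of the earlier ones, so the $\lambda_{n,m}$ terms vanish:
\begin{itemize}
\item the drift difference is controlled by the $F$-difference bound of Lemma \ref{Lemma:Fn} with $n=m\to\infty$, together with Lipschitz continuity of $\chi_R$ on $\WP$;
\item the Stratonovich correction and quadratic variation are controlled by Lemma \ref{Lemma-Q1 n m} with $n=m$ in the limit, which gives a bound by $\|X-Y\|_{\H^\theta}^2$ via \eqref{cancel-PA2} applied with $\mathcal P=\D^\theta$;
\item the Marcus compensator term is controlled by Lemma \ref{Lemma-Q2 n m} (limit form), giving $\lesssim|l|^2\|X-Y\|^2_{\H^\theta}$.
\end{itemize}
The $\ZZ$ term is handled by \ref{Hypo-h} with $\sigma=\theta$. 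Using the BDG inequality and Gr\"onwall's lemma, $\E[\sup_{t\le\tau_N\wedge T}\|X-Y\|^2_{\H^\theta}\,|\,\F_0]=0$. Letting $N\to\infty$ then gives $X=Y$.

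The main obstacle is justifying these limit identities for the Marcus flow $\M$ itself rather than $\M_n$. Concretely, we need the bounds of Lemma \ref{Lemma-Q2 n m} to hold with $n=m=\infty$, applied to $\H^s$ data and measured in $\H^\theta$. I would obtain them by applying the lemma to $\M_n(\cdot,l,X)-\M_n(\cdot,l,Y)$ and letting $n\to\infty$ using Remark \ref{Remark: wp estimate}, since all constants are uniform in $n$.

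A second delicate point is the convergence of the compensated jump integral. There I would prove $L^2$ convergence of the integrands in $\nu\otimes\mathrm dt\otimes\mathbb P$ on $[0,\tau_N]$, using \eqref{Marcus flow stability 2} for the $n$-dependence of the flow and \eqref{Marcus flow stability 1} for the dependence on the data.
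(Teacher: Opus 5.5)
Your proposal follows the paper's proof essentially step by step. Existence comes from identifying the limit $X^{(R)}$ of Lemma \ref{Lemma : Xn T estimates} as a solution by passing to the limit term by term in \eqref{approximation solution XnR}. Uniqueness comes from It\^o's formula for $\|X-Y\|^2_{\H^\theta}$ up to $\tau_N$, using the $n=m\to\infty$ versions of the difference estimates (all $\lambda_{n,m}$-terms vanish), followed by BDG and Gr\"onwall.

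One slip needs fixing. $\H^{\theta-2\zeta-1}$ does not embed into $C(\K^d)$ in general; take $\zeta=p=1$ and $\theta=\frac d2+\frac32$. Instead, identify the limit in a weak space such as $\mathbb{L}^2$, as the paper does. Then observe that every term, evaluated at $X^{(R)}\in\H^s$, already lies in $\H^\theta\hookrightarrow C(\K^d)$, because $s-\max\{2\zeta,1\}>\theta$. Also, both sides are c\`adl\`ag rather than continuous, which still suffices to obtain the identity for all $t$ almost surely.
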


	\begin{proof}
		We will show that $X^{(R)}$ given by \ref{Convergence of X-n T} of  Lemma \ref{Lemma : Xn T estimates} is the unique solution.  Let $\theta\in\left(\frac{d}{2}+p,{s-\max \{3\zeta,1\}}\right)$ and $T>0$. 
		From \ref{Hypo-h} (with $\sigma=\theta$), \eqref{Xn to X}, and Lemma \ref{Lemma:Fn}, we infer that $\pas$,
		\begin{equation*}
			\lim_{n\to\infty}\sup_{t\in[0,T]}\Big\|\chi_R\big(\|X_n^{(R)}(t)-\Xi\|_{\WP}\big) H(t,X_n^{(R)}(t))- \chi_R\big(\|X^{(R)}(t)-\Xi\|_{\WP}\big) H(t,X^{(R)}(t))\Big\|_{\H^\theta}=0,
		\end{equation*}
		and
		\begin{equation*}
			\lim_{n\to\infty}\sup_{t\in[0,T]}\Big\|\chi_R\big(\|X_n^{(R)}(t)-\Xi\|_{\WP}\big) F_n(X_n^{(R)}(t))- \chi_R\big(\|X^{(R)}(t)-\Xi\|_{\WP}\big) F(X^{(R)}(t))\Big\|_{\H^{\theta-1}}=0.
		\end{equation*}
		Here, $\{\QQ_{i,n}={\rm diag}(0,\Q_{i,n})\}_{n\ge1}$ for $i=1,2$, where $\{\Q_{i,n}\}_{n\ge1}$ is given by Lemma \ref{Lemma:Qn}. 
		Similarly, for $i=1,2$, using \ref{Hypo-Qi}, \eqref{Marcus flow QQ2}, \eqref{Marcus flow stability 2}, and \eqref{Xn to X}, we obtain that $\pas$ for $i=1,2$,
		\begin{equation*}
			\lim_{n\to\infty} \sup_{t\in[0,T]}\Big\|\QQ_{i,n} X_n^{(R)}(t)- \QQ_i  X^{(R)}(t)\Big\|_{\mathbb{L}^{2}}=0, \quad  
			\lim_{n\to\infty} \sup_{t\in[0,T]}\Big\|\QQ^2_{i,n}  X_n^{(R)}(t)- \QQ^2_i  X^{(R)}(t)\Big\|_{\mathbb{L}^{2}}=0,
		\end{equation*}
		and 
		\begin{equation*}
			\lim_{n\to\infty}\sup_{t\in[0,T]}\Big\| \M_n(r,l,X^{(R)}_n(t))-\M(r,l,X^{(R)}(t))\Big\|_{\mathbb{L}^{2}}=0,\quad  r\in[0,1].
		\end{equation*}
		Therefore, passing to the limit as $n\to\infty$ in \eqref{approximation solution XnR}, we conclude that \eqref{eq : cut-off problem} holds as an equation in $\mathbb{L}^{2}$. Since $X^{(R)}(t)\in \H^s$ with $s > \frac{d}{2} + p + \max \{3\zeta,1\}$ (cf. \eqref{X Hs bound}), all terms in \eqref{eq : cut-off problem} lie in $\H^\theta$, and the embedding $\H^\theta\hookrightarrow C(\K^d)$ implies that \eqref{eq : cut-off problem} is in fact an equation in $C(\K^d)$. The details are standard and are omitted to save space. 
		
		Now we prove the uniqueness of solution.
		Let $Y^{(R)}$ be another solution to \eqref{eq : cut-off problem} with $Y^{(R)}(0)=\Xi$ such that \eqref{X Hs bound} holds for $Y^{(R)}$ replacing $X^{(R)}$.  
		For  $t\ge0$, $r\in[0,1]$ and $|l|\le 1$, we 
		let  
		\begin{equation*}
			Z^{(R)}(t)\triangleq X^{(R)}(t)-Y^{(R)}(t),
			\quad \textbf{B}^{(R)}(r,l,t)\triangleq \M\big(r,l,Z^{(R)} (t)\big)-Z^{(R)} (t),\quad \textbf{C}^{(R)}(r,l,t)\triangleq \textbf{B}^{(R)}(r,l,t)
			-l \cdot \QQ_2 Z^{(R)} (t),
		\end{equation*}
		and 
		\begin{equation*}
			G_{R}\big(X^{(R)}(t)\big)\triangleq \chi_R\big(\|X^{(R)}(t)-\Xi\|_{\WP}\big) F(X^{(R)}(t))
			-\frac{1}{2}\QQ_1^2X^{(R)}(t).
		\end{equation*}
		It follows  from \eqref{approximation scheme C} and the linearity of operators $\QQ_1$, $\QQ_2$ and the map $\M(1,l,\cdot)$ that  
		\begin{align*} 
			Z^{(R)} (t)  
			=\ & \int_0^t \mathfrak{i}_{1}^{(R)}(t') \d t'
			+\int_0^t\int_{|l|\le1} \textbf{C}^{(R)}(1,l,t')\, \nu({\rm d}l){\rm d}t'\notag\\
			&+\int_0^t 
			\mathfrak{i}_{2}^{(R)}(t') \d W(t') 
			+\int_0^t\mathfrak{i}_{3}^{(R)}(t')\d \widetilde W(t') 
			+ \int_0^t\int_{|l|\le1} \textbf{B}^{(R)}(1,l,t'-)\, \widetilde{\eta}({\rm d}l,{\rm d}t').
		\end{align*}
		where for $t>0$,
		\begin{align*}
			\mathfrak{i}_{1}^{(R)}(t)\triangleq G_{R}\big(X^{(R)}(t)\big)-G_{R}\big(Y^{(R)}(t)\big),\quad \mathfrak{i}_{2}^{(R)}(t)\triangleq  \QQ_1  Z^{(R)}(t),\quad
			\mathfrak{i}_{3}^{(R)}(t)\triangleq H_{R}\left(t',X^{(R)}(t)\right)-H_{R}\left(t',Y^{(R)}(t)\right).
		\end{align*}
		Similar to \eqref{Znm-Ito}, we 
		use It\^o's formula (Lemma \ref{Ito formula}) to obtain that for $t\ge0$ and $\theta\in\left(\frac{d}{2}+p,{s-\max \{3\zeta,1\}}\right)$,
		\begin{align*}
			\|Z^{(R)} (t)\|_{\H^\theta}^2
			=\ &
			2\int_0^{t}\underbrace{\left\{\IP{ \mathfrak{i}_{1}^{(R)}(t'),Z^{(R)} (t')}_{\H^\theta}+\|\mathfrak{i}_{2}^{(R)}(t')\|^2_{\H^\theta}
				+\|\mathfrak{i}_{3}^{(R)}(t')\|^2_{\H^\theta}\right\}}_{\triangleq\ \mathfrak{R}_1^{(R)}(t')}
			\d t'\\
			&+2\int_0^{t}
			\underbrace{\IP{\mathfrak{i}_{2}^{(R)}(t'),Z^{(R)} (t')}_{\H^\theta}}_{\triangleq\ \mathfrak{R}_2^{(R)}(t')}
			\d W(t')
			+2\int_0^{t}
			\underbrace{\IP{\mathfrak{i}_{3}^{(R)}(t'),Z^{(R)} (t')}_{\H^\theta}}_{\triangleq\ \mathfrak{R}_3^{(R)}(t')}
			\d \widetilde W(t')\\
			&+ \int_0^{t}\int_{|l|\le1}
			\underbrace{ \left\{\Big\|\M\big(1,l,Z^{(R)} (t'-)\big)\Big\|^2_{\H^\theta} 
				-\norm{Z^{(R)} (t'-)}^2_{\H^\theta}\right\}}_{\triangleq\ \mathfrak{R}_4^{(R)}(t'-)}
			\, \widetilde \eta(\mathrm{d}l,\mathrm{d}t')\\
			&+\int_0^{t}\int_{|l|\le1}
			\underbrace{-2l\cdot \IP{\QQ_2 Z^{(R)} (t'),Z^{(R)} (t')}_{\H^\theta}}_{\triangleq \mathfrak{R}_5^{(R)}(t')}
			\,\nu({\rm d}l) {\rm d}t'\\
			&+ \int_0^{t}\int_{|l|\le1} 
			\left\{\Big\|\M\big(1,l,Z^{(R)} (t')\big)\Big\|^2_{\H^\theta} 
			-\norm{Z^{(R)} (t')}^2_{\H^\theta}\right\}
			\,\nu({\rm d}l) {\rm d}t'.
		\end{align*}
		Let
		\begin{equation}
			\tau_N(R) \triangleq  N\land \inf\big\{t\ge 0: \|X^{(R)}(t)\|_{\H^s}\lor\|Y^{(R)}(t)\|_{\H^s}\ge N\big\},\quad N\ge1.\label{tau-N in cut-off uniqueness}
		\end{equation}
		Since $X^{(R)}, Y^{(R)}$ are solutions in the sense of Definition \ref{solution definition (q u)}, the stopping time $\tau_N(R)$ in \eqref{tau-N in cut-off uniqueness} is well-defined.
		Then,  the estimate for  $\E [\sup_{t\in[0,\tau_N(R)]} 
		\|Z^{(R)} (t)\|^2_{\H^\theta}|\F_0] $ has been essentially carried out in
		the analysis of \eqref{R-1-nmR estiamte},  \eqref{R-2-nmR estiamte},  \eqref{R-3-nmR estiamte},  \eqref{R-4-nmR estiamte} and  \eqref{R-4+5-nmR estiamte}. Actually, 
		from \eqref{Xn to X} and the fact $\lim_{n,m\to \infty}\lambda_{n,m}=0$, 
		we can take $n=m\to\infty$ in the analysis of \eqref{R-1-nmR estiamte},  \eqref{R-2-nmR estiamte},  \eqref{R-3-nmR estiamte},  \eqref{R-4-nmR estiamte} and  \eqref{R-4+5-nmR estiamte} to find constants $c_1,c_2, C_N>0$ such that for all $T\in [0,N]$,
		\begin{align*}
			2\E\left[\int_0^{T\land \tau_N(R)}|\mathfrak{R}_1^{(R)}(t)|\d t\bigg|\F_0\right]\leq C_N
			\E\left[\int_0^{T}\sup_{t'\in [0,t\land \tau_N(R)]}\|Z^{(R)} (t')\|^{2}_{\H^{\theta}}\d t\bigg|\F_0\right],
		\end{align*}
		\begin{align*}
			&  2\E\left[\sup_{t \in [0, T\land \tau_N(R)]}\bigg|\int_0^{t}\mathfrak{R}_2^{(R)}(t')\d W(t')\bigg|\bigg|\F_0\right]\notag\\
			\leq\ &   \frac{1}{6}\E\left[\sup_{t \in [0, T\land \tau_N(R)]} \|Z^{(R)} (t)\|_{\H^\theta}^2\Big|\F_0\right]
			+c_2\int_0^T\E\left[\sup_{t'\in [0,t\land \tau_N(R)]}\|Z^{(R)} (t')\|_{\H^\theta}^2\bigg|\F_0\right]{\rm d}t,
		\end{align*}
		\begin{align*}
			& 2\E\left[\sup_{t \in [0, T\land \tau_N(R)]} \bigg|\int_0^{t}\mathfrak{R}_3^{(R)}(t')
			\d \widetilde W(t')\bigg|\bigg|\F_0\right]\notag\\
			\le\ &  \frac{1}{6}\E\left[\sup_{t \in [0, T\land \tau_N(R)]} \|Z^{(R)} (t)\|_{\H^\theta}^2\Big|\F_0\right]
			+C_N\int_0^T\E\left[\sup_{t'\in [0,t\land \tau_N(R)]}\|Z^{(R)} (t')\|_{\H^\theta}^2\bigg|\F_0\right]{\rm d}t,
		\end{align*}
		\begin{align*}
			&  \E\left[\sup_{t \in [0, T\land \tau_N(R)]}\bigg|\int_0^{t}\int_{|l|\le1}\mathfrak{R}_4^{(R)}(t'-)
			\, \widetilde \eta(\mathrm{d}l,\mathrm{d}t')\bigg|\bigg|\F_0\right]\notag\\
			\leq  \ & \frac{1}{6}\E\left[\sup_{t \in [0, T\land \tau_N(R)]} \|Z^{(R)} (t)\|_{\H^\theta}^2\Big|\F_0\right]
			+c_2\int_0^T\E\left[\sup_{t'\in [0,t\land \tau_N(R)]}\|Z^{(R)} (t')\|_{\H^\theta}^2\bigg|\F_0\right]{\rm d}t, 
		\end{align*}
		and 
		\begin{align*}
			\E\left[\sup_{t \in [0, T\land \tau_N(R)]} \bigg|\int_0^{t}\int_{|l|\le1}
			\left(\mathfrak{R}_4^{(R)}(t')+\mathfrak{R}_5^{(R)}(t')\right)
			\, \nu({\rm d}l) {\rm d}t'\bigg|\bigg|\F_0\right] 
			\leq C_N
			\E\left[\int_0^{T}\sup_{t'\in [0,t\land \tau_N(R)]}\|Z^{(R)} (t')\|^{2}_{\H^{\theta}}\d t\bigg|\F_0\right].
		\end{align*}
		Using the above estimates,  we have 
		\begin{align*}
			\E \left[\sup_{t\in[0,\tau_N(R)]} 
			\|Z^{(R)} (t)\|^2_{\H^\theta}\Big|\F_0\right] 
			\leq  C_N\int_0^T\E\left[\sup_{t'\in [0,t\land \tau_N(R)]}\|Z^{(R)} (t')\|_{\H^\theta}^2\bigg|\F_0\right]{\rm d}t.
		\end{align*}
		By Gr\"onwall's inequality, the monotone convergence theorem, the fact that $\lim_{N\to\infty}\tau_N(R)=\infty$ and the arbitrariness of $T>0$, we infer that, for every  $Y^{(R)}(t)= X^{(R)}(t)$ for all $t\ge 0$.   
	\end{proof}
	
	\paragraph{\textbf{Step 4: Conclude the Proof of Proposition \ref{Proposition-(q u)}.}}
	We first note that uniqueness of $\H^s$-valued solution to \eqref{Target problem X=(q u)} can be carried out similar to the cut-off case in Lemma \ref{cut-off global solution}.
	For any $R\ge 1$, let $X^{(R)}$ be the unique global solution to the Cauchy problem \eqref{eq : cut-off problem} such that \eqref{X Hs bound} is verified (by Lemma \ref{cut-off global solution}). Let 
	\begin{equation*}
		\tau(R) \triangleq  \inf\big\{t\ge 0: \|X^{(R)}(t)-\Xi\|_{\WP}\ge R\big\}. 
	\end{equation*}
	According to \eqref{Xn to X},  we have that $X^{(R)} \in D([0,\infty);\H^{s'})$ with $s'\in[\theta,s)$.
	Due to the right continuity of $X^{(R)}(t)$ in $\H^\theta$ and the embedding $\H^\theta\hookrightarrow {\WP}$, we have $\p(\tau(R)>0)=1$ for all $R>0.$ Since $\chi_R(\|X^{(R)}(t)-\Xi\|_{\WP})=1$ for $t\le \tau(R)$, \eqref{Target problem X=(q u) full} coincides with 
	\eqref{eq : cut-off problem} up to time $\tau(R)$. This together with \eqref{X Hs bound} implies that 
	$X^{(R)}$ satisfies \eqref{Target problem X=(q u)} up to $\tau(R)$.    By the uniqueness of solutions to \eqref{Target problem X=(q u)}, we see that $\tau(R)$ is increasing in $R$, and 
	$$X^{(R)}(t)= X^{(R+1)}(t),\qquad t\le \tau(R), \quad  R\ge 1\quad \p\text{-a.s.}$$
	We set
	\begin{align} 
		X(t) \triangleq \sum_{R=1}^\infty \textbf{1}_{[\tau(R-1), \tau(R))}(t) X^{(R)}(t),\quad t\in [0,\tau^*),\qquad \tau^*  \triangleq \lim_{R\to\infty} \tau(R),\qquad \tau(0) \triangleq 0.\label{X=sum XR} 
	\end{align}
	Then one can conclude that $X\in D([0,\tau^*);\H^{s'})$ with $s'<s$ and $X$ satisfies \eqref{Target problem X=(q u)} up to $\tau^*$. Moreover, according to the definitions of $\tau^*$ and $\tau(R)$,  we see that \eqref{X blow-up criterion} holds true, and hence $(X,\tau^{*})$  is actually a maximal solution.

	\subsubsection{Solving \eqref{Target problem X=(q u)} under Constraint \eqref{a<q<b}}
	\label{Section : Constrained Problem}
	It remains to verify that the maximal $\mathbb{H}^s$ solution is admissible in the sense of Definition
	\ref{solution definition (q u)}, that is, it satisfies the constraint \eqref{a<q<b}.   Recall that
	$\rr \in {\bf R}_{(\rr_0,\rr_\infty)}$ (cf. \eqref{Set of transform}).
	The key result in this step is

	\begin{Proposition}
		\label{Proposition-(q u)-admissible}
		Let $(X,\tau^*)$ be the unique maximal $\H^s$ classical solution to \eqref{Target problem X=(q u)} given by Proposition \ref{Proposition-(q u)}.
		\begin{enumerate}[label={\bf (\arabic*)},leftmargin=0.79cm]\setlength\itemsep{0.2em}
			
			\item \label{Prop-(q u)-admissible-Theta=C0}
			When\ref{Constant Sound} holds, $(X,\tau^*)$ is   admissible in the sense of Definition \ref{solution definition (q u)}.
			
			\item  \label{Propo-(q u)-admissible-Theta->Lambda} When \ref{General Sound} holds, we further assume 
			$$\text{either}\quad \p\Big(0=\rr_0<\underline{\rr}\le \vrho_0\le \overline{\rr}<\rr_\infty=\infty\Big)=1\quad \text{or}
			\quad \p\Big(-\infty=\rr_0<\underline{\rr}\le\vrho_0\le\overline{\rr}<\rr_\infty=0\Big)=1.$$
			Then $(X,\tau^*)$ is also admissible in the sense of Definition \ref{solution definition (q u)}.
			
		\end{enumerate}
	\end{Proposition}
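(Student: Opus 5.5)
Part \ref{Prop-(q u)-admissible-Theta=C0} needs essentially no work. Under \ref{Constant Sound} we have $(\rr_0,\rr_\infty)=(-\infty,\infty)$, so the constraint \eqref{a<q<b} only asks that $\vrho(t)$ be finite-valued. This holds because $X(t)\in\H^s\hookrightarrow C(\K^d)$ for every $t<\tau^*$, by Definition \ref{solution definition (q u)} and Proposition \ref{Proposition-(q u)}.

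For part \ref{Propo-(q u)-admissible-Theta->Lambda} I would exploit that the density component is noise-free. All stochastic terms act only on $u$, since $\QQ_i={\rm diag}(0,\Q_i)$ and $\ZZ=(0,z)^T$. Hence $\vrho$ solves, pathwise and pointwise in $x$, the transport-type equation
\[
\partial_t\vrho+u\cdot\nabla\vrho=-\Lambda(\vrho)\,\Div u,\qquad \vrho(0)=\vrho_0.
\]
Because $s>\frac d2+1$ and $u\in D([0,\tau^*);\H^{s'})$, the field $u$ is $\pas$ Lipschitz in $x$ and bounded on $[0,T]\times\K^d$ for every $T<\tau^*$. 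It is only c\`adl\`ag in $t$, but this is harmless: it is measurable and locally bounded, so Carath\'eodory theory applies. I would therefore introduce the flow $\dot\Psi_t(x)=u(t,\Psi_t(x))$, $\Psi_0=x$. This flow is a homeomorphism of $\K^d$ for each $t<\tau^*$ (on $\R^d$ one uses boundedness of $u$). Along characteristics, $y(t)\triangleq\vrho(t,\Psi_t(x))$ satisfies the scalar ODE $\dot y=-\Lambda(y)\,(\Div u)(t,\Psi_t(x))$. To make this rigorous, I would note that $\vrho\in C([0,\tau^*);H^{s'-1})$ has time derivative in $H^{s'-1}\hookrightarrow C^1$, so the chain rule applies.

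The key observation is that $\Lambda(0)=0$ and $\Lambda$ is globally Lipschitz. So $y\equiv0$ solves the ODE, and by uniqueness $y$ can never reach $0$ starting from a nonzero value. Quantitatively, for $y>0$ we have $|\Lambda(y)|\le L|y|$, so $|\tfrac{d}{dt}\log|y||\le L\|\Div u(t)\|_{L^\infty}$. This gives
\[
|\vrho_0(x)|\,{\rm e}^{-L\int_0^t\|\nabla u\|_{L^\infty}}\le|\vrho(t,\Psi_t(x))|\le|\vrho_0(x)|\,{\rm e}^{L\int_0^t\|\nabla u\|_{L^\infty}},
\]
with a fixed sign. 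In the case $0=\rr_0<\underline\rr\le\vrho_0$ and $\rr_\infty=\infty$, we get $\vrho(t)\ge\underline\rr\,{\rm e}^{-L\int_0^t\|\nabla u\|_{L^\infty}}>0$ for all $t<\tau^*$. The Chaplygin-type case $\vrho_0\le\overline\rr<0=\rr_\infty$ is symmetric: $\vrho$ stays strictly negative, and the lower endpoint $-\infty$ is excluded again by continuity. Since $\Psi_t$ is onto, the constraint \eqref{a<q<b} holds at all points and all times $t<\tau^*$, $\pas$.

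I expect the main obstacle to be rigorous justification of the characteristic argument when $u$ is only c\`adl\`ag in time, together with the need for sup-norm rather than merely $\H^s$ control. If this becomes delicate, I would instead run a maximum-principle style argument on $\log\vrho$, or on $(\vrho-\epsilon)^-$ with an $L^2$ energy estimate. The damping factor $\exp(L\int\|\nabla u\|_{L^\infty})$ is finite on $[0,T]$ for $T<\tau^*$ by the blow-up criterion, and the whole argument is pathwise, so no expectation is needed.
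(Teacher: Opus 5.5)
Your proposal is correct and follows essentially the same route as the paper. Part (1) comes from $(\rr_0,\rr_\infty)=(-\infty,\infty)$; Part (2) applies pathwise a characteristics-based maximum principle (the paper's Lemma \ref{Maximum principle-Lemma} with $\upvartheta=\Lambda$, using $|\Lambda(y)|\le C|y|$ from $\Lambda(0)=0$ and $\sup|\Lambda'|<\infty$), giving two-sided exponential bounds of fixed sign along the flow. Your remark that $u$, and hence $\partial_t\vrho$, is only c\`adl\`ag in time, so that the flow and chain rule must be handled in the Carath\'eodory/a.e. sense, is slightly more careful than the paper, which asserts $\partial_t\vrho\in C([0,\tau^*);H^{s'-1})$ even though $u$ jumps under the Marcus noise.
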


	We first establish a maximum principle for a transport-type equation with a linearly growing term $\upvartheta(\cdot)$. For the well-known case $\upvartheta(x)=x$, we refer, for example, to \cite{Maslennikova-Bogovskii-1994-chapter,Feireisl-2004-Book}.
	
	\begin{Lemma}[Maximum principle]\label{Maximum principle-Lemma}
		Assume  $\upvartheta:\R\to\R$ satisfies, for some $C>0$, $|\upvartheta(x)|\leq C |x|$ for $x\in\R.$
		Let $d\ge2$, $a_0<b_0$ and $T^*>0$. Assume that $v: [0,T^*)\times\K^d\mapsto v(t,x)\in \R^d$ such that  $v\in L_{\rm loc}^1([0,T^*);\Wlip_d\cap L^2_d)$ and
		$$\int_0^{t}\|v(t')\|_{\Wlip}\d t'<\infty,\quad t\in[0,T^*).$$
		If  $\partial_tf\in C([0,T^*);\Wlip_1)$ and  $f$  satisfies
		\begin{equation*}
			\partial_tf+\upvartheta(f) \, \Div v+ v\cdot\nabla  f=0,\quad
			f(0,x)=f_0(x),\quad a_0\le f_0(\cdot)\le b_0,
		\end{equation*}
		then the following estimates hold true for $(t,x)\in[0,T^*)\times \K^d$:
		\begin{itemize}[leftmargin=0.79cm]\setlength\itemsep{0.2em}
			\item {\bf (Positive solution)} When  $a_0>0$,  
			\begin{equation}\label{c0 exp-<f<d0 exp}
				a_0 \exp \left\{-\int_0^tC\|\Div v(t')\|_{L^\infty}\d t'\right\} \leq f(t,x)\leq b_0 \exp\left\{\int_0^tC\|\Div v(t')\|_{L^\infty}\d t'\right\}.
			\end{equation}
			\item {\bf (Negative solution)} When  $b_0<0$,  
			\begin{equation}\label{c0 exp<f<d0 exp-}
				a_0 \exp \left\{\int_0^tC\|\Div v(t')\|_{L^\infty}\d t'\right\} \leq f(t,x) \leq b_0 \exp\left\{\int_0^t-C\|\Div v(t')\|_{L^\infty}\d t'\right\}.
			\end{equation}
		\end{itemize}

	\end{Lemma}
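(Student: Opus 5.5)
The plan is to use the method of characteristics. The hypotheses make the flow map of $v$ well defined and Lipschitz, and along each characteristic the equation reduces to a scalar linear ODE with a bounded, integrable coefficient. Concretely, since $\int_0^t\|v(t')\|_{\Wlip}\d t'<\infty$ for $t<T^*$, the Cauchy--Lipschitz theorem (in its Carathéodory form, for vector fields Lipschitz in $x$ with $L^1_{\rm loc}$-in-time Lipschitz constant) gives a unique flow
\[
\tfrac{\mathrm d}{\mathrm dt}X(t,y)=v(t,X(t,y)),\qquad X(0,y)=y .
\]
For each $t$, the map $y\mapsto X(t,y)$ is a bi-Lipschitz homeomorphism of $\K^d$; on $\T^d$ this uses the periodicity of $v$. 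Set $g(t,y)\triangleq f(t,X(t,y))$. Because $\partial_t f\in C([0,T^*);\Wlip_1)$, the function $f$ is $C^1$ in $t$ and Lipschitz in $x$. The chain rule along absolutely continuous curves then gives, for a.e.\ $t$,
\[
\partial_t g=\partial_t f+v\cdot\nabla f=-\upvartheta(g)\,\Div v(t,X(t,y)).
\]
If one is worried about the chain rule for functions that are merely Lipschitz in $x$, I will first mollify $f$ in $x$ and then pass to the limit.

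Next I will write $\upvartheta(g)=c(t,y)\,g$. Here $c\triangleq \upvartheta(g)/g$ when $g\neq0$ and $c\triangleq 0$ otherwise, so $|c|\le C$ by the hypothesis $|\upvartheta(x)|\le C|x|$. Then $g$ solves the linear ODE $\partial_t g=-c\,\Div v(t,X)\,g$ with measurable coefficient
\[
a(t)\triangleq c\,\Div v(t,X(t,y)),\qquad |a(t)|\le C\|\Div v(t)\|_{L^\infty}\in L^1_{\rm loc}([0,T^*)).
\]
By uniqueness for this linear Carathéodory ODE,
\[
g(t,y)=f_0(y)\exp\Big\{-\int_0^t a(t')\,\d t'\Big\}.
\]
In particular $g$ keeps the sign of $f_0(y)$, so there is no need for a separate argument that $g$ cannot cross $0$. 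Bounding the exponent between $-\int_0^t C\|\Div v\|_{L^\infty}$ and $+\int_0^t C\|\Div v\|_{L^\infty}$ gives the two cases.
\begin{itemize}
\item When $a_0>0$, we have $f_0(y)\in[a_0,b_0]$ with both endpoints positive. Multiplying by the exponential bounds gives \eqref{c0 exp-<f<d0 exp}.
\item When $b_0<0$, we have $f_0(y)\in[a_0,b_0]$ with both endpoints negative. Multiplying a negative number by a factor in $[e^{-A},e^{A}]$, where $A=\int_0^tC\|\Div v\|_{L^\infty}$, reverses the ordering and gives \eqref{c0 exp<f<d0 exp-}.
\end{itemize}
Since $X(t,\cdot)$ is onto, the bounds on $g$ transfer to $f(t,x)$ for every $x$.

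The main obstacle is the regularity bookkeeping: justifying the chain-rule identity for $g$ when $f$ is only Lipschitz in $x$ and $v$ is only $L^1$ in time with values in $\Wlip$, and, on $\R^d$, making sure the flow is global and surjective. For the chain rule I plan to mollify as described. For the flow on $\R^d$, global existence and surjectivity follow from the global Lipschitz bound together with the time-reversed flow. The linear-growth assumption on $\upvartheta$ is used only to obtain the bound $|c|\le C$.
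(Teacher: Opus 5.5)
Your proposal is correct and follows essentially the paper's route. Both arguments take the flow of $v$ and use the ODE $\frac{\mathrm{d}}{\mathrm{d}t}f(t,\upchi(t,x))=-\upvartheta(f)\,\Div v$ along each characteristic; the only difference is the last step---the paper integrates $\mathrm{d}y/y$ up to the first time $T_x^{\pm}$ at which $f(t,\upchi(t,x))$ could change sign and shows $T_x^{\pm}=T^*$ by continuity, whereas your factorization $\upvartheta(g)=c\,g$ with $|c|\le C$ gives the explicit formula $g=f_0e^{-\int_0^t a}$ and hence sign preservation for free (you are also more careful than the paper about surjectivity of the flow and the chain rule for $f$ that is only Lipschitz in $x$).
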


	\begin{proof}
		We first consider the case $\K=\R$.  Since $v(t,\cdot)\in \Wlip\cap L^2$,
		the differential equation for unknown variable $\upchi(t,x):[0,T^*)\times \R^d\to \R^d$
		\begin{equation*}
			\frac{{\rm d}}{{\rm d} t} \upchi(t,x)=\frac{{\rm d}}{{\rm d}t} (\upchi_i(t,x))_{1\le i\le d}=v(t,\upchi(t,x))=(v_i(t,\upchi(t,x)))_{1\le i\le d},\ \ \upchi(0,x)=x\in\R^d
		\end{equation*}
		has a unique solution $\upchi(t,x)$. Let $L(t) \triangleq \|v(t)\|_{\Wlip}$ for $t\in[0,T^*)$. We have
		that for $x,y\in\R^d$,
		\begin{equation*}
			|x-y|-\int_0^tL(t')|\upchi(t',x)-\upchi(t',y)|\d t'\leq |\upchi(t,x)-\upchi(t,y)|\leq |x-y|+\int_0^tL(t')|\upchi(t',x)-\upchi(t',y)|\d t',
		\end{equation*}
		which means that
		\begin{equation*} 
			|x-y| \exp\left\{\int_0^t-L(t')\d t'\right\}\leq |\upchi(t,x)-\upchi(t,y)|\leq |x-y| \exp\left\{\int_0^tL(t')\d t'\right\}.
		\end{equation*}
		Consequently,  $\upchi(t,\cdot)$ is a Lipschitz homeomorphism of $\R^d$ and one can obtain the pointwise-in-$x$ estimate on $f(t,x)$ by considering  $f(t,\upchi(t,x))$.

		Now we observe that for $(t,x)\in[0,T^*)\times \R^d$,
		\begin{align*}
			\frac{{\rm d}}{{\rm d} t} f(t,\upchi(t,x))=\partial_t f(t,\upchi(t,x))+\nabla f(t,\upchi(t,x)) \cdot v(t,\upchi(t,x)) =-\upvartheta(f)(t,\upchi(t,x))\, \Div v(t,\upchi(t,x)).
		\end{align*}

		Let $a_0>0$, and let
		$$T^+_{x} \triangleq T^*\land \inf\{t>0: f(t,\upchi(t,x))\le0\},\quad x\in\R^d.$$ Then on $[0,T^+_{x})$, $f>0$ and
		\begin{align*}
			-Cf(t,\upchi(t,x))\|\Div v(t)\|_{L^\infty}\leq\frac{{\rm d}}{{\rm d} t} f(t,\upchi(t,x))\leq  Cf(t,\upchi(t,x))\|\Div v(t)\|_{L^\infty}.
		\end{align*}
		Hence 
		\begin{align*}
			-C\int_0^t\|\Div v(t')\|_{L^\infty}\d t'\leq\int_{f_0}^{f(t,\upchi(t,x))}\frac{ {\rm d}y}{y}\leq  C\int_0^t\|\Div v(t')\|_{L^\infty}\d t',\quad (t,x)\in[0,T^+_{x})\times \R^d.
		\end{align*}
		Then we have that for  $(t,x)\in[0,T_{x}^+)\times \R^d$,
		$$a_0 \exp \left\{-\int_0^tC\|\Div v(t')\|_{L^\infty}\d t'\right\}\leq f(t,x)\leq b_0 \exp\left\{\int_0^tC\|\Div v(t')\|_{L^\infty}\d t'\right\}.$$

		It follows from $a_0 \exp \left\{-\int_0^tC\|\Div v(t')\|_{L^\infty}\d t'\right\}>0$ and the continuity of $f(\cdot,\upchi(\cdot,x))$ that $T_x^+=T^*$ for all $x\in\R^d$. Hence we obtain  \eqref{c0 exp-<f<d0 exp}.
		
		When $b_0<0$, similarly, we define for all $x\in \R^d$ that 
		$$T_{x}^- \triangleq T^*\land \inf\{t>0: f(t,\upchi(t,x))\ge0\}.$$ Then on $[0,T_{x}^-)$, $f<0$ and
		\begin{align*}
			Cf(t,\upchi(t,x))\|\Div v(t)\|_{L^\infty}\leq\frac{{\rm d}}{{\rm d} t} f(t,\upchi(t,x))\leq  -Cf(t,\upchi(t,x))\|\Div v(t)\|_{L^\infty}.
		\end{align*}
		Similar to the above case,  we have that for $(t,x)\in[0,T_{x}^-)\times \R^d$, 
		$$a_0 \exp \left\{\int_0^tC\|\Div v(t')\|_{L^\infty}\d t'\right\} \leq f(t,x)\leq b_0 \exp\left\{\int_0^t-C\|\Div v(t')\|_{L^\infty}\d t'\right\},$$
		which  implies  $T_{x}^-=T^*$ for all $x\in\R^d$. Therefore, \eqref{c0 exp<f<d0 exp-} holds. 
		
		For the case $\K=\T$, 
		one can also obtain the pointwise-in-$x$ estimate on $f(t,x)$ by considering  $f(t,\upchi(t,x))$. In the same way we can prove \eqref{c0 exp-<f<d0 exp} and \eqref{c0 exp<f<d0 exp-}.
	\end{proof}

	\begin{proof}[Proof of  Propostion \ref{Proposition-(q u)-admissible}]
		By part  \ref{Thea=C0-Remark C0 (a b)} of Remark \ref{Remark-Hypo-Pressure}, if \ref{Constant Sound} holds, $(\rr_0,\rr_\infty)=(-\infty,\infty)$, so that \ref{Prop-(q u)-admissible-Theta=C0}  of  Proposition \ref{Proposition-(q u)-admissible} automatically holds true.
		We only need to prove that \ref{Propo-(q u)-admissible-Theta->Lambda}  of Proposition \ref{Proposition-(q u)-admissible}. For all $s'<s$, since $(\vrho,u)\in D\big([0,\tau^*);\H^{s'}\big)$, we can infer from \eqref{Target problem X=(q u)}$_1$ that $\partial_t \vrho\in C([0,\tau^*);H_1^{s'-1})$ $\pas$ and satisfies
		\begin{equation}\label{q Eq maximum principle}
			\partial_t \vrho+\Lambda(\vrho) \, \Div u+ u\cdot\nabla \vrho=0, \quad t<\tau^*\quad \pas
		\end{equation}
		Furthermore, \eqref{X blow-up criterion} implies 
		$$\int_0^{t}\|u(t')\|_{\Wlip_d}\d t'\leq \limsup_{t\to \tau^*}\int_0^{t}\|X(t')\|_{\WP}\d t'<\infty,\quad t\in[0,\tau^*)\quad \pas$$
		Therefore, the desired results comes from pathwisely applying Lemma \ref{Maximum principle-Lemma}  (with $\upvartheta=\Lambda$) to \eqref{q Eq maximum principle}.
	\end{proof}

	\subsubsection{Concluding the Proof of Theorem  \ref{Thm-(rho u)}}
	Now, 
	Theorem \ref{Thm-(rho u)} is a direct consequence of Propositions \ref{Proposition-(q u)} and \ref{Proposition-(q u)-admissible}.

	\section{A Generalized Krylov--Bogoliubov Criterion}\label{Section : ASSES}

	To establish the existence of invariant probability measures for the stochastic damped Euler equations \eqref{SEuler-(u P) damp}, we first review the classical Krylov--Bogoliubov criterion and isolate the underlying topological difficulties mentioned in Section \ref{Section : novel-introduction}.
	
	Consider a Markov transition semigroup $\{\mathscr{P}_{t}\}_{t\ge0}$ on a Polish space $(E,d_E)$, admitting the representation
	\begin{equation}\label{Markov semigroup-kernel}
		[\mathscr{P}_{t} f](x) = \int_{E} f(y)\, \pi(t,x,{\rm d}y), \quad t \geq 0, \quad f \in \mathscr{M}_B(E),
	\end{equation}
	where $\pi$ denotes the associated transition probability kernel on $E$. Recall that $\mathscr{B}(E)$ denotes the Borel $\sigma$-algebra on $E$, and $\mathbf{P}(E)$ be  the set of Borel probability measures on $(E,\mathscr{B}(E))$. 
	Operating at the measure level, we utilize the standard pairing $\mu(\varphi) \triangleq \int_E \varphi(x)\,\mu(\mathrm{d}x)$ and define the adjoint (or push-forward) operator $\mathscr{P}^*_{t}$ of $\mathscr{P}_{t}$ as
	\begin{align*}
		\mathscr{P}^*_{t}: \mathbf{P}(E)\to \mathbf{P}(E),\qquad
		[\mathscr{P}^*_{t}\mu](A) = \int_E \pi(t,x,A)\,\mu({\rm d}x),\quad \mu \in \mathbf{P}(E),\quad A \in \mathscr{B}(E),
	\end{align*}
	
	\begin{Definition}\label{IM definition}
		A Markov semigroup $\{\mathscr{P}_{t}\}_{t\ge0}$ is called \textbf{Feller} on $E$ if $\mathscr{P}_{t} (C_B(E)) \subset C_B(E)$ for all $t\ge0$. A probability measure $\mu\in\mathbf{P}(E)$ is called an \textbf{invariant probability measure} for $\{\mathscr{P}_{t}\}_{t\ge0}$ if  
		\begin{equation}\label{eq:invariance}
			\mathscr{P}^*_{t}\mu=\mu, \quad \text{for all } t > 0.
		\end{equation}
	\end{Definition}
	
	We briefly recall the classical Krylov--Bogoliubov criterion (cf.\ \cite[Theorem 3.1.1]{DaPrato-Zabczyk-1996-Book}). For an initial state $x_0 \in E$, let $\delta_{x_0}$ be the Dirac measure concentrated at $x_0$. Define the family of time-averaged measures
	\begin{equation}\label{eq:approx_measures}
		\nu_{T}^{x_0} \triangleq \frac{1}{T} \int_{0}^{T} \mathscr{P}^*_{t} \delta_{x_0}\,\mathrm{d}t, \quad T>0.
	\end{equation}
	The classical theorem ensures that $\{\mathscr{P}_{t}\}_{t\ge0}$ admits an invariant probability measure provided that:\\ \textbf{(a)} $\{\nu_T^{x_0}\}_{T>0}$ possesses a weakly convergent subsequence in $\mathbf{P}(E)$ as $T \to \infty$;\\ 
	\textbf{(b)} $\{\mathscr{P}_{t}\}_{t\ge0}$ is globally Feller on $E$.
	
	\subsection{Problem Formulation, Assumptions, and Main Result}\label{Section : Mismatched topo}
	
	The application of the classical Krylov--Bogoliubov theorem necessitates both tightness and the Feller property within the \textbf{same} underlying Polish space $(E,d_E)$. However, for strongly nonlinear stochastic fluid models, such as the stochastic damped Euler equations (see Section \ref{Section : novel-introduction}), establishing uniform bounded regularity, compactness, and global continuous dependence on initial data simultaneously in a single phase space is generally unfeasible. This obstruction is well documented; for instance, in \cite{Constantin-Glatt-Holtz-Vicol-2014-CMP}, the addition of fractional diffusion was required to restore the regularization intrinsically absent in the purely damped 2-D Euler system. 
	
	To illustrate the dynamical mechanism underlying this degeneracy, consider the 1-D inviscid advection term $u \partial_x u$. When analyzing the continuous dependence on initial data for solutions in a Sobolev space $H^s$, estimating the difference between two solutions in a strictly weaker $H^\theta$-metric yields bounds of the form
	\begin{equation*}
		\big| \langle u \partial_x u - v \partial_x v, u-v \rangle_{H^{\theta}} \big| \lesssim \big(\|u\|_{H^{\theta+1}}+\|v\|_{H^{\theta+1}}\big) \|u-v\|^2_{H^\theta}.
	\end{equation*}
	To close the energy estimate for $\E\|u-v\|^2_{H^\theta}$, one is compelled to restrict $\theta \le s-1$ and employ appropriate stopping times to truncate the stronger norms. In the general theory of SPDEs with strong transport nonlinearities, these stopping times may degenerate to zero for unbounded initial data. Consequently, continuous dependence---and by extension, the Feller property---can only be rigorously guaranteed on bounded closed balls of $(H^s, \|\cdot\|_{H^s})$ when measured with respect to the weaker $H^\theta$-metric. 
	
	To abstractly formalize this metric-mismatch, we introduce the following structural hypothesis.
	
	\begin{Hypothesis}\label{Hypo-generalized KB}
		Let $(E, d_E)$ be a Polish space. Let $\widetilde{d}_E$ be another metric defined on $E$ strictly weaker than $d_E$, i.e., the $d_E$-topology is strictly stronger than the $\widetilde{d}_E$-topology. Moreover, we assume that $d_E(x_0,\cdot)$ is lower semicontinuous with respect to the $\widetilde{d}_E$-topology for any fixed reference point $x_0\in E$.
	\end{Hypothesis}
	
	In the context of stochastic inviscid flows, $(E, d_E)$ represents the high-regularity state space where solutions are globally defined but lack compactness, whereas $\widetilde{d}_E$ generates the weaker topology in which weak convergence of probability measures can be established via \textit{a priori} bounds. 
	
	Let the metrics $d_E$ and $\widetilde{d}_E$ be specified as in \ref{Hypo-generalized KB}. For notational clarity, we denote the strong space simply by $E \triangleq (E, d_E)$, and the weak space by $E_{\widetilde{d}_E} \triangleq (E,\widetilde{d}_E)$.
	We introduce the following definition to capture the localized nature of the continuous dependence under mismatched metrics.
	
	\begin{Definition}[\textbf{Restricted Feller Property under Mismatched Metrics}]\label{Locally Feller definition} 
		Let $x_0 \in E$ be a given reference point and define the strong closed ball
		\begin{equation}\label{eq: ball B-E-R}
			B^{d_E}_R(x_0) \triangleq \big\{x \in E : d_E(x, x_0) \le R\big\}.
		\end{equation}
		A Markov semigroup $\{\mathscr{P}_t\}_{t\ge 0}$ on $E$ satisfies the \textbf{restricted Feller property under mismatched metrics} (with respect to $\widetilde{d}_E$ and $d_E$, anchored at $x_0$) if the following two conditions hold:
		\begin{enumerate}[label={\rm(\roman*)}, leftmargin=0.79cm]\setlength\itemsep{0.2em}
			\item for every bounded Borel-measurable function $\phi \in \mathscr{M}_B(E)$ and $x \in E$, the map $t \mapsto [\mathscr{P}_t \phi](x)$ is Borel measurable on $[0, \infty)$;
			\item for any $t\ge 0$, $R>0$, and $\varphi \in C_B(E_{\widetilde{d}_E})$, the restriction 
			$\mathscr{P}_{t}\varphi\big|_{B^{d_E}_R(x_0)}$
			is continuous with respect to the metric $\widetilde{d}_E$.
		\end{enumerate}
	\end{Definition}
	
	To handle abstract Markov semigroups lacking globally continuous trajectories, we incorporate the requirement of temporal measurability directly into Definition \ref{Locally Feller definition}. By formulating condition (i) for arbitrary bounded measurable functions $\mathscr{M}_B(E)$, we rigorously ensure that the time-averaged measures $\nu_T^{x_0}$ defined in Theorem \ref{Thm-generalized KB} are well-defined probability measures on $\mathscr{B}(E)$. In the context of stochastic evolution equations (such as the fluid models discussed herein), this condition is naturally satisfied. Specifically, when the transition semigroup is induced by a solution map $\mathscr{S}_t$ via $[\mathscr{P}_t \phi](x) = \E[\phi(\mathscr{S}_t x)]$ (see \eqref{Markov semigroup-SES} below), the 
	joint measurability of $(\omega,t)$ of the stochastic flow ensures the Borel measurability of $t \mapsto [\mathscr{P}_t \phi](x)$ for any $\phi \in \mathscr{M}_B(E)$ via Fubini's theorem.
	The following theorem provides a generalized Krylov--Bogoliubov criterion adapted to systems exhibiting this restricted Feller property under mismatched metrics.
	
	\begin{Theorem}[\textbf{Existence Criterion}]\label{Thm-generalized KB}
		Suppose \ref{Hypo-generalized KB} holds. Let $\nu_{T}^{x_0}$ be defined by \eqref{eq:approx_measures} for the fixed initial point $x_0 \in E$ given in Definition \ref{Locally Feller definition}. Assume the following conditions are satisfied:
		\begin{enumerate}[label={{\bf (\arabic*)}},leftmargin=0.79cm]\setlength\itemsep{0.2em}
			\item\label{Cond-Tightness} \textbf{$($Weak Convergence$)$} There exist a sequence $T_n\to\infty$ and a probability measure $\nu\in\mathbf{P}(E_{\widetilde{d}_E})$ such that $\nu_{T_n}^{x_0} \to \nu$ weakly in $\mathbf{P}(E_{\widetilde{d}_E})$ as $n\to\infty$. 		
			\item\label{Cond-tail control} \textbf{$($Uniform Tail Control$)$} The time-averaged measures satisfy the following uniform tail estimate
			\begin{equation*}
				\lim_{R\to\infty} \sup_{n\ge 1} \nu_{T_n}^{x_0}\big(E \setminus B^{d_E}_R(x_0)\big) = 0,
			\end{equation*}
			where $B^{d_E}_R(x_0)$ is the closed ball defined in \eqref{eq: ball B-E-R}.
			
			\item\label{Cond-RestrictedFeller} \textbf{$($Restricted Feller Property under Mismatched Metrics$)$} The semigroup $\{\mathscr{P}_t\}_{t\ge 0}$ satisfies the restricted Feller property under mismatched metrics (cf.\ Definition \ref{Locally Feller definition}).
		\end{enumerate}
		Then $\nu\in\mathbf{P}(E)$ is an invariant probability measure for $\{\mathscr{P}_{t}\}_{t\ge0}$.
	\end{Theorem}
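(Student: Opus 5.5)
The plan is to prove two things in order. First, $\nu$ is actually a Borel probability measure on the strong space $E$. Second, $\nu(\mathscr{P}_t\varphi)=\nu(\varphi)$ for every $\varphi\in C_B(E_{\widetilde d_E})$ and $t>0$, after which invariance on $\mathscr{B}(E)$ follows.

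\textbf{Step 1: $\nu$ lives on $E$.} By Hypothesis \ref{Hypo-generalized KB}, $d_E(x_0,\cdot)$ is $\widetilde d_E$-lower semicontinuous. Hence each ball $B^{d_E}_R(x_0)$ is $\widetilde d_E$-closed. The portmanteau theorem in $E_{\widetilde d_E}$ then gives
$\nu(B^{d_E}_R(x_0))\ge\limsup_n \nu^{x_0}_{T_n}(B^{d_E}_R(x_0))\ge 1-\varepsilon_R$, with $\varepsilon_R\to0$ by \ref{Cond-tail control}. So $\nu$ is concentrated on $\bigcup_R B^{d_E}_R(x_0)=E$.

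For the identification of $\sigma$-algebras, I would use that $E$ is Polish and the identity $E\to E_{\widetilde d_E}$ is a continuous bijection. As in Remark \ref{Remark : measurability-upgrade} (Lusin/Souslin), the two Borel $\sigma$-algebras coincide on $E$. Therefore $\nu\in\mathbf P(E)$, and $C_B(E_{\widetilde d_E})$ is measure-determining for $\mathbf P(E)$, since the $\widetilde d_E$-topology is metric and generates $\mathscr B(E)$.

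\textbf{Step 2: invariance.} Fix $t>0$ and $\varphi\in C_B(E_{\widetilde d_E})$. Condition (i) of Definition \ref{Locally Feller definition} and Fubini give the standard Krylov--Bogoliubov telescoping:
\[
\nu^{x_0}_{T_n}(\mathscr P_t\varphi)-\nu^{x_0}_{T_n}(\varphi)=\frac1{T_n}\Big(\int_{T_n}^{T_n+t}-\int_0^t\Big)[\mathscr P_r\varphi](x_0)\,\mathrm dr=O(t\|\varphi\|_\infty/T_n)\to0.
\]
Here the semigroup property is used in the form $\mathscr P^*_r\delta_{x_0}(\mathscr P_t\varphi)=[\mathscr P_{r+t}\varphi](x_0)$. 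Also $\nu^{x_0}_{T_n}(\varphi)\to\nu(\varphi)$ directly by \ref{Cond-Tightness}.

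\textbf{Step 3: pass to the limit in $\nu^{x_0}_{T_n}(\mathscr P_t\varphi)$.} This is the main obstacle, because $\psi\triangleq\mathscr P_t\varphi$ is only known to be $\widetilde d_E$-continuous on each ball $B_R\triangleq B^{d_E}_R(x_0)$. I would handle it by a Tietze extension and truncation. Since $B_R$ is $\widetilde d_E$-closed and $\psi|_{B_R}$ is $\widetilde d_E$-continuous with $|\psi|\le\|\varphi\|_\infty$, Tietze's theorem in the metric space $E_{\widetilde d_E}$ yields $\psi_R\in C_B(E_{\widetilde d_E})$ with $\psi_R=\psi$ on $B_R$ and $\|\psi_R\|_\infty\le\|\varphi\|_\infty$. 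Then
\[
|\nu^{x_0}_{T_n}(\psi)-\nu(\psi)|\le|\nu^{x_0}_{T_n}(\psi_R)-\nu(\psi_R)|+2\|\varphi\|_\infty\big(\nu^{x_0}_{T_n}(B_R^c)+\nu(B_R^c)\big).
\]
The first term tends to $0$ by \ref{Cond-Tightness}. The second is at most $4\|\varphi\|_\infty\varepsilon_R$ uniformly in $n$, using Step 1 for $\nu(B_R^c)\le\varepsilon_R$. Letting $n\to\infty$ and then $R\to\infty$ gives $\nu(\mathscr P_t\varphi)=\nu(\varphi)$.

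\textbf{Step 4: conclude.} From Step 3, $(\mathscr P_t^*\nu)(\varphi)=\nu(\varphi)$ for all $\varphi\in C_B(E_{\widetilde d_E})$. Both sides are Borel probability measures on $E_{\widetilde d_E}$, which is metric, so they agree on $\mathscr B(E_{\widetilde d_E})=\mathscr B(E)$. Hence $\mathscr P_t^*\nu=\nu$.
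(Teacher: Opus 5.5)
Your proposal is correct and follows the paper's proof essentially step for step. The shared ingredients are the identification $\mathscr{B}(E_{\widetilde d_E})=\mathscr{B}(E)$, the Tietze extension of $\mathscr{P}_t\varphi$ off the $\widetilde d_E$-closed ball $B^{d_E}_R(x_0)$ combined with the uniform tail bound and the portmanteau estimate for $\nu(E\setminus B^{d_E}_R(x_0))$, the Krylov--Bogoliubov telescoping via temporal measurability and Fubini, and the fact that $C_B(E_{\widetilde d_E})$ is measure-determining.

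The differences are cosmetic: you telescope before passing to the limit, and you use the closed-set form of the portmanteau theorem rather than the open-set form. The ``$\nu$ is concentrated on $E$'' part of your Step 1 is vacuous, since $E_{\widetilde d_E}$ has underlying set $E$. What genuinely matters in Step 1 is the tail bound on $\nu$ and the $\sigma$-algebra identification, and you have both.
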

	
	\begin{Remark}\label{Remark-generalized KB} 
		We provide further context regarding the conditions in Theorem \ref{Thm-generalized KB} and address several underlying topological nuances:
		\begin{itemize}[leftmargin=0.79cm]\setlength\itemsep{0.2em}
			\item Regarding condition \ref{Cond-Tightness}, while tightness is typically formulated in complete spaces, its application within the generally incomplete space $E_{\widetilde{d}_E}$ is mathematically justified. In an arbitrary metrizable space, a tight family of probability measures inherently possesses a weakly convergent subsequence whose limit is a Radon measure supported on the space itself (see Lemmas \ref{Lemma-tightness} and \ref{Lemma-convergence of measures}). Furthermore, as explained in Remark \ref{Remark : measurability-upgrade}, the Borel $\sigma$-algebras associated with comparable Lusin topologies on a Polish space coincide (see, e.g., \cite[Page 107]{Schwartz-1973-Book} or \cite[Theorem 6.8.6]{Bogachev-2007-Books}); hence, $\mathscr{B}(E_{\widetilde{d}_E}) = \mathscr{B}(E)$. Consequently, the limit measure $\nu \in \mathbf{P}(E_{\widetilde{d}_E})$ obtained from condition \ref{Cond-Tightness} is a well-defined probability measure on the strong Borel $\sigma$-algebra $\mathscr{B}(E)$. This circumvents the need for a completion of $E_{\widetilde{d}_E}$ and directly verifies the support condition $\nu(E)=1$.
			
			\item Condition \ref{Cond-tail control} compensates for the localized nature of the restricted Feller property, and is typically verified via Chebyshev's inequality combined with \textit{a priori} bounds. In SPDE applications, whenever every $d_E$-bounded set is intrinsically precompact in $E_{\widetilde{d}_E}$---which is naturally the case under compact Sobolev embeddings---the closed balls $B^{d_E}_R(x_0)$ are compact in $E_{\widetilde{d}_E}$. In such scenarios, condition \ref{Cond-tail control} implies  tightness in $\mathbf{P}(E_{\widetilde{d}_E})$ (see Section \ref{Section: Probability Measures}).  
			
			\item Finally,  \ref{Hypo-generalized KB} requires the metric function $d_E(x_0, \cdot)$ to be lower semicontinuous with respect to $\widetilde{d}_E$. This structural assumption ensures that the $d_E$-closed ball $B^{d_E}_R(x_0)$ remains a closed subset within the topological space $E_{\widetilde{d}_E}$. This closure property is crucial, as it permits the application of the Tietze extension theorem, thereby bridging the local continuity of the restricted mapping $\mathscr{P}_{t}\varphi |_{B^{d_E}_R(x_0)}$ with global weak convergence.
		\end{itemize}
	\end{Remark}
	
	\subsection{Proof of Theorem \ref{Thm-generalized KB}}
	\label{Section : gKB-proof}
	
	\begin{proof}[Proof of Theorem \ref{Thm-generalized KB}]
		By Remark \ref{Remark-generalized KB}, the topological equality $\mathscr{B}(E_{\widetilde{d}_E}) = \mathscr{B}(E)$ guarantees that $\nu \in \mathbf{P}(E)$. To establish invariance, we proceed in two stages. First, we claim that for any arbitrary test function $\varphi \in C_{B}(E_{\widetilde{d}_E})$ and $t'>0$,
		\begin{equation}\label{eq:Phi_limit}
			\lim_{n \to \infty} \int_{E} [\mathscr{P}_{t'}\varphi](x)\, \nu_{T_n}^{x_0}(\mathrm{d}x) = \int_{E} [\mathscr{P}_{t'}\varphi](x)\, \nu(\mathrm{d}x).
		\end{equation}
		Define $\psi(x) \triangleq [\mathscr{P}_{t'}\varphi](x)$. Since $\mathscr{P}_{t'}:\mathscr{M}_B(E)\to\mathscr{M}_B(E)$ is a Markov propagator and $\varphi \in C_B(E_{\widetilde{d}_E}) \subset \mathscr{M}_B(E)$, the function $\psi$ is globally bounded by $\|\varphi\|_{L^\infty}$ and is Borel-measurable on $E$, rendering the integrals in \eqref{eq:Phi_limit} well-defined.
		
		However, condition \ref{Cond-RestrictedFeller} only guarantees that $\psi$ is $\widetilde{d}_E$-continuous when restricted to strong $d_E$-balls; it may not globally belong to $C_B(E_{\widetilde{d}_E})$. To circumvent this, let us fix $\varepsilon > 0$. By the tail condition \ref{Cond-tail control}, there exists a sufficiently large radius $R_0 > 0$ such that
		\begin{equation}\label{eq:tail_control_proof}
			\sup_{n \ge 1} \nu_{T_n}^{x_0}\big(E \setminus B^{d_E}_{R_0}(x_0)\big) < \varepsilon.
		\end{equation}
		The lower semicontinuity of $d_E(x_0, \cdot)$ with respect to $\widetilde{d}_{E}$ implies that the ball $B^{d_E}_{R_0}(x_0)$ is closed in the metrizable space $E_{\widetilde{d}_E}$. Here, condition \ref{Cond-RestrictedFeller} ensures that the restriction $\psi|_{B^{d_E}_{R_0}(x_0)}$ is $\widetilde{d}_{E}$-continuous. Since metrizable spaces are perfectly normal, the Tietze extension theorem yields a globally continuous function $\Phi_{R_0} \in C_B(E_{\widetilde{d}_E})$ satisfying
		\begin{equation*}
			\Phi_{R_0}(x) = \psi(x) \quad \text{for all } x \in B^{d_E}_{R_0}(x_0), \quad \text{and} \quad \sup_{x \in E}|\Phi_{R_0}(x)| \le \sup_{x \in B^{d_E}_{R_0}(x_0)}|\psi(x)| \le \|\varphi\|_{L^\infty}.
		\end{equation*}
		
		With this extension in hand, we naturally decompose the error into components localized inside and outside $B^{d_E}_{R_0}(x_0)$:
		\begin{equation}\label{eq:Tietze_split}
			\begin{aligned}
				\left| \int_{E} \psi \, \mathrm{d}\nu_{T_n}^{x_0} - \int_{E} \psi \, \mathrm{d}\nu \right| 
				\le \int_{E \setminus B^{d_E}_{R_0}(x_0)} |\psi - \Phi_{R_0}| \, \mathrm{d}\nu_{T_n}^{x_0}  
				+ \left| \int_{E} \Phi_{R_0} \, \mathrm{d}\nu_{T_n}^{x_0} - \int_{E} \Phi_{R_0} \, \mathrm{d}\nu \right| 
				+ \int_{E \setminus B^{d_E}_{R_0}(x_0)} |\Phi_{R_0} - \psi| \, \mathrm{d}\nu. 
			\end{aligned}
		\end{equation}
		Globally, we observe that $|\psi(x) - \Phi_{R_0}(x)| \le 2\|\varphi\|_{L^\infty}$. Thus, by \eqref{eq:tail_control_proof}, the first term on the right-hand side is strictly bounded by $2\|\varphi\|_{L^\infty} \varepsilon$. 
		
		For the third term, since the complement $E \setminus B^{d_E}_{R_0}(x_0)$ is an open set in $E_{\widetilde{d}_E}$, Lemma \ref{Lemma-convergence of measures} implies
		\begin{equation*}
			\nu\big(E \setminus B^{d_E}_{R_0}(x_0)\big) \le \liminf_{n \to \infty} \nu_{T_n}^{x_0}\big(E \setminus B^{d_E}_{R_0}(x_0)\big) \le \sup_{n \ge 1} \nu_{T_n}^{x_0}\big(E \setminus B^{d_E}_{R_0}(x_0)\big) < \varepsilon.
		\end{equation*}
		Hence, the third term is also bounded by $2\|\varphi\|_{L^\infty} \varepsilon$. 
		
		As for the middle term, the global continuity of $\Phi_{R_0} \in C_B(E_{\widetilde{d}_E})$ and the assumed weak convergence ensure that it vanishes as $n \to \infty$. Taking the limit superior in \eqref{eq:Tietze_split} then yields
		\begin{equation*}
			\limsup_{n \to \infty} \left| \int_{E} [\mathscr{P}_{t'}\varphi](x) \, \mathrm{d}\nu_{T_n}^{x_0} - \int_{E} [\mathscr{P}_{t'}\varphi](x) \, \mathrm{d}\nu \right| \le 4\|\varphi\|_{L^\infty} \varepsilon.
		\end{equation*}
		Since $\varepsilon > 0$ is arbitrary, we conclude that \eqref{eq:Phi_limit} rigorously holds.
		
		Equipped with \eqref{eq:Phi_limit}, we can now prove the invariance for functions in $C_B(E_{\widetilde{d}_E})$. By the definition of $\nu_{T_n}^{x_0}$, Fubini's theorem (justified by the temporal measurability in Definition \ref{Locally Feller definition}), and the semigroup property $\mathscr{P}_{t} \circ \mathscr{P}_{t'} = \mathscr{P}_{t+t'}$ (which holds universally for elements in $\mathscr{M}_B(E)$, independent of topological continuity), we obtain
		\begin{align}
			\int_{E} [\mathscr{P}_{t'}\varphi](x)\, \nu(\mathrm{d}x) 
			=\ & \lim_{n \to \infty} \frac{1}{T_n} \int_{0}^{T_n} [\mathscr{P}_{t+t'} \varphi](x_0) \, \mathrm{d} t \nonumber\\
			=\ & \lim_{n \to \infty} \frac{1}{T_n} \left(\int_{0}^{T_n} [\mathscr{P}_{r} \varphi](x_0) \, \mathrm{d} r + \int_{T_n}^{T_n+t'} [\mathscr{P}_{r} \varphi](x_0) \, \mathrm{d} r - \int_{0}^{t'} [\mathscr{P}_{r} \varphi](x_0) \, \mathrm{d} r\right) \nonumber\\
			=\ & \lim_{n \to \infty} \int_{E} \varphi(x)\, \nu_{T_n}^{x_0}(\mathrm{d}x) \nonumber \\
			=\ & \int_{E} \varphi(x)\, \nu(\mathrm{d} x), \quad t'\ge0,\label{eq:invariance_final}
		\end{align}
		where the boundary temporal integrals vanish asymptotically since $\varphi$ is uniformly bounded. The final equality follows directly from the weak convergence of $\nu_{T_n}^{x_0} \to \nu$ in $\mathbf{P}(E_{\widetilde{d}_E})$. 
		
		Note that \eqref{eq:invariance_final} can be reformulated as 
		\begin{align*}
			\int_{E} \varphi(x)\, [\mathscr{P}^*_{t'}\nu](\mathrm{d}x) = \int_{E} \varphi(x)\, \nu(\mathrm{d} x).
		\end{align*}
		Since $E_{\widetilde{d}_E}$ is a metric space, $C_B(E_{\widetilde{d}_E})$ forms a measure-determining class (also known as a separating class), see \cite[Theorem 1.2]{Billingsley-1999-Book}. Consequently, $\mathscr{P}^*_{t'}\nu \equiv \nu$ as probability measures on the Borel $\sigma$-algebra $\mathscr{B}(E_{\widetilde{d}_E})$.  As pointed out in Remark \ref{Remark-generalized KB}, we have the identity of Borel $\sigma$-algebras: $\mathscr{B}(E_{\widetilde{d}_E}) = \mathscr{B}(E)$. Thus, $\mathscr{P}^*_{t'}\nu \equiv \nu$ as measures on the strong measurable space $(E, \mathscr{B}(E))$. This completes the proof.
	\end{proof}

	\begin{Remark}[\emph{Detailed Comparison with Existing Frameworks}]\label{Remark-mismatch compare}
		While alternative abstract frameworks (e.g., \cite{CotiZelati-GlattHoltz-Trivisa-2021-AMO,Kim-2005-SIMA}) and specific applications (\cite{Bessaih-Ferrario-2020-CMP}) have addressed systems exhibiting Feller-type properties under weaker topologies, these existing approaches do not accommodate the \emph{restricted Feller property under mismatched metrics} developed herein.
		
		\begin{itemize}[leftmargin=0.79cm]\setlength\itemsep{0.2em}
			\item In \cite{Kim-2005-SIMA}, the Feller property is postulated on a larger, complete Banach space. By contrast, \ref{Hypo-generalized KB} does not require the weaker topological space $E_{\widetilde{d}_E}$ to be complete, thereby imposing strictly milder topological prerequisites. 
			
			\item The criterion developed in \cite{CotiZelati-GlattHoltz-Trivisa-2021-AMO} considers a space $E$ endowed with two metrics $d_s$ and $d_w$ (where $d_s$ is stronger than $d_w$), but its abstract formulation exhibits two notable structural limitations. First, it explicitly requires any $d_s$-bounded set to be precompact in the $d_w$-topology. While this condition naturally holds for Sobolev spaces on bounded domains, it inherently fails on unbounded domains (such as $\mathbb{R}^d$) due to the lack of compact embeddings. Our framework fully decouples this topological restriction: Theorem \ref{Thm-generalized KB} operates without requiring spatial precompactness, substituting it instead with a dynamical uniform tail control (Condition \ref{Cond-tail control}). Although we focus on bounded domains in the concrete applications herein, this structural decoupling preserves the mathematical flexibility to accommodate whole-space problems. Second, to theoretically ensure that a limiting measure obtained via the weaker topology uniquely defines a valid probability measure on the strong state space $(E,d_s)$, one fundamentally requires the measurable inclusion $\mathscr{B}\big((E,d_s)\big) \subset \mathscr{B}\big((E,d_w)\big)$. Although this inclusion fortuitously holds in many concrete SPDE applications (e.g., within Polish Sobolev spaces), it fails for general metric spaces. Without explicitly incorporating this prerequisite, an abstract framework cannot theoretically guarantee that the limiting measure truly returns to the original strong space. By formalizing precise topological constraints in  \ref{Hypo-generalized KB}, our formulation clarifies this measure-theoretic subtlety, rigorously ensuring that the constructed invariant measure genuinely resides on the \emph{original} strong Borel $\sigma$-algebra.
			
			\item For the 2D stochastic damped Euler equations in vorticity form driven by additive noise, the authors of \cite{Bessaih-Ferrario-2020-CMP} established the existence of an invariant probability measure defined on $\mathscr{B}\big((L^\infty, \mathscr{T}_{\rm bw*})\big)$---the Borel $\sigma$-algebra generated by the bounded weak-$\ast$ topology. However, it remains unclear whether such a measure can be well-defined on the strong Borel $\sigma$-algebra $\mathscr{B}\big((L^\infty, \|\cdot\|_{L^\infty})\big)$. By contrast, when applying Theorem \ref{Thm-generalized KB} to the higher-dimensional stochastic damped Euler equations under genuinely mixed multiplicative noise, we construct an invariant probability measure explicitly defined on 
			strong Borel $\sigma$-algebra
			\(\mathscr B(\Hdiv^s,\|\cdot\|_{\Hdiv^s})\), i.e., belonging to
			$\mathbf{P}\big((\Hdiv^{s},\|\cdot\|_{\Hdiv^s})\big)$. Consequently, by virtue of the continuous embedding $\Hdiv^{s}\hookrightarrow L^\infty$, this measure is naturally supported on $L^\infty$ (see Theorem \ref{Thm-(u P)-long-time} below).
		\end{itemize}
	\end{Remark}
	
	\subsection{Application to Abstract Singular Stochastic Evolution Systems}\label{Section : SSES}
	
	Theorem \ref{Thm-generalized KB} establishes an  abstract criterion for the existence of invariant probability measures tailored to Markov semigroups subject to the restricted Feller property under mismatched metrics. In this section, we demonstrate that this  machinery applies to a \textbf{broad} class of of stochastic fluid models. To highlight the universality of this approach, we first extract the core mechanism responsible for the loss of regularity in inviscid fluid dynamics and formulate an abstract singular stochastic evolution system. We then apply Theorem \ref{Thm-generalized KB} to   deduce the existence of invariant probability measures for such systems.
	
	Let $(\mathbf{H},\langle\cdot,\cdot\rangle_{\mathbf{H}})$ be a separable Hilbert space, continuously and densely embedded in a separable Fr\'echet space $(\widetilde{\mathbf{H}},d_{\widetilde{\mathbf{H}}})$, where $d_{\widetilde{\mathbf{H}}}$ is a translation-invariant metric. We consider the following singular stochastic evolution system for an $\mathbf{H}$-valued unknown process $X(t)$ with $t \geq 0$:
	\begin{equation}
		X(t) = X_0 + \int_0^t b_1(X(t')) \, \mathrm{d}t' + \int_0^t b_2(X(t')) \, \mathrm{d}W(t') + \int_0^t \int_{|l| \le 1} b_3(l,X(t'-)) \, \widetilde{\eta}(\mathrm{d}l,\mathrm{d}t')+ \int_0^t b_4(X(t')) \, \mathrm{d}\widetilde{W}(t'), \label{SES}
	\end{equation} 
	where the driving noises and coefficients satisfy the following standard structural assumptions:
	\begin{itemize}[leftmargin=0.79cm]\setlength\itemsep{0.2em}
		\item $\eta$ is an $\{\mathcal{F}_t\}$-adapted Poisson random measure counting jumps on the punctured ball $\{l \in \mathbb{R} : 0 < |l| \le 1\}$, with an intensity measure $\nu$ supported on $[-1,1] \setminus \{0\}$, and associated compensator $\widetilde{\eta}=\eta-\nu\otimes\mathrm{d}t$;
		\item $W(t)$ and $\widetilde{W}(t)$ are standard 1-D $\{\mathcal{F}_t\}$-adapted Brownian motions, and $W(t)$, $\widetilde{W}(t)$, and $\eta$ are mutually independent.
		
		\item the unknown process $X:\Omega\times[0,\infty)\to\mathbf{H}$ is $\{\mathcal{F}_t\}$-progressively measurable;
		\item $b_1,\ b_2 \in \mathscr{M}\big(\mathbf{H}; \widetilde{\mathbf{H}}\big)$, $b_3\in \mathscr{M}\big([-1,1]\setminus\{0\}\times \mathbf{H}; \widetilde{\mathbf{H}}\big)$, and $b_4\in \mathscr{M}\big(\mathbf{H}; \mathbf{H}\big)$.
	\end{itemize}
	
	Because the coefficients $b_1$, $b_2$, and $b_3$ map into $\widetilde{\mathbf{H}}$ rather than preserving the state space $\mathbf{H}$, the system \eqref{SES} is intrinsically singular. The severity of this singularity is quantified precisely by the regularity gap between $\mathbf{H}$ and $\widetilde{\mathbf{H}}$.  
	
	\begin{Definition}\label{Global-solution-definition-SES} 
		An $\{\mathcal{F}_t\}$-progressively measurable $\mathbf{H}$-valued process $X=(X(t))_{t\ge0}$ with càdlàg paths in $\widetilde{\mathbf{H}}$ is called a global solution to \eqref{SES} with initial data $X_0 \in \mathbf{H}$ if the following conditions hold $\mathbb{P}$-$\mathrm{a.s.}$:
		\begin{enumerate}[label={\rm(\arabic*)},leftmargin=0.79cm]\setlength\itemsep{0.2em}
			\item $\sup_{t\in[0,T]}\|X(t)\|_{\mathbf{H}}<\infty$ for every $T>0$;
			\item the integral equation \eqref{SES} holds identically for all $t\ge0$ as an equality in $\widetilde{\mathbf{H}}$.
		\end{enumerate}
	\end{Definition}
	Assuming global existence and pathwise uniqueness of solutions to \eqref{SES}, we denote by $X(t,x_0)$ the state of the solution at time $t$ originating from a deterministic initial state $x_0 \in \mathbf{H}$. Thanks to the time-homogeneous nature of the coefficients and the independent, stationary increments of the driving noises $W$ and $\eta$, the solution family naturally induces a Markov transition semigroup $\{\mathscr{P}_{t}\}_{t\ge0}$ on $\mathbf{H}$, whose action on bounded measurable functions $f \in \mathscr{M}_B(\mathbf{H})$ is defined via
	\begin{align}\label{Markov semigroup-SES}
		[\mathscr{P}_{t} f](x_0) \triangleq \mathbb{E} \big[f\big(X(t,x_0)\big)\big],\quad t \ge 0, \quad x_0 \in \mathbf{H}.
	\end{align}
	
	\begin{Remark}[Scope of the Abstract System of the Abstract System \eqref{SES}]\label{Remark-SES-examples}
		The singular system \eqref{SES} captures the fundamental mechanism underlying the loss of regularity in inviscid fluid models. For such nonlinear singular dynamics, the flow cannot intrinsically gain higher regularity than its initial data; consequently, the coefficients fail to map the state space into itself, an obstruction characterized by the strict embedding $\mathbf{H}\subsetneq \widetilde{\mathbf{H}}$. In this sense, \eqref{SES} encompasses a vast class of fluid dynamics equations. For instance, we observe that, under suitable structural hypotheses (e.g., \ref{Hypo-W L}, \ref{Hypo-Q-Pi} and \ref{Hypo-h div} in Section \ref{Section : Incompressible case}), specific choices of the coefficients $b_i$ precisely reduce the abstract equation \eqref{SES} to the targeted stochastic damped Euler equations \eqref{SEuler-(u P) damp} (see \eqref{Target problem (u P) damp} below):
		\begin{align*} 
			&b_1(u) = -\bigg[\Pi\big((u\cdot\nabla)u\big)+\Upgamma u-\frac{1}{2}\mathcal{Q}_1^2u\bigg]
			+\int_{|l|\le1} \Big\{\wp\big(1,l,u\big)-u-l\cdot\mathcal{Q}_2u\Big\}\,\nu(\mathrm{d}l), \\
			&b_2(u) = \mathcal{Q}_1u, \quad b_3(l,u) = \wp\big(1,l,u\big)-u,\quad b_4(u)=\widetilde h (u).
		\end{align*} 
		Here, $\Pi$ denotes the Leray projection (restricted to zero-average functions on $\mathbb{T}^d$, see \eqref{Pi=Pi-d-0}), $\mathcal{Q}_1$ and $\mathcal{Q}_2$ are spatial pseudo-differential noise operators, $\Upgamma\ge0$ is the physical damping coefficient, and $\wp$ represents the Marcus canonical flow associated with the jump amplitude $\mathcal{Q}_2$ (see \eqref{Marcus flow Q2}). To construct global strong solutions in the sense of Definition~\ref{Global-solution-definition-SES}, one typically operates within the high-regularity Sobolev space $\mathbf{H}=H_{\mathrm{div}}^{s}$ with $s\gg 1.$ After the derivative loss inherent to the transport term $(u\cdot\nabla)u$ and the noise operators, the system naturally takes values in a lower-regularity space $\widetilde{\mathbf{H}}=H_{\mathrm{div}}^{\theta},$ where $\theta < s$ depends on the concrete model. If $\theta > d/2$, then $\widetilde{\mathbf{H}}\hookrightarrow C(\mathbb{K}^d)$, ensuring that solutions retain classical spatial interpretations. 
		Furthermore, our unified abstract formulation intentionally bypasses any reliance on smoothing properties (e.g., from full or fractional viscosity) by treating any potential viscous term identically as a singular operator causing regularity loss (i.e., $H_{\mathrm{div}}^s\ni u\mapsto (-\Delta)^{\ell/2}u \in H_{\mathrm{div}}^{s-\ell}$). This viewpoint bridges the gap, allowing for the simultaneous analysis of invariant measures for both stochastic inviscid and viscous fluid models (see Remarks \ref{Remark: local-(u P) viscosity} and \ref{Remark: global-(u P) viscosity} below).
	\end{Remark}
	
	To study the long-time statistical behavior of \eqref{SES}, we introduce the following natural compactness assumption:
	\begin{Hypothesis}\label{Hypo-SES H E}
		The continuous embedding of the separable Hilbert space $(\bfH,\IP{\cdot,\cdot}_{\bfH})$ into the Fr\'{e}chet space $(\widetilde{\bfH},d_{\widetilde{\bfH}})$ is compact, i.e., $\bfH \hookrightarrow\hookrightarrow \widetilde{\bfH}$.
	\end{Hypothesis}
	
	\begin{Theorem}[\textbf{Invariant Probability Measures for Abstract Singular Systems}]\label{Thm-SES-IM}
		Let \ref{Hypo-SES H E} hold. Suppose there exists a deterministic initial state $x_0 \in \bfH$ such that the following conditions are met:
		\begin{enumerate}[label={\bf (\arabic*)},leftmargin=0.79cm]\setlength\itemsep{0.2em}
			\item The system \eqref{SES} admits a pathwise unique global solution $X(t,x_0)$ starting from $x_0$ which satisfies a uniform-in-time \textit{a priori} bound:
			\begin{equation}\label{E V X}
				\sup_{t\ge0}\mathbb{E}\Big[f\big(\|X(t,x_0)\|_{\bfH}^2\big)\Big] < \infty,
			\end{equation}
			for some strictly increasing, continuous function $f:[0,\infty)\to [0,\infty)$ with $f(0)=0$ and $\lim_{r\to\infty} f(r)=\infty$.
			
			\item\label{restricted mismatched feller SES} The Markov semigroup $\{\mathscr{P}_{t}\}_{t\ge0}$ defined by \eqref{Markov semigroup-SES} satisfies the restricted Feller property under mismatched metrics (cf. Definition \ref{Locally Feller definition}) on the state space $E=\bfH$ equipped with  $\widetilde{d}_E=d_{\widetilde{\bfH}}$.
		\end{enumerate}
		Then there exists an invariant probability measure for $\{\mathscr{P}_{t}\}_{t\ge0}$ supported on $\bfH$ in the sense of Definition \ref{IM definition}.
	\end{Theorem}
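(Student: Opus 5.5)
The plan is to reduce Theorem \ref{Thm-SES-IM} to Theorem \ref{Thm-generalized KB} with $E=(\bfH,\|\cdot\|_{\bfH})$, $d_E(x,y)=\|x-y\|_{\bfH}$ and $\widetilde d_E=d_{\widetilde{\bfH}}$ restricted to $\bfH$. The proof has three parts: verify \ref{Hypo-generalized KB}, verify the tail and weak-convergence conditions using \eqref{E V X}, and read off condition \ref{Cond-RestrictedFeller} from assumption \ref{restricted mismatched feller SES}.

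\textbf{Checking \ref{Hypo-generalized KB}.} $E$ is Polish because $\bfH$ is a separable Hilbert space. The $\widetilde d_E$-topology is weaker since $\bfH\hookrightarrow\widetilde{\bfH}$ continuously. It is strictly weaker: were the topologies equal, the compactness in \ref{Hypo-SES H E} would make the closed unit ball of $\bfH$ norm-compact, which is impossible for an infinite-dimensional space. (The finite-dimensional case is trivial and can be treated separately.)

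For lower semicontinuity of $x\mapsto\|x-x_0\|_{\bfH}$ in $\widetilde d_E$, take $x_k\to x$ in $\widetilde{\bfH}$ with $\liminf_k\|x_k-x_0\|_{\bfH}<\infty$. Along a subsequence attaining the liminf, $\{x_k\}$ is bounded in $\bfH$, so it has a further subsequence converging weakly in $\bfH$. Its weak limit must equal $x$, because weak convergence in $\bfH$ implies weak convergence in $\widetilde{\bfH}$, which is Hausdorff (continuous functionals on a Fréchet space separate points). Weak lower semicontinuity of the Hilbert norm then gives the claim.

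\textbf{Tail control and weak convergence.} Put $M\triangleq\sup_{t}\E f(\|X(t,x_0)\|_{\bfH}^2)$ and take $R>\|x_0\|_{\bfH}$. Since $\|X\|_{\bfH}\ge R+\|x_0\|_{\bfH}$ forces $\|X-x_0\|_{\bfH}>R$ only in one direction, use instead
\[
\{\|X-x_0\|_{\bfH}>R\}\subset\{\|X\|_{\bfH}>R-\|x_0\|_{\bfH}\}.
\]
Because $f$ is strictly increasing, Chebyshev's inequality gives
\[
\nu_T^{x_0}\big(E\setminus B_R^{d_E}(x_0)\big)\le \frac{M}{f\big((R-\|x_0\|_{\bfH})^2\big)},
\]
uniformly in $T$. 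This bound tends to $0$ as $R\to\infty$, which is condition \ref{Cond-tail control}; the time averages $\nu_T^{x_0}$ are well defined by (i) of Definition \ref{Locally Feller definition}.

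By \ref{Hypo-SES H E}, each ball $B_R^{d_E}(x_0)$ is precompact in $\widetilde{\bfH}$. It is also $\widetilde d_E$-closed inside $E$ by the lower semicontinuity above, and a sequential argument with weak $\bfH$-limits shows it is closed in $\widetilde{\bfH}$ itself, hence compact in $E_{\widetilde d_E}$. The family $\{\nu_T^{x_0}\}$ is therefore tight on the metrizable space $E_{\widetilde d_E}$. Prokhorov's theorem in the form of Lemmas \ref{Lemma-tightness} and \ref{Lemma-convergence of measures} then yields $T_n\to\infty$ and $\nu\in\mathbf P(E_{\widetilde d_E})$ with $\nu_{T_n}^{x_0}\to\nu$ weakly, which is condition \ref{Cond-Tightness}. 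Condition \ref{Cond-RestrictedFeller} is exactly assumption \ref{restricted mismatched feller SES}, and Theorem \ref{Thm-generalized KB} concludes.

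\textbf{Main obstacle.} The step needing the most care is applying Prokhorov on the incomplete space $E_{\widetilde d_E}$. This requires that the tight compacta be subsets of $\bfH$, not merely of $\widetilde{\bfH}$; the $\widetilde{\bfH}$-closedness of $\bfH$-balls established above is what secures this.
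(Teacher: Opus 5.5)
Your proposal is correct and follows essentially the same route as the paper: reduce to Theorem~\ref{Thm-generalized KB}, and take condition~\ref{Cond-RestrictedFeller} directly from assumption~\ref{restricted mismatched feller SES}. The remaining conditions come from \eqref{E V X}: Chebyshev's inequality gives uniform tail control, $d_{\widetilde{\bfH}}$-compactness of $\bfH$-balls (weak compactness plus the compact embedding) turns this into tightness, and Lemmas~\ref{Lemma-tightness} and~\ref{Lemma-convergence of measures} then give weak convergence. You are slightly more careful than the paper, which asserts \ref{Hypo-generalized KB} without proof and uses balls centred at $0$ rather than at $x_0$; your weak-limit argument for lower semicontinuity and your triangle-inequality step fill exactly those two small gaps.
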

	
	\begin{proof}[Proof of Theorem \ref{Thm-SES-IM}]
		It is clear that \ref{Hypo-generalized KB} holds for $E=\mathbf{H}$ and $\widetilde{d}_E=d_{\widetilde{\mathbf{H}}}$. Since the Markov propagator $\{\mathscr{P}_{t}\}_{t\ge0}$ is assumed to satisfy the restricted Feller property under mismatched metrics, Condition \ref{Cond-RestrictedFeller} of Theorem \ref{Thm-generalized KB} is immediately met.  
		Consider the family of time-averaged measures $\{\nu_{T}^{x_0}\}_{T > 0} \subset \mathbf{P}(\mathbf{H})$ defined on the strong Borel $\sigma$-algebra $\mathscr{B}(\mathbf{H})$:
		\begin{align*}
			\nu_{T}^{x_0}(\cdot) \triangleq \frac{1}{T} \int_{0}^{T} \mathscr{P}^*_{t} \delta_{x_0} \mathrm{d}t, \quad T>0.
		\end{align*}
		Let $\mathbf{H}_{d_{\widetilde{\mathbf{H}}}} \triangleq (\mathbf{H},d_{\widetilde{\mathbf{H}}})$. Because $\mathbf{H}$ is a reflexive Hilbert space, its closed norm balls $B^{\mathbf{H}}_R \triangleq \big\{v \in \mathbf{H} : \|v\|_{\mathbf{H}} \le R\big\}$ are weakly compact. Furthermore, since the compact embedding $\mathbf{H} \hookrightarrow\hookrightarrow \widetilde{\mathbf{H}}$ maps weakly convergent sequences in $\mathbf{H}$ to strongly convergent sequences in $\widetilde{\mathbf{H}}$, the weakly compact sets $B^{\mathbf{H}}_R$ are strongly compact in the topology of $\mathbf{H}_{d_{\widetilde{\mathbf{H}}}}$. Consequently, verifying tightness in ${\bf P}(\mathbf{H}_{d_{\widetilde{\mathbf{H}}}})$ reduces precisely to establishing uniform tail control in the strong norm (Condition \ref{Cond-tail control} of Theorem \ref{Thm-generalized KB}). To apply Theorem \ref{Thm-generalized KB}, it thus suffices to verify the tightness of $\{\nu_{T}^{x_0}\}_{T > 0}$ in ${\bf P}(\mathbf{H}_{d_{\widetilde{\mathbf{H}}}})$.
		
		We establish this tightness via Chebyshev's inequality. Let $C \triangleq \sup_{t\ge0}\mathbb{E}\big[f(\|X(t,x_0)\|_{\mathbf{H}}^2)\big]$, which is finite by assumption \eqref{E V X}. For any threshold radius $R>0$, we bound the measure of the complement $\mathbf{H} \setminus B^{\mathbf{H}}_R$ as follows:
		\begin{align}
			\nu_{T}^{x_0}(\mathbf{H} \setminus B^{\mathbf{H}}_R) 
			=\nu_{T}^{x_0}\Big(\big\{v \in \mathbf{H}: f(\|v\|_{\mathbf{H}}^2) > f(R^2)\big\}\Big) 
			=\leq  \frac{1}{f(R^2)} \int_{\mathbf{H}} f(\|v\|_{\mathbf{H}}^2) \, \nu_{T}^{x_0}(\mathrm{d} v) 
			\leq \frac{C}{f(R^2)}. \label{tight of nu-(T,-n)}
		\end{align}
		Since $\lim_{r\to\infty} f(r) = \infty$, for any arbitrarily small $\varepsilon>0$, choosing $R$ sufficiently large guarantees that $C / f(R^2) \leq \varepsilon$. This yields $\nu_{T}^{x_0}(\mathbf{H} \setminus B^{\mathbf{H}}_R) \leq \varepsilon$ uniformly for all $T > 0$, confirming the required tightness of $\{\nu_{T}^{x_0}\}_{T > 0}$ in ${\bf P}(\mathbf{H}_{d_{\widetilde{\mathbf{H}}}})$. 
		
		By Lemma \ref{Lemma-convergence of measures}, Condition \ref{Cond-Tightness} of Theorem \ref{Thm-generalized KB} is satisfied. Having fulfilled all prerequisites of Theorem \ref{Thm-generalized KB}, we conclude the existence of an invariant probability measure for the abstract singular system.
	\end{proof}
	
	\begin{Remark}
		Condition \eqref{E V X} in Theorem \ref{Thm-SES-IM} functions as a uniform-in-time coercive Lyapunov bound rather than a stringent higher-moment energy estimate. The function $f$ is solely required to be increasing, continuous, and unbounded; it may grow \emph{arbitrarily slowly}, as it enters the analysis only through Chebyshev's inequality to secure the tightness bound \eqref{tight of nu-(T,-n)}. Similarly, Condition \ref{restricted mismatched feller SES} in Theorem \ref{Thm-SES-IM} imposes minimal topological constraints. By completely circumventing the need for a Feller property within a single topology---a rigid limitation inherent to the classical Krylov--Bogoliubov criterion---this framework relies exclusively on the restricted Feller property under mismatched metrics. Consequently, in applications, Theorem \ref{Thm-SES-IM} affords \textbf{substantial flexibility} in the choice of the weaker metric $d_{\widetilde{\mathbf{H}}}$, naturally accommodating singular and nonlinear dynamics. 
		As a concrete realization of this abstract machinery, Theorems \ref{Thm-(u P)-long-time} and \ref{Thm-(u P)-IM} establish the existence of invariant probability measures for the stochastic damped Euler equations \eqref{SEuler-(u P) damp} on the torus $\mathbb{T}^d$ across all spatial dimensions $d \geq 2$. Furthermore, as highlighted in Remark \ref{Remark-SES-examples}, because this abstract framework does \emph{not} rely on the parabolic smoothing properties of the Laplacian (treating viscosity merely as an alternative mechanism of regularity loss), Theorem \ref{Thm-SES-IM} applies equally well to fluid models driven by fractional or  degenerate dissipations (see Remarks \ref{Remark: local-(u P) viscosity} and \ref{Remark: global-(u P) viscosity} below).
	\end{Remark}
	
	\section{Stochastic Incompressible Case}\label{Section : Incompressible case}
	
	\subsection{Assumptions and Target Model}\label{Section : Incompressible-Assumptions}
	
	The stochastic damped Euler equations \eqref{SEuler-(u P) damp} are understood, similarly to \eqref{Target problem (rho u)}, in the following sense:
	\begin{equation}\label{SEuler-damping-target}
		\left\{\begin{aligned}
			u(t)-u_0 \ 
			+\ &\int_0^{t}  \bigg[(u\cdot\nabla)u+\Upgamma u-\frac{1}{2} \mathcal{Q}_1^2u\bigg](t') \d t'+\nabla P(t)-\nabla P(0)\\
			=\ & \int_0^{t} \left(\Q_{1}u(t') \d W(t')+  \widetilde h\big(u(t')\big)\d \widetilde W(t')\right )\\
			&+\int_0^t\int_{|l|\le1} \Big\{\wp\big(1,l,u(t'-)\big)-u(t'-)\Big\}\, \widetilde{\eta}({\rm d}l,{\rm d}t')\\
			&+\int_0^t\int_{|l|\le1} \Big\{\wp\big(1,l,u(t')\big)-u(t')-l\cdot \Q_2 u(t')\Big\}\, \nu({\rm d}l){\rm d}t',\\
			{\rm div}\, u =0,\ \ \ & \\
			u(0)=u_0,\ \, \, &
		\end{aligned}
		\right.
	\end{equation}
	where $\wp$ denotes the solution to \eqref{Marcus flow Q2}, i.e., 
	\begin{equation*} 
		\wp(r)\triangleq\wp(r,l,f),\quad 
		\frac{{\rm d}}{{\rm d}r}\wp(r)= l\cdot \Q_2\wp(r),\quad r\in[0,1],  \quad \wp(0) = f.
	\end{equation*}
	In \eqref{SEuler-damping-target}, the pressure $\nabla P$ can be eliminated using the Leray projection $\Pi_d$ defined in \eqref{Pi-d define}, but care must be taken with the spatial domain. As previously noted, $\Pi_d$ is not well-defined in $H^s(\mathbb{T}^d;\mathbb{R}^d)$ because $\Delta$ is not invertible. Hence, in the periodic setting, we consider \eqref{SEuler-damping-target} under the conditions $\Pi_0u=u$ and $\Pi_0P=0$.  With this structure, when $x\in\T^d$, we restrict the system to the zero-average subspace by applying the zero-average projection $\Pi_0$ (cf. \eqref{Pi-0 define}) to \eqref{SEuler-damping-target}.

	To conclude, on account of the decomposition \eqref{Hodge decomposition},  applying the projection $\Pi$ defined in \eqref{Pi=Pi-d-0}, i.e.,
	\begin{equation*} 
		\Pi\triangleq\Pi_d \ \ \text{on}\ x\in\mathbb{R}^d \ \ \text{and}\ \ 
		\Pi\triangleq\Pi_d\Pi_0\ \ \text{on}\  x\in\mathbb{T}^d,
	\end{equation*}
	we can formulate  \eqref{SEuler-damping-target} as
	\begin{align}\label{Target problem (u P)-remark}
		\left\{\begin{aligned}
			u(t)-u_0 \ 
			+&\int_0^{t}  \bigg[\Pi(u\cdot\nabla)u+\Upgamma u-\frac{1}{2} \Pi\mathcal{Q}_1^2u\bigg](t') \d t'\\
			= & \int_0^{t} \left(\Pi\Q_{1}u(t') \d W(t')+ \Pi  \widetilde h\big(u(t')\big)\d \widetilde W(t')\right )\\
			&+\int_0^t\int_{|l|\le1} \Big\{\Pi\wp\big(1,l,u(t'-)\big)-\Pi u(t'-)\Big\}\, \widetilde{\eta}({\rm d}l,{\rm d}t')\\
			&+\int_0^t\int_{|l|\le1} \Big\{\Pi\wp\big(1,l,u(t')\big)-\Pi u(t')-l\cdot \Pi\Q_2 u(t')\Big\}\, \nu({\rm d}l){\rm d}t'
			,\\
			u(0)=u_0.\, \, &
		\end{aligned}
		\right.
	\end{align}
	However, the product of $\Pi$ and $\Q_i$ is generally very difficult to handle. 
	We therefore introduce the space $\Hdiv^d$ (as defined in \eqref{H-div}):
	\begin{equation*}
		\Hdiv^s(\mathbb K^d;\R^d)\triangleq\Pi H^s(\mathbb K^d;\R^d),
	\end{equation*}
	and formulate the following assumption:
	\begin{Hypothesis}
		\label{Hypo-Q-Pi}
		Suppose that for $d=m\ge2$ in \eqref{Ss define}, the operators  $\Q_i$  with $i=1,2$ satisfy  \ref{Hypo-Qi} and
		$$
		\Pi\Q_i\big|_{\Hdiv^s}=\Q_i\big|_{\Hdiv^s}.
		$$
	\end{Hypothesis}
	For the It\^{o}-forcing part, we impose the following assumption on $\widetilde h(u)$:
	
	\begin{Hypothesis}
		\label{Hypo-h div}
		Let $d\ge 2$, $p\ge 1$, and $\sigma > \frac{d}{2}+p$, $\widetilde h: \Hdiv^\sigma\to \Hdiv^\sigma$. Moreover,  there exists a non-decreasing function
		$\widetilde K\in \mathscr{M}([0,\infty);[0,\infty))$ such that 
		for all $t\ge 0$ and $u, u_1, u_2\in H^{\sigma}$,
		\begin{align*}
			\|\widetilde h(u)\|^2_{H^{\sigma}} 
			\leq  \widetilde K\big(\|u\|_{W^{p,\infty}}\big)(1+\|u\|^{2}_{H^{\sigma}}),
		\end{align*}
		\begin{align*}
			\|\widetilde h(u_1)- \widetilde h( u_2)\|^{2}_{H^{\sigma}} \le 
			\widetilde K\big(\|u_1\|_{H^{\sigma}}+\|u_2\|_{H^{\sigma}}\big)
			\|u_1-u_2\|^{2}_{H^{\sigma}}.
		\end{align*}
	\end{Hypothesis}

	\begin{Remark} 
		There are at least two reasons why \ref{Hypo-Q-Pi} is essential.
		\begin{enumerate}[label={\bf (\alph*)},leftmargin=0.79cm]\setlength\itemsep{0.2em}
			\item From the perspective of the Marcus integral: if $\wp$ satisfies \eqref{Marcus flow Q2}, it is \textbf{unknown} whether $\frac{{\rm d}}{{\rm d}r}\Pi\wp(r) = l \cdot \Pi\Q_2\Pi\wp(r)$ holds. If this equality fails, then \eqref{Target problem (u P)-remark} becomes \textbf{self-inconsistent} because $\Pi\wp$ would \textbf{not} be the Marcus flow corresponding to $\Pi\Q_2$. 
			
			\item From the perspective of the cancellation properties in \eqref{Cancel-introduction} (see Theorem \ref{Thm-cancel}): the formal argument requires the identity $\Pi\Q_i^2 = (\Pi\Q_i)^2$ to hold in \eqref{Target problem (u P)-remark}. This identity generally \textbf{does not} hold. Moreover, because the multiplier of $\Pi_d$ lacks smoothness at $\xi=0$ (cf. \eqref{Pi-d define}), even if the formal equality $\Pi\Q_i^2 = (\Pi\Q_i)^2$ were valid, the operator $\Pi\Q_i$ might not be a pseudo-differential operator. Consequently, the cancellation properties in \eqref{Cancel-introduction} may fail.
		\end{enumerate}
		The role of \ref{Hypo-Q-Pi} is to circumvent the two issues described above. We also note that, if $\Q_i$ is of the following form
		$$\Q_i={\mathrm{diag}}( \mathcal T_{i}, \cdots,  \mathcal T_{i})=\mathcal T_{i}\mathbf{I}$$
		for some $\mathcal T_{i}\in \OP\mathcal{S}_0^\beta$ (recall that the subscript 0 means that the symbol is independent of $x$, see Section \ref{Section:Notations}) such that $\mathcal T_{i}+\mathcal T^*_{i}\in\OP\mathcal{S}_0^0$, $i=1,2$, then $\Q_1,\Q_2\in\mathbb{B}^\beta$ and satisfy \ref{Hypo-Q-Pi} since $\Pi$ commutes with $\Q_i$.
	\end{Remark}

	Under  \ref{Hypo-Q-Pi} and \ref{Hypo-h div}, finding $\Hdiv^s$-valued solutions to \eqref{Target problem (u P)-remark} is equivalent to finding $\Hdiv^s$-valued solutions to  
	\begin{equation}\label{Target problem (u P) damp}
		\left\{\begin{aligned}
			u(t)-u_0 \ 
			+&\int_0^{t}  \bigg[\Pi(u\cdot\nabla)u+\Upgamma u-\frac{1}{2} \mathcal{Q}_1^2u\bigg](t') \d t'\\
			= & \int_0^{t} \left(\Q_{1}u(t') \d W(t')+   \widetilde h\big(u(t')\big)\d \widetilde W(t')\right )\\
			&+\int_0^t\int_{|l|\le1} \Big\{\wp\big(1,l,u(t'-)\big)-u(t'-)\Big\}\, \widetilde{\eta}({\rm d}l,{\rm d}t')\\
			&+\int_0^t\int_{|l|\le1} \Big\{\wp\big(1,l,u(t')\big)-u(t')-l\cdot \Q_2 u(t')\Big\}\, \nu({\rm d}l){\rm d}t',\\
			u(0)=u_0.\, \, &
		\end{aligned}
		\right.
	\end{equation}
	where $\wp$ is the solution to \eqref{Marcus flow Q2}.

	As in the compressible case, we focus on \textit{classical} solutions requiring that  \eqref{Target problem (u P) damp}$_1$  hold as an equation in $C(\K^d)$.  
	We now precisely state the following definition of pathwise classical solutions to \eqref{Target problem (u P) damp}:
	
	\begin{Definition}\label{solution definition (u P)} 
		Let $\tau^*$ be a stopping time with $\p(\tau^*>0)=1$, and let $(u,\tau^*) \triangleq (u(t))_{t\in [0,\tau^*)}$ be a progressively measurable process on $\Hdiv^s$.
		The pair $(u,\tau^*)$ is called a maximal $\Hdiv^s$ classical solution to \eqref{Target problem (u P) damp} if $\pas$ the following conditions are satisfied:
		\begin{itemize}
			\item \eqref{Target problem (u P) damp} holds as an equality in $C(\K^d)$ for all $t\in[0,\tau^*)$;
			\item  $\sup_{[0,T]}\|u(t)\|_{H^s}<\infty$ for all $T<\tau^*$;
			\item $\limsup_{t\uparrow\tau^*}\|u(t)\|_{\H^s}=\infty$ a.s. on $\{\tau^*<\infty\}.$
		\end{itemize}
		In particular, if $\p(\tau^*=\infty)=1$, the solution is called global.
		
	\end{Definition}
	After establishing the maximal solution, our next goal is to quantify the long-time regimes induced by the \textbf{damping-noise interaction} (\textbf{DNI})
	\begin{equation*} 
		\Upgamma u, \quad \mathcal{Q}_1 u\, \circ\, {\rm d} W,\quad \mathcal{Q}_2 u \diamond {\rm d} L, \quad \text{and} \quad \widetilde h( u)\d \widetilde{W}. 
	\end{equation*}
	
	\begin{Remark}\label{Remark singualrity IC}
		To analyze the long-time behavior of solutions under varying \textbf{DNI}, we introduce the function class $\mathscr{V}$ defined in \eqref{SCRV} as Lyapunov-type functions, following the approach in \cite{Ren-Tang-Wang-2024-POTA} (see also \cite{Tang-Wang-2024-CCM,Tang-Yang-2023-AIHP}). The following points are noteworthy:
		
		\begin{enumerate}[leftmargin=0.79cm]\setlength\itemsep{0.2em}
			\item As noted in Remark \ref{Remark singualrity}, the stochastic damped Euler equations \eqref{Target problem (u P) damp} form a singular evolution system in the Sobolev space $\Hdiv^s$, since their coefficients do not map $\Hdiv^s$ into itself. Consequently, for a local solution $(u,\tau)$, one cannot directly apply It\^o's formula to $\|u\|^2_{H^s}$ to establish global-in-time estimate, as the required inner products are \textbf{not} well-defined.   For systems without Marcus noise, a known  approach in the literature is to apply a mollifier $J_n$ (for example, as defined in \eqref{Define Jn}) to the equation, estimate $\|J_n u\|^2_{H^s}$, and then pass to the limit (see, e.g., \cite{Miao-Rohde-Tang-2024-SPDE,Tang-Yang-2023-AIHP,Li-Liu-Tang-2021-SPA}). However, when a Marcus integral is present, applying $J_n$ to the term $\wp\big(1,l,u(t')\big)-u(t')-l\cdot\Q_2u(t')$ yields
			\[
			J_n\wp\big(1,l,u(t')\big)-J_nu(t')-l\cdot J_n\Q_2u(t'),
			\]
			which is \textbf{inconsistent}, because $J_n\wp\big(1,l,u(t')\big)$ is generally \textbf{not} the Marcus flow corresponding to the operator $J_n\Q_2$ with initial condition $J_nu(t')$.
			
			\item Suppose a blow-up criterion is available in terms of the $\Wlip$-norm; that is, the $H^s$-norm blows up if and only if the $W^{1,\infty}$-norm blows up. Assume further that It\^o's formula can be applied to $\|u\|^2_{H^\theta}$ for some suitable $\theta$ satisfying $\Hdiv^s\hookrightarrow\Hdiv^{\theta}\hookrightarrow \Wlip$, yielding a global $H^\theta$ bound. Invoking the blow-up criterion then implies global existence in $H^s$. 
			This argument, however, does not yield a direct bound on the $H^s$-norm itself. When an explicit global $H^s$ estimate is required, we therefore adopt a different approach. We first construct an approximation scheme whose solutions satisfy uniform $H^s$ bounds, and we impose the conditions ensuring global $H^s$ estimates already at the approximation level. Passing to the limit then yields a solution to the original equation that inherits the $H^s$ estimate, thereby providing the desired global $H^s$ bound. The details are presented in Section~\ref{Section : SEuler-global-proof}.

			\item We record two constants used in the analysis of \textbf{DNI}. 
			For a regularized version of \eqref{Target problem (u P) damp}, in which each $\Q_{i}$ is replaced by its regularized counterpart $\Q_{i,n}$ ($i=1,2$), we show that there exist constants $\mathscr{A}_i=\mathscr{A}_i(s,d)>0$ for $i=1,2$ (see \eqref{Qn cancel-2} and \eqref{wp-n estimate 2 C22} above) such that, for every $f\in H^s$ with $s\ge0$,
			\begin{equation}\label{Qn cancel-2 A1}
				\sup_{n \ge 1} \left| \langle \mathcal{Q}_{1,n}^2 f, f \rangle_{H^s}
				+ \langle \mathcal{Q}_{1,n} f, \mathcal{Q}_{1,n} f \rangle_{H^s} \right|
				\leq \mathscr{A}_1 \|f\|^2_{H^s}.
			\end{equation}
			Moreover, for every $V\in\mathscr{V}$,
			\begin{align}\label{wp-n estimate 2 C22 A2}
				\sup_{n\ge1}\int_{|l|\le 1}
				\Big\{V\!\big(\|\wp_n(r,l,f)\|_{H^s}^2\big)
				-V\!\big(\norm{f}_{H^s}^2\big)
				-2l\cdot V'\!\big(\|f\|^2_{H^s}\big)\bIP{\Q_{2,n}f,f}_{H^s}\Big\}\,\nu({\rm d}l)
				\leq \mathscr{A}_2\,V(\|f\|^2_{H^s}).
			\end{align}
			Furthermore, if $\Q_i\in\mathbb{B}^\beta$ and $Q_i+\Q_i^*=0$, then  we have (cf.\ \ref{Q-n cancel 0} in Lemma~\ref{Lemma:Qn} and Remark~\ref{Remark: wp-n estimates})
			\begin{equation*}
				\left| \langle \mathcal{Q}_{1,n}^2 f, f \rangle_{H^s} + \langle \mathcal{Q}_{1,n} f, \mathcal{Q}_{1,n} f \rangle_{H^s} \right| = 0,\quad n\ge1,
			\end{equation*}
			and
			\begin{align*}
				\int_{|l|\le 1}
				\Big\{V(\|\wp_n(r,l, f)\|_{H^s}^2) - V(\norm{f}_{H^s}^2)-2l\cdot V'(\|f\|^2_{H^s} )\bIP{\Q_{2,n}f,f}_{H^s}\Big\}\,\nu({\rm d}l)
				= 0,\quad n\ge1.
			\end{align*}

			A comparison with \eqref{Qn cancel-2} and \eqref{wp-n estimate 2 C22} yields
			\begin{equation}\label{A1=C12 A2=C22}
				\mathscr{A}_1 = C_{1,2}, \quad \mathscr{A}_2 = \mathscr{C}_{2,2}.
			\end{equation}
			The notations $C_{1,2}$ and $\mathscr{C}_{2,2}$ are used to distinguish particular cases that arise in frameworks involving several scenarios; see \eqref{Qn cancel-1}, \eqref{Qn cancel-2}, \eqref{wp-n estimate 1 C21}, and \eqref{wp-n estimate 2 C22}.  In the present setting, however, only the two estimates   \eqref{Qn cancel-2 A1} and \eqref{wp-n estimate 2 C22 A2}, together with their counterparts \eqref{Qn cancel-2} and \eqref{wp-n estimate 2 C22}, are needed.  It is therefore convenient to write $\mathscr{A}_1$ and $\mathscr{A}_2$ instead. For a more detailed discussion, see  Remarks~\ref{Remark:renormalization Qn}, \ref{Remark: wp-n estimates} and \ref{Remark: wp estimate}, Lemmas~\ref{Lemma:Qn} and \ref{Lemma-Marcus-n}, and Section~\ref{Section : SEuler-global-proof}.
		\end{enumerate}
	\end{Remark}
	
	From Remark \ref{Remark singualrity IC} and Lemma \ref{Lemma:Qn}, we define
	\begin{align}\label{SCR ai}
		\mathscr{a}_i=\mathscr{a}_i(s,d) \triangleq  
		\begin{cases}
			0, &\text{if }\ \text{\ref{Hypo-Qi} holds and }\ \Q_i\in\mathbb{B}^\beta,\ \ \Q_i+\Q_i^*=0,\vspace*{4pt}\\
			\mathscr{A}_i, &\text{otherwise},
		\end{cases}
	\end{align}
	where $\mathscr{A}_1$ and $\mathscr{A}_2$ denote the constants appearing in \eqref{Qn cancel-2 A1} and \eqref{wp-n estimate 2 C22 A2}, respectively. Thus $\mathscr{a}_1=\mathscr{a}_1(s,d)$ and $\mathscr{a}_2=\mathscr{a}_2(s,d)$ quantify the strength (in $H^s$) of the noise terms $\mathcal{Q}_1 u\, \circ\, {\rm d} W$ and $\mathcal{Q}_2 u \diamond {\rm d} L$. 
	
	Given $V\in\mathscr{V}$ and $\widetilde h$ satisfying \ref{Hypo-h div} with $p\ge1$, we define $\mathbf{V}_{\sigma}^{\Q_1,\,\Q_2,\,\widetilde h}$ by
	\begin{align}  
		\mathbf{V}_{\sigma}^{\Q_1,\, \Q_2,\, \widetilde h}(f) 
		\triangleq\ & V'(\|f\|^2_{H^{\sigma}}) \bigg\{  
		\big(\mathscr{a}_1 + 2\mathscr{c}_{\sigma,d} \|f\|_{\Wlip} \big) \|f\|^2_{H^{\sigma}}    + \|\widetilde h(f)\|^2_{H^{\sigma}} \bigg\}  \notag\\
		& + 2V''(\|f\|^2_{H^{\sigma}})\bIP{\widetilde h(f), f}_{H^{\sigma}}^2+\mathscr{a}_2 V(\|f\|^2_{H^{\sigma}}), \qquad f\in \Hdiv^{\sigma},\quad \sigma > \tfrac{d}{2} + p, \label{V Lyapunov type term}
	\end{align}  
	where $\mathscr{a}_1$ and $\mathscr{a}_2$ are given in \eqref{SCR ai}, and $\mathscr{c}=\mathscr{c}_{\sigma,d}>0$ is taken from Lemma \ref{uux u M} and denotes the growth coefficient of $\IP{\Pi(u\cdot\nn)u,u}_{H^{\sigma}}$.
	
	Let $p\ge1$ be fixed as in \ref{Hypo-h div}.
	We now characterize the \textbf{DNI} conditions implicitly using  $2\Upgamma V'(\|f\|^2_{H^{\sigma}} )\|f\|^2_{H^{\sigma}}$ and $\mathbf{V}_{\sigma}^{\Q_1,\, \Q_2,\, \widetilde h}$ in \eqref{V Lyapunov type term}. Recall that $\mathscr{V}$ is defined in \eqref{SCRV}.
	
	\begin{Hypothesis}
		\label{Hypo-DNI-1}
		There exist $V\in \mathscr{V}$ and constants $\mathscr{G}_1,\mathscr{G}_2>0$ such that
		\begin{align*}
			\mathbf{V}_{\sigma}^{\Q_1,\, \Q_2,\, \widetilde h}(f)
			\leq \mathscr{G}_1\,V(\|f\|^2_{H^{\sigma}})+\mathscr{G}_2,\qquad f\in  \Hdiv^{\sigma},\quad \sigma>d/2+p.
		\end{align*}
	\end{Hypothesis}

	\begin{Hypothesis}
		\label{Hypo-DNI-2}
		$\Q_2\in\mathbb{B}^\beta$ and $Q_2+\Q_2^*=0$. Moreover, there exists $V\in \mathscr{V}$ such that
		\begin{align*}
			\mathbf{V}_{\sigma}^{\Q_1,\, \Q_2,\, \widetilde h}(f)-2\Upgamma V'(\|f\|^2_{H^{\sigma}} )\|f\|^2_{H^{\sigma}} 
			\leq 0,\qquad f\in  \Hdiv^{\sigma},\quad \sigma>d/2+p.
		\end{align*}
	\end{Hypothesis}
	
	\begin{Hypothesis}
		\label{Hypo-DNI-3}
		$\Q_2\in\mathbb{B}^\beta$ and $Q_2+\Q_2^*=0$. Moreover, there exists  $V\in \mathscr{V}$ with $\sup_{x\in\R}|V''(x)|<\infty$ and a constant $\mathscr{G}_3>0$ such that 
		\begin{align*} 
			\mathbf{V}_{\sigma}^{\Q_1,\, \Q_2,\, \widetilde h}(f) -2\Upgamma V'(\|f\|^2_{H^{\sigma}} )\|f\|^2_{H^{\sigma}} 
			\leq -\mathscr{G}_3  V\left(\frac{\|f\|^2_{W^{p,\infty}}}{M^2}\right),\qquad f\in  \Hdiv^{\sigma},\quad \sigma>d/2+p.
		\end{align*}
		Here $M=M_{\sigma,d}>0$ is the embedding constant for $\|\cdot\|_{W^{p,\infty}}\leq M_{s,d}\|\cdot\|_{H^s}$ when $s>d/2+p$.
	\end{Hypothesis}

	\begin{Remark}\label{Remark-DNI-Compare}
		Applying It\^o's formula, we can formally derive the following {\it a priori} estimate (a regularization process is required for rigorous validation; see Remark \ref{Remark singualrity IC}):  
		\begin{equation*} 
			\mathbb{E} \Big[ V(\|u(t)\|^2_{H^\sigma}) \big| \F_0 \Big]-V(\|u_0\|^2_{H^\sigma})  
			\leq \int_0^t\mathbb{E} \Big[ \left(\mathbf{V}_{\sigma}^{\Q_1,\, \Q_2,\, \widetilde h}(u(t')) -2\Upgamma V'(\|u(t')\|^2_{H^{\sigma}} )\|u(t')\|^2_{H^{\sigma}}\right)  \Big| \F_0 \Big]\d t'.
		\end{equation*}
		The logical relationships between these conditions are clearly expressed by
		\begin{center}
			\ref{Hypo-DNI-1} $\Longleftarrow$ \ref{Hypo-DNI-2} $\Longleftarrow$ \ref{Hypo-DNI-3}.
		\end{center}
		To make this precise, we provide the following comparative analysis:
		\begin{enumerate}[leftmargin=0.79cm]\setlength\itemsep{0.2em}
			\item \textbf{Weak dissipation of \textbf{DNI} via \ref{Hypo-DNI-1}.}  
			A dissipative mechanism is described in \ref{Hypo-DNI-1}: when the growth of $\widetilde h(\cdot)$ exceeds a certain threshold, it counteracts the growth contributions from other terms (due to $V''<0$), leading to linear growth in $\mathbb{E}\big[V(\|u(t)\|^2_{H^\sigma})|\mathcal{F}_0\big]$ for the solution $u$, despite the growth observed in $\mathbb{E}\big[\|u(t)\|^2_{H^\sigma}|\mathcal{F}_0\big]$. Consequently, global existence is guaranteed under \ref{Hypo-DNI-1}; related results in a more general setting are first provided in \cite{Ren-Tang-Wang-2024-POTA,Tang-Wang-2022-arXiv}. Nevertheless, the dissipation remains weak because \ref{Hypo-DNI-1} still permits the possibility of a ``blow-up at $t=\infty$''.  
			
			\item \textbf{Moderate dissipation of \textbf{DNI} via \ref{Hypo-DNI-2}.} \ref{Hypo-DNI-2} produces a  stronger dissipative regime for analyzing the long-time behavior of solutions to \eqref{Target problem (u P) damp}. In this regime, solutions satisfy uniform-in-time estimates, which in the time-homogeneous setting on $\T^d$ makes it possible to establish the existence of an invariant probability measure.
			
			\item \textbf{Strong dissipation of \textbf{DNI} via \ref{Hypo-DNI-3}.} \ref{Hypo-DNI-3} addresses the regime where the \textbf{DNI} generates sufficiently strong dissipation to ensure uniqueness of the invariant probability measure.

			\item \textbf{The role of the damping coefficient $\Upgamma$.} Under \ref{Hypo-DNI-1}, the value $\Upgamma=0$ is allowed; thus, no dissipation from the damping term is required. In concrete examples (see Example \ref{Example-decay-noise} below) satisfying \ref{Hypo-DNI-2} and \ref{Hypo-DNI-3}, the parameter $\Upgamma$ must be sufficiently large in the case $V(\cdot)=\log(1+\cdot)$. In particular, for examples satisfying \ref{Hypo-DNI-3}, the required $\Upgamma$ must be larger than that in the example verifying \ref{Hypo-DNI-2}.
		\end{enumerate}
		
	\end{Remark}

	\subsection{Examples of the Damping-Noise Interactions (\textbf{DNI}s)}
	
	\begin{Example}\label{Example-decay-noise} 
		Since \ref{Hypo-DNI-3} and \ref{Hypo-DNI-2} are stronger than \ref{Hypo-DNI-1},  
		We only give an example satisfying \ref{Hypo-h div} and \ref{Hypo-DNI-3} or \ref{Hypo-h div} and  \ref{Hypo-DNI-2}. As before, for brevity, the examples are given for $\K=\R$ since the example on $\K=\T$ can be constructed in the same way.

		As before,  $\mathscr{c}=\mathscr{c}_{\sigma,d}$ is the constant given in Lemma \ref{uux u M}, and $M=M_{\sigma,d}>0$ denotes the embedding constant for $\|\cdot\|_{W^{p,\infty}}\leq M_{\sigma,d}\|\cdot\|_{H^{\sigma}}$ when $\sigma>d/2+p$. Suppose that
		$\mathfrak{g}(x)\in C^1\left([0,\infty); (0,\infty)\right)$ satisfies
		\begin{equation*}
			\mathfrak{g}'>0,\quad \lim_{x\to\infty}\mathfrak{g}(x)=\infty, \quad 
			A\triangleq\sup_{x\in [0,\infty)}\frac{2\mathscr{c} x}{\mathfrak{g}^2\big(\frac{x}{M}\big)}<1.
		\end{equation*}
		Let 
		\begin{align*}
			\widetilde h(f) =\mathfrak{g}\left(\frac{\|f\|_{\Wlip}}{M}\right)f,\qquad f\in  \Hdiv^{\sigma},\quad \sigma>d/2+p.
		\end{align*}
		We now show that, with appropriate choices of $\Upgamma$ and $\mathscr{G}_3$, the function $\widetilde h$ given above satisfies \ref{Hypo-DNI-3} with $V(x)=\log(1+x)$. Indeed, since $\Q_2\in\mathbb{B}^\beta$ with $\Q_2+\Q_2^*=0$, we have $\mathscr{a}_2=0$ (cf. \eqref{SCR ai}). 
		Then, when $2\Upgamma>\mathscr{a}_1$, we obtain
		\begin{align*}  
			& \mathbf{V}_{\sigma}^{\Q_1,\, \Q_2,\, \widetilde h}(f) -2\Upgamma V'(\|f\|^2_{H^{\sigma}} )\|f\|^2_{H^{\sigma}} \\
			=\ & \frac{\left(\mathscr{a}_1-2\Upgamma \right)\|f\|_{H^{\sigma}}^2}{1+\|f\|_{H^{\sigma}}^2}
			+\frac{2\mathscr{c} \|f\|_{W^{1,\infty}}\|f\|_{H^{\sigma}}^2+
				\mathfrak{g}^2(M^{-1}\|f\|_{W^{p,\infty}})\|f\|_{H^{\sigma}}^2}{1+\|f\|_{H^{\sigma}}^2}
			-\frac{2\mathfrak{g}^2(M^{-1}\|f\|_{W^{p,\infty}})\|f\|_{H^{\sigma}}^4}{(1+\|f\|_{H^{\sigma}}^2)^2}\\
			\leq\ &\frac{\left(\mathscr{a}_1-2\Upgamma \right)\|f\|_{H^{\sigma}}^2}{1+\|f\|_{H^{\sigma}}^2}
			+\mathfrak{g}^2(M^{-1}\|f\|_{W^{p,\infty}}) \underbrace{\left(\frac{(A+1)\|f\|_{H^{\sigma}}^2}{1+\|f\|_{H^{\sigma}}^2}
				-\frac{2\|f\|_{H^{\sigma}}^4}{(1+\|f\|_{H^{\sigma}}^2)^2}\right)}_{\triangleq H(\|f\|^2_{H^\sigma})}.
		\end{align*}
		Let $y = \|f\|_{H^{\sigma}}^2$ and $z = M^{-2}\|f\|_{W^{p,\infty}}^2$, which implies $z \le y$. 
		A direct computation shows $H(y) = \frac{(A+1)y + (A-1)y^2}{(1+y)^2}$. 
		Since $A < 1$, we have $\lim_{y \to \infty} H(y) = A-1 < 0$. Thus, there exists $Y_0 > 0$ such that $H(y) \le \frac{A-1}{2} < 0$ for all $y \ge Y_0$. 
		
		\textbf{Case 1: $y < Y_0$.} Note that $H(y) \le (A+1)\frac{y}{1+y}$. Since $z \le y < Y_0$, $\mathfrak{g}^2(z^{1/2})$ is uniformly bounded by $G_{\rm max} \triangleq \mathfrak{g}^2(Y_0^{1/2})$. The entire expression is bounded by $\big(\mathscr{a}_1 - 2\Upgamma + G_{\rm max}(A+1)\big)\frac{y}{1+y}$. By choosing $2\Upgamma$ sufficiently large, we can ensure this is controlled by $-\mathscr{G}_3 \frac{1+Y_0}{1+y}y \le -\mathscr{G}_3 y \le -\mathscr{G}_3 z \le -\mathscr{G}_3 \log(1+z)$.
		
		\textbf{Case 2: $y \ge Y_0$.} The expression is bounded by $\frac{\mathscr{a}_1 - 2\Upgamma}{1+Y_0}y + \frac{A-1}{2}\mathfrak{g}^2(z^{1/2}) \le \frac{A-1}{2} \mathfrak{g}^2(z^{1/2})$. Because $A-1 < 0$ and $\lim_{x\to \infty}\frac{\log(1+x^2)}{\mathfrak{g}^2(x)}=0$, the strong dissipation provided by the fast-growing noise dominates. We can choose $\mathscr{G}_3 > 0$ sufficiently small such that $\frac{A-1}{2} \mathfrak{g}^2(z^{1/2}) \le -\mathscr{G}_3 \log(1+z)$ for all $z \ge 0$.
		
		Consequently, universally for all $f \in \Hdiv^\sigma$, we obtain:
		\begin{align*}
			\mathbf{V}_{\sigma}^{\Q_1,\, \Q_2,\, \widetilde h}(f) -2\Upgamma V'(\|f\|^2_{H^{\sigma}} )\|f\|^2_{H^{\sigma}} 
			\leq   -\mathscr{G}_3 \log(1+M^{-2}\|f\|_{W^{p,\infty}}^2).
		\end{align*}
		Therefore, there exists $\mathscr{G}_3>0$ such that when $2\Upgamma\gg \mathscr{a}_1+\mathscr{G}_3$, $\widetilde h$ satisfies \ref{Hypo-DNI-3} with $V(x)=\log(1+x)$. Similarly, when $2\Upgamma\gg \mathscr{a}_1$, \ref{Hypo-DNI-2} holds with $V(x)=\log(1+x)$.
	\end{Example}
	
	\subsection{Local and Long-Time Behavior for Damped Incompressible Case}
	\label{Section : Result Thm-DNI}
	
	\begin{Theorem}[\textbf{Local pathwise classical solutions to \eqref{Target problem (u P) damp}}]
		\label{Thm-(u P)-local}
		Let $d \geq 2$, $p\ge 1$ and $\Upgamma\ge0$.  
		Assume \ref{Hypo-W L}, \ref{Hypo-Q-Pi},  and \ref{Hypo-h div} with $\sigma> \frac{d}{2} + p $. Let $s> \frac{d}{2} + p + \max \{3\zeta,1\}$ where $\zeta=\zeta_{\Q_1,\Q_2}$ is given in \eqref{zeta-Q12}.  
		Let $u_0\in \Hdiv^s$ be $\mathcal{F}_{0}$-measurable. Then the Cauchy problem \eqref{Target problem (u P) damp} 
		has a unique maximal $\Hdiv^s$ classical solution $(u,\tau^{*})$ in the sense of Definition \ref{solution definition (u P)}. Moreover, 
		\begin{equation*} 
			u\in D([0,\tau^*);\Hdiv^{s'}),\quad s'<s\quad \pas,
		\end{equation*}
		the lifetime $\tau^*$ is independent of $s$, and the following blow-up criterion holds:
		\begin{equation}\label{Blow-up criterion u}
			\limsup_{t \rightarrow \tau^*} \|u(t)\|_{\Wp} = \infty \quad \text{a.s. on } \{\tau^* < \infty\}.
		\end{equation}
	\end{Theorem}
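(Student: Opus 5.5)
The plan is to transplant the scheme of Proposition \ref{Proposition-(q u)} (Steps 1--4) to the single unknown $u\in\Hdiv^s$. The pressure is already eliminated in \eqref{Target problem (u P) damp}, and the damping term $\Upgamma u$ is a bounded linear perturbation. Under \ref{Hypo-Q-Pi}, $\Q_i$ maps $\Hdiv^s$ into $\Hdiv^{s-\delta_i}$, so the Marcus flow $\wp$ of \eqref{Marcus flow Q2} preserves the divergence-free subspace.

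\textbf{Approximation scheme.} Let $\Q_{i,n}=J_n\Q_iJ_n$ be the regularizations from Lemma \ref{Lemma:Qn}. Since $J_n\in\OP\SS^{-\infty}_0$ commutes with $\Pi$, the regularized Marcus flow $\wp_n$ of \eqref{regular Marcus flow Q2} also leaves $\Hdiv^s$ invariant. For $n,R\ge1$ I would solve
\[
{\rm d}u+\chi_R\big(\|u-u_0\|_{W^{p,\infty}}\big)\big[J_n\Pi((J_nu\cdot\nabla)J_nu)\big]{\rm d}t+\Upgamma u\,{\rm d}t=\Q_{1,n}u\circ{\rm d}W+\Q_{2,n}u\diamond{\rm d}L+\chi_R\big(\|u-u_0\|_{W^{p,\infty}}\big)\widetilde h(u)\,{\rm d}\widetilde W .
\]
All coefficients are locally Lipschitz on $\Hdiv^s$ with linear growth, so Lemma \ref{Lemma: existence of XnR} applies verbatim and yields a global càdlàg $\Hdiv^s$-valued solution $u_n^{(R)}$.

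\textbf{Uniform bounds and Cauchy property.} This mirrors Lemma \ref{Lemma : Xn T estimates}.
\begin{itemize}
\item For the $H^s$ bound, the convective term is controlled by the estimate of Lemma \ref{uux u M}, namely $|\IP{\Pi(v\cdot\nabla)v,v}_{H^s}|\le\mathscr c\|v\|_{\Wlip}\|v\|_{H^s}^2$ (with $v=J_nu$, using $\Pi J_n=J_n\Pi$). The damping contributes $-2\Upgamma\|u\|_{H^s}^2\le0$.
\item The Stratonovich and Marcus terms are handled by the uniform cancellations \eqref{Qn cancel-1}--\eqref{Qn cancel-2}, together with \eqref{Ito-4}--\eqref{Ito-5} from Lemma \ref{Lemma-Marcus-n}.
\item The Itô forcing is handled by \ref{Hypo-h div} and the cut-off.
\item BDG and Grönwall, conditioned on $\F_0$, then give \eqref{Xn uniform bound}.
\item For the $H^\theta$ Cauchy estimate with $\theta\in(\frac d2+p,s-\max\{3\zeta,1\})$, the nonlinear difference $\IP{J_n\Pi(J_nu\cdot\nabla J_nu)-J_m\Pi(J_mv\cdot\nabla J_mv),u-v}_{H^\theta}$ is the incompressible analogue of Lemma \ref{Lemma-F-difference}. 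It is bounded by $C(\|u\|_{H^s},\|v\|_{H^s})\big((n\wedge m)^{-2(s-1-\theta)}+\|u-v\|_{H^\theta}^2\big)$ using Lemma \ref{Lemma-Jn} and the commutator estimates. The noise differences are controlled by Lemmas \ref{Lemma-Q1 n m} and \ref{Lemma-Q2 n m}; the latter requires exactly $\theta<s-3\delta_2$.
\end{itemize}

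\textbf{Passage to the limit.} Stopping at $\tau_N^{n,m}(R)$ and arguing as in part \ref{Convergence of X-n T} of Lemma \ref{Lemma : Xn T estimates} gives an a.s.\ limit $u^{(R)}$. This limit is progressively measurable in $\Hdiv^s$ by Remark \ref{Remark : measurability-upgrade}, and the convergence holds in $D([0,T];H^{s'})$. Since $\Hdiv^{s'}$ is closed, $u^{(R)}\in\Hdiv^{s'}$. Passing to the limit in the equation and proving uniqueness follow Lemma \ref{cut-off global solution}. Removing the cut-off via $\tau(R)=\inf\{t:\|u^{(R)}-u_0\|_{W^{p,\infty}}\ge R\}$ and gluing as in \eqref{X=sum XR} gives the maximal solution $(u,\tau^*)$ together with \eqref{Blow-up criterion u}. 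Independence of $s$ follows from uniqueness and the blow-up criterion: the lifespan is characterized by the $W^{p,\infty}$ norm, which does not depend on $s$.

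\textbf{Main obstacle.} The step I expect to be most delicate is the $H^\theta$ difference estimate for the mollified, projected convective term with mismatched $n,m$. One must commute $J_n$ and $\Pi$ through $(J_nu\cdot\nabla)$ while keeping only $(n\wedge m)^{-(s-1-\theta)}$ losses. I would first decompose the term into $(J_n-J_m)$ pieces plus a Kato--Ponce commutator $[\D^\theta,J_nu\cdot\nabla]$, and use $\Div u=0$ to kill the top-order term.
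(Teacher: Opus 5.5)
Your proposal is correct and follows essentially the same route as the paper, which proves Theorem \ref{Thm-(u P)-local} only by a sketch referring back to Proposition \ref{Proposition-(q u)}. The ingredients match: the same cut-off, mollified scheme with $J_n\Pi[(J_nu\cdot\nabla)J_nu]$ and $\Q_{i,n}=J_n\Q_iJ_n$; uniform $H^s$ bounds and $H^\theta$ Cauchy estimates as in Lemma \ref{Lemma : Xn T estimates}, where the convective difference is exactly the $\Psi_3$ part of Lemma \ref{Lemma-F-difference} once $\Pi$ is dropped against $u-v\in\Hdiv^\theta$; passage to the limit and uniqueness as in Lemma \ref{cut-off global solution}; and removal of the cut-off by gluing as in \eqref{X=sum XR}. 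Your explicit checks that $\wp_n$ preserves $\Hdiv^s$ (so that $\wp$ preserves $\Hdiv^\theta$ by closedness of the limit), and your uniqueness-plus-blow-up argument for the independence of $\tau^*$ from $s$, fill in details the paper leaves implicit.
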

	
	Since $\Q_1$ and $\Q_2$ significantly extend classical transport-type operators, Theorem \ref{Thm-(u P)-local} for \eqref{SEuler-(u P) damp} with noise structure $\mathcal{Q}_1 u\, \circ\, {\rm d} W
	+\mathcal{Q}_2 u \diamond {\rm d} L 
	+ \widetilde h( u)\d \widetilde{W}$ extends many existing results in the literature that focus on a single noise type.
	
	\begin{Theorem}[\textbf{Long-time dynamics of \eqref{Target problem (u P) damp}}]\label{Thm-(u P)-long-time}
		Let the conditions of Theorem \ref{Thm-(u P)-local} hold and
		$$\theta\in\left(\frac{d}{2}+p,{s-\max \{3\zeta,1\}}\right).$$
		\begin{enumerate}[label={\bf{(\arabic*)}},leftmargin=0.79cm]\setlength\itemsep{0.2em}
			\item\label{Thm-(u P)-estimate V} $($\textbf{Global regularity and continuous dependence on initial data}$)$  Under \ref{Hypo-DNI-1}, the Cauchy problem \eqref{Target problem (u P) damp} admits a global $\Hdiv^s$ solution. For $V\in\mathscr{V}$  as specified in \ref{Hypo-DNI-1}, we have
			\begin{equation}\label{u Hs bound V-1}
				\mathbb{E} \left[V(\|u(t)\|_{H^s}^2)\Big|\mathcal{F}_0\right]  
				\leq \Big(V(\|u_0\|^2_{H^s})+\mathscr{G}_2t\Big) {\rm e}^{\mathscr{G}_1t},
				\quad t>0.
			\end{equation}
			Moreover, 
			if $\{u_{0,m}\}_{m\ge1}$ are $\mathcal{F}_0$-measurable $\Hdiv^s$-valued random variables such that 
			\begin{equation}
				\sup_{m\ge1}\|u_{0,m}\|_{H^s}\le R\quad \text{for some}\  R>0 \quad  \text{and}\quad  \lim_{m\to\infty}\|u_{0,m}-u_0\|_{H^{\theta}}=0 \quad\pas,\label{u-0-m condition}
			\end{equation}
			then the corresponding sequence of solutions $\{u_{m}\}_{m\ge1}$ with $u_m|_{t=0}=u_{0,m}$ satisfies
			\begin{equation}\label{H-theta stabilty}
				\lim_{m\to\infty}
				\mathbb{E} \left[1\land \|u_{m}(t)-u(t)\|^2_{H^{\theta}}\Big|\mathcal{F}_0\right]=0, \qquad t>0.
			\end{equation}
			
			\item\label{Thm-(u P)-IM}  $($\textbf{Uniform-in-time estimate and invariant probability measure}$)$ If \ref{Hypo-DNI-1} is strengthened to \ref{Hypo-DNI-2}, then $u$ satisfies 
			\begin{equation}\label{u Hs bound V-2}
				\mathbb{E}\left[V(\|u(t)\|_{H^s}^2) \Big| \mathcal{F}_0\right]  
				\leq V(\|u_0\|^2_{H^s}), \qquad t\ge0.
			\end{equation}
			Moreover, on $\T^d$, there exists an
			invariant 
			measure $\mu$ 
			in the sense of Definition \ref{IM definition} with $E=\Hdiv^s$. 
			
			\item\label{Thm-(u P)-decay} $($\textbf{Uniqueness of invariant probability measure}$)$ Let $M=M_{\theta,d}>0$ be a constant satisfying $\|\cdot\|_{W^{p,\infty}}\leq M \|\cdot\|_{H^\theta}$ for $\theta>d/2+p$. If \ref{Hypo-DNI-3} holds, then  
			\begin{equation}\label{u Wlip -> 0}
				\int_0^t\mathbb{E}  \left[V\left(\frac{\|u(t')\|^2_{W^{p,\infty}}}{M^2}\right)\Bigg|\mathcal{F}_0\right]\d t'
				\leq  \frac{V(\| u_0\|^2_{H^\theta})}{\mathscr{G}_3}\left(1-{\rm e}^{-\mathscr{G}_3t}\right),
				\quad t>0,
			\end{equation}
			As a consequence of \eqref{u Wlip -> 0} and $V(0)=0$, it follows that
			the Dirac measure centered at $0$ is the \textbf{unique invariant probability measure} on both $\T^d$ and $\R^d$ when $\widetilde h(0)=0$.
		\end{enumerate}
	\end{Theorem}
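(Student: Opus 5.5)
The plan is to run all three parts through a single regularized scheme that carries a uniform $V$-Lyapunov estimate, following Remark \ref{Remark singualrity IC}(2). For $n\ge1$ we consider the analogue of \eqref{approximation scheme C} for \eqref{Target problem (u P) damp}. The drift is $-\chi_R(\|u-u_0\|_{\Wp})J_n\Pi((J_nu)\cdot\nabla)J_nu-\Upgamma u+\tfrac12\Q_{1,n}^2u$. The noise is $\Q_{1,n}u\,\d W+\chi_R\widetilde h(u)\,\d\widetilde W$ together with the Marcus term built from $\wp_n$. Lemma \ref{Lemma : Xn T estimates} and Lemma \ref{cut-off global solution} carry over verbatim here: the pressure-law structure is replaced by Lemma \ref{uux u M}, and \ref{Hypo-Q-Pi} keeps the flow $\wp_n$ in $\Hdiv^s$. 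By Theorem \ref{Thm-(u P)-local}, this gives $u^{(R)}_n\to u^{(R)}$ and a solution up to $\tau^*$.

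\textbf{Part (1).} We apply It\^o's formula to $V(\|u_n\|_{H^s}^2)$. The contributions are estimated as follows.
\begin{itemize}
\item The Stratonovich correction is bounded by \eqref{Qn cancel-2 A1}.
\item The Marcus compensator term is bounded by \eqref{wp-n estimate 2 C22 A2}.
\item The nonlinearity is bounded by $2\mathscr{c}\|u\|_{\Wlip}\|u\|_{H^s}^2$ via Lemma \ref{uux u M}; for this we use $J_n$ commuting with $\Pi$ and the inequality $\|J_nu\|\le\|u\|$.
\item The It\^o quadratic variation supplies the $V''$ term.
\end{itemize}
Together these reproduce $\mathbf V^{\Q_1,\Q_2,\widetilde h}_s(u)-2\Upgamma V'\|u\|^2$ in the drift. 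The martingale parts, localized by $\tau_N$, have zero conditional expectation. Under \ref{Hypo-DNI-1}, Gr\"onwall gives \eqref{u Hs bound V-1} uniformly in $n$ and $R$. We then pass to the limit $n\to\infty$ and $R\to\infty$ by Fatou and weak lower semicontinuity, exactly as in the proof of \eqref{X Hs bound}. Since $V(x)\to\infty$, the bound \eqref{u Hs bound V-1} forbids $\|u\|_{H^s}\to\infty$ in finite time, so $\tau^*=\infty$ almost surely.

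For \eqref{H-theta stabilty}, we define $\tau_N=\inf\{t:\|u_m\|_{H^s}\vee\|u\|_{H^s}\ge N\}$. We apply It\^o's formula to $\|u_m-u\|_{H^\theta}^2$ up to $\tau_N$. The differences are controlled as in Lemma \ref{cut-off global solution}: \eqref{Q1n difference Ito 1}--\eqref{Q1n difference Ito 2} and \eqref{M-flow difference Ito 1}--\eqref{M-flow difference Ito 2} are applied with $n=m=\infty$, and the transport difference is bounded by $C_N\|u_m-u\|_{H^\theta}^2$. Gr\"onwall then gives $\E[\sup_{t\le\tau_N}\|u_m-u\|^2_{H^\theta}\mid\F_0]\le C_N\|u_{0,m}-u_0\|_{H^\theta}^2\to0$. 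The complementary event $\{\tau_N<t\}$ has conditional probability at most $2\sup_m\E[V(\|\cdot\|^2_{H^s})\mid\F_0]/V(N^2)$. By \eqref{u Hs bound V-1} and $\|u_{0,m}\|_{H^s}\le R$, this is uniform in $m$ and tends to $0$ as $N\to\infty$. Splitting $\E[1\wedge\cdot]$ over the two events concludes.

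\textbf{Part (2).} Under \ref{Hypo-DNI-2} the drift of $V$ is nonpositive, which yields \eqref{u Hs bound V-2} directly. We then invoke Theorem \ref{Thm-SES-IM} with $\bfH=\Hdiv^s(\T^d)$ and $\widetilde\bfH=\Hdiv^\theta(\T^d)$. The compact embedding required by \ref{Hypo-SES H E} holds on the torus (Rellich). Condition \eqref{E V X} holds with $f=V$, starting from $x_0=0$ or any deterministic point. Measurability in $t$ follows from the c\`adl\`ag paths and Fubini.

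The restricted Feller property is the main step. Let $\varphi\in C_B(\Hdiv^{s,\theta})$ and take $x_m\to x$ in $H^\theta$ inside the ball $B_R$. Then \eqref{H-theta stabilty} gives convergence of $u(t,x_m)\to u(t,x)$ in probability in $H^\theta$. Since $\varphi$ is bounded and $H^\theta$-continuous, bounded convergence gives $\mathscr P_t\varphi(x_m)\to\mathscr P_t\varphi(x)$. I expect the subtle point to be verifying that the stability constant depends only on $R$, which is why the uniform bound \eqref{u Hs bound V-1} is inserted in the tail estimate.

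\textbf{Part (3).} Under \ref{Hypo-DNI-3} we run the It\^o computation at level $\theta$; since \ref{Hypo-DNI-3} holds for all $\sigma$, this is legitimate. The drift is then $\le-\mathscr G_3V(\|u\|^2_{W^{p,\infty}}/M^2)$. To get the exponential factor we use $V(\|u\|_{W^{p,\infty}}^2/M^2)\le V(\|u\|_{H^\theta}^2)$, so that $y(t)=\E[V(\|u(t)\|^2_{H^\theta})\mid\F_0]$ satisfies $y'\le-\mathscr G_3 y$, hence $y(t)\le V(\|u_0\|^2_{H^\theta})e^{-\mathscr G_3 t}$. Integrating the drift inequality, $\int_0^t\E[V(\cdot)]\le(y(0)-y(t))/\mathscr G_3$. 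Combining the two yields \eqref{u Wlip -> 0}. Since $V$ is bounded below by $0$ with $V(0)=0$, letting $t\to\infty$ shows the time averages of $\|u(t)\|_{W^{p,\infty}}$ concentrate at $0$, and the decay of $y$ gives $u(t)\to0$ in probability in $H^\theta$.

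For uniqueness, any invariant measure $\mu$ satisfies $\int\E\varphi(u(t,x))\,\mu(\d x)=\int\varphi\,\d\mu$ for every bounded Lipschitz $\varphi$ in $H^\theta$. Letting $t\to\infty$ with dominated convergence forces $\mu=\delta_0$. Because $\widetilde h(0)=0$, the point $0$ is stationary, so $\delta_0$ is indeed invariant. No compactness is used in this part, so it holds on both $\T^d$ and $\R^d$.
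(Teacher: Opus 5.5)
Your Part (2) (nonpositive drift, Theorem \ref{Thm-SES-IM}, restricted Feller property via \eqref{H-theta stabilty}) is the paper's argument, but Parts (1) and (3) each contain a step that fails.

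\textbf{Part (1): the cut-off.} Keeping the cut-off $\chi_R$ destroys the structure that \ref{Hypo-DNI-1} exploits. Set $B\triangleq 2\mathscr{c}\,V'(\|u\|^2_{H^s})\|u\|_{\Wlip}\|u\|^2_{H^s}$ and $C\triangleq V'(\|u\|^2_{H^s})\|\widetilde h(u)\|^2_{H^s}+2V''(\|u\|^2_{H^s})\langle\widetilde h(u),u\rangle^2_{H^s}$. Your drift bound contains $\chi_RB+\chi_R^2C$, while \ref{Hypo-DNI-1} only controls $B+C$. Since $\chi_RB+\chi_R^2C-(B+C)=-(1-\chi_R)\big(B+(1+\chi_R)C\big)$, the cut-off version exceeds $B+C$ whenever $\chi_R\in(0,1)$ and $B+(1+\chi_R)C<0$. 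That is exactly the fast-noise regime in which \ref{Hypo-DNI-1} is meant to be used, so no bound uniform in $R$ follows; the same problem affects your derivation of \eqref{u Hs bound V-2}. The paper removes the cut-off altogether, see \eqref{approximation scheme IC global}, and localizes only with $\tau_{n,N}$.

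\textbf{Part (1): the tail bound.} More seriously, your bound for $\{\tau_N<t\}$ invokes the fixed-time estimate \eqref{u Hs bound V-1}, which gives no control of $\sup_{r\le t}\|u(r)\|_{H^s}$. For the limits $u,u_m$ you can neither apply It\^o's formula at level $H^s$ nor use right-continuity in $H^s$, so even $\|u(\tau_N)\|_{H^s}\ge N$ on $\{\tau_N<t\}$ is unavailable. The missing idea is to transfer the stopped bound \eqref{Hs bound V tau-N} from the c\`adl\`ag approximations. Define $\tau^N$ with the strict inequality $\|u\|^2_{H^s}>N$ and use lower semicontinuity of the $H^s$-norm along \eqref{un to u IC}; this gives $\{\tau^N<t\}\subseteq\liminf_n\{\tau_{n,N}<t\}$. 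Fatou plus Chebyshev then give $\mathbb{P}(\tau^N<t\mid\mathcal F_0)\le\big(V(\|u_0\|^2_{H^s})+\mathscr{G}_2t\big)e^{\mathscr{G}_1t}/V(N)$. Likewise, it is this stopped estimate at the approximate level, not \eqref{u Hs bound V-1}, that rules out blow-up.

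\textbf{Part (3).} Here the comparison runs the wrong way. Put $f_2(t)\triangleq\mathbb{E}[V(\|u(t)\|^2_{W^{p,\infty}}/M^2)\mid\mathcal F_0]\le y(t)$. The inequality $y'\le-\mathscr{G}_3f_2$ does not imply $y'\le-\mathscr{G}_3y$, because $f_2\le y$ means $-\mathscr{G}_3f_2\ge-\mathscr{G}_3y$; in general $y$ is merely nonincreasing. Even if $y(t)\le y(0)e^{-\mathscr{G}_3t}$ held, combining it with $\mathscr{G}_3\int_0^tf_2\le y(0)-y(t)$ bounds $y(0)-y(t)$ from below, not from above, so \eqref{u Wlip -> 0} would still not follow. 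The correct device is Lemma \ref{Gronwall g f}: apply Gr\"onwall to $k(t)\triangleq\mathscr{G}_3\int_0^tf_2$. Using $y(t)\le y(0)-k(t)$ and $f_2\le y$ gives $k'\le\mathscr{G}_3\big(y(0)-k\big)$, hence $k(t)\le y(0)\big(1-e^{-\mathscr{G}_3t}\big)$.

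Your claim that $u(t)\to0$ in probability is therefore unsupported, and uniqueness should come from time averages instead. Test invariance of $\mu$ against the bounded function $1\wedge V(\|x\|^2_{W^{p,\infty}}/M^2)$, average over $[0,T]$, and use \eqref{u Wlip -> 0}. This gives $\int 1\wedge V(\|x\|^2_{W^{p,\infty}}/M^2)\,\mu(\mathrm{d}x)\le\int 1\wedge\frac{V(\|x\|^2_{H^\theta})}{\mathscr{G}_3T}\,\mu(\mathrm{d}x)\to0$ as $T\to\infty$, whence $\mu=\delta_0$.
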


	\begin{Remark}
		Regularization effects of fast-growing noise that guarantee global existence have been investigated. For fluid-type SPDEs, the related techniques appear to be  pioneered in \cite{Ren-Tang-Wang-2024-POTA}, see the subsequent development in an abstract framework \cite{Tang-Wang-2022-arXiv}, as well as \cite{Tang-2023-JFA,Tang-Yang-2023-AIHP,Tang-Wang-2024-CCM} for further concrete advances. Motivated by these works, we identify a hierarchy of conditions \ref{Hypo-DNI-1}, \ref{Hypo-DNI-2}, and \ref{Hypo-DNI-3}, which yield \ref{Thm-(u P)-estimate V}, \ref{Thm-(u P)-IM}, and \ref{Thm-(u P)-decay} in Theorem \ref{Thm-(u P)-long-time}, respectively. Broadly speaking, these conditions exploit the dissipativity far from the origin induced by fast-growing noise, as well as the dissipativity near the origin provided by damping. It is worth emphasizing that applying conditions \ref{Hypo-DNI-1}, \ref{Hypo-DNI-2}, and \ref{Hypo-DNI-3} at the technical level is more subtle than it may first appear. For instance, when Marcus noise is present, one cannot directly use these conditions to obtain a global \textit{a priori} estimate, and an additional limiting procedure is required; see Remark \ref{Remark singualrity IC}. We also note that dissipation from damping is not needed in \ref{Hypo-DNI-1}; see Remark \ref{Remark-DNI-Compare}.  
	\end{Remark}

	\subsection{Proof of Theorem \ref{Thm-(u P)-local}}\label{Section : SEuler-local-proof}

	In this section we prove Theorem \ref{Thm-(u P)-local}.
	Let $J_n$ be the mollifier given in \eqref{Define Jn}.
	By Lemma \ref{Lemma-Jn}, for $n\ge1$ and $s\ge0$, the operator $J_n$ maps $\Hdiv^s$ to $\Hdiv^s$, where $\Hdiv^s$ is defined in \eqref{H-div}. Accordingly, we consider the following approximation scheme for \eqref{Target problem (u P) damp}$_1$:
	\begin{align*}
		\d u+ \left\{\chi_R\big(\|u-u_0\|_{\Wp}\big)J_n\Pi[(J_n u\cdot\nn)J_n u]+\Upgamma u\right\}\d t
		= \Q_{1,n}X\,\circ\, {\rm d} W+\Q_{2,n}X\diamond {\rm d}L + \chi_R\big(\|u-u_0\|_{\Wp}\big)\widetilde h(u)\d \widetilde W,
	\end{align*}
	where $\{\Q_{1,n}\}_{n\ge1}$ and $\{\Q_{2,n}\}_{n\ge1}$ the sequences from Lemma \ref{Lemma:Qn}, and $\chi_R\in C^{\infty}([0,\infty);[0,1])$ is the cut-off function used in \eqref{eq : appro compressible} with $R>1$.  Under    \ref{Hypo-W L}, \ref{Hypo-Q-Pi}, and \ref{Hypo-h div}, we rewrite \eqref{Target problem (u P) damp} as
	\begin{equation}\label{approximation scheme IC}
		\left\{\begin{aligned}
			u(t)-u_0 \ 
			+&\int_0^{t}  \left\{\chi_R\big(\|u-u_0\|_{\Wp}\big)J_n\Pi[(J_n u\cdot\nn)J_n u]+\Upgamma u-\frac{1}{2} \mathcal{Q}_{1,n}^2u\right\}(t') \d t'\\
			= & \int_0^{t} \left(\Q_{1,n}u(t') \d W(t')+ \chi_R\big(\|u-u_0\|_{\Wp}\big)  \widetilde h\big(u(t')\big)\d \widetilde W(t')\right )\\
			&+\int_0^t\int_{|l|\le1} \Big\{\wp_n\big(1,l,u(t'-)\big)-u(t'-)\Big\}\, \widetilde{\eta}({\rm d}l,{\rm d}t')\\
			&+\int_0^t\int_{|l|\le1} \Big\{\wp_n\big(1,l,u(t')\big)-u(t')-l\cdot \Q_{2,n} u(t')\Big\}\, \nu({\rm d}l){\rm d}t',\\
			u(0)=u_0.\, \, &
		\end{aligned}
		\right.
	\end{equation}
	where the Marcus flow $\wp_n$ is defined by \eqref{regular Marcus flow Q2}, i.e.,
	\begin{equation*} 
		\wp_n(r)\triangleq\wp_n(r,l,f),\quad 
		\frac{{\rm d}}{{\rm d}r}\wp_n(r)= l\cdot \Q_{2,n}\wp_n(r),\quad r\in[0,1],  \quad \wp_n(0) = f.
	\end{equation*}
	
	\begin{proof}[Proof of  Theorem \ref{Thm-(u P)-local}]
		The proof of  Theorem \ref{Thm-(u P)-local} is very similar to that for Proposition \ref{Proposition-(q u)} and we only provide a sketch.
		
		Let $R>1$. Since the modified non-linear term $\chi_R\big(\|u-u_0\|_{\Wp}\big)J_n\Pi [(J_n u\cdot\nn)J_n u]$ can be estimated analogously to the term $G_{n,R}(X)$ defined in \eqref{GnR HR}, the proof of Lemma \ref{Lemma: existence of XnR} guarantees that  \eqref{approximation scheme IC} admits a global-in-time solution sequence $\{u_{R,n}\}_{n\ge1}\subset D([0,\infty);\Hdiv^s)$ $\pas$ 
		
		Repeating the proof of Lemma \ref{Lemma : Xn T estimates}, we conclude that  there exists a subsequence of $\{u_{R,n}\}_{n\ge1}$ (still denoted by $\{u_{R,n}\}_{n\ge1}$) that converges to a limit process $u_R$ in the following sense:
		\begin{equation*} 
			\lim_{n\to\infty}\sup_{t\in[0,T]}\|u_{R,n}(t)-u_R(t)\|_{H^{s'}}=0 \quad \text{and} \quad u_{R,n} \xrightarrow[n\to \infty]{} u_R \quad \text{in} \quad D([0,T];\Hdiv^{s'}), \quad T>0, \quad s'<s \quad \pas
		\end{equation*}
		Following the analysis in the proof of Lemma \ref{cut-off global solution}, we then show that the limit yields a unique global solution to the cut-off version of \eqref{Target problem (u P) damp} (obtained by replacing $J_n$ with the identity map $\I$ in \eqref{approximation scheme IC}). 
		
		Finally, removing the cut-off $\chi_R\big(\|u-u_0\|_{\Wp}\big)$ as described in \eqref{X=sum XR} produces a local solution to \eqref{Target problem (u P) damp} satisfying the blow-up criterion \eqref{Blow-up criterion u}. Uniqueness comes from the argument used in the proof of  Lemma \ref{cut-off global solution}.
	\end{proof}
	
	\begin{Remark}\label{Remark: local-(u P) viscosity}
		As in Remark~\ref{Remark:renormalization Qn}, the proof of Theorem~\ref{Thm-(u P)-local} relies on a structure-preserving mollification of the singular terms. This methodology is robust: the operator $(-\Delta)^{\ell/2}u$ with $\ell \in (0,2]$ can be seamlessly incorporated into the system. Rather than exploiting its parabolic smoothing effect, our framework treats diffusion purely as an additional singular term that induces regularity loss (see Remark \ref{Remark-SES-examples}). By applying the mollification $J_n(-\Delta)^{\ell/2}J_n u$ and imposing the adjusted regularity threshold $s > \frac{d}{2} + p + \max \{3\zeta, \ell, 1\}$, the results established in Theorem~\ref{Thm-(u P)-local} remains valid. Similarly,  this approach also accommodates systems with partial or degenerate viscosity (e.g., dissipation restricted exclusively to certain spatial directions).
	\end{Remark}

	\subsection{Proof of  Theorem \ref{Thm-(u P)-long-time}}\label{Section : SEuler-global-proof}

	We begin by noting the following observation. With the local solution $(u,\tau)$ obtained in Theorem \ref{Thm-(u P)-local} at hand, and as noted in Remarks \ref{Remark singualrity} and \ref{Remark singualrity IC}, the singularities in \eqref{Target problem (u P) damp} preclude the direct application of It\^o's formula to $\|u\|^2_{H^s}$ to derive a global-in-time estimate. More critically, it is not clear how to apply the mollifier $J_n$ (cf. \eqref{Define Jn}) to the equation to obtain an estimate for $\|J_n u\|^2_{H^s}$ and subsequently pass to the limit. Indeed, in the presence of a Marcus integral, applying $J_n$ to the term $\wp\big(1,l,u(t')\big)-u(t')-l\cdot\Q_2u(t')$ yields
	\[
	J_n\wp\big(1,l,u(t')\big)-J_nu(t')-l\cdot J_n\Q_2u(t'),
	\]
	which is inconsistent: $J_n\wp\big(1,l,u(t')\big)$ is generally \textbf{not} the Marcus flow associated with the operator $J_n\Q_2$ and initial condition $J_nu(t')$. 
	
	To obtain an $H^s$ bound, as mentioned in Remark \ref{Remark singualrity IC}, we adopt a different strategy: we first construct an approximation scheme whose solutions satisfy a uniform $H^s$ estimate by enforcing the conditions required for a global $H^s$ bound at the approximation level. Then, passing to the limit yields a solution to the original equation that satisfies an $H^s$ estimate, thereby providing the desired global $H^s$ bound.   More precisely, we choose the approximation scheme for \eqref{Target problem (u P) damp} to be \eqref{approximation scheme IC} without a cut-off, i.e.,
	\begin{equation}\label{approximation scheme IC global}
		\left\{\begin{aligned}
			u(t)-u_0 \ 
			+&\int_0^{t}  \left\{J_n\Pi[(J_n u\cdot\nn)J_n u]+\Upgamma u-\frac{1}{2} \mathcal{Q}_{1,n}^2u\right\}(t') \d t'\\
			= & \int_0^{t} \left(\Q_{1,n}u(t') \d W(t')+  \widetilde h\big(u(t')\big)\d \widetilde W(t')\right )\\
			&+\int_0^t\int_{|l|\le1} \Big\{\wp_n\big(1,l,u(t'-)\big)-u(t'-)\Big\}\, \widetilde{\eta}({\rm d}l,{\rm d}t')\\
			&+\int_0^t\int_{|l|\le1} \Big\{\wp_n\big(1,l,u(t')\big)-u(t')-l\cdot \Q_{2,n} u(t')\Big\}\, \nu({\rm d}l){\rm d}t',\\
			u(0)=u_0.\, \ &
		\end{aligned}
		\right.
	\end{equation}

	\begin{Lemma}\label{Lemma: approxiamtion IC global} 
		Let $d \geq 2$ and $p\ge 1$.  
		Assume \ref{Hypo-W L}, \ref{Hypo-Q-Pi}, \ref{Hypo-h div} with $\sigma> \frac{d}{2} + p$, and \ref{Hypo-DNI-1}. Let $s> \frac{d}{2} + p + \max \{3\zeta,1\}$ where $\zeta=\zeta_{\Q_1,\Q_2}$ is given in \eqref{zeta-Q12}.  For any $\mathcal{F}_{0}$-measurable $u_0\in \Hdiv^s$ and $n\ge1$,  \eqref{approximation scheme IC global} admits a global solution $u_n\in D([0,\infty);\Hdiv^s)$ such that 
		\begin{equation}\label{un Hs bound V}
			\sup_{n\ge1} \E\left[V(\|u_n(t)\|_{H^s}^2)\Big|\F_0\right]  
			\leq   \Big(V(\|u_0\|^2_{H^s})+\mathscr{G}_2t\Big) {\rm e}^{\mathscr{G}_1t},
			\quad t>0.
		\end{equation}
		Moreover, there exists an $\mathcal{F}_t$-progressively measurable $\Hdiv^s$-valued process $u=(u(t))_{t\ge 0}$  and a subsequence of $\{u_n\}_{n\ge1}$  $($still labeled as $\{u_n\}_{n\ge1}$ for simplicity$)$ such that for any $T>0$ and $s'<s$,
		\begin{equation}\label{un to u IC}
			\lim_{n\to\infty}\sup_{t\in[0,T]}\|u_{n}(t)-u(t)\|_{H^{s'}}=0\quad \text{and}\quad  u_{n}\xrightarrow[]{n\to \infty}u \  {\rm in}\ D([0,T];\Hdiv^{s'}),\quad T>0
			\quad \pas
		\end{equation}
	\end{Lemma}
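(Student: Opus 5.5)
Fix $n\ge1$ and treat \eqref{approximation scheme IC global} as an SDE with jumps in the Hilbert space $\Hdiv^s$. All coefficients there are locally Lipschitz on $\Hdiv^s$: $J_n\Pi[(J_nu\cdot\nabla)J_nu]$ is smooth in $u$; $\Q_{1,n},\Q_{1,n}^2\in\LL(H^s;H^s)$ by Lemma \ref{Lemma:Qn}; $\widetilde h$ is covered by \ref{Hypo-h div} with $\sigma=s$; and $\wp_n(1,l,\cdot)$ is linear and bounded, with $\|\wp_n(1,l,f)\|_{H^s}\le {\rm e}^{C_{2,1}|l|}\|f\|_{H^s}$ by \eqref{Phi norm-2}. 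As in Lemma \ref{Lemma: existence of XnR}, we obtain a unique local c\`adl\`ag solution $u_n$ in $\Hdiv^s$ with maximal time $\tau_n$. Here \ref{Hypo-Q-Pi} guarantees that the $\Q_{i,n}=J_n\Q_iJ_n$ and the flow $\wp_n$ preserve $\Hdiv^s$, since $J_n$ commutes with $\Pi$.

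To show $\tau_n=\infty$ and obtain \eqref{un Hs bound V}, apply It\^o's formula (Lemma \ref{Ito formula}) to $V(\|u_n\|_{H^s}^2)$ up to $\tau_{n,N}=\inf\{t:\|u_n(t)\|_{H^s}\ge N\}$. This is legitimate at fixed $n$ because all terms are bounded in $H^s$. The drift then contains the following pieces.
\begin{itemize}
\item The transport term. Write $v=J_nu$ and use the commutation of $J_n$ with $\Pi,\D^s$ and $\Pi v=v$, so that $\langle J_n\Pi[(v\cdot\nabla)v],u\rangle_{H^s}=\langle\Pi(v\cdot\nabla)v,v\rangle_{H^s}$. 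By Lemma \ref{uux u M} this is bounded by $\mathscr{c}\|v\|_{\Wlip}\|v\|_{H^s}^2\le\mathscr{c}\|u\|_{\Wlip}\|u\|^2_{H^s}$, provided $J_n$ is a contraction on $\Wlip$. If $J_n$ is not a contraction there, I would instead bound the term by $\|J_nu\|_{\Wlip}$ and check that \ref{Hypo-DNI-1} is applied at $f=u_n$ consistently; this is where some care is needed.
\item The damping term, $-2\Upgamma V'\|u\|^2\le0$.
\item The Stratonovich correction together with the quadratic variation, $V'(\langle\Q_{1,n}^2u,u\rangle_{H^s}+\|\Q_{1,n}u\|^2_{H^s})+2V''\langle\Q_{1,n}u,u\rangle^2_{H^s}$. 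Using $V''\le0$, this is bounded by $\mathscr a_1V'\|u\|^2$ via \eqref{Qn cancel-2 A1}.
\item The It\^o forcing, $V'\|\widetilde h(u)\|^2+2V''\langle\widetilde h(u),u\rangle^2$.
\item The Marcus compensator. Its $\nu$-integral is at most $\mathscr a_2V$ by \eqref{wp-n estimate 2 C22 A2}.
\end{itemize}
Collecting these, the drift is at most $\mathbf V^{\Q_1,\Q_2,\widetilde h}_s(u_n)$. By \ref{Hypo-DNI-1} this is bounded by $\mathscr G_1V+\mathscr G_2$. The martingale parts vanish in conditional expectation given $\F_0$ after stopping; the jump martingale is controlled via \eqref{wp-n estimate 1 C21}. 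Gr\"onwall then gives $\E[V(\|u_n(t\wedge\tau_{n,N})\|^2_{H^s})|\F_0]\le (V(\|u_0\|^2_{H^s})+\mathscr G_2t){\rm e}^{\mathscr G_1t}$. Since $V\to\infty$, we get $\p(\tau_{n,N}\le t)\to0$ as $N\to\infty$, hence $\tau_n=\infty$. Fatou's lemma then yields \eqref{un Hs bound V} uniformly in $n$.

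For the convergence \eqref{un to u IC}, I follow Lemma \ref{Lemma : Xn T estimates}\ref{Convergence of X-n TnmN}--\ref{Convergence of X-n T}. Work with stopping times $\tau_N^{n,m}$ defined by $\|u_n\|_{H^s}\vee\|u_m\|_{H^s}\le N$, and apply It\^o's formula to $\|u_n-u_m\|^2_{H^\theta}$. The nonlinear difference is handled as in Lemma \ref{Lemma-F-difference}, and the noise differences via Lemmas \ref{Lemma-Q1 n m} and \ref{Lemma-Q2 n m}. This gives Cauchy convergence in probability in $\sup_{[0,T]}\|\cdot\|_{H^\theta}$.

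The main obstacle is that \eqref{un Hs bound V} controls only $V$ pointwise in $t$, not $\E\sup_t\|u_n\|^2_{H^s}$. So $\p(\tau_N^{n,m}<T)\to0$ uniformly in $n,m$ needs a separate argument. My first attempt is a BDG estimate for $\sup_{t\le T}V(\|u_n(t)\|^2_{H^s})$: the martingale integrands are bounded by $V'\|u\|^2\lesssim V$, using \eqref{Ito-2}, \ref{Hypo-h div} and \eqref{wp-n estimate 1 C21} with $V'x\le V$. Applied on the stopped process, this gives $\E[\sup_{t\le T}V(\|u_n\|^2_{H^s})|\F_0]\le C_T(1+V(\|u_0\|^2_{H^s}))$ uniformly in $n$, and hence the needed uniform tail bound. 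The remaining steps (extracting an a.s.\ subsequence, identifying the $H^s$-valued limit by weak lower semicontinuity, interpolating up to $s'<s$, and upgrading measurability via Remark \ref{Remark : measurability-upgrade}) then go exactly as in Lemma \ref{Lemma : Xn T estimates}.
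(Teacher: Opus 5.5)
Your construction of $u_n$ and your derivation of \eqref{un Hs bound V} follow the paper's argument: a local solution at fixed $n$, It\^o's formula for $V(\|u_n\|^2_{H^s})$ up to $\tau_{n,N}$, the drift bounded by $\mathbf{V}_{s}^{\Q_1,\Q_2,\widetilde h}(u_n)-2\Upgamma V'(\|u_n\|^2_{H^s})\|u_n\|^2_{H^s}$, then Gr\"onwall and Chebyshev. Your worry about $\|J_nu\|_{\Wlip}$ is only bookkeeping. By Lemma \ref{Lemma-Jn}, $\|J_nu\|_{\Wlip}\le C\|u\|_{\Wlip}$ uniformly in $n$, and $C$ can be absorbed into $\mathscr{c}$.

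\textbf{The gap.} The step that fails is your BDG resolution of the ``main obstacle''. The bound $\E[\sup_{t\le T}V(\|u_n(t)\|^2_{H^s})\mid\F_0]\le C_T(1+V(\|u_0\|^2_{H^s}))$ needs every martingale integrand to be $\lesssim V(\|u_n\|^2_{H^s})$.
\begin{itemize}
\item For the $W$-integral this holds, by \eqref{Ito-2} and $V'(x)x\le V(x)$.
\item For the jump integral it holds, by \eqref{wp-n estimate 1 C21}.
\item For the $\widetilde W$-integral, with integrand $2V'(\|u\|^2_{H^s})\langle\widetilde h(u),u\rangle_{H^s}$, it fails.
\end{itemize}
\ref{Hypo-h div} allows an arbitrary non-decreasing $\widetilde K$, and superlinear $\widetilde h$ is exactly the regime \ref{Hypo-DNI-1} targets. \ref{Hypo-DNI-1} controls $V'\|\widetilde h(u)\|^2_{H^s}$ only through cancellation with $2V''\langle\widetilde h(u),u\rangle^2_{H^s}\le0$. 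It gives no bound on the quadratic variation $4V'^2\langle\widetilde h(u),u\rangle^2_{H^s}$.

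A concrete counterexample is Example \ref{Example-decay-noise} with $V(x)=\log(1+x)$. It satisfies \ref{Hypo-DNI-2}, hence \ref{Hypo-DNI-1}. There the integrand equals $2\mathfrak g(M^{-1}\|u\|_{W^{p,\infty}})\|u\|^2_{H^s}/(1+\|u\|^2_{H^s})$, and $A<1$ forces $\mathfrak g(y)\ge\sqrt{2\mathscr{c}My}$. Along $u=\lambda\phi$ the integrand grows like $\sqrt\lambda$, while $V$ grows like $\log\lambda$. So a uniform $\E\sup$ bound is not available in general.

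\textbf{The fix.} You do not need that bound. The stopped estimate \eqref{Hs bound V tau-N}, which you already have, suffices. Combine it with right-continuity ($\|u_n(\tau_{n,N})\|^2_{H^s}\ge N$ on $\{\tau_{n,N}\le t\}$) and the monotonicity of $V$. This gives
\[
\sup_{n\ge1}\p\Big(\sup_{r\le t}\|u_n(r)\|^2_{H^s}>N\,\Big|\,\F_0\Big)\le\frac{\big(V(\|u_0\|^2_{H^s})+\mathscr{G}_2t\big){\rm e}^{\mathscr{G}_1t}}{V(N)},\qquad N\ge t,
\]
which is the content of \eqref{eq: un tau-n-N estimate}.

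This tail bound replaces each use of \eqref{Xn uniform bound} in Lemma \ref{Lemma : Xn T estimates}:
\begin{itemize}
\item It makes $\p(\tau_N^{n,m}<T\mid\F_0)$ small uniformly in $n,m$.
\item Set $a_n\triangleq\sup_{t\le T}\|u_n(t)\|^2_{H^s}$. Since $\{\liminf_na_n>N\}\subseteq\liminf_n\{a_n>N\}$ and $\p(\liminf_nA_n)\le\liminf_n\p(A_n)$, it gives $\liminf_na_n<\infty$ a.s. Weak lower semicontinuity then shows the limit is $\Hdiv^s$-valued.
\item It controls the term $\p(Z_n>M\mid\F_0)$ in the interpolation step.
\end{itemize}
Equivalently, ${\rm e}^{-\mathscr G_1t}\big(V(\|u_n(t\wedge\tau_{n,N})\|^2_{H^s})+\mathscr G_2/\mathscr G_1\big)$ is a nonnegative supermartingale, and the maximal inequality yields the same bound.

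With this substitution the rest of your plan goes through as written. This is the route implicit in the paper's brief remark that \eqref{un to u IC} follows by repeating \ref{Convergence of X-n T} of Lemma \ref{Lemma : Xn T estimates}.
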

	\begin{proof}
		Similar to the proof of  Lemma \ref{Lemma: existence of XnR}, 
		for any $n\ge1$, all the coefficients in \eqref{approximation scheme IC global} are locally Lipschitz continuous in $\Hdiv^s$ locally uniform in $t$.  Therefore, \eqref{approximation scheme IC global} admits a maximal solution $u_n\in D([0,\tau^*_n);\Hdiv^s)$. It sufficies to prove \eqref{un Hs bound V}.
		
		Define the stopping time
		\begin{equation} 
			\tau_{n,N} \triangleq N \land \inf \big\{ t \ge 0 : 
			\| u_n(t) \|^2_{H^s} > N \big\}, \quad N \ge 1.\label{eq: un tau-n-N}
		\end{equation}
		and recall \eqref{V wp-n f}, i.e.,
		$
		V_s^{\wp_n,f}(r,l)\triangleq V(\|\wp_n(r,l, f)\|_{H^s}^2) - V(\norm{f}_{H^s}^2)$ with $r\in[0,1]$ and $f\in H^{s}$. 
		For the function $V\in\mathscr{V}$ given by
		\ref{Hypo-DNI-1}, 
		we use It\^o's formula (Lemma \ref{Ito formula}), 
		to derive 
		that for any $t>0$,
		\begin{align*}
			&\E  \left[V(\| u_n(t\wedge \tau_{n,N})\|^2_{H^s})\big|\F_0\right]
			-V(\| u_0\|^2_{H^s}) \notag \\
			=\ &\E \bigg[
			\int_0^{t\wedge \tau_{n,N}}V'(\|u_n(t')\|^2_{H^s})
			\Big(-2\IP{J_n [(J_n u_n(t')\cdot\nn)J_n u_n(t')],u_n(t')}_{H^s}
			-2\Upgamma \|u_n(t')\|^2_{H^s}\Big)\, {\rm d}t'\bigg|\F_0\bigg]\notag\\
			&+\E \bigg[
			\int_0^{t\wedge \tau_{n,N}}V'(\|u_n(t')\|^2_{H^s})
			\Big(\IP{\Q_{1,n}^2u_n(t') , u_n(t')}_{H^s}+\|\Q_{1,n} u_n(t')\|^2_{H^s}
			+\| \widetilde h(u_n(t')) \|^2_{H^s}\Big)\, {\rm d}t'\bigg|\F_0\bigg]\notag\\
			&+\E \bigg[
			\int_0^{t\wedge \tau_{n,N}} 2V''(\|u_n(t')\|^2_{H^s}) \Big(\IP{\Q_{1,n}u_n(t'), u_n(t')}^2_{H^{s}}
			+\IP{  \widetilde h(u_n(t')),u_n(t')}_{H^s}^2 \Big)\, {\rm d}t'\bigg|\F_0\bigg]\notag\\
			&+ \E \left[
			\int_0^{t\wedge \tau_N}\int_{|l|\le1} 
			\bigg\{V_s^{\wp_n,u_n(t')}(1,l)-2l\cdot V'(\|u_n(t')\|^2_{H^s})\IP{\Q_{2,n} u_n(t'),u_n(t')}_{H^s}\bigg\}\, \nu({\rm d}l){\rm d}t'\bigg|\F_0\right].
		\end{align*}
		Recall the function
		$\mathbf{V}_{s}^{\Q_1,\, \Q_2,\, \widetilde h}(f)$ defined in \eqref{V Lyapunov type term}, namely, 
		\begin{align*}  
			\mathbf{V}_{s}^{\Q_1,\, \Q_2,\, \widetilde h}(f)
			\triangleq V'(\|f\|^2_{H^s}) \bigg\{  
			\big(\mathscr{a}_1 + 2\mathscr{c}  \|f\|_{\Wlip} \big) \|f\|^2_{H^s}    + \|\widetilde h(f)\|^2_{H^s} \bigg\}  +2V''(\|f\|^2_{H^s}) \bIP{\widetilde h(f), f }_{H^s}^2+\mathscr{a}_2 V(\|f\|^2_{H^{s}}).
		\end{align*}  
		Using the properties of $V$ (cf. \eqref{SCRV}), \ref{Hypo-h div}, \eqref{Qn cancel-2},  \eqref{wp-n estimate 2 C22}, \eqref{A1=C12 A2=C22}, and Lemma
		\ref{uux u M}, we arrive at
		\begin{align*}
			&\E  \left[V(\| u_n(t\wedge \tau_{n,N})\|^2_{H^s})\big|\F_0\right]
			-V(\| u_0\|^2_{H^s}) \notag \\
			\leq \ &  \E \bigg[
			\int_0^{t\wedge \tau_{n,N}}V'(\|u_n(t')\|^2_{H^s})
			\Big(\big(\mathscr{a}_1- 2\Upgamma + 2\mathscr{c}  \|u_n(t')\|_{\Wlip} \big) \|u_n(t')\|^2_{H^s}+\|\widetilde h(u_n) \|^2_{H^s} \Big)\, {\rm d}t'\bigg|\F_0\bigg]\notag\\
			&+ \E \bigg[
			\int_0^{t\wedge \tau_{n,N}}
			2V''(\| u_n(t')\|^2_{H^s})
			\IP{\widetilde h(u_n(t')),  u_n(t')}^2_{H^{s}}\,{\rm d}t'\Big|\F_0\bigg] +\mathscr{a}_2 \E \left[
			\int_0^{t\wedge \tau_{n,N}} 
			V(\| u_n(t')\|^2_{H^s}) \d t'\bigg|\F_0\right]\notag\\
			\leq \ &  \E \bigg[
			\int_0^{t\wedge \tau_{n,N}}\left(\mathbf{V}_{s}^{\Q_1,\, \Q_2,\, \widetilde h}(u_n(t'))-2\Upgamma V'(\|u_n(t')\|^2_{H^{s}} )\|u_n(t')\|^2_{H^{s}} \right)\d t'\bigg|\F_0\bigg],
		\end{align*}
		which together with \ref{Hypo-DNI-1} yields that
		\begin{align*}
			\E  \left[V(\|u_n(t\wedge \tau_{n,N})\|^2_{H^s})\big|\F_0\right]
			-V(\| u_0\|^2_{H^s})  
			\leq  
			\mathscr{G}_1\int_0^{t} \E \left[V(\|u_n(t'\land\tau_{n,N})\|^2_{H^s})
			\Big|\F_0\right]\, {\rm d}t'+\mathscr{G}_2t.
		\end{align*}
		Thanks to Gr\"{o}nwall's inequality, we have
		\begin{equation}\label{Hs bound V tau-N}
			\sup_{n,\, N\ge 1}\E  \left[V(\|u_n(t\wedge \tau_{n,N})\|^2_{H^s})\Big|\F_0\right]
			\leq \Big(V(\|u_0\|^2_{H^s})+\mathscr{G}_2t\Big) {\rm e}^{\mathscr{G}_1t}, 
			\quad t>0.
		\end{equation}
		Accordingly, by the c\`adl\`ag property of $u_n(t)$ in $\Hdiv^s$, we find that for $N> t>0$,
		\begin{align} 
			\p\big(\tau_n^*<t |\F_0\big)  \le\,  \p\big(\tau_{n,N}<t|\F_0\big) 
			\le \frac{\Big(V(\|u_0\|^2_{H^s})+\mathscr{G}_2t\Big) {\rm e}^{\mathscr{G}_1t}}{V(N)}.\label{eq: un tau-n-N estimate}
		\end{align} 
		Letting $N\uparrow\infty$ and then $t\uparrow\infty$ 
		yields
		$\p(\tau_n^*<\infty|\F_0)=0$. 
		Hence we obtain $\p(\tau_n^*<\infty)=0.$ Then \eqref{un Hs bound V} is a consequence of \eqref{Hs bound V tau-N}. Note that $\Hdiv^s$ is a closed subspace of $H^s$.
		Given \eqref{un Hs bound V}, the convergence \eqref{un to u IC}  can be established by repeating the arguments used in the proof of   \ref{Convergence of X-n T} in Lemma \ref{Lemma : Xn T estimates}, For  brevity, the details are omitted.
	\end{proof}
	
	\subsubsection{Global Regularity: Proof of Theorem \ref{Thm-(u P)-long-time} \ref{Thm-(u P)-estimate V}} 
	
	Following the analysis in proving Lemma \ref{cut-off global solution}, we show that the limit $u$ obtained in Lemma \ref{Lemma: approxiamtion IC global} is the unique global solution to \eqref{Target problem (u P) damp}. Moreover, \eqref{u Hs bound V-1} follows from \eqref{un to u IC} and \eqref{un Hs bound V}. The details are omitted for brevity. It suffices to prove \eqref{H-theta stabilty}. 
	It follows  from the linearity of operators $\Q_1$, $\Q_2$ and the Marcus flow $\wp(1,l,\cdot)$ that  $z_m\triangleq u_m-u$ satisfies
	\begin{align*} 
		z_m (t)  
		=\ & \int_0^t \left\{-\Pi[(u_m\cdot\nn)u_m]+\Pi[(u\cdot\nn)u]-\Upgamma z_m+\frac{1}{2}\Q_1^2z_m\right\} (t') \d t'\notag\\
		&+\int_0^t\int_{|l|\le1} \left\{\wp\big(1,l,z_m(t')\big)-z_m (t')-l \cdot \Q_2 z_m (t')\right\}\, \nu({\rm d}l){\rm d}t'\notag\\
		&+\int_0^t 
		\Q_1 z_m(t') \d W(t') 
		+\int_0^t\left\{\widetilde h(u_m(t'))-\widetilde h(u(t'))\right\}\d \widetilde W(t')\notag \\
		&+ \int_0^t\int_{|l|\le1} \left\{\wp\big(1,l,z_m(t'-)\big)-z_m (t'-)\right\}\, \widetilde{\eta}({\rm d}l,{\rm d}t'),\qquad z_m(0)=0.
	\end{align*}
	Define the stopping times:
	\begin{equation}\label{eq: tau-m-N tau-N}
		\tau_m^{N}\triangleq N \land\inf\{t\ge 0: \|u_{m}(t)\|^2_{H^s} > N\},\quad \tau^{N}\triangleq N \land \inf\{t\ge 0: \|u(t)\|^2_{H^s}> N\},\ \ m, N\ge1
	\end{equation}

	Note that the fixed-time expectation bound \eqref{u Hs bound V-1} yields no path information, and it is not clear whether $u_m$ and $u$ have c\`adl\`ag paths in $\Hdiv^s$. Hence,  we  cannot  \textbf{directly} estimate
	$\p(\tau_m^N<t |\F_0)$ and $\p(\tau^N<t |\F_0)$ by using Chebyshev's inequality.

	Before proceeding, we justify the measurability of $\tau^N$ defined in \eqref{eq: un tau-n-N} for $N\ge t$.  
	We recall the approximate solutions $u_n$ satisfying \eqref{un to u IC}. Although $u$ is not \textit{a priori} known to possess c\`adl\`ag paths in the strong $H^s$-topology, the strong convergence in \eqref{un to u IC} ensures that $u$ is right-continuous in $\Hdiv^{s'}$. Combined with the lower semicontinuity of the $H^s$-norm with respect to the $H^{s'}$-topology (which readily follows from the weak compactness in $H^s$ alongside the uniqueness of limits in $H^{s'}$), the mapping $t \mapsto \|u(t)\|_{H^s}^2$ is inherently right-lower semicontinuous. Consequently, any strict exceedance of $N$ is maintained over a short interval to the right, which allows us to reduce the uncountable infimum to a countable union over rationals:
	\begin{equation*}
		\left\{ \tau^N < t \right\} = \bigcup_{q \in \mathbb{Q} \cap [0,t)} \Big\{ \|u(q)\|_{H^s}^2 > N \Big\},\quad N\ge t.
	\end{equation*}
	Since $u$ is pointwise measurable, this countable union rigorously guarantees that $\tau^N$ is a well-defined $\F_t$-stopping time.
	
	Now we verify the strict event inclusion 
	\begin{equation}\label{eq: strict inclusion}
		\left\{ \tau^N < t \right\} \subseteq \liminf_{n\to\infty} \left\{ \tau_{n,N} < t \right\}.
	\end{equation} 
	Fix an arbitrary time $t \le N$. 
	Suppose $\omega \in \{\tau^N < t\}$. By definition of the infimum, there exists a time $r \in [0,t)$ such that $\|u(r)\|_{H^s}^2 > N$. Relying on the pointwise lower semicontinuity, we obtain:
	\begin{equation*}
		\liminf_{n\to\infty} \|u_n(r)\|_{H^s}^2 \ge \|u(r)\|_{H^s}^2 > N.
	\end{equation*}
	Crucially, this strict inequality guarantees the existence of an integer $n_0(\omega)$ such that for all $n \ge n_0(\omega)$, $\|u_n(r)\|_{H^s}^2 > N$. Since $r < t \le N$, the truncation by $N$ for $u_n$ is similarly inactive, which intrinsically implies $\tau_{n,N} \le r < t$. Thus, for all $n \ge n_0(\omega)$, we must have $\tau_{n,N} \le r < t$.  This yields the exact event inclusion \eqref{eq: strict inclusion}.

	Applying Fatou's lemma to the indicator functions associated with \eqref{eq: strict inclusion}, and utilizing Chebyshev's inequality along with the uniform bound \eqref{Hs bound V tau-N}, we obtain:
	\begin{align*}
		& \p\left(\tau^N < t \;\big|\; \F_0\right) \\
		\le\ & \p\left( \liminf_{n\to\infty} \left\{ \tau_{n,N} < t \right\} \;\bigg|\; \F_0 \right) \\
		\le\ &  \liminf_{n\to\infty} \frac{\E \left[V(\|u_n(t\wedge \tau_{n,N})\|^2_{H^s})\Big|\F_0\right]}{V(N)} \\
		\le\ &  \frac{\Big(V(\|u_0\|^2_{H^s})+\mathscr{G}_2t\Big){\rm e}^{\mathscr{G}_1t}}{V(N)}.
	\end{align*}
	Similarly, for $u_m$, we can also carry out the estimate at approximate level and then obtain 
	\begin{align*}
		\p\left(\tau_m^N < t \;\big|\; \F_0\right) 
		\le \frac{\Big(V(\|u_{0,m}\|^2_{H^s})+\mathscr{G}_2t\Big){\rm e}^{\mathscr{G}_1t}}{V(N)}.
	\end{align*}
	Therefore, we obtain that 
	\begin{align*}
		\p\left(\tau_m^N\land \tau^N<t\big|\F_0\right) \leq 
		\frac{\Big(V(\|u_{0,m}\|^2_{H^s})+V(\|u_0\|^2_{H^s})+2\mathscr{G}_2t\Big){\rm e}^{\mathscr{G}_1t}}{V(N)},\quad m\ge 1,\quad N>t.
	\end{align*}
	Using Lemma \ref{KP-commutator} leads 
	to the following
	inequality:
	\begin{align*}
		\left|\bIP{\Pi[(u_m\cdot\nn)u_m]-\Pi[(u\cdot\nn)u],u_m-u}_{H^{\theta}}\right|
		\lesssim \left(\|u\|_{H^{\theta+1}}+\|u_m\|_{H^{\theta+1}}\right)
		\|u_m-u\|^2_{H^\theta}.
	\end{align*}
	Similar to \eqref{Znm-Ito}, we 
	use It\^o's formula (Lemma \ref{Ito formula}), Remark  \ref{Remark: wp-n estimates}
	and \ref{Hypo-h div} to obtain that for $t\ge0$, $m\ge1$, and $N>t$,
	\begin{align*} 
		&\E\left[\norm{u_{m}\big(t\land \tau_m^N\land \tau^N\big)
			-u\big(t\land \tau_m^N\land \tau^N\big)}^2_{H^\theta}\Big|\F_0\right]\\
		\lesssim\ &   
		\norm{u_{0,m}-u_{0}}^2_{H^\theta}
		+\E\left[\int_0^{t\land \tau_m^N\land \tau^N}\big(1+N+\widetilde K(t',N)\big)
		\norm{u_{m}(t')-u(t')}^2_{H^\theta}\d t'\bigg|\F_0\right].
	\end{align*}
	From the above estimates, we find
	a function $\mathfrak{a}(t,N)$, 
	non-decreasing in $N$ and locally 
	integrable in $t$ such that for all $t>0$, $m\ge1$, and $N> t$,
	\begin{align*}
		&\E \left[
		1\land \norm{u_{m}(t)-u(t)}^2_{H^\theta}\Big|\F_0\right]\\
		=\ &\E \left[
		\left(1\land \norm{u_{m}(t)-u(t)}^2_{H^\theta}\right)
		\left({\bf 1}_{\{\tau_m^N\land \tau^N<t\}}+{\bf 1}_{\{\tau_m^N\land \tau^N\ge t\}}\right)
		\Big|\F_0\right]\\
		\leq \ & \p\left(\tau_m^N\land \tau^N<t\big|\F_0\right)+\E \left[
		\left(1\land \norm{u_{m}\big(t\land \tau_m^N\land \tau^N\big)
			-u\big(t\land \tau_m^N\land \tau^N\big)}^2_{H^\theta}\right)
		\Big|\F_0\right]\\
		\leq\ & 
		\frac{\Big(V(\|u_{0,m}\|^2_{H^s})+V(\|u_0\|^2_{H^s})+2\mathscr{G}_2t\Big){\rm e}^{\mathscr{G}_1t}}{V(N)}
		+\Big[1\land \mathfrak{a}(t,N)\|u_{0,m}-u_0\|^2_{H^\theta}\Big].
	\end{align*}
	
	By \eqref{u-0-m condition}, for any fixed $N > t$, taking the limit supremum as $m \to \infty$ on both sides yields:
	$$ \limsup_{m\to\infty}\E \left[1\land \norm{u_{m}(t)-u(t)}^2_{H^\theta}\Big|\F_0\right] \le \frac{\Big(V(R^2)+V(\|u_0\|^2_{H^s})+2\mathscr{G}_2t\Big){\rm e}^{\mathscr{G}_1t}}{V(N)}. $$
	Since this inequality holds for all sufficiently large $N$ (i.e., $N > t$), we  pass to the limit $N \to \infty$ to obtain \eqref{H-theta stabilty}.
	
	\begin{Remark}
		To accommodate the weak lower semicontinuity, we  define the stopping times using a strict inequality (i.e., ``$>N$'') in \eqref{eq: un tau-n-N} and \eqref{eq: tau-m-N tau-N}. If we still used $\ge N$ in them, then \eqref{eq: strict inclusion} would not hold.
	\end{Remark}

	\subsubsection{Existence of an Invariant Probability Measure: Proof of Theorem  \ref{Thm-(u P)-long-time} \ref{Thm-(u P)-IM}} 
	Recall that $\widetilde h$ is time-homogeneous. We recall the Markov propagator $\{\mathscr{P}_{t}\}_{t\ge0}$ defined by \eqref{Markov semigroup-SES} with $\bfH=\Hdiv^s$. Recall the space $\Hdiv^{s,\theta}$ defined in \eqref{H-s1 s2}.

	\begin{Lemma}\label{Lemma Euler restricted Feller}
		Under the conditions of Theorem  \ref{Thm-(u P)-long-time} \ref{Thm-(u P)-IM}, the Markov semigroup $\{\mathscr{P}_{t}\}_{t\ge0}$ associated with \eqref{SEuler-(u P) damp} is  restricted Feller under mismatch $H^{\theta}$ topology in the sense of Definition \ref{Locally Feller definition} with $E=\Hdiv^{s}$.
	\end{Lemma}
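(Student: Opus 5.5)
We have to verify the two conditions of Definition \ref{Locally Feller definition} for $E=\Hdiv^s$, with $d_E$ the $H^s$-metric and $\widetilde d_E$ the $H^\theta$-metric, anchored at some fixed $x_0\in\Hdiv^s$. \ref{Hypo-generalized KB} is immediate: lower semicontinuity of $\|\cdot-x_0\|_{H^s}$ in the $H^\theta$-topology follows from weak compactness of $H^s$-balls together with uniqueness of limits in $H^\theta$. This is the same argument already used before \eqref{eq: strict inclusion}.

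For condition (i), fix $\phi\in\mathscr{M}_B(\Hdiv^s)$ and $x\in\Hdiv^s$. By Theorem \ref{Thm-(u P)-long-time} \ref{Thm-(u P)-estimate V}, which applies since \ref{Hypo-DNI-2} implies \ref{Hypo-DNI-1}, the solution $u(t,x)$ is global. By Lemma \ref{Lemma: approxiamtion IC global} it is progressively measurable as an $\Hdiv^s$-valued process; Remark \ref{Remark : measurability-upgrade} upgrades $H^{s'}$-measurability to $H^s$-measurability. Hence $(\omega,t)\mapsto\phi(u(t,x))$ is jointly measurable and bounded. Fubini then gives Borel measurability of $t\mapsto\E[\phi(u(t,x))]=[\mathscr{P}_t\phi](x)$.

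For condition (ii), fix $t\ge0$, $R>0$ and $\varphi\in C_B(\Hdiv^{s,\theta})$. Since $\Hdiv^{s,\theta}$ is metrizable, it suffices to check sequential continuity. Take $x_m,x\in B^{d_E}_R(x_0)$ with $\|x_m-x\|_{H^\theta}\to0$. Then $\sup_m\|x_m\|_{H^s}\le R+\|x_0\|_{H^s}$, so \eqref{u-0-m condition} holds with deterministic data, $\mathcal{F}_0$-measurable. By \eqref{H-theta stabilty},
\[
\E\big[1\land\|u(t,x_m)-u(t,x)\|_{H^\theta}^2\big]\to0,
\]
so $u(t,x_m)\to u(t,x)$ in probability in $H^\theta$. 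Since $\varphi$ is bounded and $H^\theta$-continuous, we then need $\E[\varphi(u(t,x_m))]\to\E[\varphi(u(t,x))]$.

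The only subtlety is that continuous mapping under convergence in probability normally requires uniform continuity or tightness. We handle this with the subsequence principle. Every subsequence of $(x_m)$ has a further subsequence along which $u(t,x_{m_k})\to u(t,x)$ almost surely in $H^\theta$. Along it, $\varphi(u(t,x_{m_k}))\to\varphi(u(t,x))$ almost surely, and dominated convergence gives convergence of expectations. Since the limit does not depend on the subsequence, the whole sequence converges. This yields $H^\theta$-continuity of $\mathscr{P}_t\varphi|_{B_R^{d_E}(x_0)}$, and the lemma follows.

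I expect the main care to be needed in checking that the Markov property and pathwise uniqueness legitimately make $\mathscr{P}_t$ a semigroup on $\mathscr{M}_B(\Hdiv^s)$. The continuity step itself follows directly from \eqref{H-theta stabilty}.
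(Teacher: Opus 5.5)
Your proposal is correct and follows essentially the paper's route: you prove sequential continuity on strong $H^s$-balls in the $H^\theta$-metric, using the stability estimate \eqref{H-theta stabilty} (after taking expectations) to get convergence in probability, and then pass to expectations by a bounded-convergence step, where you use the subsequence principle with dominated convergence in place of the paper's appeal to Vitali's theorem. Your only additions are harmless: you check the temporal measurability condition (i) of Definition \ref{Locally Feller definition} explicitly via progressive measurability and Fubini (the paper leaves this to the discussion following that definition), and you center the ball at $x_0$ rather than at the origin.
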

	
	\begin{proof}
		We only need to prove the case  $t>0$. Let $t > 0$, $R>0$, and $\varphi \in C_B(\Hdiv^{s,\theta})$ be arbitrarily given. Since the $H^\theta$-topology is metrizable, it suffices to prove the sequential continuity of the restricted mapping $x \mapsto \mathscr{P}_t\varphi(x)$ on the set $B^s_R = \{v \in \Hdiv^s : \|v\|_{H^s} \le R\}$.  Let $v \in B^s_R$ and let $\{v_m\}_{m\ge 1} \subset B^s_R$ be an arbitrary deterministic sequence such that $\lim_{m\to\infty}\|v_m - v\|_{H^\theta} = 0$. 
		
		Note that because the $H^s$-norm is lower semicontinuous with respect to the $H^\theta$-topology (as $\theta < s$), we have
		\begin{equation*}
			\|v\|_{H^s} \le \liminf_{m\to\infty} \|v_m\|_{H^s} \le R,
		\end{equation*}
		which means that $B^s_R$ is a closed subset in $\Hdiv^{s,\theta}$.
		
		Since $\{v_m\}_{m\ge 1}$ and $v$ are deterministic initial data, we can trivially view them as constant $\mathcal{F}_0$-measurable random variables by setting $u_{0,m} \triangleq v_m$ and $u_0 \triangleq v$. Then \eqref{u-0-m condition} is satisfied deterministically (and thus almost surely).
		
		Let $u_m(t)$ and $u(t)$ be the solutions corresponding to the initial data $v_m$ and $v$, respectively. As in \eqref{Markov semigroup-SES}, we have
		\begin{equation}\label{eq: semigroup_def}
			\mathscr{P}_t\varphi(v_m) = \mathbb{E}\big[\varphi(u_m(t))\big] \quad \text{and} \quad \mathscr{P}_t\varphi(v) = \mathbb{E}\big[\varphi(u(t))\big].
		\end{equation}
		Applying the stability estimate \eqref{H-theta stabilty}, we have
		\begin{equation*}
			\lim_{m\to\infty} \mathbb{E} \left[ 1 \land \|u_m(t) - u(t)\|^2_{H^\theta} \;\bigg|\; \mathcal{F}_0 \right] = 0 \quad \pas
		\end{equation*}
		Taking the expectation on both sides and applying the dominated convergence theorem, we obtain:
		\begin{equation}\label{eq: prob convergence limit}
			\lim_{m\to\infty} \mathbb{E} \left[ 1 \land \|u_m(t) - u(t)\|^2_{H^\theta} \right] = 0,
		\end{equation}
		which is precisely the characterization for $u_m(t) \to u(t)$  in probability with respect to the $H^\theta$-topology as $m \to \infty$. 
		
		Note that \eqref{eq: prob convergence limit} implies that $u_m(t)\to u(t)$ in probability in the $H^\theta$-topology. Hence, for every $\varphi\in C_B(H^{s,\theta}_{\mathrm{div}})$, which is  globally bounded by some constant $M > 0$, the continuity of $\varphi$ (with respect to the $H^\theta$-norm) gives $\varphi(u_m(t))\to \varphi(u(t))$ in probability.   Since  $\varphi(u_m(t))$ are uniformly bounded by $M$,  the Vitali's convergence theorem implies
		\begin{equation*}
			\lim_{m\to\infty} \mathbb{E}\big[\varphi(u_m(t))\big] = \mathbb{E}\big[\varphi(u(t))\big].
		\end{equation*}
		
		In view of \eqref{eq: semigroup_def}, this establishes exactly that $\lim_{m\to\infty} \mathscr{P}_t\varphi(v_m) = \mathscr{P}_t\varphi(v)$. Since the sequence $\{v_m\}_{m\ge 1} \subset B^s_R$ and the limit $v \in B^s_R$ were chosen arbitrarily, we rigorously conclude that the restriction of the mapping $x \mapsto \mathscr{P}_t\varphi(x)$ to the closed ball $B^s_R$ is continuous with respect to the $H^\theta$-topology. This completes the proof.
	\end{proof}
	
	\begin{proof}[Proof of Theorem  \ref{Thm-(u P)-long-time} \ref{Thm-(u P)-IM}]
		
		By Remark \ref{Remark-SES-examples}, \eqref{Target problem (u P) damp} fits into the abstract framework \eqref{SES} with
		\begin{align*} 
			&b_1(u) = -\bigg[\Pi\big((u\cdot\nabla)u\big)+\Upgamma u-\frac{1}{2}\mathcal{Q}_1^2u\bigg]
			+\int_{|l|\le1} \Big\{\wp\big(1,l,u\big)-u-l\cdot\mathcal{Q}_2u\Big\}\,\nu(\mathrm{d}l), \\
			&b_2(u) = \mathcal{Q}_1u, \quad b_3(l,u) = \wp\big(1,l,u\big)-u,\quad b_4(u)=\widetilde h (u).
		\end{align*} 
		and
		\begin{equation*}
			\bfH=\Hdiv^{s},\quad
			\widetilde{\bfH}=\Hdiv^{\theta},\quad \theta\in\left(\frac{d}{2}+p,{s-\max \{3\zeta,1\}}\right).
		\end{equation*}
		If \ref{Hypo-DNI-1} is strengthened to \ref{Hypo-DNI-2}, we can take $\mathscr{G}_1=\mathscr{G}_2=0$ in the proof of   \eqref{u Hs bound V-1} to obtain \eqref{u Hs bound V-2}.

		When working on $\T^d$, the embedding $\Hdiv^s\hookrightarrow\hookrightarrow \Hdiv^{\theta}$ holds. Therefore, combining \eqref{u Hs bound V-2}, Lemma \ref{Lemma Euler restricted Feller} and Theorem \ref{Thm-SES-IM} yields the existence of an invariant probability measure.
	\end{proof}
	
	\subsubsection{Uniqueness of the Invariant Probability Measure: Proof of Theorem \ref{Thm-(u P)-long-time} \ref{Thm-(u P)-decay}}

	Recall the stopping $\tau^N$ in \eqref{eq: tau-m-N tau-N}, i.e.,
	\begin{equation*}
		\tau^{N}\triangleq N \land \inf\{t\ge 0: \|u(t)\|^2_{H^s}> N\}, \quad N \ge 1,
	\end{equation*}
	and consider the function
	\begin{equation*} 
		G(\bullet\land \tau^N) \triangleq \mathbb{E}\left[V(\|u(\bullet\land \tau^N)\|_{H^\theta}^2 )\bigg|\mathcal{F}_0\right],\quad N\ge1,\quad \theta\in(d/2+p,{s-\max \{3\zeta,1\}}).
	\end{equation*}
	Let $N\ge1$ and let $V\in\mathscr{V}$ be as in \ref{Hypo-DNI-3}. Keep in mind that $\Q_2\in\mathbb{B}^\beta$ and $\Q_2+\Q_2^*=0$. 	
	In this case we have $\mathscr{a}_2=0$ (the Marcus flow acts as an exact isometry in $H^\theta$, rendering the jump contribution identically zero, cf. Remarkcf. \eqref{SCR ai}).  For any $t_1 > t_2 \ge 0$, we have
	\begin{align*}
		&G(t_1 \land \tau^N) - G(t_2 \land \tau^N)  \notag \\
		=\ &\E \bigg[
		\int_{t_2}^{t_1} \mathbf{1}_{\{t' \le \tau^N\}}  V'(\|u(t')\|^2_{H^{\theta}})
		\Big(-2\IP{ \Pi[(u(t')\cdot\nn)u(t')],u(t')}_{H^{\theta}} -2\Upgamma \|u(t')\|^2_{H^{\theta}}\Big)\, {\rm d}t'\bigg|\F_0\bigg]\notag\\
		&+\E \bigg[
		\int_{t_2}^{t_1} \mathbf{1}_{\{t' \le \tau^N\}}  V'(\|u(t')\|^2_{H^{\theta}})
		\Big(\IP{\Q_{1}^2u(t') , u(t')}_{H^{\theta}}+\|\Q_{1} u(t')\|^2_{H^{\theta}}
		+\|  \widetilde h(u(t')) \|^2_{H^{\theta}}\Big)\, {\rm d}t'\bigg|\F_0\bigg]\notag\\
		&+\E \bigg[
		\int_{t_2}^{t_1} \mathbf{1}_{\{t' \le \tau^N\}}   2V''(\|u(t')\|^2_{H^{\theta}}) \Big(\IP{\Q_{1}u(t'), u(t')}^2_{H^{\theta}}
		+\IP{ \widetilde h(u(t')),u(t')}_{H^{\theta}}^2 \Big)\, {\rm d}t'\bigg|\F_0\bigg].
	\end{align*}
	Using the properties of $V$ (cf. \eqref{SCRV} and \ref{Hypo-DNI-3}), \ref{Hypo-h div},  \eqref{wp estimate 1 C21}, and Lemma \ref{uux u M}, we find a constant $c_N>0$ such that for any $t_1 > t_2 \ge 0$,
	\begin{align*}
		& \Abs{G(t_1 \land \tau^N) - G(t_2 \land \tau^N)}  \notag \\
		\leq\ &\E \bigg[
		\int_{t_2}^{t_1} \mathbf{1}_{\{t' \le \tau^N\}}  V'(\|u(t')\|^2_{H^{\theta}})
		\Big(\big(\mathscr{a}_1+ 2\Upgamma + 2\mathscr{c}  \|u(t')\|_{\Wlip} \big) \|u(t')\|^2_{H^{\theta}}
		+\|\widetilde h(u(t')) \|^2_{H^{\theta}} \Big)\, {\rm d}t'\Big|\F_0\bigg]\notag\\
		&+\E \bigg[
		\int_{t_2}^{t_1} \mathbf{1}_{\{t' \le \tau^N\}}   2\Abs{V''(\|u(t')\|^2_{H^{\theta}}) }\Big(\IP{\Q_{1}u(t'), u(t')}^2_{H^{\theta}}
		+\IP{\widetilde h(u(t')),u(t')}_{H^{\theta}}^2 \Big)\, {\rm d}t'\bigg|\F_0\bigg]\notag\\
		\leq \ &c_N (t_1-t_2).
	\end{align*}
	This implies that $G(\bullet \land \tau^N)$ is locally absolutely continuous almost surely for all $N\ge1$.
	Again using the properties of $V$ (cf. \eqref{SCRV} and \ref{Hypo-DNI-3}), \ref{Hypo-h div}, estimate \eqref{wp estimate 1 C21}, and Lemma \ref{uux u M}, we derive the differential inequality:
	\begin{align}
		&\frac{{\rm d}}{{\rm d}t}G(t \land \tau^N) \notag\\
		=\ &\E \bigg[
		\mathbf{1}_{\{t \le \tau^N\}}  V'(\|u(t')\|^2_{H^{\theta}})
		\Big(-2\IP{ [(u(t)\cdot\nn)u(t)],u(t)}_{H^{\theta}}
		-2\Upgamma \|u(t)\|^2_{H^{\theta}}\Big)\bigg|\F_0\bigg]\notag\\
		&+\E \bigg[
		\mathbf{1}_{\{t \le \tau^N\}}  V'(\|u(t)\|^2_{H^{\theta}})
		\Big(\IP{\Q_{1}^2u(t) , u(t)}_{H^{\theta}}+\|\Q_{1} u(t)\|^2_{H^{\theta}}
		+\| \widetilde h(u(t)) \|^2_{H^{\theta}}\Big)\bigg|\F_0\bigg]\notag\\
		&+\E \bigg[
		\mathbf{1}_{\{t\le \tau^N\}}   2V''(\|u(t)\|^2_{H^{\theta}}) \Big(\IP{\Q_{1}u(t), u(t)}^2_{H^{\theta}}
		+\IP{ \widetilde h(u(t)),u(t)}_{H^{\theta}}^2 \Big)\bigg|\F_0\bigg]\notag\\
		\leq \ &  \E \bigg[V'(\|u(t\land \tau^N)\|^2_{H^{\theta}})\Big(\big(\mathscr{a}_1+ 2\mathscr{c}  \|u(t\land \tau^N)\|_{\Wlip} \big) \|u(t\land \tau^N)\|^2_{H^{\theta}}
		+\|  \widetilde h(u(t\land \tau^N)) \|^2_{H^{\theta}}\Big)\bigg|\F_0\bigg]\notag\\
		& -2\Upgamma\E \bigg[\mathbf{1}_{\{t \le \tau^N\}} V'(\|u(t)\|^2_{H^{\theta}})\|u(t)\|^2_{H^{\theta}}\bigg|\F_0\bigg]+\E \bigg[
		\mathbf{1}_{\{t\le \tau^N\}}   2V''(\|u(t)\|^2_{H^{\theta}}) \IP{ \widetilde h(u(t)),u(t)}_{H^{\theta}}^2 \Big)\bigg|\F_0\bigg]
		\label{V u t tau-N differential}
	\end{align}
	Since \ref{Hypo-DNI-3} implies \ref{Hypo-DNI-1}, the solution exists globally; that is,
	\begin{equation*}
		\p(\lim_{N\to\infty}\tau^N=\tau^*=\infty)=1.
	\end{equation*}
	We note that $\mathscr{a}_2=0$ (cf. \ref{Hypo-DNI-3} and \eqref{SCR ai}) and then the function $\mathbf{V}_{s}^{\Q_1,\, \Q_2,\, \widetilde h}(f)$ defined in \eqref{V Lyapunov type term} reduces to
	\begin{align*}  
		\mathbf{V}_{s}^{\Q_1,\, \Q_2,\, \widetilde h}(f)
		=  V'(\|f\|^2_{H^s}) \bigg\{  
		\big(\mathscr{a}_1 + 2\mathscr{c}  \|f\|_{\Wlip} \big) \|f\|^2_{H^s}    + \|\widetilde h(f)\|^2_{H^s} \bigg\}  +2V''(\|f\|^2_{H^s}) \langle \widetilde h(f), f \rangle_{H^s}^2.
	\end{align*}  
	Hence, passing to the limit $N\to\infty$ in \eqref{V u t tau-N differential},  we arrive at
	\begin{equation*}
		\frac{{\rm d}}{{\rm d}t}\mathbb{E}\left[V(\|u(t)\|_{H^\theta}^2 )\bigg|\mathcal{F}_0\right]
		\leq \mathbb{E}\left[\mathbf{V}_{\theta}^{\Q_1,\, \Q_2,\, \widetilde h}(u(t)) 
		-2\Upgamma V'(\|u(t)\|^2_{H^{\theta}} )\|u(t)\|^2_{H^{\theta}} \bigg|\mathcal{F}_0\right].
	\end{equation*}
	This, together with \ref{Hypo-DNI-3}, implies
	\begin{equation*}
		\frac{{\rm d}}{{\rm d}t}\mathbb{E}\left[V(\|u(t)\|_{H^\theta}^2 )\bigg|\mathcal{F}_0\right]
		\leq -\mathscr{G}_3  \mathbb{E}\left[V\left(\frac{\|u(t)\|^2_{W^{p,\infty}}}{M^2}\right)\bigg|\mathcal{F}_0\right].
	\end{equation*}
	Applying Lemma \ref{Gronwall g f} below with 
	$$
	f_1=\mathbb{E}\left[V\left(\|u(t)\|^2_{H^\theta}\right)\bigg|\mathcal{F}_0\right],\quad f_2=\mathbb{E}\left[V\Big(\frac{\|u(t)\|^2_{W^{p,\infty}}}{M^2}\Big)\bigg|\mathcal{F}_0\right],\quad 
	q(t)\equiv\mathscr{G}_3
	$$
	yields \eqref{u Wlip -> 0}, which completes the proof.

	\begin{Lemma}\label{Gronwall g f}
		Let $f_1:[0,\infty)\to[0,\infty)$ be locally absolutely continuous,  let $f_2:[0,\infty)\to [0,\infty)$ be locally bounded and let $q:[0,\infty)\to [0,\infty)$ be locally integrable. 
		Suppose that for almost every $t \ge 0$,
		\[
		f_1'(t) \le -q(t)\,f_2(t), \qquad f_2(t) \le f_1(t).
		\]
		Then, for every $t \ge 0$,
		\[
		\int_0^t q(t')\, f_2(t')\, dt' \;\le\; f_1(0)\Big(1 - e^{-\int_0^t q(t')\, dt'}\Big).
		\]
	\end{Lemma}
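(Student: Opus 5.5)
The plan is an integrating-factor argument. Set $Q(t)\triangleq\int_0^t q(t')\,{\rm d}t'$. This function is locally absolutely continuous and nondecreasing, with $Q'=q$ almost everywhere. Then consider
\[
g(t)\triangleq \mathrm{e}^{Q(t)} f_1(t).
\]
Since $\mathrm{e}^{Q}$ and $f_1$ are both locally absolutely continuous on compact intervals, their product is too. For almost every $t$,
\[
g'(t)=\mathrm{e}^{Q(t)}\big(q(t)f_1(t)+f_1'(t)\big)\le \mathrm{e}^{Q(t)}q(t)\big(f_1(t)-f_2(t)\big).
\]
On its own this is not the most useful form. It is better to combine the two hypotheses in the opposite direction: from $f_1'\le -qf_2\le 0$ we learn that $f_1$ is nonincreasing, and from $f_2\le f_1$ we get $f_1'\le -q f_2$ with $f_2$ dominated by $f_1$.

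The cleanest route is the following. Define $I(t)\triangleq\int_0^t q f_2\,{\rm d}t'$. This quantity is finite, because $f_2$ is locally bounded and $q$ is locally integrable. Integrating $f_1'\le -qf_2$ gives
\[
I(t)\le f_1(0)-f_1(t).
\]
Next use $f_2\le f_1\le f_1(0)-I$ pointwise: the bound $f_1(t')\le f_1(0)-I(t')$ follows from the same integration up to time $t'$. This yields the differential inequality
\[
I'(t)=q(t)f_2(t)\le q(t)\big(f_1(0)-I(t)\big)\quad\text{for a.e. } t.
\]
Since $I$ is absolutely continuous, the function $h(t)\triangleq \mathrm{e}^{Q(t)}\big(f_1(0)-I(t)\big)$ is absolutely continuous as well, and
\[
h'=\mathrm{e}^{Q}\big(q(f_1(0)-I)-I'\big)\ge 0 .
\]
Hence $h(t)\ge h(0)=f_1(0)$, that is, $f_1(0)-I(t)\ge f_1(0)\mathrm{e}^{-Q(t)}$, which is exactly the claimed bound
\[
I(t)\le f_1(0)\big(1-\mathrm{e}^{-Q(t)}\big).
\]

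The only delicate point is regularity. One must justify that products of locally absolutely continuous functions are locally absolutely continuous, and that the fundamental theorem of calculus applies to $f_1$ and $I$; both are standard. Measurability of $f_2$ is implicitly needed for $I$ to be defined, and I would assume it as part of the setting. Nonnegativity of $f_2$ and $q$ is used only to ensure that $I$ is nondecreasing; it is not strictly required in the comparison step.
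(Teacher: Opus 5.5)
Your argument is correct and is essentially the paper's proof: both integrate $f_1'\le -qf_2$ to get $f_1\le f_1(0)-I$, derive $I'\le q\,(f_1(0)-I)$, and close with the integrating factor $\mathrm{e}^{Q}$. The paper applies the factor to $I$ itself and you apply it to $f_1(0)-I$, which is the same computation. One correction to your closing remark: nonnegativity of $q$ is genuinely used in the comparison step, because multiplying $f_2\le f_1\le f_1(0)-I$ by $q$ to get $qf_2\le q\,(f_1(0)-I)$ requires $q\ge 0$.
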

	
	\begin{proof}
		Since $f_1$ is locally absolutely continuous and $f_1'(t) \le -q(t)f_2(t)$ a.e., integrating from $0$ to $t$ gives, for every $t\ge 0$,
		\[
		f_1(t) \le f_1(0) - \int_0^t q(t') f_2(t')\d t' \;=\; f_1(0) - k(t),\quad k(t) \triangleq \int_0^t q(t') f_2(t')\d t'.
		\]
		Since $qf_2$ is locally integrable and nonnegative, $k$ is locally absolutely continuous with $k(0)=0$ and
		\[
		k'(t) = q(t) f_2(t) \quad \text{for a.e. } t \ge 0.
		\]
		Using $f_2(t) \le f_1(t)$ and the bound on $f_1(t)$ above, we get for a.e. $t \ge 0$,
		\[
		k'(t) = q(t) f_2(t) \le q(t)\, f_1(t) \le q(t)\,\big(f_1(0) - k(t)\big).
		\]
		Set $$K(t) \triangleq k(t) {\rm e}^{ \int_0^t q(t')\d t'}.$$ Since \( k(t) \) and \( t \mapsto \int_0^t q(t')\d t' \) are locally absolutely continuous, so is \( K \), and for almost every \( t \ge 0 \),
		\[
		K'(t) = \big( k'(t) + q(t)k(t) \big) e^{\int_0^t q(t')\d t'} \le q(t) f_1(0) e^{\int_0^t q(t')\d t'}.
		\]
		Integrating from \( 0 \) to \( t \) (note that \( K(0) = 0 \)) gives
		\[
		K(t) \le f_1(0) \int_0^t q(t') e^{\int_0^{t'} q(t'')\d t''}\d t' = f_1(0) \Big( e^{\int_0^t q(t')\d t'} - 1 \Big).
		\]
		Hence,
		\[
		k(t)=
		\int_0^t q(t')\, f_2(t')\, dt' \;\le\; f_1(0)\Big(1 - e^{-\int_0^t q(t')\, dt'}\Big).
		\]
		This holds for every $t \ge 0$ because both sides are continuous in $t$.
	\end{proof}
	\begin{Remark}\label{Remark: global-(u P) viscosity}
		Because the \textbf{DNI} conditions \ref{Hypo-DNI-1}, \ref{Hypo-DNI-2}, and \ref{Hypo-DNI-3} are entirely independent of any viscous smoothing effect (as discussed in Remark \ref{Remark-DNI-Compare}), the global-in-time results established in Theorem \ref{Thm-(u P)-long-time} extend easily to the stochastic incompressible Navier--Stokes equations with fractional or degenerate viscosity (cf. Remark~\ref{Remark: local-(u P) viscosity}). Indeed, by accommodating viscous dissipation purely through its favorable sign condition rather than its regularizing properties, the proofs of Theorem \ref{Thm-(u P)-long-time}  also provide related results on invariant measures for the damped stochastic Navier--Stokes equations on $\K^d$. For more details and further extensions to other physically relevant equations, we refer the reader to Section \ref{Section: Extension-Related Models}.

	\end{Remark}

	\section{Discussion on Extensions and Open Problems}\label{Section : Further Discussions}
	
	\subsection{Homogeneous Mikhlin-type Noise Amplitudes}\label{Section:Mikhlin}
	
	The standard pseudo-differential symbol classes $\mathbf{S}^s$ defined in Section \ref{Section:PDO} require the symbols to be smooth with respect to the frequency variable everywhere. Consequently, many fundamental operators whose symbols are homogeneous and singular (or undefined) at the origin do not fall into these classical categories. Notable examples include the fractional Laplacian $(-\Delta)^{\alpha}$ (for non-integer $\alpha>0$) and the $x_j$-directional Riesz transforms $\mathcal{R}_j$ ($1\le j\le d$). 
	
	To accommodate these important operators, we introduce the homogeneous Mikhlin-type symbol classes. In the continuous setting, since the singularity occurs at the zero frequency, it is natural to define the symbol class strictly away from the origin. For $s\in\R$, we define the continuous Mikhlin-type class $\mathfrak{M}^s(\R^{d}\setminus\{0\};\mathbb{C}^{m\times m})$ as
	\begin{align}\label{def Ms R}
		\mathfrak{M}^s(\R^{d}\setminus\{0\};\mathbb{C}^{m\times m})  \triangleq 
		\left \{\mathscr{p}\in C^\infty(\R^d \setminus \{0\};\mathbb{C}^{m\times m}) \, : \,  
		|\mathscr{p}|^{\alpha ;s}_{\R^{d}} \triangleq 
		\sup_{\xi\in \R^{d}\setminus\{0\}}
		\frac{\left |\partial _{\xi}^{\alpha} \mathscr{p} (\xi )\right |_{m\times m}}{|\xi |^{ s-|\alpha |_{1}}}<\infty,\quad  \alpha \in \mathbb N_{0}^{d}\right \}.
	\end{align}
	
	In the periodic setting, however, defining the discrete counterpart requires more caution. Recall the partial difference operator $\triangle _{k}^{\alpha}$ given by \eqref{difference operator}. Computing $\triangle _{k}^{\alpha}\mathscr{p}(k)$ involves evaluating the symbol at shifted points $k+\beta$. Even for a non-zero frequency $k \neq 0$, the shifted point may hit the origin. Therefore, the symbol must be algebraically defined on the entire lattice $\Z^d$. 
	
	To this end, we directly define the discrete Mikhlin-type class $\mathfrak{M}^s(\Z^{d};\mathbb{C}^{m\times m})$ on $\Z^d$   with a built-in zero condition at the origin:
	\begin{align}\label{def Ms Z}
		\mathfrak{M}^s(\mathbb Z^{d};\mathbb{C}^{m\times m})\triangleq\left\{\mathscr{p}\in\mathscr{M}(\mathbb Z^{d};\mathbb{C}^{m\times m}) \, :\,\mathscr{p}(0) = 0_{m\times m},\quad 
		|\mathscr{p}|^{\alpha ;s}_{\mathbb Z^{d}} \triangleq \sup_{k\in \mathbb Z^{d}\setminus\{0\}}
		\frac{\left|\triangle _{k}^{\alpha} \mathscr{p} (k)\right|_{m\times m}}{|k|^{ s-|\alpha |_{1}}}<\infty\quad \forall \alpha \in \mathbb N_{0}^{d}\right\}.
	\end{align}

	Since we are considering real-valued evolution systems, we assume
	that for the frequency variable $\xi\in\R^d$ (if $x\in \mathbb R^{d}$) or $k\in\Z^d$ (if $x\in \mathbb T^{d}$):
	\begin{align}
		\mathscr{p}(-\cdot)=\overline{ \mathscr{p}(\cdot)},
		\label{Real symbol Mikhlin}
	\end{align}
	Note that $\mathfrak{M}^s(\mathbb R^{d}\setminus\{0\};\mathbb{C}^{m\times m}) $ and $\mathfrak{M}^s(\mathbb Z^{d};\mathbb{C}^{m\times m}) $ are Fr\'{e}chet spaces with respect to the seminorms $\{|\cdot |^{\alpha ;s}_{\mathbb R^{d}}\}_{
		\alpha \in \mathbb N_{0}^{d}}$ and $\{|\cdot|^{\alpha ;s}_{\mathbb Z^{d}}\}_{
		\alpha \in \mathbb N_{0}^{d}}$, respectively.
	If there is no ambiguity in the context, we unify notations and write 
	\begin{equation*} 
		\mathfrak{M}^{s} \triangleq  \left \{\mathscr{p} \in\mathfrak{M}^s(\mathbb R^{d}\setminus\{0\};\mathbb{C}^{m\times m})  \, : \, \text{{\eqref{Real symbol Mikhlin}}}\ \text{holds} \right \}\  \text{or}\
		\left \{\mathscr{p} \in \mathfrak{M}^s(\mathbb Z^{d};\mathbb{C}^{m\times m}) \, : \, \text{{\eqref{Real symbol Mikhlin}}}\ \text{holds} \right \},
	\end{equation*}
	
	As in \eqref{OP define}, $\OP(\mathscr{p})$ with symbol $\mathscr{p}$ is defined by
	\begin{align*}
		\OP(\mathscr{p}) \triangleq \mathcal P,\quad
		[\mathcal P f]  \triangleq  
		\mathscr F^{-1}\Big[\mathscr{p} \big(\mathscr Ff\big)\Big]
	\end{align*}
	and we denote
	\begin{align}
		{\mathrm{OP}}\mathfrak M^{s} \triangleq 
		\big\{{\mathrm{OP}}({\mathscr{p} })\, : \, {\mathscr{p} } \in \mathfrak M^{s}\big\},
		\quad 
		\{|\cdot |^{\alpha ;s}\}_{\alpha \in \mathbb N_{0}^{d}} 
		\triangleq 
		\{|\cdot |^{\alpha ;s}_{\mathbb R^{d}}\}_{\alpha \in \mathbb N_{0}^{d}}
		\ \text{or} \
		\{|\cdot |^{\alpha ;s}_{\mathbb Z^{d}}\}_{\alpha \in \mathbb N_{0}^{d}},
		\quad  
		s\in \mathbb R, \ \alpha \in \mathbb N_{0}^{d}.\label{OPM seminorms}
	\end{align}
	By direct computation,  one we have
	\begin{Lemma}\label{Lem: OP-continuous-M}
		If $s\ge0$, and $\mathscr{p}\in \mathfrak{M}^s$, then  there
		is $\widetilde \alpha \in \mathbb N_{0}^{d}$ and a constant
		$C=C(s,q)>0$ such that
		\begin{align*}
			\|{\mathrm{OP}}(\mathscr{p})\|_{\mathscr L(H^{q+s};H^{q})}\leq C(s,q)|\mathscr{p}|^{\widetilde \alpha ; s}.
		\end{align*}
	\end{Lemma}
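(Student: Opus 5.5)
The approach is to reduce the estimate to a Fourier-multiplier bound on weighted $L^2$, via Plancherel on $\R^d$ or Parseval on $\T^d$. Since $\OP(\mathscr p)$ has an $x$-independent symbol, for $f\in H^{q+s}$ we have
\[
\|\OP(\mathscr p)f\|_{H^q}^2 = c_d\int (1+|\xi|^2)^{q}\,|\mathscr p(\xi)|_{m\times m}^2\,|\widehat f(\xi)|^2\,\mathrm d\xi ,
\]
with the integral replaced by a sum over $k\in\Z^d$ in the periodic case. It therefore suffices to prove the pointwise bound
\[
|\mathscr p(\xi)|_{m\times m}\le C\,|\mathscr p|^{0;s}\,(1+|\xi|^2)^{s/2},
\]
and the natural choice is $\widetilde\alpha=0$.

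On $\R^d\setminus\{0\}$ the seminorm with $\alpha=0$ gives directly $|\mathscr p(\xi)|\le |\mathscr p|^{0;s}|\xi|^{s}$. Because $s\ge0$, we have $|\xi|^s\le(1+|\xi|^2)^{s/2}$. The origin is a single point, so it has Lebesgue measure zero and does not affect the integral.

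On $\Z^d$ the built-in condition $\mathscr p(0)=0$ handles $k=0$. For $k\neq0$ the same bound $|\mathscr p(k)|\le|\mathscr p|^{0;s}_{\Z^d}|k|^s\le|\mathscr p|^{0;s}_{\Z^d}(1+|k|^2)^{s/2}$ applies.

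Substituting the pointwise bound into the Plancherel/Parseval identity gives
\[
\|\OP(\mathscr p)f\|_{H^q}\le |\mathscr p|^{0;s}\,\|f\|_{H^{q+s}}
\]
for matrix-valued symbols acting on vector-valued $f$, where the matrix norm controls the action on $\C^m$. This yields $C(s,q)=1$ up to normalization constants, and in fact a constant independent of $q$.

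A few points need checking along the way. The reality condition \eqref{Real symbol Mikhlin} guarantees that $\OP(\mathscr p)f$ is real-valued, so the operator maps into the real Sobolev spaces. Well-definedness on $\R^d$ holds because the multiplier is measurable and of polynomial growth, so $\mathscr p\widehat f\in L^2$ with the weight.

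The only genuine subtlety I expect is the role of $s\ge0$. For negative $s$ the factor $|\xi|^s$ is singular near the origin and is not controlled by $(1+|\xi|^2)^{s/2}$; the hypothesis $s\ge0$ is exactly what removes this obstacle. No derivative seminorms ($|\alpha|_1>0$) are needed for this $L^2$-based statement; they would only enter for $L^p$ bounds through the Mikhlin multiplier theorem.
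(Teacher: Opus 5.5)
Your proof is correct and is essentially the ``direct computation'' the paper invokes without writing it out. Plancherel/Parseval reduces the claim to the pointwise bound $|\mathscr p(\xi)|_{m\times m}\le |\mathscr p|^{0;s}|\xi|^{s}\le |\mathscr p|^{0;s}(1+|\xi|^2)^{s/2}$, which holds because $s\ge 0$; the origin is negligible on $\R^d$ and is handled by $\mathscr p(0)=0$ on $\Z^d$, so $\widetilde\alpha=0$ suffices. The constant is then independent of $q$, up to an $m$-dependent factor if $|\cdot|_{m\times m}$ is not the operator norm.
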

	To highlight the \emph{scalar} symbols, we define
	\begin{equation}\label{Ms define} 
		\mathcal{M}^{s} \triangleq  \left \{\mathscr{p} \in\mathfrak{M}^s(\mathbb R^{d}\setminus\{0\};\mathbb{C})  \, : \, \text{{\eqref{Real symbol Mikhlin}}}\ \text{holds} \right \}\  \text{or}\
		\left \{\mathscr{p} \in \mathfrak{M}^s(\mathbb Z^{d};\mathbb{C}) \, : \, \text{{\eqref{Real symbol Mikhlin}}}\ \text{holds} \right \},
	\end{equation}
	where  $\mathscr{p} \in\mathfrak{M}^s(\mathbb R^{d}\setminus\{0\};\mathbb{C})$ and $\mathscr{p} \in \mathfrak{M}^s(\mathbb Z^{d};\mathbb{C})$ are obtained formally by taking $m=1$ in \eqref{def Ms R} and \eqref{def Ms Z}, with $|\cdot|_{1\times 1}$ identified with the modulus on $\mathbb C$.
	Similarly, we define  $\OP\mathcal{M}^s$ as the operators with symbols in $\mathcal{M}^s$.

	\begin{Remark} 
		The convention $\mathscr{p}(0) = 0_{m\times m}$ in \eqref{def Ms Z} is not merely an algebraic convenience. Indeed, besides ensuring that the nonlocal difference operator $\triangle _{k}^{\alpha}\mathscr{p}(k)$ is well-defined for all $k \neq 0$, there are some additional reasons: 
		\begin{itemize}[leftmargin=0.79cm]\setlength\itemsep{0.2em}
			\item If $s>0$, homogeneity forces $\mathscr{p}(0)$ to be zero.  If $s=0$, we take  the Riesz transforms $\mathcal{R}_j$ ($2\le j\le d$) with symbol $p(\xi) = -i \xi_j / |\xi|$ as an example. This symbol has no classical limit at the origin. However, in Calder\'on-Zygmund theory, such multipliers correspond to convolution kernels $K(x)$ (after periodic extension) that are odd functions. Therefore, the value of the Fourier multiplier at the origin corresponds to the principal value integral of the kernel over the spatial domain:
			$p(0) = \mathrm{p.v.} \int_{\R^d} K(x) \d x=0.$
			\item Without the condition $\mathscr{p}(0)=0_{m\times m}$, the associated operator $\OP(\mathscr{p})$ is not even defined on $L^2$.
		\end{itemize}
	\end{Remark}
	
	\begin{Definition} 
		Let $d, m \ge 1$ and $s\in\R$. A subset $\mathscr O\subset {\mathrm{OP}}\mathfrak M^{s}$ is said to be bounded if
		$
		\big\{\mathscr{p}: {\mathrm{OP}}(\mathscr{p})\in \mathscr O\big\}\subset \mathfrak M^{s}
		$
		is bounded with respect to the seminorms in \eqref{OPM seminorms}. Let $\varsigma \ge 0$.
		We define
		\[
		\mathbb{M}^{\varsigma} \triangleq \left\{\mathcal K:\
		\text{there exist }\mathcal H={\mathrm{diag}}(\mathcal H_{1},\cdots ,\mathcal H_{m})\in\mathrm{OP}\mathfrak M^{\varsigma}
		\text{ and }\mathcal E\in\mathrm{OP}\mathfrak M^0\text{ such that }
		\mathcal K=\mathcal H+\mathcal E
		\text{ and  \ref{Kk:(K1 K2)}  holds} \right\},
		\]
		where condition \ref{Kk:(K1 K2)} is specified as follows:
		\begin{enumerate}[label={$(\mathbf{K}^{\varsigma})$},leftmargin=0.79cm]
			\item\label{Kk:(K1 K2)}  
			$\mathcal H_{i} \in {\mathrm{OP}}\mathcal M^{\varsigma}$ and $\mathcal H_{i} + \mathcal H_{i}^{*} \in {\mathrm{OP}}\mathcal M^{0}$ for $1 \le i \le m$. 
			Moreover, the symbols of $\mathcal H$ and $\mathcal E$ are commuting matrices.
		\end{enumerate}
		A subset $\mathscr{N} \subset \mathbb{M}^{\varsigma}$ is called bounded if for every
		$\mathcal K\in\mathscr N$,  one can find a decomposition
		$\mathcal K=\mathcal H_{\mathcal K}+\mathcal E_{\mathcal K}$ such that
		$
		\{\mathcal H_{i,\mathcal K}:\mathcal K\in\mathscr N,1 \le i \le m\}$
		is bounded in $\mathrm{OP}\mathcal M^{\varsigma}$, 
		$	\{\mathcal H_{i,\mathcal K}+\mathcal H^*_{i,\mathcal K}:\mathcal K\in\mathscr N,1 \le i \le m\}$  is bounded in $\mathrm{OP}\mathcal M^0$,  and $\{\mathcal G_{\mathcal K}:\mathcal K\in\mathscr N\}$
		are bounded in $\mathrm{OP}\mathfrak M^0$.
	\end{Definition}

	Repeating the proof of \ref{x-independent case} in Theorem \ref{Thm-cancel} shows that 
	\eqref{cancel-PB1} and \eqref{cancel-PB2} hold when $\mathbb{B}^{\beta}$ is replaced by $\mathbb{M}^{\varsigma}$.  
	Moreover, note that, as long as the following commutators make sense (for instance, when the operators act on zero-average functions), we have
	$$[\mathcal K,J_n]=[\mathcal K,\D^s]=[\mathcal K, \Pi_d]=[\mathcal K, \Pi_0]=0,\quad \mathcal K\in {\mathrm{OP}}\mathcal M^{\varsigma},$$
	where $J_n$ is the mollifier defined in \eqref{Define Jn}, $\D^s$ is the Bessel potential 
	$\mathcal D^{s}=({\mathbf I}-\Delta )^{s/2}$ defined in \eqref{Define Ds}, $\Pi_d$ is the Leray projection defined in \eqref{Pi-d define}, and $\Pi_0$ is the zero-average projection defined in \eqref{Pi-0 define}. With these facts and Lemma \ref{Lem: OP-continuous-M}, one can analogously carry out all the results in Section \ref{Section : Appl-cancel} for the case $\Q_1,\Q_2\in\mathbb{M}^{\varsigma}$.
	
	In summary, we can replace \ref{Hypo-Qi} by 
	\begin{Hypothesis}\label{Hypo-Qi-M}
		Assume that $\mathcal Q_1,\mathcal Q_2\in \mathbb{A}^{\alpha}\cup \mathbb{B}^{\beta}\cup \mathbb{M}^{\varsigma}$, with $d=m\ge2$  in \eqref{Ss define} and \eqref{Ms define}, respectively.
	\end{Hypothesis}
	
	As before, once $\Q_1$ and $\Q_2$ are given, their decomposition and exponent are chosen once and for all, even if even if $\mathcal Q_i$ ($i=1,2$) has another admissible decomposition.

	Then, after adjusting the value of $\zeta=\zeta_{\Q_1,\Q_2}$  in \eqref{zeta-Q12} to accommodate the case $\Q_i\in \mathbb{M}^{\varsigma}$, Theorem \ref{Thm-(rho u)} still holds true. Indeed, the same proof still works. Similarly, Theorem \ref{Thm-(u P)-local}
	also holds true if we require that $\Q_1$ and $\Q_2$ satisfy \ref{Hypo-Qi-M} and  
	$$
	\Pi\Q_i\big|_{\Hdiv^s}=\Q_i\big|_{\Hdiv^s}.
	$$
	Furthermore, since \eqref{cancel-PB1} and \eqref{cancel-PB2} also hold true with $\mathbb{M}^{\varsigma}$ replacing $\mathbb{B}^{\beta}$, the 
	Lyapunov-type function \eqref{V Lyapunov type term} can be defined in the same form but with constants $\mathscr{a}_1$ and $\mathscr{a}_2$ taking possibly different values (see Remark \ref{Remark singualrity IC}). Hence Theorem \ref{Thm-(u P)-long-time} also holds true if $\mathcal Q_1,\mathcal Q_2\in \mathbb{A}^{\alpha}\cup \mathbb{B}^{\beta}\cup \mathbb{M}^{\varsigma}$ and $
	\Pi\Q_i\big|_{\Hdiv^s}=\Q_i\big|_{\Hdiv^s}.
	$
	
	As a notable application, Theorems \ref{Thm-(rho u)}, \ref{Thm-(u P)-local}, and \ref{Thm-(u P)-long-time} remain valid for the class of fractional homogeneous operators introduced in Section \ref{Section : PD Noise Amplitudes}:
	$$
	\mathcal{Q}_j u
	= \sum_{i=1}^d c_i 
	\mathcal{R}_i(-\Delta)^{\varsigma/2} u,
	\qquad \varsigma \ge 0,
	$$
	where $(-\Delta)^{\varsigma/2}$ is the fractional Laplacian, $\mathcal{R}_i$ denotes the $i$-th Riesz transform, and $c_i \in \mathbb{R}$.
	
	\subsection{Related Models and Sequences of Noise}
	\subsubsection{Related Models}\label{Section: Extension-Related Models}
	From the proof of Theorem \ref{Thm-(rho u)}, we observe that the techniques utilized to control the delicate interaction between noise and nonlocal operators can also be applied to a broader class of fluid equations. Indeed, with a slight abuse of notation, we consider the following SPDE:
	\begin{align}\label{SPDE-general}
		{\rm d} u + \left[\epsilon(-\Delta)^{\ell/2}u+\Upgamma u+(u \cdot \nabla)u
		+F(u)\right]\d  t
		=\mathcal{Q}_1 u\, \circ\, {\rm d} W
		+\mathcal{Q}_2 u \diamond {\rm d} L 
		+ h(u)\d \widetilde{W},\quad \epsilon,\ \, \Upgamma\ge0,\quad \ell\in(0,2],
	\end{align}
	where $F(\cdot)$ is a new model-dependent ingredient, $\Upgamma u$ represents the Ekman damping term, and $(-\Delta)^{\ell/2}u$ denotes the fractional viscous term.
	
	\begin{Hypothesis}
		\label{Hypo-F-h}
		Let $d \ge 1$, $p \ge 1$, and $\sigma > \frac{d}{2} + p$. Assume that
		there exists a positive, increasing function $K:[0,\infty)\to (0,\infty)$ such that for all $u, u_1,u_2\in H^{\sigma}$,
		\begin{equation*}
			\norm{F(u)}^2_{H^{\sigma}},\, \norm{h(u)}^2_{H^{\sigma}} \leq  K(\|u\|_{W^{p,\infty}}
			)(1+\|u\|^{2}_{H^{\sigma}}),
		\end{equation*}
		\begin{align*}
			\norm{F(u_1)- F(u_2)}^{2}_{H^{\sigma}},\,\norm{h(u_1)- h(u_2)}^{2}_{H^{\sigma}} \le K(\|u_1\|_{H^{\sigma}}+\|u_2\|_{H^{\sigma}})\|u_1-u_2\|^{2}_{H^{\sigma}}.
		\end{align*}
	\end{Hypothesis}
	
	Under \ref{Hypo-Qi-M} and \ref{Hypo-F-h}, one can apply the same analysis  to the velocity equation in \eqref{SEuler-(rho u)}$_2$ (one may compare \ref{Hypo-F-h} with Lemmas \ref{Lemma-F} and \ref{Lemma-F-difference}). By treating the viscous term $\epsilon(-\Delta)^{\ell/2}u$ as a singular term (see, for instance, \cite{Majda-Bertozzi-2002-book,Li-Liu-Tang-2021-SPA,Alonso-Pang-Tang-2026-JLMS}), we obtain the following local-in-time theory for \eqref{SPDE-general} in a sense similar to Definition \ref{solution definition (q u)}. 
	
	\begin{Theorem}[\textbf{Local-in-time theory for pathwise classical solutions to \eqref{SPDE-general}}] \label{Thm-local-Appendix}
		Let $d \geq 1$ and $p\ge 1$.  
		Assume \ref{Hypo-W L}, \ref{Hypo-Qi-M}, and \ref{Hypo-F-h} hold with $\sigma> \frac{d}{2} + p$. Let $s > \frac{d}{2} + p + \max \{3(\alpha\lor \beta\lor \varsigma),\ell\mathbf{1}_{\{\epsilon>0\}},1\}$. Let $u_0\in H^s$ be an $\mathcal{F}_{0}$-measurable random variable. Then \eqref{SPDE-general} with initial data $u(0)=u_0$ admits a unique maximal $H^s$ classical solution $(u,\tau^{*})$. Moreover, 
		\begin{equation*}
			u\in D([0,\tau^*);H^{s'})\quad \forall s'<s\quad \pas,
		\end{equation*}
		the lifetime $\tau^*$ is independent of $s$, and the following blow-up criterion holds:
		\begin{equation*} 
			\limsup_{t \rightarrow \tau^*} \|u(t)\|_{W^{p,\infty}} = \infty \quad \text{a.s. on } \{\tau^* < \infty\}.
		\end{equation*}
	\end{Theorem}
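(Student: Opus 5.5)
The plan is to reproduce the argument of Proposition \ref{Proposition-(q u)} for the single unknown $u$. The new nonlinearity $F$ is treated exactly as $F$ was in Lemma \ref{Lemma:Fn}, and the viscous term $\epsilon(-\Delta)^{\ell/2}u$ is treated as one more singular linear term, as in Remark \ref{Remark: local-(u P) viscosity}.

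\textbf{Approximation scheme.} For $n,R\ge1$ I will consider
\[
{\rm d}u+\chi_R\big(\|u-u_0\|_{W^{p,\infty}}\big)\big[J_n((J_nu\cdot\nabla)J_nu)+F(u)\big]{\rm d}t+\big[\epsilon J_n(-\Delta)^{\ell/2}J_nu+\Upgamma u\big]{\rm d}t
=\Q_{1,n}u\circ{\rm d}W+\Q_{2,n}u\diamond{\rm d}L+\chi_R\, h(u)\,{\rm d}\widetilde W .
\]
Here $\Q_{i,n}$ are given by Lemma \ref{Lemma:Qn}, and by Section \ref{Section:Mikhlin} the same construction works for $\mathbb{M}^\varsigma$. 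All coefficients are locally Lipschitz on $H^s$ with linear growth, so the proof of Lemma \ref{Lemma: existence of XnR} gives a global càdlàg solution $u_n^{(R)}$.

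\textbf{Uniform $H^s$ bound.} I apply Itô's formula to $\|u_n^{(R)}\|_{H^s}^2$ and control each term as follows.
\begin{itemize}
\item The transport term is bounded by the Kato--Ponce commutator estimate, $\lesssim \|\nabla J_nu\|_{L^\infty}\|u\|_{H^s}^2$, with constants uniform in $n$.
\item $F$ and $h$ are controlled by \ref{Hypo-F-h} with $\sigma=s$, using the cut-off $\chi_R$.
\item The viscous term has a favourable sign: $\langle J_n(-\Delta)^{\ell/2}J_nu,u\rangle_{H^s}\ge0$, so it can simply be dropped.
\item The Stratonovich and Marcus contributions are controlled by \eqref{Qn cancel-1}, \eqref{Qn cancel-2} and Lemma \ref{Lemma-Marcus-n} with $V(x)=x$.
\end{itemize}
Combining these with BDG and Grönwall, conditionally on $\F_0$, yields the analogue of \eqref{Xn uniform bound}.

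\textbf{Cauchy property in $H^\theta$.} This is the step I expect to be the main obstacle. I take $\theta\in(\frac d2+p,\,s-\max\{3(\alpha\lor\beta\lor\varsigma),\,\ell\mathbf 1_{\{\epsilon>0\}},\,1\})$ and estimate $\|u_n-u_m\|_{H^\theta}^2$ up to the stopping time $\tau_N^{n,m}$.
\begin{itemize}
\item Noise differences are handled by Lemmas \ref{Lemma-Q1 n m} and \ref{Lemma-Q2 n m}; this is where the threshold $3\delta_2$ is used.
\item Transport differences are handled as in Lemma \ref{Lemma-F-difference}, producing the rate $(n\wedge m)^{-2(s-1-\theta)}$.
\item $F$ and $h$ differences use the Lipschitz part of \ref{Hypo-F-h} with $\sigma=\theta$.
\item The viscous difference is split as $J_n(-\Delta)^{\ell/2}J_n u_n - J_m(-\Delta)^{\ell/2}J_m u_m = J_m(-\Delta)^{\ell/2}J_m(u_n-u_m) + (J_nAJ_n-J_mAJ_m)u_n$, with $A=(-\Delta)^{\ell/2}$. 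The first piece is nonnegative and is discarded. The second is $O((n\wedge m)^{-(s-\theta-\ell)})\|u_n\|_{H^s}$ by Lemma \ref{Lemma-Jn}, which requires $s>\theta+\ell$.
\end{itemize}
The delicate part is collecting all error rates into a single $\lambda_{n,m}\to0$ while keeping the Grönwall constant depending only on $N$.

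\textbf{Limit, uniqueness, cut-off removal and blow-up criterion.} Passage to the limit follows \ref{Convergence of X-n T} of Lemma \ref{Lemma : Xn T estimates}: Chebyshev's inequality gives convergence in probability, then a subsequence converges uniformly in $H^\theta$, and interpolation upgrades this to $H^{s'}$. Lower semicontinuity gives the $H^s$ bound, and Remark \ref{Remark : measurability-upgrade} gives measurability. Identifying the limit equation in $C(\K^d)$ and proving uniqueness follow Lemma \ref{cut-off global solution}. Finally, the cut-off is removed via $\tau(R)$ and the gluing \eqref{X=sum XR}. This yields the maximal solution, shows that $\tau^*$ is independent of $s$, and gives the $W^{p,\infty}$ blow-up criterion.
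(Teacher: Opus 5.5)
Your proposal is correct and follows essentially the route the paper indicates. The paper obtains this theorem by rerunning the proof of Proposition \ref{Proposition-(q u)}:
\begin{itemize}
\item a cut-off, mollified scheme with $\Q_{i,n}=J_n\Q_iJ_n$;
\item uniform conditional $H^s$ bounds from the renormalized cancellation estimates;
\item the Cauchy property in $H^\theta$ via Lemmas \ref{Lemma-Q1 n m} and \ref{Lemma-Q2 n m};
\item passage to the limit, uniqueness, and removal of the cut-off;
\end{itemize}
with $F$ and $h$ handled through \ref{Hypo-F-h} at $\sigma=s$ and $\sigma=\theta$, as you do. Your treatment of $\epsilon(-\Delta)^{\ell/2}$ via $J_n(-\Delta)^{\ell/2}J_n$ uses only its sign and the rate $(n\wedge m)^{-(s-\theta-\ell)}$, which produces the $\ell\mathbf{1}_{\{\epsilon>0\}}$ term in the threshold; this is exactly the mechanism of Remark \ref{Remark: local-(u P) viscosity}.
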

	
	Analogous to \eqref{V Lyapunov type term}, to study the long-time dynamics of \eqref{SPDE-general} for a given $V\in\mathscr{V}$ and $F,h$ satisfying \ref{Hypo-F-h} with $p\ge1$, we define $\widetilde \mathbf{V}_{\sigma}^{\Q_1,\,\Q_2,\, h}$ by
	\begin{align}  
		\widetilde \mathbf{V}_{\sigma}^{\Q_1,\, \Q_2,\,  h}(f)
		\triangleq\ & V'(\|f\|^2_{H^{\sigma}}) \bigg\{  
		\big(\mathscr{a}_1 + 2\mathscr{c}_{\sigma,d} \|f\|_{\Wlip} \big) \|f\|^2_{H^{\sigma}}   +\bIP{F(f), f}_{H^{\sigma}}+ \| h(f)\|^2_{H^{\sigma}} \bigg\}  \notag\\
		& + 2V''(\|f\|^2_{H^{\sigma}})\bIP{h(f), f}_{H^{\sigma}}^2+\mathscr{a}_2 V(\|f\|^2_{H^{\sigma}}), \qquad f\in  H ^{\sigma},\quad \sigma > \tfrac{d}{2} + p, \label{V Lyapunov type term F-h}
	\end{align}  
	where $\mathscr{a}_1$ and $\mathscr{a}_2$ are given in \eqref{SCR ai}, and $\mathscr{c}=\mathscr{c}_{\sigma,d}>0$ is a constant such that $\IP{(u\cdot\nn)u,u}_{H^{\sigma}}\leq \mathscr{c}\|u\|_{\Wlip}\|u\|^2_{H^{\sigma}}$ for smooth $u$. With a slight abuse of notation, we formulate the following assumptions in the same spirit as \ref{Hypo-DNI-1}, \ref{Hypo-DNI-2}, and \ref{Hypo-DNI-3}:
	
	\begin{Hypothesis}
		\label{Hypo-DNI-1 F-h}
		There exist $V\in \mathscr{V}$ and constants $\mathscr{G}_1,\mathscr{G}_2>0$ such that
		\begin{align*}
			\widetilde \mathbf{V}_{\sigma}^{\Q_1,\, \Q_2,\, h}(f)
			\leq \mathscr{G}_1\,V(\|f\|^2_{H^{\sigma}})+\mathscr{G}_2,\qquad f\in  H^{\sigma},\quad \sigma>d/2+p.
		\end{align*}
	\end{Hypothesis}

	\begin{Hypothesis}
		\label{Hypo-DNI-2 F-h}
		$\Q_2\in\mathbb{B}^{\beta}\cup \mathbb{M}^{\varsigma}$ and $Q_2+\Q_2^*=0$. Moreover, there exists $V\in \mathscr{V}$ such that
		\begin{align*}
			\widetilde \mathbf{V}_{\sigma}^{\Q_1,\, \Q_2,\,  h}(f)-2\Upgamma V'(\|f\|^2_{H^{\sigma}} )\|f\|^2_{H^{\sigma}} 
			\leq 0,\qquad f\in  H^{\sigma},\quad \sigma>d/2+p.
		\end{align*}
	\end{Hypothesis}
	
	\begin{Hypothesis}
		\label{Hypo-DNI-3 F-h}
		$\Q_2\in\mathbb{B}^{\beta}\cup \mathbb{M}^{\varsigma}$ and $Q_2+\Q_2^*=0$. Moreover, there exists $V\in \mathscr{V}$ with $\sup_{x\in\R}|V''(x)|<\infty$ and a constant $\mathscr{G}_3>0$ such that 
		\begin{align*} 
			\widetilde  \mathbf{V}_{\sigma}^{\Q_1,\, \Q_2,\, h}(f) -2\Upgamma V'(\|f\|^2_{H^{\sigma}} )\|f\|^2_{H^{\sigma}} 
			\leq -\mathscr{G}_3  V\left(\frac{\|f\|^2_{W^{p,\infty}}}{M^2}\right),\qquad f\in  H^{\sigma},\quad \sigma>d/2+p.
		\end{align*}
		Here $M=M_{\sigma,d}>0$ is the embedding constant for $\|\cdot\|_{W^{p,\infty}}\leq M_{\sigma,d}\|\cdot\|_{H^\sigma}$ when $\sigma>d/2+p$.
	\end{Hypothesis}
	
	\begin{Theorem}[\textbf{Long-time dynamics of \eqref{SPDE-general}}]\label{Thm-long-time-Appendix}
		Let the conditions of Theorem \ref{Thm-local-Appendix} hold. Let  
		$\frac{d}{2}+p<\theta<s-\max \{3(\alpha\lor \beta\lor \varsigma),\ell\mathbf{1}_{\{\epsilon>0\}},1\}$.
		\begin{enumerate}[label={\bf{(\arabic*)}},leftmargin=0.79cm]\setlength\itemsep{0.2em}
			\item  Under \ref{Hypo-DNI-1 F-h}, the system \eqref{SPDE-general} with initial data $u(0)=u_0$ admits a global $H^s$ solution satisfying
			\begin{equation*}
				\mathbb{E} \left[V(\|u(t)\|_{H^s}^2)\Big|\mathcal{F}_0\right]  
				\leq \Big(V(\|u_0\|^2_{H^s})+\mathscr{G}_2t\Big) {\rm e}^{\mathscr{G}_1t},
				\quad t>0.
			\end{equation*}
			Moreover, the solution map $H^{s,\theta}\ni u_0\mapsto u(t)\in H^{s,\theta}$ is continuous, where $H^{s,\theta}$ denotes the space $H^s$ equipped with the $H^{\theta}$-topology.
			
			\item   If \ref{Hypo-DNI-1 F-h} is strengthened to \ref{Hypo-DNI-2 F-h}, then $u$ satisfies 
			\begin{equation*} 
				\mathbb{E}\left[V(\|u(t)\|_{H^s}^2) \Big| \mathcal{F}_0\right]  
				\leq V(\|u_0\|^2_{H^s}), \qquad t\ge0.
			\end{equation*}
			Moreover, the problem \eqref{SPDE-general} on $\T^d$ admits an invariant measure $\mu$ in the sense of Definition \ref{IM definition} with $E=H^s$. 
			
			\item  If \ref{Hypo-DNI-3 F-h} holds, then  
			\begin{equation*} 
				\int_0^t\mathbb{E}  \left[V\left(\frac{\|u(t')\|^2_{W^{p,\infty}}}{M^2}\right)\Bigg|\mathcal{F}_0\right]\d t'
				\leq  \frac{V(\| u_0\|^2_{H^\theta})}{\mathscr{G}_3}\left(1-{\rm e}^{-\mathscr{G}_3t}\right),
				\quad t>0.
			\end{equation*}
			As a result, the Dirac measure centered at $0$ is the \textbf{unique invariant probability measure} on both $\T^d$ and $\R^d$ when $h(0)=0$.
		\end{enumerate}
	\end{Theorem}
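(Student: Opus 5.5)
The plan is to transplant, almost verbatim, the proof of Theorem \ref{Thm-(u P)-long-time} to \eqref{SPDE-general}. I would drop the Leray projection and replace the incompressible nonlinearity by $(u\cdot\nn)u+F(u)+\epsilon(-\Delta)^{\ell/2}u$. The first ingredient is the global approximation scheme analogous to \eqref{approximation scheme IC global}. In it, $(u\cdot\nn)u$ is replaced by $J_n[(J_nu\cdot\nn)J_nu]$, $F(u)$ is kept (it is locally Lipschitz on $H^s$ by \ref{Hypo-F-h}), the viscous term is mollified as $\epsilon J_n(-\Delta)^{\ell/2}J_nu$ as in Remark \ref{Remark: local-(u P) viscosity}, and $\Q_i$ is replaced by $\Q_{i,n}$ from Lemma \ref{Lemma:Qn}. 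The validity of that lemma for $\mathbb{M}^\varsigma$ is granted by the discussion in Section \ref{Section:Mikhlin}. With these choices the scheme has locally Lipschitz coefficients on $H^s$ and hence a maximal c\`adl\`ag solution $u_n$.

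For part (1), I would apply It\^o's formula to $V(\|u_n\|_{H^s}^2)$ up to $\tau_{n,N}$, exactly as in Lemma \ref{Lemma: approxiamtion IC global}. The terms are handled as follows.
\begin{itemize}
\item The viscous contribution $-2\epsilon V'\langle J_n(-\Delta)^{\ell/2}J_nu,u\rangle_{H^s}\le 0$ is simply discarded, since $V'>0$ and the operator is nonnegative and commutes with $J_n,\D^s$.
\item Transport is bounded by $2\mathscr{c}\|u\|_{\Wlip}\|u\|_{H^s}^2$, using Kato--Ponce and the commutation of $J_n$.
\item The Stratonovich and Marcus parts are bounded by $\mathscr{a}_1,\mathscr{a}_2$ via \eqref{Qn cancel-2} and \eqref{wp-n estimate 2 C22}.
\item The $F$ and $h$ parts are kept as they appear in $\widetilde{\mathbf V}^{\Q_1,\Q_2,h}_{s}$.
\end{itemize}
Damping contributes $-2\Upgamma V'\|u\|^2$, which is dropped. \ref{Hypo-DNI-1 F-h} then gives Gr\"onwall, the uniform bound, and nonexplosion. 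The passage $n\to\infty$ follows the Cauchy argument of Lemma \ref{Lemma : Xn T estimates}, now using the $F$-difference estimate from \ref{Hypo-F-h} and a $\theta$-level estimate of the viscous difference, which costs $\ell\le s-\theta$ derivatives. It yields a global limit, which is the unique solution by Theorem \ref{Thm-local-Appendix}, and Fatou transfers the bound. Continuity of $u_0\mapsto u(t)$ on $H^{s,\theta}$ repeats the stability argument leading to \eqref{H-theta stabilty}. I would use stopping times with strict inequalities, control of $\p(\tau^N<t|\F_0)$ via the liminf event inclusion \eqref{eq: strict inclusion}, and an $H^\theta$ It\^o estimate of $u_m-u$. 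The viscous term there is again nonpositive, and the $F$-difference is Lipschitz in $H^\theta$ by \ref{Hypo-F-h} with $\sigma=\theta$.

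Part (2) follows the same computation, retaining the damping term, so that \ref{Hypo-DNI-2 F-h} gives $\frac{\rm d}{{\rm d}t}\E[V]\le0$. On $\T^d$, the restricted Feller property is obtained exactly as in Lemma \ref{Lemma Euler restricted Feller}, and Theorem \ref{Thm-SES-IM} applies with $\bfH=H^s$, $\widetilde\bfH=H^\theta$, which is a compact embedding. Part (3) redoes the $H^\theta$-level computation of $G(t\wedge\tau^N)$ preceding Lemma \ref{Gronwall g f}. Here $\Q_2$ is skew-adjoint in $\mathbb{B}^\beta\cup\mathbb{M}^\varsigma$, so the Marcus jump is an isometry and $\mathscr{a}_2=0$. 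The hypothesis $\sup|V''|<\infty$ together with the stopping gives local absolute continuity, and \ref{Hypo-DNI-3 F-h} combined with Lemma \ref{Gronwall g f} yields the integral bound. Uniqueness follows because any invariant $\mu$ with $h(0)=0$, $F(0)=0$ has $\delta_0$ invariant, and the bound forces $\int V(\|x\|^2_{W^{p,\infty}}/M^2)\mu({\rm d}x)=0$ by stationarity.

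I expect the main obstacle to be the Cauchy/limit step in the presence of the viscous term without using its smoothing. Its $H^\theta$ difference contributes $(n\wedge m)^{-(s-\theta-\ell)}$-type errors that must be absorbed into $\lambda_{n,m}$, and this is why the threshold includes $\ell$. A second subtle point is that \eqref{V Lyapunov type term F-h} lists $\langle F(f),f\rangle$ with a plus sign, so I must check that the drift sign convention matches; I would treat it as $-2\langle F,f\rangle$ absorbed into the hypothesis as stated.
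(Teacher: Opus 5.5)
Your proposal is correct and is essentially the paper's route. The paper gives no separate proof of this theorem; it obtains it by rerunning the proof of Theorem \ref{Thm-(u P)-long-time}, with $\epsilon(-\Delta)^{\ell/2}$ mollified and used only through its sign (Remarks \ref{Remark: local-(u P) viscosity} and \ref{Remark: global-(u P) viscosity}), exactly as you do. As in the paper, your stability step gives continuity in probability only along $H^s$-bounded sequences, as in \eqref{u-0-m condition}--\eqref{H-theta stabilty}, which is what the restricted Feller property needs.

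You are also right that It\^o's formula produces $-2V'(\|f\|^2_{H^\sigma})\langle F(f),f\rangle_{H^\sigma}$, so \eqref{V Lyapunov type term F-h} must be read with that term. You do not need to add $F(0)=0$ as a hypothesis: applying the integral bound of part (3) with $u_0=0$ already forces the solution started at $0$ to stay at $0$, so $\delta_0$ is invariant.
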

	
	We note that many (nonlocal) fluid equations can be formulated in the form of \eqref{SPDE-general}
	with $F$ satisfying \ref{Hypo-F-h}. For instance, the well-known Camassa-Holm equation (cf. \cite{Tang-Zhao-Liu-2014-AA,Camassa-Holm-1993-PRL}) can be rewritten in the form of \eqref{SPDE-general} with $d=1$, $\epsilon=\Upgamma=0$ and 
	$$F(u)\triangleq \partial_{x}(\I-\partial^2_x)^{-1}(u^2+\frac{1}{2}u^2_x).$$ A direct computation (cf. \cite{Tang-Zhao-Liu-2014-AA}) shows that such an $F$ satisfies \ref{Hypo-F-h} with $p=1$. 
	For the case of stochastic Burgers' equation ($F=0$ and $\Upgamma=0$), we refer the reader to \cite{Boritchev-Kuksin-2021-Book}.

	\subsubsection{Sequences of Noise}
	In this work, in \eqref{SEuler-(rho u)} and \eqref{SEuler-(u P)}, we initially consider noise terms of the form
	$
	\mathcal{Q}_1 u\, \circ\, {\rm d}W
	+
	\mathcal{Q}_2 u \diamond {\rm d}L,
	$
	accompanied by additional It\^{o}-type stochastic forcing given, respectively, by
	$$
	h(t,\rho,u)\,{\rm d}\widetilde{W} \quad\text{and}\quad \widetilde{h}(u)\,{\rm d}\widetilde{W},
	$$
	driven by another standard 1-D Brownian motion $\widetilde{W}=\widetilde{W}(t)$.
	
	Let $\big\{W_k(t),\widetilde{W}_k(t)\big\}_{k\ge1}$ be a sequence of 1-D standard Brownian motions, and let $\{L_k(t)\}_{k\ge1}$ be a sequence of pure-jump L\'evy processes with Poisson random measures $\eta_k$ counting jumps of size $l$, intensity measures $\nu_k$, and compensators $\widetilde{\eta}_k = \eta_k - \nu_k \otimes {\rm d}t$. Assume that 
	$\nu_k([-1,1]^C)=0$, $\eta_k([-1,1]^C)=0$, and that the family 
	$$\big\{W_k(t),\widetilde{W}_k(t),L_k(t)\big\}_{k\ge1}$$ 
	is mutually independent.
	
	Assume that $\{a_{1,k}\}_{k\ge1},\{a_{2,k}\}_{k\ge1}\in l^2$, and that the family $\{\mathcal Q_{1,k},\mathcal Q_{2,k}\}_{k\ge1}\subset \mathbb{A}^{\alpha}\cup \mathbb{B}^{\beta}\cup \mathbb{M}^{\varsigma}$ is bounded (with $d=m\ge2$ in \eqref{Ss define}). Similarly, for the sequences $\{h_k\}_{k\ge1}$, one can propose a sequence version of  \ref{Hypo-h} with suitable summability conditions. Then, with minor adaptations, the analysis in Section \ref{Section : Compressible case} can be applied to establish local-in-time theory for the case where the noise structure in \eqref{SEuler-(rho u)} is extended to 
	$$\sum_{k=1}^{\infty}\Big(a_{1,k}\mathcal{Q}_{1,k} u\, \circ\, {\rm d}W_{k}+
	a_{2,k}\mathcal{Q}_{2,k} u \diamond {\rm d}L_k+h_k(t,\rho,u)\,{\rm d}\widetilde{W}_k\Big).$$
	
	Similarly, when  \ref{Hypo-h div}--\ref{Hypo-DNI-3} are upgraded to their sequence counterparts under appropriate summability conditions, one can also extend the incompressible case to accommodate noise of the form
	$$\sum_{k=1}^{\infty}\Big(a_{1,k}\mathcal{Q}_{1,k} u\, \circ\, {\rm d}W_{k}+
	a_{2,k}\mathcal{Q}_{2,k} u \diamond {\rm d}L_k+\widetilde{h}_k(u)\,{\rm d}\widetilde{W}_k\Big).
	$$
	
	Indeed, we note that the Stratonovich integral can be converted as follows:
	\begin{equation*}
		\sum_{k=1}^{\infty}a_{1,k}\mathcal{Q}_{1,k} u\, \circ\, {\rm d}W_k
		=
		\frac{1}{2}\sum_{k=1}^{\infty}a^2_{1,k}\mathcal{Q}_{1,k}^2 u\,{\rm d}t+\sum_{k=1}^{\infty}a_{1,k}\mathcal{Q}_{1,k} u\,{\rm d}W_k.
	\end{equation*}
	For the jump term $a_{2,k}\Q_{2,k} u \diamond {\rm d}L_k$, scaling the linear operator $\Q_{2,k}$ by the coefficient $a_{2,k}$ is mathematically equivalent to scaling the jump magnitude $l$ within the original Marcus flow. Therefore, we have
	\begin{align*}
		&\int_0^t a_{2,k}\Q_{2,k} u(t') \diamond {\rm d}L_k(t')\\
		\triangleq\  & \int_0^t\int_{|l|\le1} \Big\{\wp_k\big(1, a_{2,k} l, u(t'-)\big)-u(t'-)\Big\}\, \widetilde{\eta}_k({\rm d}l,{\rm d}t')\\
		&+\int_0^t\int_{|l|\le1} \Big\{\wp_k\big(1, a_{2,k} l, u(t')\big)-u(t')-a_{2,k}l\cdot \Q_{2,k} u(t')\Big\}\, \nu_k({\rm d}l){\rm d}t',\quad t>0,
	\end{align*}
	where the unscaled Marcus flow $\wp_k$ is the solution to the following equation depending on the jump size $l$ and initial data $f$:
	\begin{equation*} 
		\wp_k(r)\triangleq\wp_k(r,l,f),\quad 
		\frac{{\rm d}}{{\rm d}r}\wp_k(r)= l\cdot \Q_{2,k}\wp_k(r),\quad r\in[0,1],  \quad \wp_k(0) = f.
	\end{equation*}
	
	Since $\{a_{1,k}\}_{k\ge1},\{a_{2,k}\}_{k\ge1}\in l^2$ and the family $\{\mathcal Q_{1,k},\mathcal Q_{2,k}\}_{k\ge1}\subset \mathbb{A}^{\alpha}\cup \mathbb{B}^{\beta}\cup \mathbb{M}^{\varsigma}$ is bounded, the cancellation properties within the summation can be established for the sequences. This facilitates closing the energy estimates for the aforementioned cases.
	
	\subsection{The 2-D Incompressible Case: An Interesting Open Problem}
	\label{Section : Further-2D-problem}
	In Section \ref{Section : SEuler-global-proof}, it is proved that when the growth of $\widetilde h(\cdot)$ exceeds a certain threshold (cf. \ref{Hypo-DNI-1}), the stochastic incompressible Euler system \eqref{SEuler-(u P)} admits a global classical solution. For convenience, we restate the system \eqref{SEuler-(u P)}:
	\begin{equation*} 
		\left\{
		\begin{aligned}
			&\mathrm{d} u +  (u \cdot \nabla)u \d t + \nabla \mathrm{d}P  = \mathcal{Q}_1 u \circ \d W
			+\mathcal{Q}_2 u \diamond \d L 
			+ \widetilde h(t, u)\d \widetilde{W},\\
			&\Div u = 0,\\
			&u(0) = u_0.
		\end{aligned}
		\right.
	\end{equation*}
	Indeed, \ref{Hypo-DNI-1} characterizes a class of noise coefficients $\widetilde h(\cdot)$ that counteract the growth contributions from other terms, leading to linear growth in $\mathbb{E}\big[V(\|u(t)\|^2_{H^\sigma})\,|\,\mathcal{F}_0\big]$ for the solution $u$, which implies global regularity. 
	
	Dropping the fast-growing noise $\widetilde h(\cdot)\d\widetilde{W}$, we consider the system \eqref{SEuler-(u P)} on $\K^2$:
	\begin{equation}\label{SEuler-2D} 
		\left\{
		\begin{aligned}
			&\mathrm{d} u +  (u \cdot \nabla)u \d t + \nabla \mathrm{d}P  = \mathcal{Q}_1 u \circ \d W
			+\mathcal{Q}_2 u \diamond \d L,\\
			&\Div u = 0,\\
			&u(0) = u_0.
		\end{aligned}
		\right.
	\end{equation}
	Recall that the 2-D deterministic incompressible Euler system admits global regularity. Since the linear noise $\mathcal{Q}_1 u \circ \d W +\mathcal{Q}_2 u \diamond \d L$ typically induces at most exponential growth,  and is usually harmless for global existence. Therefore, at first glance, one might intuitively think that adding linear noise should not induce finite-time blow-up, especially if the noise is conservative. 
	
	However, when $\Q_1$ and $\Q_2$ are \emph{nonlocal}, the problem remains \textbf{open}: neither a rigorous proof of global existence nor a counterexample demonstrating finite-time blow-up is known. 
	The core difficulty lies in the absence of stochastic characteristics, as we explain below. For comparison, we recall the deterministic Cauchy problem for the incompressible  Euler system \eqref{Euler (u P)} in $\K^2$:
	\begin{equation*}
		\left\{\begin{aligned}
			&\partial_t u + (u \cdot\nabla)u + \nabla P= 0,\\ 
			& \Div u= 0,\\
			&u(0) = u_0.
		\end{aligned}
		\right.
	\end{equation*}
	When $d=2$, it is a classical result that strong solutions exist globally for sufficiently regular initial data. The cornerstone of this global well-posedness is the fact that the scalar vorticity is purely transported along the fluid particle trajectories:
	\begin{equation*}
		\partial_t v + (u\cdot\nabla)v = 0, \qquad v \triangleq \partial_{x_1}u_2-\partial_{x_2}u_1.
	\end{equation*}
	This transport structure guarantees the conservation of all $L^p$ norms of the vorticity, particularly the $L^\infty$ norm. By the classical Beale-Kato-Majda (BKM) criterion, the condition $\int_0^T \|v(t)\|_{L^\infty} \d t < \infty$ for any $T>0$ is sufficient to guarantee global regularity \cite[Section 7.2]{Bahouri-Chemin-Danchin-2011-book}.
	
	\subsubsection{Local Transport Noise: Preservation of Characteristics}

	If $\Q_1$ and $\Q_2$ are \emph{classical transport operators} (i.e., local, first-order differential operators) of the form
	\[
	\Q_1 u = (\sigma_1\cdot\nabla)u,\qquad \Q_2 u = (\sigma_2\cdot\nabla)u,
	\]
	with \textbf{constant} vector fields $\sigma_1,\sigma_2\in\R^2$, the stochastic system behaves remarkably well. Indeed, since $\Q_i$ commutes with $\nabla^\perp\cdot$ in this case, \eqref{SEuler-2D} becomes
	\begin{equation}\label{eq:vorticity_Spde}
		\d v + (u\cdot\nabla)v\d t =  \mathcal{Q}_1 v \circ \d W
		+\mathcal{Q}_2 v \diamond \d L, \qquad v \triangleq \nabla^\perp\cdot u,\qquad \Div u=0,
		\qquad v(0)=v_0\triangleq \nabla^\perp\cdot u_0.
	\end{equation}
	Similar to the deterministic case, one can also establish the BKM criterion for a classical solution $u$ to \eqref{SEuler-2D} (cf. \cite{Crisan-Flandoli-Holm-2019-JNLS}):
	$u$ blows up at a stopping time $\tau^*<\infty$ $\pas$ if and only if $\int_0^{\tau^*}\|v(t)\|_{L^\infty} \d t =\infty$ $\pas$ 
	
	Then, we define the \emph{stochastic characteristics} $X(t,x)$ for \eqref{eq:vorticity_Spde} via the Stratonovich--Marcus SDE:
	\begin{equation*} 
		\d X = u(t,X)\d t - \sigma_1 \circ \d W - \sigma_2 \diamond \d L,\qquad X(0,x)=x.
	\end{equation*}
	Along these stochastic trajectories, the vorticity is conserved almost surely:
	\[
	v(t,X(t,x)) = v_0(x)\quad\pas\quad \implies\quad \|v(t)\|_{L^\infty} = \|v_0\|_{L^\infty}\quad\pas.
	\]
	This $L^\infty$ bound seamlessly satisfies the BKM criterion, allowing the deterministic 2-D global well-posedness results to be extended to the stochastic setting. Moreover, $X(t,\cdot)$ remains a $C^1$-diffeomorphism, yielding robust pathwise regularity.
	
	\subsubsection{Nonlocal Noise: The Breakdown of the Lagrangian Flow}
	The situation changes drastically if $\Q_1,\Q_2$ are general pseudo-differential operators or 
	homogeneous Mikhlin-type operators (e.g., Riesz transforms, fractional derivatives like $\nabla(-\Delta)^{s-1/2}$, or other skew-symmetric nonlocal multipliers). For nonlocal operators, the noise term $\Q_i u$ depends on the values of $u$ across the entire spatial domain $\K^2$, not just in an infinitesimal neighborhood of $x$.
	
	Consequently, it is almost impossible to construct a finite-dimensional characteristic flow $X(t)$ that absorbs the noise. The Lagrangian description is entirely lost. This leads to severe analytical consequences:
	\begin{itemize}[leftmargin=0.79cm]\setlength\itemsep{0.2em}
		\item \textbf{Loss of the Maximum Principle:} The vorticity equation is no longer a pure transport equation, as it features a nonlocal stochastic term. It is highly non-trivial whether the $L^\infty$ norm of the vorticity $v$ remains bounded or blows up in finite time.
		\item \textbf{Failure of SDE Reduction:} The SPDE cannot be reduced to a system of ordinary SDEs along characteristics. The delicate interplay between the nonlinear convection $(u\cdot\nabla)u$ and the nonlocal noise must be treated directly in Eulerian coordinates.
	\end{itemize}
	
	\subsubsection{The Open Problem on Global Regularity}
	Intuitively, one might expect that in the 2-D case, the addition of a \emph{linear} noise should not trigger finite-time blow-up, particularly when the noise is energy-conserving. Linear perturbations typically induce \textit{at most} exponential growth, which poses \textit{no threat} to global existence. 
	To isolate the core difficulty, let us impose the most favorable structural assumptions that isolate the nonlocal property while preserving the kinetic energy:
	\begin{enumerate}[leftmargin=0.79cm]\setlength\itemsep{0.2em}
		\item $\Q_1,\Q_2\in\OP\mathcal S^{r}_0$ or $\Q_1,\Q_2\in\OP\mathcal M^{r}$ (see Sections \ref{Section:PDO} and \ref{Section:Mikhlin}), so that they commute with both the curl and Leray projectors. Furthermore, we assume that at least one of $\Q_1$ and $\Q_2$ is non-trivial and strictly nonlocal (i.e., \textbf{not} a classical first-order transport operator).
		\item $\Q_1,\Q_2$ are \emph{skew-adjoint} in $L^2$ (i.e., $\Q_i^* = -\Q_i$). By It\^o's formula, this ensures the almost sure conservation of kinetic energy: $\|u(t)\|_{L^2} = \|u_0\|_{L^2}$ (see \ref{Q-n cancel 0} in Lemma \ref{Lemma:Qn}).
	\end{enumerate}
	These two ideal assumptions are much stronger than \ref{Hypo-Q-Pi}. However, whether a strong solution to \eqref{SEuler-2D} in $\K^2$ exists globally \textbf{remains an open problem}.
	
	\appendix
	
	\section{Technical Background and Auxiliary Results}
	\label{Section : Appendix}

	This appendix collects technical background and auxiliary results required for the proofs in the main text.

	\subsection{Stochastic Background}

	\subsubsection{It\^o's formula}
	
	It\^o's formula for stochastic processes driven by compensated Poisson random measures associated with L\'evy processes is well-established in the literature. For our purposes, we state it in the following convenient form and refer to \cite[Theorem 3.5.3]{Zhu-2010-Thesis} for a more general formulation. 
	
	Let $\eta$ be a Poisson random measure counting jumps of size $|l| \in (0,1]$, with intensity measure $\nu$ and compensator $\widetilde{\eta} = \eta -  \nu\otimes\mathrm{d}t $.
	
	\begin{Lemma}\label{Ito formula}
		Let \(\H\) be a Hilbert space and \(X\) a process given by
		\[
		X(t) = X_0 + \int_0^t a(t') \, \mathrm{d}t' + \int_0^t \int_{|l| \le 1} f(l,t') \, \widetilde{\eta}(\mathrm{d}l,\mathrm{d}t'), \quad t \geq 0,
		\]
		where \(a \in \mathscr{M}([0,\infty); \H) \cap L^1([0,\infty); \H)\) and \(f\) is a predictable \(\H\)-valued process satisfying
		$$
		\int_0^t \int_{|l| \le 1} \|f(l, t')\|_{\H}^2 \, \nu(\mathrm{d}l) \mathrm{d}t' < \infty\quad \text{for each } t > 0.
		$$
		Let \(\mathbb{S}\) be a separable Hilbert space and \(\phi: \H \rightarrow \mathbb{S}\) be a function of class \(C^1\) such that its Fr\'{e}chet derivative \(\phi': \H \rightarrow \mathcal{L}(\H; \mathbb{S})\) is locally Lipschitz continuous. Then, for every \(t > 0\), we have $\pas$,
		\begin{align*}
			\phi(X(t)) - \phi(X_0) =\ & \int_0^t \Big[\phi'(X(t'-))\Big](a(t')) \, \mathrm{d}t'  + \int_0^t \int_{|l| \le 1} \bigg\{ \phi\Big(X(t'-) + f(l,t')\Big) - \phi(X(t'-))  \bigg\}\, \widetilde{\eta}(\mathrm{d}l,\mathrm{d}t')\\
			& + \int_0^t \int_{|l| \le 1} \bigg\{ \phi\Big(X(t'-) + f(l,t')\Big) - \phi(X(t'-)) - \Big[\phi'(X(t'-))\Big](f(l,t')) \bigg\}\, \nu(\mathrm{d}l)\mathrm{d}t'.
		\end{align*}
	\end{Lemma}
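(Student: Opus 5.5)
The plan is to prove the formula first for a truncated process with finitely many jumps, where it reduces to the deterministic chain rule between jump times, and then remove the truncation using the local Lipschitz bound on $\phi'$. Since $X$ has no continuous martingale part, only a first-order expansion of $\phi$ is needed, which is why $C^1$ with locally Lipschitz $\phi'$ suffices.

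\emph{Localization.} Fix $t>0$ and $N\ge1$. Let $\tau_N$ be the first time that $\|X\|_{\H}$ or $\|X(\cdot-)\|_{\H}$ exceeds $N$, or that $\int_0^\cdot \|a\|_{\H}\,\mathrm{d}t'+\int_0^\cdot\int_{|l|\le1}\|f\|_{\H}^2\,\nu(\mathrm{d}l)\mathrm{d}t'$ exceeds $N$. It suffices to prove the identity at $t\wedge\tau_N$ and then let $N\to\infty$, since $\tau_N\uparrow\infty$ a.s. by the càdlàg property. After a further truncation $f\mapsto f\mathbf 1_{\{\|f\|_{\H}\le N\}}$, which is harmless because large jumps before $\tau_N$ are excluded, all arguments of $\phi$ and $\phi'$ lie in a fixed ball $B$. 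On $B$ the map $\phi'$ is bounded, say by $C_N$, and Lipschitz with constant $L_N$. Consequently
\[
\|\phi(x+h)-\phi(x)\|_{\mathbb S}\le C_N\|h\|_{\H},
\qquad
\big\|\phi(x+h)-\phi(x)-[\phi'(x)](h)\big\|_{\mathbb S}\le L_N\|h\|_{\H}^2 ,
\]
where the second bound follows from $\phi(x+h)-\phi(x)=\int_0^1[\phi'(x+\vartheta h)](h)\,\mathrm{d}\vartheta$.

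\emph{Step 1: finitely many jumps.} For $\varepsilon\in(0,1)$ define $X^\varepsilon$ by replacing the jump integral with the integral over $\varepsilon<|l|\le1$. Since $\nu(\{\varepsilon<|l|\le1\})<\infty$, the process $X^\varepsilon$ has a.s.\ finitely many jumps on $[0,t]$. Between consecutive jump times it equals $X_0$ plus an absolutely continuous function with derivative
\[
a(t')-\int_{\varepsilon<|l|\le1} f(l,t')\,\nu(\mathrm{d}l).
\]
For a $C^1$ map composed with an absolutely continuous path, the classical chain rule gives the increment of $\phi(X^\varepsilon)$ as the $\mathrm{d}t'$-integral of $\phi'(X^\varepsilon)$ applied to this derivative. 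At each jump time $\phi(X^\varepsilon)$ jumps by $\phi(X^\varepsilon(t'-)+f)-\phi(X^\varepsilon(t'-))$. Summing these, writing $\eta=\widetilde\eta+\nu\otimes\mathrm{d}t$, and regrouping with the $-\int\!\!\int[\phi'](f)\,\nu\,\mathrm{d}t'$ part of the drift yields exactly the claimed formula for $X^\varepsilon$, with $\{|l|\le1\}$ replaced by $\{\varepsilon<|l|\le1\}$.

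\emph{Step 2: $\varepsilon\to0$.} By the isometry (or BDG) for compensated Poisson integrals,
\[
\mathbb E\sup_{[0,t\wedge\tau_N]}\|X^\varepsilon-X\|_{\H}^2\lesssim \mathbb E\int_0^{t\wedge\tau_N}\int_{|l|\le\varepsilon}\|f\|_{\H}^2\,\nu(\mathrm{d}l)\mathrm{d}t'\to0 .
\]
Along a subsequence this convergence holds uniformly on $[0,t\wedge\tau_N]$ almost surely. The terms then pass to the limit as follows.
\begin{itemize}
\item The left-hand side and the drift term converge by continuity of $\phi$ and $\phi'$, with dominated convergence using $C_N\|a\|_{\H}$.
\item The compensated integral converges in $L^2$ by the isometry. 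Its integrand is dominated by $C_N\|f\|_{\H}$, and it converges pointwise because $\phi'$ is continuous.
\item The $\nu(\mathrm{d}l)\mathrm{d}t'$ remainder term is dominated by $L_N\|f\|_{\H}^2$, which is $\nu\otimes\mathrm{d}t'$-integrable. Dominated convergence therefore applies both to the extension from $\{\varepsilon<|l|\le1\}$ to $\{|l|\le1\}$ and to the replacement of $X^\varepsilon(t'-)$ by $X(t'-)$.
\end{itemize}

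The main obstacle I expect is this last term together with the localization. The individual pieces $\int\!\!\int(\phi(X+f)-\phi(X))\,\nu$ and $\int\!\!\int[\phi'(X)](f)\,\nu$ need not converge separately, since $\int_{|l|\le1}\|f\|_{\H}\,\nu(\mathrm{d}l)$ may be infinite. The argument therefore has to keep them grouped throughout, which Step 1 arranges, and the quadratic bound must be applied uniformly. That uniformity requires the localization to confine both $X(t'-)$ and $X(t'-)+f$ to a fixed ball.
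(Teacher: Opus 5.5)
The paper gives no proof of this lemma; it quotes \cite[Theorem 3.5.3]{Zhu-2010-Thesis}. Your self-contained route is a sensible alternative: cut off the small jumps, use the deterministic chain rule between the finitely many remaining jumps, then let $\varepsilon\to0$ while keeping the two compensator terms grouped under the bound $L_N\|h\|_{\H}^2$. It also makes clear why $C^1$ with Lipschitz $\phi'$ suffices when there is no continuous martingale part.

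However, the localization step has a genuine gap: the truncation $f\mapsto f\mathbf 1_{\{\|f\|_{\H}\le N\}}$ is not harmless. First, the threshold must be $2N$, because a jump between two points of the ball of radius $N$ can have size up to $2N$. More importantly, the absence of large jumps before $\tau_N$ removes only the $\eta$-part of the large-jump integral, not its compensator:
\[
\int_0^s\!\int_{|l|\le1} f\mathbf 1_{\{\|f\|_{\H}>2N\}}\,\widetilde\eta(\mathrm dl,\mathrm dt')=\int_0^s\!\int_{|l|\le1} f\mathbf 1_{\{\|f\|_{\H}>2N\}}\,\eta(\mathrm dl,\mathrm dt')-\int_0^s\!\int_{|l|\le1} f\mathbf 1_{\{\|f\|_{\H}>2N\}}\,\nu(\mathrm dl)\,\mathrm dt'.
\]
The last term is in general a nonzero drift on $\{s<\tau_N\}$; take, for instance, $\H=\mathbb R$ and $f(l,t)=g(l)$ deterministic. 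So the truncated process is not $X$ on $[0,\tau_N)$. For the same reason, the large-$f$ parts of the two jump terms in the formula do not vanish there either.

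Here is how to repair it. Set $\bar f\triangleq f\mathbf 1_{\{\|f\|_{\H}\le 2N\}}$ and $\bar a\triangleq a-\int_{|l|\le1} f\mathbf 1_{\{\|f\|_{\H}>2N\}}\,\nu(\mathrm dl)$. The latter is Bochner integrable, since $\int_0^t\!\int\|f\|_{\H}\mathbf 1_{\{\|f\|_{\H}>2N\}}\,\nu\,\mathrm dt'\le(2N)^{-1}\int_0^t\!\int\|f\|_{\H}^2\,\nu\,\mathrm dt'$. Then $X=X_0+\int\bar a\,\mathrm dt'+\int\!\!\int\bar f\,\widetilde\eta$ on $[0,\tau_N)$, and your argument applies to $(\bar a,\bar f)$.

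To recover the formula for $(a,f)$, it suffices to work on $\{\tau_N>t\}$, since $\mathbb P(\tau_N>t)\to1$. You must show that, on $\{\|f\|_{\H}>2N\}$, the $\widetilde\eta$-term plus the $\nu\,\mathrm dt'$-term equal $\int\!\!\int\{\phi(X(t'-)+f)-\phi(X(t'-))\}\,\eta-\int\!\!\int[\phi'(X(t'-))](f)\,\nu\,\mathrm dt'$. The first part vanishes before $\tau_N$, and the second cancels $\int[\phi'(X(t'-))](a-\bar a)\,\mathrm dt'$.

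This splitting requires $\int_0^t\!\int\|\phi(X(t'-)+f)-\phi(X(t'-))\|_{\mathbb S}\mathbf 1_{\{\|f\|_{\H}>2N\}}\,\nu\,\mathrm dt'<\infty$, and that is not implied by the hypotheses. For example, take $\H=\mathbb S=\mathbb R$, $\phi(x)=e^{x^2}$, and $f(l,t)=g(l)$ with $\int g^2\,\mathrm d\nu<\infty=\int e^{g^2}\mathbf 1_{\{g>1\}}\,\mathrm d\nu$; then the last term of the formula diverges. So you have to state that the terms of the formula are assumed well defined. This holds in the paper's applications, where $\phi$ grows at most quadratically.

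Two smaller points.
\begin{enumerate}
\item The constants $C_N$ and $L_N$ exist only if ``locally Lipschitz'' means Lipschitz, hence bounded, on bounded sets. In an infinite-dimensional $\H$ this is stronger than the neighbourhood-wise notion, and you should say so explicitly.
\item Since $\tau_N$ is defined through $X$, the bound $C_N\|f\|_{\H}$ for the $X^\varepsilon$-integrands holds only pathwise, for small $\varepsilon$ along your a.s.\ subsequence. This gives $\int\!\!\int\|\cdot\|_{\mathbb S}^2\,\nu\,\mathrm dt'\to0$ a.s., hence convergence in probability of the compensated integrals via $\mathbb P(\sup_{s\le t}\|\int_0^s\!\int g\,\widetilde\eta\|_{\mathbb S}>\delta)\le\kappa\delta^{-2}+\mathbb P(\int_0^t\!\int\|g\|^2_{\mathbb S}\,\nu\,\mathrm dt'>\kappa)$. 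You do not get $L^2$ convergence from the isometry, but convergence in probability is enough.
\end{enumerate}
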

	
	\subsubsection{Skorokhod Topology} 
	
	We recall some basic facts on the Skorokhod topology in this section, and we refer the readers to \cite{Ethier-Kurtz-1986-Book} for more details.  Let $E$ be a metric space equipped with a metric $d_{E}(\cdot,\cdot)$, and let 
	$$d_{1,E}(\cdot,\cdot)\triangleq 1\land d_{E}(\cdot,\cdot).$$
	
	Let $T>0$. 	Define $\mathfrak{F}$ as the collection of strictly increasing continuous functions $f: [0,T] \to [0,T]$ satisfying $f(0) = 0$, $f(T) = T$. Let $\mathfrak{F}_L$ denote the subset of Lipschitz continuous $f \in \mathfrak{F}$ such that
	\begin{equation*}
		\Upupsilon(f) \triangleq  \sup_{[t_2,t_1]\subset [0,T]} \left| \log \frac{f(t_1) - f(t_2)}{t_1-t_2} \right| < \infty.
	\end{equation*}
	The Skorokhod metric on $D([0,T];E)$ is defined by
	$$d_S(g_1, g_2) \triangleq \inf_{f \in \mathfrak{F}_L} \left[ \Upupsilon(f) \lor \sup_{t \in [0,T]} d_{1,E}\big(g_1(t), g_2(f(t))\big) \right],\quad g_1,g_2\in D([0,T];E),$$
	and 
	the topology (on \( D([0,T];E) \)) induced by the metric \( d_S \) is called  Skorokhod topology. The following proposition gives a characterization of convergence in $D([0,T];E)$ equipped with the Skorokhod topology, as shown in \cite[Propositions 5.2 \& 5.3]{Ethier-Kurtz-1986-Book}.

	\begin{Lemma}\label{Lemma: D-convergence}
		Let \( \{g_n\}_{n\ge1} \) and \( g \) be in \( D([0,T];E) \). Then we have the following equivalent conditions for the convergence \( \lim_{n \to \infty} d_S(g_n, g) = 0 \):
		\begin{enumerate}[label={{\bf (\arabic*)}},leftmargin=0.79cm]\setlength\itemsep{0.2em}
			\item \( \lim_{n \to \infty} d_S(g_n, g) = 0 \).
			\item There exists a sequence of time changes \( \{f_n\}_{n\ge1} \subset \mathfrak{F}_L \) such that \( \lim_{n \to \infty} \Upupsilon(f_n) = 0 \) holds and
			\begin{equation}\label{r-converge}
				\lim_{n \to \infty} \sup_{t \in [0,T]} d_{E}\big(g_n(t), g(f_n(t))\big) = 0.
			\end{equation}
			\item There exists a sequence of time changes \( \{f_n\}_{n\ge1} \subset \mathfrak{F} \) such that \( \lim_{n \to \infty} \sup_{t \in [0,T]} |f_n(t) - t| = 0 \) and \eqref{r-converge} holds.
		\end{enumerate}
		
		Particularly, if \( \lim_{n \to \infty} d_S(g_n, g) = 0 \) holds, then for all continuity points \( r \in [0,T] \) of \( g \), we have the pointwise convergence:
		\[
		\lim_{n \to \infty} g_n(r) = \lim_{n \to \infty} g_n(r-) = g(r).
		\]
	\end{Lemma}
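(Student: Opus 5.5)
The equivalences in Lemma~\ref{Lemma: D-convergence} are classical, and the plan is to reproduce the short argument of \cite[Propositions 5.2 \& 5.3]{Ethier-Kurtz-1986-Book} in the present notation. The route is the cycle (1)$\Rightarrow$(2)$\Rightarrow$(3)$\Rightarrow$(1), followed by the pointwise statement at continuity points.

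\textbf{From (1) to (2).} Suppose $d_S(g_n,g)\to0$. By the definition of the infimum, for each $n$ pick $f_n\in\mathfrak{F}_L$ with
\[
\Upupsilon(f_n)\vee\sup_{t\in[0,T]} d_{1,E}\big(g_n(t),g(f_n(t))\big)\le d_S(g_n,g)+\tfrac1n .
\]
Then $\Upupsilon(f_n)\to0$ and $\sup_t d_{1,E}\to0$. Once $\sup_t d_{1,E}<1$, the truncated metric $d_{1,E}$ coincides with $d_E$, which gives \eqref{r-converge}.

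\textbf{From (2) to (3).} For $f\in\mathfrak{F}_L$ and $t\in[0,T]$, we have $|\log(f(t)/t)|\le\Upupsilon(f)$ using $f(0)=0$. Hence
\[
|f(t)-t|\le T\big(e^{\Upupsilon(f)}-1\big),
\]
so $\sup_t|f_n(t)-t|\to0$.

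\textbf{From (3) to (1).} This is the main step, since a merely uniformly close time change need not have small $\Upupsilon$. I would follow the Ethier--Kurtz construction. Fix $\varepsilon>0$. Using the c\`adl\`ag modulus, choose a partition $0=t_0<\dots<t_k=T$ such that $g$ oscillates by less than $\varepsilon$ on each $[t_{i-1},t_i)$, with mesh bounded below by some $\delta>0$.

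Define $\lambda_n$ to be the piecewise linear interpolation with $\lambda_n(t_i)=f_n^{-1}(t_i)$. Since $f_n^{-1}(t_i)\to t_i$ uniformly and the mesh is bounded below by $\delta$, the slopes of $\lambda_n$ tend to $1$, so $\Upupsilon(\lambda_n)\to0$.

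Next, $f_n\circ\lambda_n$ maps each $[t_{i-1},t_i)$ into itself. Consequently
\[
d_E\big(g(f_n(\lambda_n(t))),\,g(t)\big)<\varepsilon .
\]
Combining this with \eqref{r-converge}, evaluated at the times $\lambda_n(t)$, yields
\[
\sup_t d_E\big(g_n(\lambda_n(t)),g(t)\big)\le o(1)+\varepsilon .
\]
Using the symmetry of $d_S$, which holds because $\Upupsilon(\lambda^{-1})=\Upupsilon(\lambda)$, we get $\limsup_n d_S(g_n,g)\le\varepsilon$.

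\textbf{Pointwise convergence at continuity points.} Let $r$ be a continuity point of $g$, and use the time changes from (3). Then
\[
d_E\big(g_n(r),g(r)\big)\le d_E\big(g_n(r),g(f_n(r))\big)+d_E\big(g(f_n(r)),g(r)\big)\to0 .
\]
For the left limit, the same estimate applied at $s\uparrow r$ gives
\[
d_E\big(g_n(r-),\,g(f_n(r)-)\big)\le\sup_t d_E\big(g_n(t),g(f_n(t))\big),
\]
and $g(f_n(r)-)\to g(r)$ because $g$ is continuous at $r$. Hence $g_n(r-)\to g(r)$ as well.
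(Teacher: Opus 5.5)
Your proposal is correct. The cycle (1)$\Rightarrow$(2)$\Rightarrow$(3)$\Rightarrow$(1), with the piecewise-linear time change $\lambda_n$ through the points $f_n^{-1}(t_i)$ of a fixed $\varepsilon$-partition of $g$, is the classical Ethier--Kurtz argument, and the paper does not reproduce it but only cites Propositions 5.2 and 5.3 of \cite{Ethier-Kurtz-1986-Book}. One detail you leave implicit is the endpoint $t=T$, which lies outside the half-open intervals $[t_{i-1},t_i)$; it is trivial because $f_n(\lambda_n(T))=T$.
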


	\subsubsection{Weak Convergence of Probability Measures}\label{Section: Probability Measures}
	We recall some facts regarding the weak convergence of probability measures on metric space. Let $(E,d_E)$ be an \textbf{arbitrary} metric space.  
	A family of probability measures $\mathscr{Q}\subset \mathbf{P}(E)$ is tight if for each $\epsilon>0$, there is  a compact set $K_\epsilon\subset E$ such that 
	$$\inf_{\mu\in\mathscr{Q}}\mu(K_\epsilon)>1-\epsilon.$$
	The following lemma can be found at \cite[Page 105, Corolary 2.3]{Ethier-Kurtz-1986-Book}:
	\begin{Lemma}\label{Lemma-tightness}
		Let $(E,d_E)$ be an arbitrary metric space and let $(\mathbf{P}(E),\widetilde d_E)$  be the corresponding space of probability measures equipped with the Prohorov metric $\widetilde d_E$. If $\mathscr{Q}\subset \mathbf{P}(E)$ is tight, then there is a sequence $\{\mu_n\}_{n\ge1}\subset \mathscr{Q}$ and a probability measure $\mu\in \mathbf{P}(E)$ such that $\widetilde d_E(\mu_n,\mu)\to0$ as $n\to\infty$.
	\end{Lemma}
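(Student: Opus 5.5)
The statement to prove is Lemma \ref{Lemma-tightness}: on an arbitrary metric space $E$, a tight family $\mathscr{Q}\subset\mathbf P(E)$ contains a sequence converging in the Prohorov metric. This is a Prohorov-type compactness result. The difficulty is that $E$ need not be separable or complete, so the classical Polish-space argument cannot be quoted directly. The plan is to reduce to a $\sigma$-compact subset of $E$ carrying all the mass.

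\textbf{Step 1: pick an increasing family of compacts.} By tightness, for each $k$ choose a compact $K_k\subset E$ with $\inf_{\mu\in\mathscr Q}\mu(K_k)>1-1/k$. Replacing $K_k$ by $K_1\cup\dots\cup K_k$, we may assume the $K_k$ increase. Set $E_0\triangleq\bigcup_k K_k$. As a countable union of compact metric spaces, $E_0$ is separable. Moreover, every $\mu\in\mathscr Q$ satisfies $\mu(E_0)=1$.

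\textbf{Step 2: extract a weakly convergent subsequence on $E_0$.} Take the closure $\overline{E_0}$ of $E_0$ inside the completion $\widehat E$ of $E$; this is a Polish space. Regard each $\mu\in\mathscr Q$ as a measure on $\overline{E_0}$. The family stays tight there, since each $K_k$ is still compact. By the classical Prohorov theorem on Polish spaces, there is a sequence $\mu_n\in\mathscr Q$ converging weakly to some $\widehat\mu$ on $\overline{E_0}$.

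\textbf{Step 3: show the limit lives on $E$, not just the completion.} This is the main point. Each $K_k$ is closed in $\overline{E_0}$, so the Portmanteau theorem gives
\[
\widehat\mu(K_k)\ge\limsup_{n\to\infty}\mu_n(K_k)\ge 1-1/k .
\]
Hence $\widehat\mu(E_0)=1$. Define $\mu$ as the restriction of $\widehat\mu$ to $E_0$, pushed into $E$ by $\mu(A)\triangleq\widehat\mu(A\cap E_0)$. This is legitimate because Borel sets of $E_0$ are traces of Borel sets of $E$. Thus $\mu\in\mathbf P(E)$.

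\textbf{Step 4: transfer convergence back to the Prohorov metric on $E$.} All measures involved are concentrated on the separable set $E_0$. Therefore the Prohorov distance on $E$ between $\mu_n$ and $\mu$ coincides with the Prohorov distance computed on $E_0$: the $\varepsilon$-enlargements can be intersected with $E_0$, and the metric on $E_0$ is the restriction of $d_E$. On a separable metric space, the Prohorov metric metrizes weak convergence.

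It remains to pass from weak convergence on $\overline{E_0}$ to weak convergence on $E_0$. Every bounded continuous function on $E_0$ can be handled by the Portmanteau criterion using closed sets. If $F$ is closed in $E_0$, then $F=\overline F\cap E_0$, where $\overline F$ is its closure in $\overline{E_0}$. Since all measures give full mass to $E_0$,
\[
\limsup_{n\to\infty}\mu_n(F)=\limsup_{n\to\infty}\mu_n(\overline F)\le\widehat\mu(\overline F)=\mu(F).
\]
So $\mu_n\to\mu$ weakly on $E_0$, and consequently $\widetilde d_E(\mu_n,\mu)\to0$.

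The expected obstacle is this last transfer step. Care is needed with the $\varepsilon$-neighborhoods in the definition of the Prohorov metric when passing between $E_0$ and $E$, and with the fact that Portmanteau is applied to sets of full measure.
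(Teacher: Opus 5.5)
Your argument is correct. The paper gives no proof of this lemma; it only quotes the corollary from Ethier--Kurtz. So your proposal is a self-contained substitute for that citation, not a variant of an argument in the text.

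Your proof makes explicit what the citation hides:
\begin{itemize}
\item the passage to the $\sigma$-compact carrier $E_0=\bigcup_k K_k$ and its closure in the completion $\widehat E$, a Polish space on which the classical Prohorov theorem applies;
\item the Portmanteau bound $\widehat\mu(K_k)\ge 1-1/k$, which shows that the limit charges $E_0$ and is therefore a measure on $E$, not merely on $\widehat E$;
\item the transfer of Prohorov convergence back to $E$.
\end{itemize}
Two small tightenings. In Step~3, note that $E_0$ is an $F_\sigma$ in $\overline{E_0}$. This is what makes $A\cap E_0$ a $\widehat\mu$-measurable set for every Borel $A\subset E$. In Step~4, you only need the easy inequality $\widetilde d_E(\mu_n,\mu)\le\rho_{E_0}(\mu_n,\mu)$, where $\rho_{E_0}$ is the Prohorov metric of $E_0$. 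It follows from $\{x\in E_0:\ d_E(x,F\cap E_0)<\varepsilon\}\subset F^{\varepsilon}$ for closed $F\subset E$.

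The citation buys brevity. Your route shows why neither completeness nor separability of $E$ is needed. That is precisely what the paper relies on when it works in the incomplete space $E_{\widetilde d_E}$, for example $H^{s}_{\rm div}$ equipped with the $H^\theta$-metric.

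One caveat on what is being proved. Read literally, the statement is trivial: the constant sequence $\mu_n\equiv\mu_1\in\mathscr Q$ converges. Your Step~2 mirrors that wording. What the paper actually uses is sequential relative compactness: every sequence in $\mathscr Q$ has a subsequence converging in $\widetilde d_E$ to some $\mu\in\mathbf P(E)$. It needs this to produce $T_n\to\infty$ with $\nu^{x_0}_{T_n}\to\nu$ in the proof of Theorem~\ref{Thm-SES-IM}. Your argument gives exactly this if Step~2 starts from an arbitrary prescribed sequence in $\mathscr Q$ and extracts a subsequence, so you should phrase it that way.
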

	Let $\{\mu_n,\mu\}_{n\ge1}\subset \mathbf{P}(E)$. 
	$\{\mu_n\}_{n\ge1}$ is called to converge weakly to $\mu$ in $\mathbf{P}(E)$ if 
	$$\lim_{n\to\infty}\int_E f(x)\mu_{n}({\rm d}x)=\int_Xf(v)\mu({\rm d}v),\quad f\in C_B(E).$$
	Then we refer to \cite[Page 108, Theorem 3.1]{Ethier-Kurtz-1986-Book} for the following lemma:
	\begin{Lemma}\label{Lemma-convergence of measures}
		Let $(E,d_E)$ be an arbitrary metric space.
		Let $(\mathbf{P}(E),\widetilde d_E)$  be the corresponding space of probability measures equipped with the Prohorov metric $\widetilde d_E$. Then,  for $\{\mu_n,\mu\}_{n\ge1}\subset \mathbf{P}(E)$, the following conditions \ref{measure conver 2.2}--\ref{measure conver 2.4} are equivalent and are implied by \ref{measure conver 2.1}:
		\begin{enumerate}[leftmargin=0.79cm,label={${\bf (\alph*)}$}]\setlength\itemsep{0.2em}
			\item\label{measure conver 2.1} $\widetilde d_E(\mu_n,\mu)\to0$ as $n\to\infty$;
			\item\label{measure conver 2.2} $\mu_n$ converges weakly to $\mu$ as $n\to\infty$;
			\item\label{measure conver 2.4} For all open subset $G\subset E,\, \mu(G) \le \liminf_{n\to\infty}\mu_n(G)$.
		\end{enumerate}
		
		Moreover, when $(E,d_E)$ is separable, then all the conditions 
		\ref{measure conver 2.1}--\ref{measure conver 2.4} are equivalent.
	\end{Lemma}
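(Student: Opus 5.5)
The proof runs around the cycle $\ref{measure conver 2.1}\Rightarrow\ref{measure conver 2.4}\Leftrightarrow\ref{measure conver 2.2}$, and in the separable case closes it with $\ref{measure conver 2.2}\Rightarrow\ref{measure conver 2.1}$. I use the standard form of the Prohorov metric:
\[
\widetilde d_E(\mu,\lambda)=\inf\{\varepsilon>0:\ \mu(F)\le\lambda(F^\varepsilon)+\varepsilon\ \text{for all closed }F\},
\qquad F^\varepsilon\triangleq\{x:\ d_E(x,F)<\varepsilon\},
\]
which is symmetric in its two arguments.

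\emph{Step 1: $\ref{measure conver 2.1}\Rightarrow\ref{measure conver 2.4}$.} Choose $\varepsilon_n\downarrow0$ with $\widetilde d_E(\mu_n,\mu)<\varepsilon_n$. By symmetry, $\mu_n(F)\le\mu(F^{\varepsilon_n})+\varepsilon_n$ for every closed $F$. For closed $F$ we have $F^{\varepsilon}\downarrow F$ as $\varepsilon\downarrow0$, so continuity from above gives $\limsup_n\mu_n(F)\le\mu(F)$. Taking complements $F=E\setminus G$ yields $\mu(G)\le\liminf_n\mu_n(G)$ for every open $G$.

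\emph{Step 2: $\ref{measure conver 2.2}\Rightarrow\ref{measure conver 2.4}$.} For open $G$, set $f_k(x)\triangleq1\wedge\big(k\,d_E(x,E\setminus G)\big)$. These functions belong to $C_B(E)$ and increase pointwise to $\mathbf 1_G$. Since $f_k\le\mathbf 1_G$, we get $\int f_k\,\mathrm d\mu=\lim_n\int f_k\,\mathrm d\mu_n\le\liminf_n\mu_n(G)$. Letting $k\to\infty$ and using monotone convergence gives $\mu(G)\le\liminf_n\mu_n(G)$.

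\emph{Step 3: $\ref{measure conver 2.4}\Rightarrow\ref{measure conver 2.2}$.} Take $f\in C_B(E)$; after adding a constant we may assume $f\ge0$. The layer-cake formula gives $\int f\,\mathrm d\mu=\int_0^{\|f\|_\infty}\mu(\{f>t\})\,\mathrm dt$. Each set $\{f>t\}$ is open, so Fatou's lemma in $t$ together with \ref{measure conver 2.4} yields $\int f\,\mathrm d\mu\le\liminf_n\int f\,\mathrm d\mu_n$. Applying the same argument to $\|f\|_\infty-f$ gives the matching $\limsup$ bound, hence weak convergence.

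\emph{Step 4: $\ref{measure conver 2.2}\Rightarrow\ref{measure conver 2.1}$ for separable $E$.} I expect this step to be the main obstacle. Fix $\varepsilon>0$. By separability, pick a countable dense set $\{x_j\}_{j\ge1}$. Choose radii $r_j\in(\varepsilon/4,\varepsilon/2)$ with $\mu(\partial B(x_j,r_j))=0$; this is possible because only countably many radii can carry positive boundary mass. Disjointify the balls into sets $A_j$, each with $\mu$-null boundary and diameter $<\varepsilon$. Choose $N$ with $\mu\big(\bigcup_{j\le N}A_j\big)>1-\varepsilon$. Weak convergence plus the portmanteau statement for $\mu$-continuity sets (obtained from Steps 2--3 applied to interior and closure) gives $\mu_n(U)\to\mu(U)$ uniformly over the finitely many unions $U$ of the $A_j$, $j\le N$. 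For closed $F$, let $U_F$ be the union of those $A_j$ ($j\le N$) meeting $F$. Then $F\subset U_F\cup\big(E\setminus\bigcup_{j\le N}A_j\big)$ and $U_F\subset F^{\varepsilon}$. Hence, for all large $n$,
\[
\mu(F)\le\mu_n(U_F)+2\varepsilon\le\mu_n(F^{\varepsilon})+2\varepsilon,
\]
so $\widetilde d_E(\mu,\mu_n)\le2\varepsilon$. The delicate points are choosing the null-boundary radii and obtaining uniformity over the finite family of unions.
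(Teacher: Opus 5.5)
Your proof is correct. The paper gives no argument of its own and simply quotes \cite[Page 108, Theorem 3.1]{Ethier-Kurtz-1986-Book}; your four steps are essentially the standard textbook argument behind that result: $F^{\varepsilon}\downarrow F$ for (a)$\Rightarrow$(c), the approximations $1\wedge(k\,d_E(\cdot,E\setminus G))$ and the layer-cake/Fatou argument for (b)$\Leftrightarrow$(c), and the countable small-set partition with uniformity over finitely many unions for (b)$\Rightarrow$(a) when $E$ is separable.

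Two cosmetic remarks. In Step 1, the bound $\mu_n(F)\le\mu(F^{\varepsilon_n})+\varepsilon_n$ follows directly from the definition of $\widetilde d_E(\mu_n,\mu)$, without invoking symmetry. In Step 4, the null-boundary radii can be avoided by using balls of radius $\varepsilon/4$ and applying (c) to the finitely many open sets $\big(\bigcup_{j\in I}A_j\big)^{\varepsilon/2}$, $I\subset\{1,\dots,N\}$; taking $I$ to be the indices with $A_j\cap F\neq\emptyset$ gives an open set containing $U_F$ and contained in $F^{\varepsilon}$.
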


	\subsection{Deterministic Estimates and Analysis}
	\subsubsection{Mollifiers and Related Estimates}
	
	We first introduce the Friedrichs mollifier.
	Recall the operator $\OP$ defined in \eqref{OP define}. For $n\ge 1$, we define the Friedrichs mollifier $J_n$ by
	\begin{align}
		J_{n}  \triangleq {\mathrm{OP}}\big(j(\cdot /n)\big),\qquad n\ge 1,
		\label{Define Jn}
	\end{align}
	where $j\in \mathscr{S}(\mathbb R^{d};\mathbb R)$ (the Schwartz space of rapidly decreasing $C^{\infty}$ functions on $\mathbb R^{d}$) is even function in each components and satisfies
	$0\leq j(y)\leq 1\ \text{for all}\ y\in \mathbb R^{d}$, 
	$j(y)=1\ \text{whenever}\ |y|\leq 1.$
	
	\begin{Lemma}[\cite{Li-Liu-Tang-2021-SPA,Taylor-2011-PDEbook3}]\label{Lemma-Jn}
		Let $\K=\R$ or $\T$. Let $J_n$ be defined by \eqref{Define Jn}. Then the following properties hold:
		\begin{enumerate}[label={${\bf (\arabic*)}$},leftmargin=0.79cm]\setlength\itemsep{0.2em}
			\item $J_n$ is self-adjoint, that is,
			$$\left \langle J_{n}f, g\right \rangle _{L^{2}}= \left \langle f, J_{n}g
			\right \rangle _{L^{2}},\quad f,g\in L^2.$$

			\item For all $\sigma\ge0$, $n\ge1$, and $f\in H^\sigma$, we have
			\[
			\sup_n\|J_{n}\|_{\mathscr L(L^{\infty};L^{\infty})}<\infty,\qquad
			\sup_n\|J_{n}\|_{\mathscr L(H^{\sigma};H^{\sigma})}\leq 1,
			\qquad \text{and} \qquad
			\lim_{n\to\infty}  \|J_n f-f\|_{H^\sigma}=0.
			\]
			Moreover, for all $s>\sigma\ge0$,
			\begin{align*}
				\|{\mathbf I}-J_{n}\|_{\mathscr L(H^{s};H^{\sigma})}\lesssim  
				\frac{1}{n^{s-\sigma}},\qquad
				\|J_{n}\|_{\mathscr L(H^{\sigma};H^{s})}\sim  O(n^{s-\sigma}).
			\end{align*}
			
			\item Let $\D^s$, $\Pi_d$, and $\Pi_0$ be given in \eqref{Define Ds}, \eqref{Pi-d define}, and \eqref{Pi-0 define}, respectively. For all $d\ge2$, $n\ge 1$, and $s\ge 0$, whenever the following commutators are well defined (for instance, $[\Pi_d,J_n]$ acts on zero-average functions if $x\in\T^d$), we have
			\begin{align*}
				[\mathcal D^{s},J_{n}]=[\Pi_d,J_{n}]=[\Pi_0,J_n]=0.
			\end{align*}
			
		\end{enumerate}
	\end{Lemma}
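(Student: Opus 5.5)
The statement to prove is Lemma \ref{Lemma-Jn}, the standard properties of the Friedrichs mollifier $J_n=\OP(j(\cdot/n))$. Since $J_n$ is a Fourier multiplier with real, even, bounded symbol $j(\xi/n)$ (and $j(k/n)$ on $\T^d$), every assertion reduces to pointwise properties of the symbol combined with Plancherel's identity. I will treat $\K=\R$ and $\K=\T$ in parallel, replacing integrals in $\xi$ by sums over $k\in\Z^d$.

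For self-adjointness (1), I write $\langle J_nf,g\rangle_{L^2}$ via Plancherel as $(2\pi)^{-d}\int j(\xi/n)\hat f(\xi)\overline{\hat g(\xi)}\,d\xi$. Because $j$ is real-valued the multiplier passes to $\hat g$, which gives $\langle f,J_ng\rangle_{L^2}$. The evenness of $j$ guarantees \eqref{Real symbol}, so $J_n$ preserves real-valuedness.

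For (2), the estimate $\|J_n\|_{\mathscr L(H^\sigma;H^\sigma)}\le1$ follows from $0\le j\le1$ and the fact that $(1+|\xi|^2)^{\sigma/2}$ commutes with the multiplier. Strong convergence $J_nf\to f$ in $H^\sigma$ follows by dominated convergence applied to $|1-j(\xi/n)|^2(1+|\xi|^2)^\sigma|\hat f|^2$, which is dominated by an integrable function and vanishes pointwise since $j(\xi/n)\to j(0)=1$. For the rate, note that $1-j(\xi/n)=0$ when $|\xi|\le n$, while for $|\xi|>n$ we have $|1-j|\le1\le (|\xi|/n)^{s-\sigma}\le C\,n^{-(s-\sigma)}(1+|\xi|^2)^{(s-\sigma)/2}$; multiplying and integrating yields $\|{\bf I}-J_n\|_{\mathscr L(H^s;H^\sigma)}\lesssim n^{-(s-\sigma)}$. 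For the smoothing bound, I use $\sup_\xi (1+|\xi|^2)^{(s-\sigma)/2}j(\xi/n)\le \sup_y (1+n^2|y|^2)^{(s-\sigma)/2}|j(y)|\lesssim n^{s-\sigma}\sup_y(1+|y|^2)^{(s-\sigma)/2}|j(y)|$, which is finite because $j$ is Schwartz. The lower bound implicit in ``$\sim$'' is obtained by testing on frequencies $|\xi|\approx n$, where $j=1$.

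The $L^\infty$ bound is the only non-Plancherel step, and I expect it to need the most care. On $\R^d$, I write $J_nf=\phi_n*f$ with $\phi_n(x)=n^d\check j(nx)$, which gives $\|J_n\|_{L^\infty\to L^\infty}\le\|\check j\|_{L^1}$ uniformly in $n$, since $\check j$ is Schwartz. On $\T^d$, I use the Poisson summation formula to represent $J_n$ as convolution with the periodization $\sum_{m\in\Z^d}\phi_n(x+2\pi m)$. Its $L^1(\T^d)$ norm is at most $\|\phi_n\|_{L^1(\R^d)}=\|\check j\|_{L^1}$, and the rapid decay of $\phi_n$ justifies the periodization.

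For (3), the operators $\D^s$, $\Pi_d$ and $\Pi_0$ are all Fourier multipliers: $\Pi_0$ is the multiplier $1-\mathbf 1_{\{k=0\}}$, and $\Pi_d$ has the multiplier $\varPi$, interpreted on zero-average functions on the torus. Scalar multipliers commute with one another and with matrix multipliers, because $j(\xi/n)$ acts as a scalar multiple of the identity on each component. All the commutators therefore vanish on their domains of definition.
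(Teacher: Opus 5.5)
Your proposal is correct and follows the standard route: the paper gives no argument of its own for this lemma and simply cites Li--Liu--Tang and Taylor, where these facts are proved by the same Fourier-multiplier reasoning you use. That is, Plancherel together with $0\le j\le 1$ and $j=1$ on the unit ball for the $H^\sigma$ statements; the $n$-independent $L^1$ bound on the kernel $n^d\check{j}(n\cdot)$, periodized on $\mathbb{T}^d$, for the $L^\infty$ bound; and the fact that $\mathcal{D}^s$, $\Pi_d$, $\Pi_0$ and $J_n$ are all Fourier multipliers, with $J_n$ having a scalar symbol, for part (3).
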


	Then we recall some estimates in the Sobolev spaces $H^{s}$.
	\begin{Lemma}[\cite{Kenig-Ponce-Vega-1991-JAMS}]
		\label{KP-commutator}
		Let $d\ge1$.
		If $f,g\in H^{s}\bigcap W^{1,\infty}$ with $s>0$, then for
		$p,p_{i}\in (1,\infty )$ with $i=2,3$ and
		$\frac{1}{p}=\frac{1}{p_{1}}+\frac{1}{p_{2}}=\frac{1}{p_{3}}+
		\frac{1}{p_{4}}$, we have
		\begin{equation*}
			\|\left [\mathcal D^{s},f{\mathbf I}\right ]g\|_{L^{p}}\leq C_{s}(\|
			\nabla f\|_{L^{p_{1}}}\|\mathcal D^{s-1}g\|_{L^{p_{2}}}+\|\mathcal D^{s}f
			\|_{L^{p_{3}}}\|g\|_{L^{p_{4}}}),
		\end{equation*}
		and
		\begin{equation*}
			\|\mathcal D^{s}(fg)\|_{L^{p}}\leq C_{s}(\|f\|_{L^{p_{1}}}\|
			\mathcal D^{s} g\|_{L^{p_{2}}}+\|\mathcal D^{s} f\|_{L^{p_{3}}}\|g\|_{L^{p_{4}}}).
		\end{equation*}

	\end{Lemma}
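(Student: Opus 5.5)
The plan is to prove both estimates through Bony's paraproduct decomposition combined with the Coifman--Meyer bilinear multiplier theorem. This is the standard Kenig--Ponce--Vega route (on $\T^d$ one runs the same argument with the periodic Littlewood--Paley decomposition). Let $\{\Delta_j\}_{j\ge -1}$ be a Littlewood--Paley decomposition and $S_j=\sum_{k<j-1}\Delta_k$. For any pair $(a,b)$ we write
\[
ab=T_a b+T_b a+R(a,b),\qquad T_a b=\sum_j S_{j}a\,\Delta_j b,\quad R(a,b)=\sum_{|j-k|\le1}\Delta_k a\,\Delta_j b .
\]

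\textbf{Leibniz rule (second estimate).} Each term of $\mathcal D^s T_f g$ has frequency $\sim 2^j$, so $\mathcal D^s$ acts like $2^{js}$ on the $j$-th piece.
\begin{itemize}
\item The Littlewood--Paley square function, together with the pointwise bound $|S_j f|\lesssim Mf$ and the Fefferman--Stein vector-valued maximal inequality, gives $\|\mathcal D^s T_f g\|_{L^p}\lesssim \|f\|_{L^{p_1}}\|\mathcal D^s g\|_{L^{p_2}}$.
\item Symmetrically, $\|\mathcal D^s T_g f\|_{L^p}\lesssim\|\mathcal D^s f\|_{L^{p_3}}\|g\|_{L^{p_4}}$.
\item For the remainder $R(f,g)$, the output at frequency $2^m$ collects only pieces with $j\ge m-2$. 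Because $s>0$, the factor $2^{ms}\le 2^{js}$ can be placed on $\Delta_j f$, and the resulting sum over $j\ge m$ is controlled by the same square-function argument. This is the only place where $s>0$ is used.
\end{itemize}
Equivalently, each of the three pieces is a bilinear multiplier whose symbol, after $\mathcal D^s$ is transferred to the high-frequency factor, satisfies the Coifman--Meyer symbol bounds.

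\textbf{Commutator (first estimate).} We decompose both $\mathcal D^s(fg)$ and $f\,\mathcal D^s g=T_f\mathcal D^s g+T_{\mathcal D^s g}f+R(f,\mathcal D^s g)$. This gives
\[
[\mathcal D^s,f]g=\big(\mathcal D^s T_f g-T_f\mathcal D^s g\big)+\mathcal D^s T_g f+\mathcal D^s R(f,g)-T_{\mathcal D^s g}f-R(f,\mathcal D^s g).
\]
\begin{itemize}
\item The terms $\mathcal D^s T_g f$ and $\mathcal D^sR(f,g)$ are already bounded by $\|\mathcal D^sf\|_{L^{p_3}}\|g\|_{L^{p_4}}$ from the first part.
\item In $T_{\mathcal D^s g}f$ and $R(f,\mathcal D^s g)$, the factor $\mathcal D^s g$ sits at frequency $\lesssim$ that of $f$. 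We move $2^{ks}\lesssim 2^{js}$ from $g$ onto $f$, and these terms are then also bounded by $\|\mathcal D^s f\|_{L^{p_3}}\|g\|_{L^{p_4}}$.
\item The genuinely new term is the first one. Its bilinear symbol is
\[
\sigma(\xi,\eta)=\sum_j\psi_{j}^{<}(\xi)\,\varphi_j(\eta)\big(\langle\xi+\eta\rangle^s-\langle\eta\rangle^s\big),\qquad |\xi|\ll|\eta|.
\]
Taylor expansion gives $\langle\xi+\eta\rangle^s-\langle\eta\rangle^s=\xi\cdot\int_0^1\nabla\langle\cdot\rangle^s(\eta+\tau\xi)\,{\rm d}\tau$. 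Hence $\sigma(\xi,\eta)=\sum_{i}\xi_i\,\langle\eta\rangle^{s-1}\,m_i(\xi,\eta)$ with each $m_i$ a Coifman--Meyer symbol.
\item Coifman--Meyer applied to $(\nabla f,\mathcal D^{s-1}g)$ then yields $\|\nabla f\|_{L^{p_1}}\|\mathcal D^{s-1}g\|_{L^{p_2}}$.
\end{itemize}

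\textbf{Expected obstacle.} The delicate point is the endpoint exponents. In the applications one needs $p_1=\infty$ or $p_4=\infty$, i.e.\ $\|\nabla f\|_{L^\infty}$ or $\|g\|_{L^\infty}$, whereas Coifman--Meyer is usually stated only for finite exponents. I would handle this by not invoking the multiplier theorem as a black box on the $L^\infty$ factor. Instead, the low-frequency factor (always the $L^\infty$ one in the relevant term) is estimated pointwise through $|S_j(\nabla f)|\lesssim\|\nabla f\|_{L^\infty}$. This works because the $m_i$ above can be expanded in a rapidly convergent Fourier series of products of separated symbols, so each piece is a product of a bounded low-frequency function and a Littlewood--Paley piece of $\mathcal D^{s-1}g$. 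The square function of the latter in $L^{p_2}$, $1<p_2<\infty$, then closes the estimate. Summing the Fourier series with the rapid decay of its coefficients finishes the argument.
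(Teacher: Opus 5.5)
Your route is the standard Kato--Ponce/Kenig--Ponce--Vega argument that the paper simply cites. It consists of Bony's decomposition, with the genuine commutator part $\sum_j[\mathcal D^s,S_jf]\Delta_jg$ rewritten as $\sum_i T_{m_i}(\partial_i f,\mathcal D^{s-1}g)$ for Coifman--Meyer symbols $m_i$. The following parts go through as you describe:
\begin{itemize}
\item the Leibniz estimate;
\item this main commutator term, including $p_1=\infty$;
\item the terms $\mathcal D^sT_gf$ and $\mathcal D^sR(f,g)$;
\item the term $T_{\mathcal D^sg}f$, whose pieces are annular and where $|S_j\mathcal D^sg|\lesssim 2^{js}Mg$ because $s>0$.
\end{itemize}

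\textbf{The gap.} The argument does not close for the resonant term $R(f,\mathcal D^sg)$ at the endpoint $p_4=\infty$. After you move $2^{js}$ onto $f$, this term becomes $\sum_{|j-k|\le1}\tilde\Delta_k(\mathcal D^sf)\,\check\Delta_jg$, with uniformly Mikhlin annular pieces.
\begin{itemize}
\item Unlike $\mathcal D^sR(f,g)$, there is no derivative on the output, so no factor $2^{(m-j)s}$ appears. The summands have spectrum in balls $B(0,C2^j)$ rather than annuli. The tail sums $\sum_{j\ge m-2}$ in your maximal/square-function argument for the Leibniz remainder are then not bounded on $\ell^2$.
\item For $p_4<\infty$ you can repair this by pointwise Cauchy--Schwarz in $j$, namely $|R(f,\mathcal D^sg)|\lesssim(\sum_k|\tilde\Delta_k\mathcal D^sf|^2)^{1/2}(\sum_j|\check\Delta_jg|^2)^{1/2}$, followed by the square-function bounds in $L^{p_3}$ and $L^{p_4}$.
\item For $p_4=\infty$ this fails, because $(\sum_j|\check\Delta_jg|^2)^{1/2}$ is not controlled by $\|g\|_{L^\infty}$.
\item Your endpoint device does not apply either. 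In this term the two factors sit at comparable frequencies, so your parenthetical claim that the $L^\infty$ factor is always the low-frequency one is false here.
\end{itemize}
This is not a marginal case. It is exactly how the paper uses the lemma, e.g.\ in Lemmas~\ref{uux u M} and~\ref{Lemma-F}, with $f=v_i$, $g=\partial_iv$, $p=p_2=p_3=2$ and $p_1=p_4=\infty$.

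\textbf{How to fix it.} The missing ingredient is the resonant estimate $\|R(F,g)\|_{L^p}\lesssim\|F\|_{L^p}\|g\|_{\mathrm{BMO}}$ for $1<p<\infty$, applied with $F$ a modified Littlewood--Paley version of $\mathcal D^sf$. One proof goes by duality:
\begin{itemize}
\item Write $\langle R(F,g),h\rangle=\langle g,\Phi\rangle$ with $\Phi=\sum_j\Delta_j\big(\tilde\Delta_jF\cdot S_{j+N}h\big)$.
\item Each $\tilde\Delta_jF\cdot S_{j+N}h$ has spectrum in $B(0,C2^j)$. Hence the (local) Hardy-space norm of $\Phi$ is $\lesssim\|(\sum_j|\tilde\Delta_jF|^2)^{1/2}Mh\|_{L^1}\lesssim\|F\|_{L^p}\|h\|_{L^{p'}}$.
\item Conclude by $\mathcal H^1$--$\mathrm{BMO}$ duality, or equivalently by a Carleson-measure argument.
\end{itemize}
Alternatively, simply invoke the Coifman--Meyer theorem. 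Contrary to your premise, its classical form already allows one input in $L^\infty$: it is bounded $L^{p}\times L^\infty\to L^{p}$ for $1<p<\infty$. That is precisely how the $\|\mathcal D^sf\|_{L^{p_3}}\|g\|_{L^\infty}$ contribution is obtained in the Kato--Ponce proof.
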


	\begin{Lemma}\label{uux u M}
		Let $d\ge1$, $s>\frac{d}{2}+1$. Then there exists a constant $\mathscr{c}=\mathscr{c}_{s,d}>0$ such that
		\begin{equation*}
			\IP{[(v\cdot\nn)v],v}_{H^s}\leq \mathscr{c}\|v\|_{\Wlip}\|v\|^2_{H^s},\qquad v\in H^{s+1},\ s>\frac{d}{2}+1.
		\end{equation*}
	\end{Lemma}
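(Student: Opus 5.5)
The plan is the standard Kato--Ponce type commutator argument. Write the $H^s$ inner product as an $L^2$ pairing of $\mathcal D^s$-derivatives:
\[
\IP{(v\cdot\nn)v,v}_{H^s}=\IP{\mathcal D^s[(v\cdot\nn)v],\mathcal D^s v}_{L^2}
=\IP{[\mathcal D^s,v\cdot\nn]v,\mathcal D^s v}_{L^2}+\IP{(v\cdot\nn)\mathcal D^s v,\mathcal D^s v}_{L^2}.
\]
Since $v\in H^{s+1}$, every term is well defined, and all manipulations are justified at this regularity. It then suffices to bound the two pieces separately by $\|v\|_{\Wlip}\|v\|_{H^s}^2$.

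For the transport piece, integrate by parts componentwise. Writing $w=\mathcal D^s v$,
\[
\IP{(v\cdot\nn)w,w}_{L^2}=\tfrac12\int_{\K^d} v\cdot\nn|w|^2\d x=-\tfrac12\int_{\K^d}(\Div v)|w|^2\d x.
\]
Hence this term is bounded by $\tfrac12\|\Div v\|_{L^\infty}\|v\|_{H^s}^2\lesssim\|v\|_{\Wlip}\|v\|_{H^s}^2$. On $\R^d$ the boundary terms vanish because $v\in H^{s+1}$ and $w\in H^1$; on $\T^d$ there are no boundary terms. Note that incompressibility is not needed, since Lemma \ref{uux u M} is stated for general $v$.

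For the commutator piece, write $[\mathcal D^s,v\cdot\nn]v=\sum_j[\mathcal D^s,v_j\I]\partial_j v$. Then apply Lemma \ref{KP-commutator} with $p=2$, $(p_1,p_2)=(\infty,2)$ and $(p_3,p_4)=(2,\infty)$, taking $f=v_j$ and $g=\partial_j v$:
\[
\|[\mathcal D^s,v_j\I]\partial_jv\|_{L^2}\lesssim\|\nn v\|_{L^\infty}\|\mathcal D^{s-1}\partial_jv\|_{L^2}+\|\mathcal D^sv_j\|_{L^2}\|\partial_jv\|_{L^\infty}\lesssim\|v\|_{\Wlip}\|v\|_{H^s}.
\]
The endpoint exponents $p_1,p_4=\infty$ are admissible in the Kato--Ponce estimate; the stated restriction to $(1,\infty)$ applies only to $p,p_2,p_3$. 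Cauchy--Schwarz against $\|\mathcal D^sv\|_{L^2}=\|v\|_{H^s}$ then gives the bound.

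The main point needing care is that Lemma \ref{KP-commutator} on $\T^d$ must be used in its periodic version. That version is standard, with the same proof via Littlewood--Paley theory or transference, so I would simply invoke it. The assumption $s>\frac d2+1$ only guarantees $\|v\|_{\Wlip}\lesssim\|v\|_{H^s}$, so the constant $\mathscr c$ is finite and depends only on $s,d$. Summing the two bounds yields the claim.
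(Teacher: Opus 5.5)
Your proof is correct and follows the paper's argument. You use the same split of $\IP{(v\cdot\nn)v,v}_{H^s}$ into the commutator term $\IP{[\mathcal D^s,v\cdot\nn]v,\mathcal D^s v}_{L^2}$, bounded via Lemma~\ref{KP-commutator}, plus the transport term $\IP{(v\cdot\nn)\mathcal D^s v,\mathcal D^s v}_{L^2}=-\tfrac12\int(\Div v)|\mathcal D^s v|^2\,\mathrm{d}x$, and you simply make explicit the exponent choices and the integration by parts that the paper leaves implicit.
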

	\begin{proof}
		Applying Lemmas \ref{Lemma-Jn} and \ref{KP-commutator}, we arrive at
		\begin{align*}
			\IP{[(v\cdot\nn)v],v}_{H^s} 
			= \IP{[\D^s,(v\cdot\nn)]\,  v,\D^s  v}_{L^2}
			+\IP{(v\cdot\nn) \D^s v,\D^s  v}_{L^2} 
			\leq \mathscr{c}\|v\|_{\Wlip}\|v\|^2_{H^s},
		\end{align*}
		which is the desired estimate.
	\end{proof}

	\begin{Lemma}\label{Lemma-F}
		Let $s>\frac{d}{2}+1$.
		Let  $F$ be defined in \eqref{F-Euler}.  
		\begin{itemize}[leftmargin=0.79cm]\setlength\itemsep{0.2em}
			\item If \ref{Constant Sound} holds, then there exists a constant $c_{1,s}>0$ such that   for all $X=(\vrho,u)^T\in \H^{s+1}$,
			\begin{equation}
				\Abs{\IP{F(X), X}_{\H^s}}\leq c_{1,s}  \big(\|\nabla u\|_{L^\infty}+\|\nabla \vrho\|_{L^\infty}\big)\|X\|_{\H^s}^2.\label{F(X) X-1}
			\end{equation}

			\item If \ref{General Sound} holds, then there exists a constant $c_{2,s}>0$ such that  for all $X=(\vrho,u)^T\in \H^{s+1}$,
			\begin{equation}
				\Abs{\IP{F(X), X}_{\H^s}}\leq c_{2,s} \big(1+\Phi_\Lambda(\|\vrho\|_{L^\infty})\big)\big(\|\nabla u\|_{L^\infty}+\|\nabla \vrho\|_{L^\infty}\big)\|X\|_{\H^s}^2.\label{F(X) X-2}
			\end{equation}
		\end{itemize}

	\end{Lemma}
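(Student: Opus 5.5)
We prove \eqref{F(X) X-1} and \eqref{F(X) X-2} by splitting $\IP{F(X),X}_{\H^s}$ into a transport part and a coupling (pressure) part. Writing $X=(\vrho,u)^T$, we have
\[
\IP{F(X),X}_{\H^s}=\underbrace{\IP{u\cdot\nabla\vrho,\vrho}_{H^s}+\IP{(u\cdot\nabla)u,u}_{H^s}}_{\text{transport}}+\underbrace{\IP{\Lambda(\vrho)\,\Div u,\vrho}_{H^s}+\IP{\Lambda(\vrho)\nabla\vrho,u}_{H^s}}_{\text{coupling}},
\]
with $\Lambda\equiv\sqrt{P(1)}$ in the case \ref{Constant Sound}.

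For the transport part, we argue as in Lemma \ref{uux u M}. We write $\IP{(u\cdot\nabla)w,w}_{H^s}=\IP{[\D^s,u\cdot\nabla]w,\D^s w}_{L^2}+\IP{(u\cdot\nabla)\D^s w,\D^s w}_{L^2}$ for $w\in\{\vrho,u\}$. Integrating by parts, the second term equals $-\tfrac12\IP{(\Div u)\D^s w,\D^s w}_{L^2}$, so it is bounded by $\|\nabla u\|_{L^\infty}\|w\|^2_{H^s}$. The commutator is bounded via Lemma \ref{KP-commutator}, with $p=2$, $p_1=p_4=\infty$, $p_2=p_3=2$, by $\|\nabla u\|_{L^\infty}\|w\|_{H^s}+\|u\|_{H^s}\|\nabla w\|_{L^\infty}$. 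Altogether the transport part is $\lesssim(\|\nabla u\|_{L^\infty}+\|\nabla\vrho\|_{L^\infty})\|X\|^2_{\H^s}$.

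For the coupling part in case \ref{Constant Sound}, the constant $\sqrt{P(1)}$ commutes with $\D^s$. Integrating by parts gives $\IP{\Div u,\vrho}_{H^s}=-\IP{u,\nabla\vrho}_{H^s}$, so the two coupling terms cancel exactly, which yields \eqref{F(X) X-1}.

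In case \ref{General Sound}, the coupling part is the main step. We commute $\D^s$ past multiplication by $\Lambda(\vrho)$:
\[
\IP{\Lambda(\vrho)\Div u,\vrho}_{H^s}+\IP{\Lambda(\vrho)\nabla\vrho,u}_{H^s}
=\IP{[\D^s,\Lambda(\vrho)]\Div u,\D^s\vrho}+\IP{[\D^s,\Lambda(\vrho)]\nabla\vrho,\D^s u}+\IP{\Lambda(\vrho)\Div\D^s u,\D^s\vrho}+\IP{\Lambda(\vrho)\nabla\D^s\vrho,\D^s u}.
\]
The last two terms combine into $\int\Lambda(\vrho)\Div(\D^s\vrho\,\D^s u)\,\d x=-\int\Lambda'(\vrho)\nabla\vrho\cdot\D^s u\,\D^s\vrho\,\d x$. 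This is bounded by $\sup|\Lambda'|\,\|\nabla\vrho\|_{L^\infty}\|X\|^2_{\H^s}$, the sup being finite by \ref{General Sound}.

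The two commutator terms are handled by Lemma \ref{KP-commutator}. Each is bounded by
\[
\|\nabla\Lambda(\vrho)\|_{L^\infty}\|\D^{s-1}\nabla w\|_{L^2}+\|\D^s\Lambda(\vrho)\|_{L^2}\|\nabla w\|_{L^\infty},\qquad w\in\{\vrho,u\}.
\]
For the first summand, $\|\nabla\Lambda(\vrho)\|_{L^\infty}\le\sup|\Lambda'|\,\|\nabla\vrho\|_{L^\infty}$. For the second, the composition estimate in \ref{General Sound} gives $\|\Lambda(\vrho)\|_{H^s}\le C_s\Phi_\Lambda(\|\vrho\|_{L^\infty})\|\vrho\|_{H^s}$. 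Since $\Lambda(0)=0$, $\Lambda(\vrho)$ lies in $H^s$ also on $\R^d$, so this applies on both $\T^d$ and $\R^d$. Combining the commutator bounds with the transport bound yields \eqref{F(X) X-2}, with the factor $1+\Phi_\Lambda(\|\vrho\|_{L^\infty})$.

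The delicate point is this commutator bookkeeping. The top-order terms must appear only in the symmetric combination, so that integration by parts moves the derivative onto $\Lambda(\vrho)$; every other term must have at most one factor with $s$ derivatives, the rest being controlled in $L^\infty$. The regularity $X\in\H^{s+1}$ is assumed precisely so that all these pairings are well defined.
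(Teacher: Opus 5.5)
Your proposal is correct and follows essentially the same route as the paper. You split $\IP{F(X),X}_{\H^s}$ into the four pairings and commute $\D^s$ past $u$ and $\Lambda(\vrho)$ via Lemma~\ref{KP-commutator}. The top-order coupling terms then combine, after one integration by parts, into $-\IP{(\nabla\Lambda(\vrho))\D^s\vrho,\D^s u}_{L^2}$, and $\|\D^s\Lambda(\vrho)\|_{L^2}$ is controlled by the composition estimate in \ref{General Sound}.
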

	
	\begin{proof} 
		We only prove the case where \ref{General Sound} holds, since it is more involved than the first case where \eqref{F(X) X-1} holds.
		When $X=(\vrho,u)\in\H^{s+1}$, we have
		\begin{align*}
			\IP{F(X), X}_{\H^s}=\ &\IP{\D^s(\Lambda(\vrho) \, \Div u),\D^s\vrho}_{L^2}
			+\IP{\D^s(u\cdot\nabla \vrho),\D^s\vrho}_{L^2}\\
			&+\IP{\D^s((u\cdot\nn)u),\D^su}_{L^2}+\IP{\D^s(\Lambda(\vrho)\,\nabla \vrho),\D^su}_{L^2}\\ \triangleq\ & \sum_{i=1}^{4}I_i.
		\end{align*}
		By integration by parts, we have
		\begin{align*}
			I_1=\ &\bIP{[\D^s,\Lambda(\vrho)]\, \Div u,\D^s\vrho}_{L^2}
			+\bIP{\Lambda(\vrho)\,\D^s\, \Div u,\D^s\vrho}_{L^2},
		\end{align*}
		and
		\begin{align*}
			I_4=\ &\bIP{[\D^s,\Lambda(\vrho)]\nabla \vrho,\D^su}_{L^2}
			+\bIP{\Lambda(\vrho)\,\D^s\nabla \vrho,\D^su}_{L^2}\\
			=\ &\IP{[\D^s,\Lambda(\vrho)]\nabla \vrho,\D^su}_{L^2}
			-\IP{(\nabla \Lambda(\vrho))\D^s \vrho,\D^su}_{L^2}-\IP{\Lambda(\vrho)\D^s \vrho,\D^s\, \Div u}_{L^2}.
		\end{align*}
		Therefore, the (singular) term $\bIP{\Lambda(\vrho)\D^s \vrho,\D^s\, \Div u}_{L^2}$ cancels out in $I_1+I_4$. 
		
		On account of $\norm{\nabla \Lambda(\vrho)}_{L^\infty}\lesssim \norm{\nabla \vrho}_{L^\infty}$ (by  \ref{Hypo-Pressure}) and Lemma \ref{KP-commutator}, we derive
		\begin{align}
			|I_1+I_4|
			\lesssim\ & \Big(\Phi_\Lambda(\|\vrho\|_{L^\infty})\big(\|\nabla u\|_{L^\infty}+\|\nabla \vrho\|_{L^\infty}\big)+\|\nabla \vrho\|_{L^\infty}\Big)\|X\|_{\H^s}^2.\label{q=r(rho) no singularity}
		\end{align}
		Similarly, we obtain
		\begin{align*}
			|I_2|=\ &\Abs{\IP{[\D^s,u]\cdot\nabla \vrho,\D^s\vrho}_{L^2}
				+\IP{u\cdot\D^s\nabla \vrho,\D^s\vrho}_{L^2}}\\
			\leq\ & (\|u\|_{H^s}\|\nabla \vrho\|_{L^\infty}+\|\nabla u\|_{L^\infty}\|\vrho\|_{H^s})\|\vrho\|_{H^s}
			+\Abs{\frac12\int_{\K^d}(\Div u)(\D^s \vrho)^2\d x}\\
			\lesssim\ & \big(\|\nabla u\|_{L^\infty}+\|\nabla \vrho\|_{L^\infty}\big)\|X\|_{\H^s}^2,
		\end{align*}
		and
		\begin{align*}
			|I_3|=\Abs{\IP{[\D^s,(u\cdot\nn)]u,\D^su}_{L^2}
				+\IP{(u\cdot\nn)\D^su,\D^su}_{L^2} }
			\lesssim \|\nabla u\|_{L^\infty}\|u\|_{H^s}^2.
		\end{align*}
		Collecting the above estimates, we obtain \eqref{F(X) X-2}. 
	\end{proof}
	
	\begin{Lemma}\label{Lemma-F-difference}
		Let $s>\frac{d}{2}+2$ and $\theta\in(\frac{d}{2}+1,s-1)$.
		Let $J_n$ and $F$ be defined in \eqref{Define Jn} and \eqref{F-Euler}, respectively. 
		\begin{itemize}[leftmargin=0.79cm]\setlength\itemsep{0.2em}
			\item If \ref{Constant Sound} holds and $X,Y\in\H^s$, then 
			\begin{align}
				\IP{J_n F(J_nX)-J_m F(J_mY),X-Y}_{\H^\theta} 
				\lesssim  \Big(1+\|X\|^4_{\H^s}+\|Y\|^4_{\H^s}\Big)
				\left((n\wedge m)^{-2(s-1-\theta)}+\|X-Y\|^2_{\H^\theta}\right).\label{F-asym-monoton-1}
			\end{align}

			\item If \ref{General Sound} holds and $X,Y\in\H^s$, then 
			\begin{align*}
				&\IP{J_n F(J_nX)-J_m F(J_mY),X-Y}_{\H^\theta}\notag\\
				\lesssim\ &  \Big(1+ \Phi^2_{\Lambda}\big(\|X\|_{\H^{s}}+\|Y\|_{\H^{s}}\big)+\|X\|^4_{\H^s}+\|Y\|^4_{\H^s}\Big)
				\left((n\wedge m)^{-2(s-1-\theta)}+\|X-Y\|^2_{\H^\theta}\right).\label{F-asym-monoton-2}
			\end{align*}
		\end{itemize}

	\end{Lemma}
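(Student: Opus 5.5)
The plan is to split the difference $J_nF(J_nX)-J_mF(J_mY)$ into a ``mollifier mismatch'' part and a ``same-mollifier difference'' part. Write
\[
J_nF(J_nX)-J_mF(J_mY)=\big(J_n-J_m\big)F(J_nX)+J_m\big(F(J_nX)-F(J_mX)\big)+J_m\big(F(J_mX)-F(J_mY)\big),
\]
and pair each piece with $X-Y$ in $\H^\theta$.

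For the first two pieces we use only crude bounds. By Lemma \ref{Lemma-Jn}, $\|J_n-J_m\|_{\mathscr L(H^{s-1};H^\theta)}\lesssim (n\wedge m)^{-(s-1-\theta)}$ and $\|J_nX-J_mX\|_{\H^{\theta+1}}\lesssim (n\wedge m)^{-(s-1-\theta)}\|X\|_{\H^s}$.

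\textbf{Mismatch term.} Since $s-1>d/2$, $H^{s-1}$ is an algebra. In the constant-sound case this gives $\|F(Z)\|_{\H^{s-1}}\lesssim \|Z\|_{\H^s}(1+\|Z\|_{\H^s})$ for $Z=J_nX$. In the general case we also invoke the composition estimate of \ref{General Sound} (at level $s-1$) to get $\|\Lambda(\vrho)\|_{H^{s-1}}\lesssim\Phi_\Lambda(\|\vrho\|_{L^\infty})\|\vrho\|_{H^{s-1}}$.

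\textbf{Second term.} Here $F(J_nX)-F(J_mX)$ is controlled in $\H^\theta$. We use the algebra property of $H^\theta$ ($\theta>d/2$) together with the Lipschitz estimate of $\Lambda$ in \ref{General Sound}, at level $\theta$ and with the $\Phi_\Lambda(\|\cdot\|_{H^\theta})$ factor. The result is
\[
\|F(J_nX)-F(J_mX)\|_{\H^\theta}\lesssim C(X)\,(n\wedge m)^{-(s-1-\theta)}.
\]
Both pieces are then closed by Young's inequality, $ab\le a^2+b^2$, which produces exactly $(n\wedge m)^{-2(s-1-\theta)}$ times a polynomial (or $\Phi^2_\Lambda$) weight in $\|X\|_{\H^s}$, plus $\|X-Y\|_{\H^\theta}^2$.

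\textbf{Main term.} The third piece loses a derivative and is the main obstacle. Set $Z=X-Y$, $\widetilde X=J_mX$, $\widetilde Y=J_mY$. Since $J_m$ is self-adjoint and commutes with $\D^\theta$, we have $\langle J_m(F(\widetilde X)-F(\widetilde Y)),Z\rangle_{\H^\theta}=\langle F(\widetilde X)-F(\widetilde Y),J_mZ\rangle_{\H^\theta}$, and $J_mZ=\widetilde X-\widetilde Y\triangleq\widetilde Z$. We then expand $F(\widetilde X)-F(\widetilde Y)$ into the part where derivatives fall on $\widetilde Z$ and the remainder:
\[
\big(\Lambda(\widetilde\vrho_X)\Div\widetilde z_u+\widetilde u_X\cdot\nabla\widetilde z_\vrho,\ (\widetilde u_X\cdot\nabla)\widetilde z_u+\Lambda(\widetilde\vrho_X)\nabla\widetilde z_\vrho\big)
\]
plus terms like $(\Lambda(\widetilde\vrho_X)-\Lambda(\widetilde\vrho_Y))\Div\widetilde u_Y$ and $\widetilde z_u\cdot\nabla\widetilde\vrho_Y$.

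\textbf{Remainder terms.} These contain no derivative on $\widetilde Z$. They are bounded in $H^\theta$ by $(1+\Phi_\Lambda)\,\|Y\|_{H^{\theta+1}}\|\widetilde Z\|_{H^\theta}$, using $\theta+1<s$ and the algebra property.

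\textbf{Principal terms.} Here we repeat the symmetrization of Lemma \ref{Lemma-F} at level $\theta$ instead of $s$. Commute $\D^\theta$ through $\Lambda(\widetilde\vrho_X)$ and $\widetilde u_X$ using the commutator estimate Lemma \ref{KP-commutator}. The commutator costs $\|\nabla\Lambda(\widetilde\vrho_X)\|_{L^\infty}\|\widetilde Z\|_{H^\theta}+\|\Lambda(\widetilde\vrho_X)\|_{H^\theta}\|\nabla\widetilde Z\|_{L^\infty}$, and since $\theta>d/2+1$ we have $\|\nabla\widetilde Z\|_{L^\infty}\lesssim\|\widetilde Z\|_{H^\theta}$. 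Then integrate by parts: the singular pair $\langle\Lambda\D^\theta\Div\widetilde z_u,\D^\theta\widetilde z_\vrho\rangle+\langle\Lambda\D^\theta\nabla\widetilde z_\vrho,\D^\theta\widetilde z_u\rangle$ cancels up to $\|\nabla\Lambda\|_{L^\infty}\|\widetilde Z\|_{H^\theta}^2$, and the transport terms give $\|\Div\widetilde u_X\|_{L^\infty}\|\widetilde Z\|_{H^\theta}^2$.

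\textbf{Conclusion.} Finally $\|\widetilde Z\|_{H^\theta}\le\|Z\|_{H^\theta}$. All coefficient norms are bounded by $\|X\|_{\H^s}+\|Y\|_{\H^s}$, with $\Phi_\Lambda$ monotone so that it is evaluated at $\|X\|_{\H^s}+\|Y\|_{\H^s}$. Products of two such factors are absorbed into $1+\Phi^2_\Lambda+\|X\|^4+\|Y\|^4$, which yields the claimed inequality. The constant-sound case is the same argument with $\Lambda\equiv\sqrt{P(1)}$: the commutators vanish, and this gives \eqref{F-asym-monoton-1}.
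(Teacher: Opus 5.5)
Your proof is correct, and it is organized differently from the paper's. The paper telescopes each of the four components of $J_nF(J_nX)-J_mF(J_mY)$ into five pieces $\Psi_{i,j}$ and estimates each piece separately. It uses the commutator estimate and integration by parts only for the transport pieces $\Psi_{2,5}$ and $\Psi_{3,5}$. You split just once: outer-mollifier mismatch, inner-mollifier mismatch, and a same-mollifier difference. You then use self-adjointness of $J_m$ and $[J_m,\D^\theta]=0$ to turn the last piece into $\langle F(J_mX)-F(J_mY),J_m(X-Y)\rangle_{\H^\theta}$. That is, the energy estimate of Lemma~\ref{Lemma-F} is redone at level $\theta$ for the difference.

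This buys exactly the step the paper's write-up leaves out. The pieces $\Psi_{1,5}=J_m\big(\Lambda(J_m\varpi)\Div J_m(u-v)\big)$ and $\Psi_{4,5}=J_m\big(\Lambda(J_m\varpi)\nabla J_m(\vrho-\varpi)\big)$ each put a full derivative on $X-Y$. The paper bounds them separately by quantities that are only linear in $\|X-Y\|_{\H^\theta}$; the factors $\|u\|_{H^s}+\|v\|_{H^s}$ and $\|\vrho\|_{H^s}+\|\varpi\|_{H^s}$ there stand in for $\|X-Y\|_{\H^{\theta+1}}$. Such bounds cannot produce the form $(n\wedge m)^{-2(s-1-\theta)}+\|X-Y\|^2_{\H^\theta}$. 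The two pieces have to be paired, and the cancellation between $\langle\Lambda\,\D^\theta\Div\widetilde z_u,\D^\theta\widetilde z_\vrho\rangle_{L^2}$ and $\langle\Lambda\,\D^\theta\nabla\widetilde z_\vrho,\D^\theta\widetilde z_u\rangle_{L^2}$ used, as you do. The paper's finer splitting has the advantage that every other piece becomes a one-line estimate once that pairing is added.

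One minor correction concerns how you apply Young's inequality in the two mismatch pieces. Use $C(X)\,ab\le C(X)(a^2+b^2)$ with $a=(n\wedge m)^{-(s-1-\theta)}$ and $b=\|X-Y\|_{\H^\theta}$, rather than moving $C(X)^2$ onto $a^2$. Here $C(X)\lesssim(1+\Phi_\Lambda)(\|X\|_{\H^s}+\|X\|^2_{\H^s})$. Moving $C(X)^2$ onto $a^2$ produces a term $\Phi_\Lambda^2\|X\|^4_{\H^s}$, which is not dominated by $1+\Phi_\Lambda^2+\|X\|^4_{\H^s}+\|Y\|^4_{\H^s}$. By contrast, $C(X)$ itself is dominated by that sum, so the stated weight comes out exactly with this form of Young.
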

	\begin{proof} 
		We first prove \eqref{F-asym-monoton-2}. 
		Let $X=(\vrho,u)^T$ and $Y=(\varpi,v)^T$. We have
		\begin{align*}
			J_n F(J_nX)- J_m  F( J_m Y) = \left(
			\begin{array}{c}
				\Psi_1+\Psi_2\\
				\Psi_3+\Psi_4
			\end{array}
			\right),
		\end{align*}
		where 
		\begin{align*}
			\Psi_1 &= J_n\Big(\Lambda(J_n \vrho)\,\Div (J_nu)\Big) -  J_m \Big(\Lambda( J_m  \varpi)\,\Div ( J_m v)\Big),\\ 
			\Psi_2 &=  J_n\Big(J_nu\cdot\nabla J_n \vrho\Big) -  J_m \Big( J_m v\cdot\nabla  J_m  \varpi\Big),\\
			\Psi_3 &= J_n\Big((J_nu\cdot\nn)J_nu\Big) -  J_m \Big(( J_m v\cdot\nn) J_m v\Big),\\ 
			\Psi_4 &= J_n\Big(\Lambda(J_n \vrho)\,\nabla  J_n \vrho\Big) -  J_m \Big(\Lambda( J_m  \varpi)\,\nabla   J_m  \varpi\Big).
		\end{align*}
		For $\Psi_1$ and $\Psi_4$, we have
		\begin{align*}
			\Psi_1 =\ &(J_n- J_m )\Big(\Lambda(J_n \vrho)\,\Div (J_nu)\Big)
			+ J_m \Big(\big(\Lambda(J_n \vrho)-\Lambda( J_m  \vrho)\big)\,\Div (J_nu)\Big)\\
			&+ J_m \Big(\big(\Lambda( J_m  \vrho)-\Lambda( J_m  \varpi)\big)\,\Div (J_nu)\Big)
			+ J_m \Big(\Lambda( J_m  \varpi)\,\Div (J_nu- J_m u)\Big)\\
			&+ J_m \Big(\Lambda( J_m  \varpi)\,\Div ( J_m u- J_m v)\Big)\\
			\triangleq\ &\sum_{j=1}^{5}\Psi_{1,j},  
		\end{align*}
		and 
		\begin{align*}
			\Psi_4 =\ &(J_n- J_m )\Big(\Lambda(J_n \vrho)\,\nabla  J_n \vrho\Big) + 
			J_m \Big(\big(\Lambda(J_n \vrho)-\Lambda( J_m  \vrho)\big)\,\nabla  J_n \vrho\Big)\\
			&+ J_m \Big(\big(\Lambda( J_m  \vrho)-\Lambda( J_m  \varpi)\big)\,\nabla  J_n \vrho\Big)
			+ J_m \Big(\Lambda( J_m  \varpi)\,\nabla  (J_n \vrho- J_m  \vrho)\Big)\\
			&+ J_m \Big(\Lambda( J_m  \varpi)\,\nabla  ( J_m  \vrho- J_m  \varpi)\Big)\\
			\triangleq\ &\sum_{j=1}^{5}\Psi_{4,j}.
		\end{align*}
		Since $\theta\in\left(\frac{d}{2}+p,{s-1}\right)$, we use Lemma \ref{Lemma-Jn} and \ref{Hypo-Pressure} to  obtain that
		\begin{align*}
			&\bIP{\Psi_{1,1},\vrho-\varpi}_{H^{\theta}} \lesssim \Phi_{\Lambda}(\|\vrho\|_{L^{\infty}})
			(n\wedge m)^{-(s-1-\theta)}\|\vrho-\varpi\|_{H^\theta}\|u\|_{H^s}\|\vrho\|_{H^s},\\
			&\bIP{\Psi_{1,2},\vrho-\varpi}_{H^{\theta}} \lesssim \Phi_{\Lambda}(\|\vrho\|_{H^{\theta}})(n\wedge m)^{-(s-\theta)}\|\vrho-\varpi\|_{H^\theta}\|\vrho\|_{H^s}\|u\|_{H^s},\\
			&\bIP{\Psi_{1,3},\vrho-\varpi}_{H^{\theta}} \lesssim \Phi_{\Lambda}(\|\vrho\|_{H^{\theta}}+\|\varpi\|_{H^{\theta}})\|\vrho-\varpi\|_{H^\theta}^2\|u\|_{H^s},\\
			&\bIP{\Psi_{1,4},\vrho-\varpi}_{H^{\theta}} \lesssim \Phi_{\Lambda}(\|\varpi\|_{L^{\infty}})(n\wedge m)^{-(s-\theta-1)}\|\vrho-\varpi\|_{H^\theta}\|\varpi\|_{H^s}\|u\|_{H^s}\\
			&\bIP{\Psi_{1,5},\vrho-\varpi}_{H^{\theta}} \lesssim \Phi_{\Lambda}(\|\varpi\|_{L^{\infty}})\|\vrho-\varpi\|_{H^\theta}\|\varpi\|_{H^s}\big(\|u\|_{H^s}+\|v\|_{H^s}\big).
		\end{align*}
		Similarly, we have
		\begin{align*}
			&\bIP{\Psi_{4,1},u-v}_{H^{\theta}}\lesssim \Phi_{\Lambda}(\|\vrho\|_{L^{\infty}})(n\wedge m)^{-(s-\theta-1)}\|u-v\|_{H^\theta}\|\vrho\|_{H^s}^2,\\
			&\bIP{\Psi_{4,2},u-v}_{H^{\theta}}\lesssim \Phi_{\Lambda}(\|\vrho\|_{H^{\theta}})(n\wedge m)^{-(s-\theta)}\|u-v\|_{H^\theta}\|\vrho\|_{H^s}^2,\\
			&\bIP{\Psi_{4,3},u-v}_{H^{\theta}}\lesssim \Phi_{\Lambda}(\|\vrho\|_{H^{\theta}}+\|\varpi\|_{H^{\theta}})\|u-v\|_{H^\theta}\|\vrho-\varpi\|_{H^\theta}\|\vrho\|_{H^s},\\
			&\bIP{\Psi_{4,4},u-v}_{H^{\theta}}\lesssim \Phi_{\Lambda}(\|\varpi\|_{L^{\infty}})(n\wedge m)^{-(s-\theta-1)}\|u-v\|_{H^\theta}\|\vrho\|_{H^s}\|\varpi\|_{H^s},\\
			&\bIP{\Psi_{4,5},u-v}_{H^{\theta}}\lesssim \Phi_{\Lambda}(\|\varpi\|_{L^{\infty}})\|u-v\|_{H^\theta}\|\varpi\|_{H^s}\big(\|\vrho\|_{H^s}+\|\varpi\|_{H^s}\big). 
		\end{align*}
		As a result, we have
		\begin{align*}
			&\bIP{\Psi_{1},\vrho-\varpi}_{H^{\theta}}+\bIP{\Psi_{4},u-v}_{H^{\theta}}\\
			\lesssim\ & \Big(1+ \Phi^2_{\Lambda}(\|X\|_{\H^{s}}+\|Y\|_{\H^{s}})+\|X\|^4_{\H^s}+\|Y\|^4_{\H^s}\Big)
			\left((n\wedge m)^{-2(s-1-\theta)}+\|X-Y\|^2_{\H^\theta}\right).
		\end{align*}
		For $\Psi_2$, we have
		\begin{align*}
			\Psi_2 =\ &(J_n- J_m )\Big(J_nu\cdot\nabla J_n \vrho\Big)
			+ J_m \Big((J_n- J_m )u\cdot\nn(J_n \vrho)\Big)\\
			&+ J_m \Big( J_m (u-v)\cdot\nn(J_n \vrho)\Big)
			+ J_m \Big( J_m v\cdot\nn(J_n \vrho- J_m  \vrho)\Big)\\
			&+ J_m \Big( J_m v\cdot\nabla  J_m (\vrho-\varpi)\Big)\\
			\triangleq\ &\sum_{j=1}^{5}\Psi_{2,j}.
		\end{align*}
		Then we obtain
		\begin{align*}
			&\bIP{\Psi_{2,1},\vrho-\varpi}_{H^{\theta}} \lesssim (n\wedge m)^{-(s-1-\theta)}\|\vrho-\varpi\|_{H^\theta}\|\vrho\|_{H^s}\|u\|_{H^s},\\
			&\bIP{\Psi_{2,2},\vrho-\varpi}_{H^{\theta}} \lesssim(n\wedge m)^{-(s-\theta)}\|\vrho-\varpi\|_{H^\theta}\|\vrho\|_{H^s}\|u\|_{H^s},\\
			&\bIP{\Psi_{2,3},\vrho-\varpi}_{H^{\theta}} \lesssim \left(\|u-v\|_{H^\theta}^2+\|\vrho-\varpi\|_{H^\theta}^2\right)\|\vrho\|_{H^s},\\
			&\bIP{\Psi_{2,4},\vrho-\varpi}_{H^{\theta}} \lesssim (n\wedge m)^{-(s-\theta-1)}\|\vrho-\varpi\|_{H^\theta}\left(\|\vrho\|_{H^s}^2+\|v\|_{H^s}^2\right),\\
			&\bIP{\Psi_{2,5},\vrho-\varpi}_{H^{\theta}} \lesssim \|\vrho-\varpi\|_{H^\theta}^2\|v\|_{H^s}.
		\end{align*}
		Therefore, we arrive at
		\begin{align}
			\bIP{\Psi_2,\vrho-\varpi}_{H^{\theta}}\lesssim 
			\big(1+\|X\|_{\H^s}^2+\|Y\|_{\H^s}^2\big)\left[(n\wedge m)^{-2(s-1-\theta)}+\|X-Y\|_{\H^\theta}^2\right].
		\end{align}
		For $\Psi_3$, we have
		\begin{align*}
			\Psi_3 = \ &(J_n- J_m )\Big((J_nu\cdot\nn)J_nu\Big) + 
			J_m \Big(\big((J_n- J_m )u\cdot\nn\big)J_nu\Big)\\
			&+ 
			J_m \Big(\big( J_m (u-v)\cdot\nn\big)J_nu\Big) + 
			J_m \Big(( J_m v\cdot\nn)(J_n- J_m )u\Big)\\
			&+
			J_m \Big(( J_m v\cdot\nn)( J_m (u-v))\Big)\\
			\triangleq\ &\sum_{j=1}^{5}\Psi_{3,j}.
		\end{align*}
		Then, using Lemma \ref{Lemma-Jn} yields
		\begin{align*}
			&\bIP{\Psi_{3,1},u-v}_{H^{\theta}} \lesssim 
			(n\wedge m)^{-(s-\theta-1)}\|u-v\|_{H^\theta}\|u\|_{H^s}^2,\\
			&\bIP{\Psi_{3,2},u-v}_{H^{\theta}} \lesssim 
			(n\wedge m)^{-(s-\theta)}\|u-v\|_{H^\theta}\|u\|_{H^s}^2,\\
			&\bIP{\Psi_{3,3},u-v}_{H^{\theta}} \lesssim 
			\|u-v\|_{H^\theta}^2\|u\|_{H^s},\\
			&\bIP{\Psi_{3,4},u-v}_{H^{\theta}} \lesssim 
			(n\wedge m)^{-(s-\theta-1)}\|u-v\|_{H^\theta}\left(\|u\|_{H^s}^2+\|v\|_{H^s}^2\right).
		\end{align*}
		For $\bIP{\Psi_{3,5},u-v}_{H^{\theta}}$, we can infer  from integration by parts, $H^s\hookrightarrow H^{\theta}\hookrightarrow W^{p,\infty}$,  Lemmas \ref{Lemma-Jn} and \ref{KP-commutator} that
		\begin{align*}
			&\bIP{\Psi_{3,5},u-v}_{H^{\theta}} \\
			=\ &\sum_{i=1}^{d}
			\Big(\IP{[\D^{\theta},( J_m v)_i]\partial_{x_i} J_m (u-v),\, \D^{\theta} J_m (u-v)}_{L^2}+\IP{( J_m v)_i\partial_{x_i}\D^{\theta} J_m (u-v),\, \D^{\theta} J_m (u-v)}_{L^2}\Big)\\
			\lesssim\ &\sum_{i=1}^{d} \Big(\|\nabla v_i\|_{L^\infty}\|u-v\|^2_{H^{\theta}}
			+\| v_i\|_{H^{\theta}}\|u-v\|_{L^\infty}\|u-v\|_{H^{\theta}}\Big)
			+\|\Div  v\|_{L^\infty}\|u-v\|^2_{H^{\theta}}\\
			\lesssim\ & \|v\|_{H^s}\|u-v\|^2_{H^{\theta}}.
		\end{align*}
		Hence we arrive at
		\begin{align*}
			\bIP{\Psi_3,u-v}_{H^{\theta}} \lesssim \big(1+\|X\|_{\H^s}^2+\|Y\|_{\H^s}^2\big)\left[(n\wedge m)^{-2(s-1-\theta)}+\|X-Y\|_{\H^\theta}^2\right].
		\end{align*}
		Collecting the above estimates yields the desired estimate \eqref{F-asym-monoton-2}. When \ref{Constant Sound} holds, $\Lambda(\cdot)\equiv \Theta$. One can repeat the above analysis to obtain \eqref{F-asym-monoton-1}.
	\end{proof}

	\subsubsection{Some Properties of Pseudo-differential Operators}
	\begin{Lemma}
		\label{LOP}
		Let $r,r_{1},r_{2}\in \mathbb R$, $\mathscr{p}\in \mathbf S^{r}$,
		$\mathscr{p}_{1}\in \mathbf S^{r_{1}},\mathscr{p}_{2}\in \mathbf S^{r_{2}}$,
		and let ${\mathrm{OP}}$ be given in \eqref{OP define}. The following results hold:
		
		\begin{enumerate}[label=\textup{\textbf {(\arabic{enumi})}},leftmargin=0.79cm]\setlength\itemsep{0.2em} 
			\item
			\label{OP-continuous}
			\textup{\textbf{(Continuity of ${\normalfont{\mathrm{OP}}}$)}} For any $q,s\in \mathbb R$,
			${\mathrm{OP}}: \mathbf S^{s}\to \OP\SS^s$ is one-to-one  and ${\mathrm{OP}}: \mathbf S^{s}\to \mathscr L(H^{q+s};H^{q})$ is bounded. More precisely, there
			are $\widetilde \beta ,\widetilde \alpha \in \mathbb N_{0}^{d}$ and a constant
			$C=C(s,q)>0$ such that
			\begin{align*}
				\|{\mathrm{OP}}(\mathscr{p})\|_{\mathscr L(H^{q+s};H^{q})}\leq C(s,q)|\mathscr{p}|^{\widetilde \beta ,\widetilde \alpha ; s},
			\end{align*}
			where $|\cdot|^{\widetilde \beta ,\widetilde \alpha ; s}$ is the seminorm given in \eqref{OPS+seminorms}.
			
			\item
			\textup{\textbf{(Adjoint)}} There is a symbol $\widetilde{\mathscr{p}}\in \SS^{r}$ such that
			$$\big({\mathrm{OP}}(\mathscr{p})\big)^{*}={\mathrm{OP}}(\widetilde{\mathscr{p}}),\quad 
			\widetilde{\mathscr{p}}\equiv \mathscr{p}\ \text{mod}\ \SS^{r-1},$$
			and 
			\begin{equation*}
				\mathbf S^{r}\ni \mathscr{p}\mapsto \widetilde{\mathscr{p}}\in
				\mathbf S^{r}  \ \text{is continuous}.
			\end{equation*}

			\item 
			\textup{\textbf{(Composition)}} 
			There is a symbol denoted by $\mathscr{p}_{1}\#\mathscr{p}_{2}\in \SS^{r_1+r_2}$ such that
			$${\mathrm{OP}}(\mathscr{p}_1){\mathrm{OP}}(\mathscr{p}_2)={\mathrm{OP}}(
			\mathscr{p}_{1}\#\mathscr{p}_{2}),\quad \mathscr{p}_{1}\#\mathscr{p}_{2}\equiv\mathscr{p}_{1}\mathscr{p}_{2} \ \text{mod}\ \SS^{r_1+r_2-1},$$
			and  
			\begin{equation*}
				\mathbf S^{r_{1}}\times \mathbf S^{r_{2}}\ni (\mathscr{p}_{1},
				\mathscr{p}_{2})\mapsto \mathscr{p}_{1}\# \mathscr{p}_{2}\in
				\mathbf S^{r_{1}+r_{2}}\ \text{is bi-linear continuous},
			\end{equation*}

			\item 
			\textup{\textbf{(Commutator)}} If
			$\mathscr{p}_{1}\in \mathbf S^{r_{1}}$ and
			$\mathscr{p}_{2}\in \mathbf S^{r_{2}}$ are commuting matrices, then
			\begin{equation*}
				\big[{\mathrm{OP}}(\mathscr{p}_{1}),{\mathrm{OP}}(\mathscr{p}_{2})\big]\in {
					\mathrm{OP}}\mathbf S^{r_{1}+r_{2}-1}.
			\end{equation*}
		\end{enumerate}
	\end{Lemma}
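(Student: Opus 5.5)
This statement is the standard package of facts about pseudo-differential symbol calculus on $\R^d$ and on $\T^d$. I would not reprove it from scratch; I would reduce each item to the classical Kohn--Nirenberg calculus. On $\R^d$ the references are Kumano-go and Taylor, and on $\T^d$ it is the toroidal calculus of Ruzhansky--Turunen. The remaining work is to check that the unified notation of Section~\ref{Section:PDO} is compatible with those references.

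For the torus, I would first use the identification recalled after \eqref{def Ss T}. Bounded sets in $\SS^s(\T^d\times\Z^d)$ are exactly restrictions of bounded sets in $\SS^s(\T^d\times\R^d)$, the latter being periodic-in-$x$ Euclidean symbols. So every operator in $\OP\SS^s$ on $\T^d$ is the restriction of a Euclidean pseudo-differential operator with a periodic symbol, and the seminorms control each other. This lets me argue all four items on $\R^d$ (with periodic symbols when needed) and transfer them. For matrix-valued symbols I would argue entrywise; the compositions then involve finite sums of scalar compositions.

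For \ref{OP-continuous}, the step is the Calder\'on--Vaillancourt type $L^2$ bound. I would write $\OP(\mathscr p)=\D^{q}\big(\D^{-q}\OP(\mathscr p)\D^{-q-s}\big)\D^{q+s}$. The middle operator has a symbol of order $0$ by item (3), with seminorms controlled continuously, and an order-$0$ operator is bounded on $L^2$ by finitely many seminorms. Injectivity follows by testing on exponentials: $\OP(\mathscr p)e^{{\rm i}x\cdot\xi}=\mathscr p(x,\xi)e^{{\rm i}x\cdot\xi}$ recovers the symbol, and on the torus the same holds with $k\in\Z^d$.

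For (2) and (3), I would use the oscillatory-integral formulas
$$\widetilde{\mathscr p}(x,\xi)=\mathrm{Os}\!\iint e^{-{\rm i}y\cdot\eta}\,\mathscr p(x+y,\xi+\eta)^*\,\frac{{\rm d}y\,{\rm d}\eta}{(2\pi)^d},$$
together with the analogous formula for $\mathscr p_1\#\mathscr p_2$. The expansions to first order are obtained by Taylor's formula with integral remainder, and the remainder is estimated in $\SS^{r-1}$ (respectively $\SS^{r_1+r_2-1}$) by integrations by parts. This integration-by-parts estimate is the main technical point. Every bound is polynomial in finitely many seminorms of the inputs, and that gives the stated continuity and bilinear continuity. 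For $x$-independent symbols, composition is exact multiplication.

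For (4), I would apply (3) twice. The commutator has symbol $\mathscr p_1\#\mathscr p_2-\mathscr p_2\#\mathscr p_1$, whose leading term is $\mathscr p_1\mathscr p_2-\mathscr p_2\mathscr p_1$. This vanishes because the two symbols are commuting matrices, so the commutator lies in $\OP\SS^{r_1+r_2-1}$.

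I expect the main obstacle to be uniformity in the torus transfer, namely that the constants in (1)--(4) depend only on seminorms. I would take that from the cited correspondence theorem of Ruzhansky--Turunen and Delgado rather than redo it.
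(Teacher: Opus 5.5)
Your route is the same as the paper's. The paper's proof of Lemma~\ref{LOP} consists only of references to the classical calculus (Kumano-go, Abels, Lerner, Nicola--Rodino, Taylor, Benzoni-Gavage--Serre). Your sketch, including the transfer to $\T^d$ via the Ruzhansky--Turunen correspondence, fills in more detail than the paper does.

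The one substantive problem is that item (2) is false as printed, and your own formula shows it. The oscillatory integral you write has leading term $\mathscr p(x,\xi)^{*}$ (the conjugate transpose). So what you actually prove is $\widetilde{\mathscr p}\equiv\mathscr p^{*}$ mod $\SS^{r-1}$, not $\widetilde{\mathscr p}\equiv\mathscr p$. For example, $\mathscr p={\rm i}\xi_1$ gives $\OP(\mathscr p)=\partial_{x_1}$, whose adjoint $-\partial_{x_1}$ has symbol $-{\rm i}\xi_1$, and $2{\rm i}\xi_1\notin\SS^{0}$.

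The corrected congruence is also the one the paper needs.
\begin{itemize}
\item If $\ell_i$ is the symbol of $\mathcal L_i$, the printed version would give $\mathcal L_i+\mathcal L_i^{*}=\OP(2\ell_i)$ modulo $\OP\mathcal S^{\alpha-1}\subset\OP\mathcal S^{0}$. The requirement $\mathcal L_i+\mathcal L_i^{*}\in\OP\mathcal S^{0}$ in Definition~\ref{Ak class define} would then force $\mathcal L_i\in\OP\mathcal S^{0}$, so the class would collapse to order-zero operators.
\item The symbol-level example after Definition~\ref{Bk class define} obtains $\mathscr H_k+\mathscr H_k^{*}\in\OP\mathcal S^{0}$ precisely because the leading symbol of $\mathscr H_k^{*}$ is $\overline{{\rm i}\mathscr h_k}=-{\rm i}\mathscr h_k$.
\end{itemize}
You should flag the misprint and prove the corrected item, not the printed one.

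Two smaller repairs are needed.

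In (1), your middle factor $\D^{-q}\OP(\mathscr p)\D^{-q-s}$ has order $-2q$, not $0$. Group instead as $\OP(\mathscr p)=\D^{-q}\big(\D^{q}\OP(\mathscr p)\D^{-q-s}\big)\D^{q+s}$. The bracket is then of order $0$, and its seminorms are bounded by a constant times a single seminorm of $\mathscr p$, by bilinear continuity of $\#$ against the fixed symbols of $\D^{q}$ and $\D^{-q-s}$. Then apply Calder\'on--Vaillancourt. On $\T^d$ this last step must be the toroidal $L^2$ boundedness theorem, since nonzero periodic functions do not lie in $L^2(\R^d)$.

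In (4) on $\T^d$, toroidal symbols that commute on $\T^d\times\Z^d$ need not have commuting periodic extensions $\mathscr q_1,\mathscr q_2$ to $\T^d\times\R^d$. So the Euclidean commutator result cannot simply be quoted. Instead, expand on $\T^d\times\R^d$:
\[
\mathscr q_1\#\mathscr q_2-\mathscr q_2\#\mathscr q_1=(\mathscr q_1\mathscr q_2-\mathscr q_2\mathscr q_1)+(\text{an element of }\SS^{r_1+r_2-1}).
\]
Restrict to the lattice only at the end. There the first term vanishes, and the remainder restricts to a toroidal symbol of order $r_1+r_2-1$.
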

	
	\begin{proof}
		For readers' convenience, we provide key references for the following  well-established properties. The first property is treated in \cite[Theorem 2.7, Page 124 \& Proposition 1.2, Page 56]{Kumano-go-1981-Book} and \cite[Theorem 3.41]{Abels-2012-Book}. The second result is detailed in \cite[Theorem 1.1.21 \& Corollary 1.1.22]{Lerner-2010-Book}. Regarding the third property, comprehensive discussions appear in \cite[Theorem 1.2.16]{Nicola-Rodino-2010-book} and \cite[Page 72]{Abels-2012-Book}. For the fourth aspect, we refer to \cite[Page 32]{Taylor-1974-note} and \cite[Theorem C.3]{Benzoni-Gavage-Serre-2007-Book}.
	\end{proof}
	
	\begin{Lemma}
		\label{Lemma-[pn qm]}
		Let $s,\, r_{1},\, r_{2}\in \mathbb R$. Suppose that
		$\mathscr{N}\times \mathscr{O}\subset \mathbf S^{r_{1}}\times
		\mathbf S^{r_{2}}$ is a bounded subset such that for any
		$(\mathscr{p}_{1},\mathscr{p}_{2})\in \mathscr{N}\times \mathscr{O}$,
		$\mathscr{p}_{1}$ and $\mathscr{p}_{2}$ are commuting matrices. Then
		we have:
		\begin{equation*}
			\sup _{(\mathscr{p}_{1},\mathscr{p}_{2})\in \mathscr{N}\times
				\mathscr{O}}\big\|\big[{\mathrm{OP}}(\mathscr{p}_{1}),{\mathrm{OP}}(
			\mathscr{p}_{2})\big]\big\|_{\mathscr L(H^{s+r_{1}+r_{2}-1};H^{s})}<
			\infty .
		\end{equation*}
	\end{Lemma}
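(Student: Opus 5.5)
The plan is to reduce Lemma~\ref{Lemma-[pn qm]} to the symbolic calculus of Lemma~\ref{LOP}, keeping track of seminorms so that every bound is uniform over the bounded family. Fix $(\mathscr{p}_1,\mathscr{p}_2)\in\mathscr N\times\mathscr O$. By the composition property in Lemma~\ref{LOP}, we have
\[
\big[\OP(\mathscr{p}_1),\OP(\mathscr{p}_2)\big]=\OP\big(\mathscr{p}_1\#\mathscr{p}_2-\mathscr{p}_2\#\mathscr{p}_1\big),
\]
and the symbol $\mathscr{q}\triangleq\mathscr{p}_1\#\mathscr{p}_2-\mathscr{p}_2\#\mathscr{p}_1$ lies in $\SS^{r_1+r_2}$. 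Since $\mathscr{p}_1$ and $\mathscr{p}_2$ commute pointwise, the principal part $\mathscr{p}_1\mathscr{p}_2-\mathscr{p}_2\mathscr{p}_1$ vanishes identically. Hence $\mathscr{q}$ equals the difference of the remainders, and it lies in $\SS^{r_1+r_2-1}$; this is exactly the commutator statement in Lemma~\ref{LOP}.

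The key step is to make this quantitative. I would write the composition through the first-order expansion with integral remainder. On $\R^d$ this reads
\[
\mathscr{p}_1\#\mathscr{p}_2=\mathscr{p}_1\mathscr{p}_2+\mathscr{r}(\mathscr{p}_1,\mathscr{p}_2),
\]
where $\mathscr{r}$ is an oscillatory-integral expression built from $\partial_\xi\mathscr{p}_1\,D_x\mathscr{p}_2$. On $\T^d$ the same expansion holds with difference operators $\triangle_k$, using the correspondence between toroidal and periodic symbol classes recalled in Section~\ref{Section:PDO}. Standard estimates (e.g.\ \cite{Kumano-go-1981-Book}) show that the map $(\mathscr{p}_1,\mathscr{p}_2)\mapsto\mathscr{r}$ is bilinear and continuous $\SS^{r_1}\times\SS^{r_2}\to\SS^{r_1+r_2-1}$. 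Concretely, each seminorm $|\mathscr{r}|^{\beta,\alpha;r_1+r_2-1}$ is bounded by a finite sum of products of seminorms of $\mathscr{p}_1$ and $\mathscr{p}_2$.

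Applying this to both orderings, the seminorms of $\mathscr{q}$ in $\SS^{r_1+r_2-1}$ are bounded uniformly over $\mathscr N\times\mathscr O$, because these families are bounded. The continuity of $\OP:\SS^{r_1+r_2-1}\to\mathscr L(H^{s+r_1+r_2-1};H^s)$ in Lemma~\ref{LOP}\ref{OP-continuous} then bounds the operator norm by $C(s)\,|\mathscr{q}|^{\widetilde\beta,\widetilde\alpha;r_1+r_2-1}$. Taking the supremum over the family gives the claim.

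The main obstacle is the uniformity of the remainder estimate, namely that $\mathscr{r}$ depends continuously (bilinearly) on the symbols in the Fréchet topologies, including in the toroidal setting. Lemma~\ref{LOP} asserts continuity only of the full composition $\#$. I would first deduce continuity of $\mathscr{r}=\#-(\text{pointwise product})$ from that: pointwise multiplication is clearly continuous $\SS^{r_1}\times\SS^{r_2}\to\SS^{r_1+r_2}$. What still needs checking is that the difference is continuous into the smaller space $\SS^{r_1+r_2-1}$. For this I would invoke the explicit remainder formula and its estimates in \cite{Kumano-go-1981-Book,Ruzhansky-Turunen-2010-Book}.
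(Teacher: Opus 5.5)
Your proposal is correct and follows the same route as the paper. You write the commutator as $\OP(\mathscr{p}_1\#\mathscr{p}_2-\mathscr{p}_2\#\mathscr{p}_1)$, use pointwise commutation to remove the principal part so the symbol lies in $\SS^{r_1+r_2-1}$, and conclude with the continuity of $\OP$ from Lemma~\ref{LOP}. You also note that uniformity over the family needs the remainder map $(\mathscr{p}_1,\mathscr{p}_2)\mapsto\mathscr{r}(\mathscr{p}_1,\mathscr{p}_2)$ to be bilinear and continuous into $\SS^{r_1+r_2-1}$, not just continuity of $\#$ into $\SS^{r_1+r_2}$. The paper's short argument leaves this implicit, and your first-order expansion with integral remainder is the right way to supply it.
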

	\begin{proof}
		When $\mathscr{p}_1$ and $\mathscr{p}_2$ are commuting matrices, some direct
		computations (cf. \cite[Corollary 1.1.22]{Lerner-2010-Book} or
		\cite[Theorem C.3]{Benzoni-Gavage-Serre-2007-Book}) yield
		$\mathscr{p}_{1}\# \mathscr{p}_{2}-\mathscr{p}_{2}\# \mathscr{p}_{1}
		\in \mathbf S^{r_{1}+r_{2}-1}$. From this and {Lemma~\ref{LOP}}, we see that
		\begin{equation*}
			(\mathscr{p}_{1},\mathscr{p}_{2})\mapsto \big[{\mathrm{OP}}(\mathscr{p}_{1}),{
				\mathrm{OP}}(\mathscr{p}_{2})\big] ={\mathrm{OP}}\big(\mathscr{p}_{1}\#
			\mathscr{p}_{2}- \mathscr{p}_{2}\# \mathscr{p}_{1}\big)
		\end{equation*}
		is continuous from $\mathbf S^{r_{1}}\times \mathbf S^{r_{2}}$ to
		$\mathscr L(H^{s+r_{1}+r_{2}-1};H^{s})$, which implies the desired result.
	\end{proof}

	\setlength{\bibsep}{1.5ex}

\end{document}